\theoremstyle{plain}
\numberwithin{equation}{section}
\newcommand{\NLS}{\scriptscriptstyle\mathrm{NLS}}
\newcommand{\KG}{\scriptscriptstyle\mathrm{KG}}
\newtheorem{Teo}{Theorem}
\newtheorem{theorem}{Theorem}[section]
\newtheorem{proposition}[theorem]{Proposition}
\newtheorem{lemma}[theorem]{Lemma}
\newtheorem{corollary}[theorem]{Corollary}
\newtheorem{remark}[theorem]{Remark}
\newtheorem{remarks}[theorem]{Remark}
\newtheorem{hypo}[theorem]{Hypothesis}
\newtheorem{definition}[theorem]{Definition}
\newcommand{\be}{\begin{equation}}
\newcommand{\ee}{\end{equation}}
\newcommand{\uno}{\mathds{1}}
\newcommand{\e}{\varepsilon}
\newcommand{\ov}{\overline}
\newcommand{\R}{\mathbb R}
\newcommand{\C}{\mathbb C}
\newcommand{\Z}{\mathbb Z}
\newcommand{\N}{\mathbb N}
\newcommand{\T}{\mathbb T}
\newcommand{\s }{\sigma }
\newcommand{\ii }{{\rm i} }
\newcommand{\g }{\gamma}
\newcommand{\z }{\zeta }
\newcommand{\x }{\xi }
\newcommand{\Nc}{\mathcal{N}}
\newcommand{\pa}{\partial}
\newcommand{\opbw}{{Op^{{\scriptscriptstyle{\mathrm BW}}}}}
\newcommand{\diag}{{\rm diag}}
\def\hat{\widehat}
\def\bar{\overline}
\renewcommand{\Re}{\mathrm{Re}\,}
\renewcommand{\Im}{\mathrm{Im}\,}
\renewcommand{\S}{\mathbb{S}}
\newcommand{\Cc}{\mathcal C}
\newcommand{\Dc}{\mathcal D}
\renewcommand{\Mc}{\mathcal M}
\def\ba{\begin{aligned}}
\def\ea{\end{aligned}}
\def\beginm{\begin{multline}}
\def\endm{\end{multline}}
\providecommand{\vect}[2]{{\bigl[\begin{smallmatrix}#1\\#2\end{smallmatrix}\bigr]}}   
\providecommand{\sm}[4]{{\bigl[\begin{smallmatrix}#1&#2\\#3&#4\end{smallmatrix}\bigr]}}
\def\l@subsection{\@tocline{2}{0pt}{2.5pc}{5pc}{}}
\def\l@subsubsection{\@tocline{3}{0pt}{4.5pc}{5pc}{}}
\renewcommand\tocchapter[3]{%
  \indentlabel{\@ifnotempty{#2}{\ignorespaces#2.\quad}}#3%
}
\newcommand\@dotsep{4.5}
\def\@tocline#1#2#3#4#5#6#7{\relax
  \ifnum #1>\c@tocdepth 
  \else
    \par \addpenalty\@secpenalty\addvspace{#2}%
    \begingroup \hyphenpenalty\@M
    \@ifempty{#4}{%
      \@tempdima\csname r@tocindent\number#1\endcsname\relax
    }{%
      \@tempdima#4\relax
    }%
    \parindent\z@ \leftskip#3\relax \advance\leftskip\@tempdima\relax
    \rightskip\@pnumwidth plus1em \parfillskip-\@pnumwidth
    #5\leavevmode\hskip-\@tempdima{#6}\nobreak
    \leaders\hbox{$\m@th\mkern \@dotsep mu\hbox{.}\mkern \@dotsep mu$}\hfill
    \nobreak
    \hbox to\@pnumwidth{\@tocpagenum{#7}}\par
    \nobreak
    \endgroup
  \fi}
\def\l@subsection{\@tocline{2}{0pt}{2.5pc}{5pc}{}}
\begin{document}
\bibliographystyle{plain}

\title[Long time solutions for quasi-linear NLS and KG on tori]{Long time solutions for 
quasi-linear Hamiltonian  perturbations of Schr\"odinger and Klein-Gordon equations on tori}

\date{}

\author{Roberto Feola}
\address{\scriptsize{Dipartimento di matematica e fisica, Universit\'a degli studi Roma Tre\\
1, Largo San Leonardo Murialdo \\
00146 Rome, Italy}}
\email{roberto.feola@uniroma3.it}

\author{Beno\^it Gr\'ebert}
\address{\scriptsize{Laboratoire de Math\'ematiques Jean Leray, Universit\'e de Nantes, UMR CNRS 6629\\
2, rue de la Houssini\`ere \\
44322 Nantes Cedex 03, France}}
\email{benoit.grebert@univ-nantes.fr}

\author{Felice Iandoli}
\address{\scriptsize{Laboratoire Jacques-Louis Lions, Sorbonne Universit\'e, UMR CNRS 7598\\
4, Place Jussieu\\
 75005 Paris Cedex 05, France}}
\email{felice.iandoli@sorbonne-universite.fr}



\thanks{Felice Iandoli has been supported  by ERC grant ANADEL 757996.
Roberto Feola and Beno\^it Gr\'ebert have been supported by 
 the Centre Henri Lebesgue ANR-11-LABX- 0020-01 
 and by ANR-15-CE40-0001-02 ``BEKAM'' of the ANR}  
 
 \thanks{\emph{MSC}: 37K45, 35S50.}

\begin{abstract}  
We consider quasi-linear, Hamiltonian perturbations of the cubic Schr\"odinger  and of the cubic (derivative) Klein-Gordon equations  on the $d$ dimensional torus. 
 If
$\varepsilon\ll1$ is the size of the initial datum, we prove that the lifespan of solutions is \emph{strictly} larger than the  local existence time  $\varepsilon^{-2}$.
More precisely, concerning the Schr\"odinger equation we  
show that the lifespan is at least of order $O(\varepsilon^{-4})$,  in the Klein-Gordon case we prove that the  solutions exist at least for a time  of order 
$O(\varepsilon^{-{8/3}^{-}})$ as soon as $d\geq3$.  
Regarding the Klein-Gordon equation, our result presents novelties  also in the case of semi-linear perturbations:  we show that the lifespan is at least of order 
$O(\varepsilon^{-{10/3}^{-}})$, 
improving, for cubic non-linearities and $d\geq4$, 
the general results in \cite{Delort-Tori, fang}.

\end{abstract}   

 \keywords{{\bf quasi-linear equations, para-differential calculus, energy estimates, small divisors}}

\maketitle
\setcounter{tocdepth}{2}
\tableofcontents

\section{Introduction}
This paper is concerned with the study of the lifespan of
solutions of two classes of quasi-linear, Hamiltonian equations on the 
$d$-dimensional torus 
$\mathbb{T}^{d}:=(\mathbb{R}/2\pi\mathbb{Z})^{d}$, $d\geq 1$.
We  study quasi-linear perturbations of the Schr\"odinger
and  Klein-Gordon equations.

The Schr\"odinger equation we consider is the following
\begin{equation*}\label{NLS}\tag{NLS}
\left\{\begin{aligned}
&\ii\partial_t u+\Delta u-V*u
+\big[\Delta (h(|u|^2))\big]h'(|u|^2)u-|u|^2u=0,\\
&u(0,x)=u_0(x)\end{aligned}\right.
\end{equation*}
where $\C\ni u:=u(t,x)$, $x\in \T^d$, $d\geq 1$, $V(x)$ 
is a real valued  potential even with respect to $x$, $h(x)$ 
is a function in $C^{\infty}(\R;\mathbb{R})$ such that $h(x)=O( x^{2})$ as $x\to 0$.
The initial datum $u_0$ has  small size 
and belongs to the Sobolev space $H^s(\T^d)$ 
(see \eqref{Sobnorm}) with $s\gg 1$.

We examine also the Klein-Gordon equation
 \begin{equation*}\label{KG}\tag{KG}
\left\{ \begin{aligned}
& \pa_{tt}\psi-\Delta \psi+m\psi+f(\psi)+g(\psi)=0 \,,\\
&\psi(0,x)=\psi_0\,,\\
&\pa_{t}\psi(0,x)=\psi_1\,,
 \end{aligned}\right.
 \end{equation*}
 where $\R\ni\psi:=\psi(t,x)$,  
 $x\in \mathbb{T}^{d}$, $d\geq 1$ 
 and $m>0$. The initial data $(\psi_0, \psi_1)$ have small size and belong to the Sobolev space
 $H^{s}(\mathbb{T}^{d})\times H^{s-1}(\mathbb{T}^{d})$, for some $s\gg1$.
 The nonlinearity $f(\psi)$ has the form
  \begin{equation}\label{KGnon}
f(\psi):=-\sum_{j=1}^{d}\pa_{x_j}\big(\pa_{\psi_{x_j}}F(\psi,\nabla \psi)\big)
+(\pa_{\psi}F)(\psi,\nabla \psi)
 \end{equation}
 where $F(y_0,y_1,\ldots,y_{d})\in C^{\infty}(\mathbb{R}^{d+1},\mathbb{R})$, 
 has a zero of order at least $5$ at the origin. 
 The non linear term $g(\psi)$ has the form 
 \begin{equation}\label{KGnon2}
g(\psi)=(\pa_{y_0}G)(\psi,\Lambda_{\KG}^{\frac{1}{2}}\psi)
+\Lambda_{\KG}^{\frac{1}{2}}(\pa_{y_1}G)(\psi,\Lambda_{\KG}^{\frac{1}{2}}\psi)
\end{equation}
where $G(y_0,y_1)\in C^{\infty}(\mathbb{R}^{2};\mathbb{R})$ is a homogeneous polynomial 
of degree $4$ and $\Lambda_{\KG}$ is the operator 
\begin{equation}\label{def:Lambda}
\Lambda_{\KG}:=(-\Delta+m)^{\frac{1}{2}}\,,
\end{equation}
defined by linearity as
\begin{equation}\label{def:Lambda2}
\Lambda_{\KG} e^{\ii j\cdot x}=\Lambda_{\KG}(j) e^{\ii j\cdot x}\,,\qquad \Lambda_{\KG}(j)=\sqrt{|j|^{2}+m}\,,\;\;\;
\forall\, j\in\mathbb{Z}^{d}\,.
\end{equation}

%
%
%
%

{\bf{Historical introduction for \eqref{NLS}.} }
Quasi-linear Schr\"odinger equations of the specific form \eqref{NLS} 
appear in many domains 
of physics like plasma physics and fluid mechanics \cite{sergio,goldman}, 
quantum mechanics \cite{hasse}, condensed matter theory \cite{fedayn}. 
They are also important  in the study of Kelvin waves 
in the superfluid turbulence \cite{LLNR}.
Equations of the form \eqref{NLS} posed in the \emph{Euclidean space}  
have received the attention of many mathematicians.  
The first result, concerning the  local well-posedness, 
is due to Poppenberg \cite{Pop1} in the one dimensional case. 
This  has been generalized by Colin to any dimension \cite{colin}. 
A more general class of equations is considered in the pioneering work 
by Kenig-Ponce-Vega \cite{KPV}. 
These results of local well-posedness have been recently optimized, 
in terms of regularity of the initial condition, 
by Marzuola-Metcalfe-Tataru \cite{MMT3} (see also references therein). 
Existence of standing waves  has been studied 
by Colin \cite{colin2003} and Colin-Jeanjean \cite{colinjj}. 
The global well-posedness has been established 
by de Bouard-Hayashi-Saut \cite{saut-glob} 
in dimension two and three for small data. 
This proof is based on dispersive estimates and energy method. 
New ideas have been introduced in studying the global well-posedness 
for other quasi-linear equations on the \emph{Euclidean space}. 
Here the aforementioned tools are combined with \emph{normal form} reductions. 
We quote Ionescu-Pusateri \cite{pusa1,pusa2} 
for the water-waves equation in two dimensions.

Very little is known when the equation \eqref{NLS} is posed on a \emph{compact manifold}. 
The first local well-posedness results on the circle 
are given in the work by Baldi-Haus-Montalto \cite{BHM}  
and in the paper \cite{Feola-Iandoli-Loc}. 
Recently these results have been  generalized to the case of tori of 
any dimension in \cite{Feola-Iandoli-local-tori}. 
Except these local existence results, 
nothing is known concerning the long time behavior  of  the solutions. 
The problem of  \emph{global existence/blow-up}  is completely open. 
In the aforementioned paper \cite{saut-glob} it is exploited  
the dispersive character of the flow of the linear Schr\"odinger equation. 
This property is not present on compact manifolds: 
the solutions of the linear Schr\"odinger equation 
do not decay when the time goes to infinity. 
However in the one dimensional case 
 in \cite{Feola-Iandoli-Long, Feola-Iandoli-Totale} it is proven 
that \emph{small} solutions of quasi-linear Schr\"odinger equations 
exist for long, but finite, times. 
In these works two of us exploit the fact that quasi-linear Schr\"odinger 
equations may be   reduced to constant coefficients 
through a \emph{para-composition} generated by a diffeomorphism of the circle. 
This powerful tool has been used 
for the same purpose by other authors in the context 
of water-waves equations, firstly by Berti-Delort in \cite{BD} 
in a  \emph{non resonant} regime, 
secondly by Berti-Feola-Pusateri in \cite{BFP,BFP1} 
and Berti-Feola-Franzoi \cite{BFF1} in the \emph{resonant} case.
We also mention that this feature has been used  
in other contexts for the same equations, for instance 
Feola-Procesi \cite{FP} prove the existence of a 
large set of \emph{quasi-periodic} 
(and hence globally defined) solutions when the problem is posed on the circle.
This ``reduction to constant coefficients'' is a peculiarity  
of one dimensional problems, in higher dimensions 
new ideas have to be introduced. 
For quasi-linear equations on tori of dimension 
two we quote the paper about long-time solutions 
for water-waves problem  by Ionescu-Pusateri \cite{IPtori}, 
where a  different \emph{normal form} analysis  has been presented. 

{\bf Historical introduction for \eqref{KG}.} The local existence for \eqref{KG} is classical  and we refer to Kato \cite{kato}. Many analyses have been done for \emph{global/long time} existence.

 When the equation is posed on  the Euclidean space we have global existence for small and localized data  in the papers by Delort \cite{Delort-glob} and Stingo \cite{Sti}, here the authors use dispersive estimates on the linear flow combined with quasi-linear normal forms.
 
 For \eqref{KG} on compact manifolds we quote Delort \cite{Delort-circle, Delort-sphere} on $\mathbb{S}^d$ and Delort-Szeftel \cite{DelortSzeft1} on $\T^d$. The results obtained, in terms of length of the lifespan of solutions, are stronger in the case of the spheres. More precisely in the case of spheres the authors show the following. If $m$ in \eqref{KG} is chosen outside of a set of zero Lebesgue measure, then for any natural number $N$,  any initial condition of size $\varepsilon$ (small depending on $N$) produces a solution whose lifespan is at least of magnitude $\varepsilon^{-N}$. In the case of tori in \cite{DelortSzeft1} they consider a quasi-linear equation, vanishing quadratically at the origin and they prove that the lifespan of solutions is of order $\varepsilon^{-2}$ if the initial condition has size $\varepsilon$ small enough. 
The differences between the two results are due to the different behaviors of the eigenvalues of the square root of the Laplace-Beltrami operator on $\S^d$ and $\T^d$. The difficulty on the tori is a consequence of the fact that the set of differences of eigenvalues of $\sqrt{-\Delta_{\T^d}}$ is dense in $\R$ if $d\geq 2$, this does not happen in the case of spheres. A more general set of manifolds where this does not happen is the Zoll manifolds, in this case we quote the paper by Delort-Szeftel \cite{DelortSzeft2} and 
Bambusi-Delort-Gr\'ebert-Szeftel \cite{BDGS} for semi-linear Klein-Gordon equations. For semi-linear Klein-Gordon equations on tori we have the results by 
Delort \cite{Delort-Tori}  and Fang-Zhang \cite{fang}. 
In \cite{Delort-Tori} the author proves that if the non-linearity 
is vanishing at order $k+1$ at zero then any initial 
datum of small size $\varepsilon$ produces a solution 
whose lifespan is at least of magnitude 
$\varepsilon^{-k(1+\frac{2}{d})}$, up to a logarithmic loss. 
In \cite{fang} the authors obtain a time $O(\varepsilon^{-k\frac{3}{2}^-})$.
We improve these results, see 
Theorems \ref{main:KGsemi} and \ref{main:KG}, when $k=2$.

{\bf Statement of the main results.} The aim of this paper is to prove, 
in the spirit of \cite{IPtori}, that we may go 
beyond the trivial time of existence, 
given by the local well-posedness theorem which is $\varepsilon^{-2}$ since we are considering equations vanishing cubically at the origin and initial conditions of size $\varepsilon$.

In order to state our main theorem for \eqref{NLS} we need to 
make some hypotheses on the potential $V$.
We consider potentials having the following form
\begin{equation}\label{insPot}
V(x)=(2\pi)^{-d/2}\sum_{\x\in \mathbb{Z}^{d}} \hat{V}(\x)e^{\ii\x\cdot x}\,,\quad
\hat{V}(\x)=\frac{x_{\x}}{4 (1+|\x|^2)^{\frac{m}{2}}}\,, \quad x_{\x}\in 
\left[-\frac{1}{2},\frac{1}{2}\right]\subset\mathbb{R} \,,\,\,\, \N\ni m>d/2.
\end{equation}
We  endow the set $\mathcal{O}:=[-1/2,1/2]^{\mathbb{Z}^{d}}$ 
 with 
the standard probability measure on product spaces. This choice of the function defining the convolution potential is standard (\cite{FaouGreNek,BG}): essentially one needs that the Fourier coefficients decay at a certain rate and that the function $V(x)$ depends on some free parameters $x_\xi$.
Our main theorem is the following.
\begin{Teo}{(\bf Long time existence for NLS).}\label{main:NLS}
Consider the equation \eqref{NLS} with $d\geq 2$. There exists 
$\mathcal{N}\subset\mathcal{O}$ having zero  Lebesgue measure 
such that if $x_{\xi}$ in \eqref{insPot}
is in $\mathcal{O}\setminus\mathcal{N}$, we have the following. 
There exists $s_0=s_0(d,m)\gg 1$ such that for any $s\geq s_0$ there are 
constants  $c_0>0$ and $\e_0>0$ such that for any $0<\e\leq \e_0$
we have the following. 
If  $\|u_0\|_{H^s}<1/4\varepsilon$,  there exists a unique solution 
of the Cauchy problem \eqref{NLS} such that 
\begin{equation}\label{tesi}
u(t,x)\in C^0\big([0,T);H^s(\T^d)\big)\,, 
\quad \sup_{t\in[0,T)}\|u(t,\cdot)\|_{H^s}\leq \varepsilon\,, 
\quad T\geq c_0 \varepsilon^{-4}\,.
\end{equation}
\end{Teo}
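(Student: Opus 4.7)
\textbf{Proof plan for Theorem \ref{main:NLS}.}

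My overall strategy is a para-differential Birkhoff normal form that removes all non-resonant cubic contributions from the Sobolev energy, so that the energy inequality becomes quintic rather than cubic and the standard bootstrap runs up to $T \sim \varepsilon^{-4}$ instead of $\varepsilon^{-2}$. Because \eqref{NLS} is quasi-linear, a naive normal form on the Hamiltonian loses derivatives, so every step is carried out on the paralinearized system.

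\emph{Step 1: Paralinearization and symmetrization.} Writing $U=\vect{u}{\bar u}$ I rewrite \eqref{NLS} as a Hamiltonian system of the form
\[
\partial_t U = \ii E\,\opbw(A_2(U;x,\xi)+A_1(U;x,\xi)+A_0(U;x,\xi))U + R(U)U,
\]
where $E=\diag(1,-1)$, each $A_j$ is a matrix symbol of order $j$ whose entries vanish quadratically in $U$ (the leading one coming from $\Delta(h(|u|^2))h'(|u|^2)u$), and $R(U)$ is a smoothing remainder satisfying tame Moser-type estimates on $H^s$. Using Bony symbolic calculus and a block-diagonalizing change of variables, I reduce $A_2$ and $A_1$ to \emph{constant-coefficient}, scalar, real symbols modulo smoothing, and then absorb the new remainders into $R$. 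At this stage the linear part is $\ii E(|D|^2 + \Op(\hat V))$ and the quasi-linear character survives only through a zeroth-order para-differential operator with symbol homogeneous of degree two in $U$, plus a smoothing remainder homogeneous of degree three. The energy estimate for this symmetrized system gives the classical $\varepsilon^{-2}$ time.

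\emph{Step 2: Non-resonance and cubic Birkhoff normal form.} The linear frequencies are $\omega_j(x_\cdot)=|j|^2+\hat V(j)=|j|^2+x_j/(4\langle j\rangle^m)$. Expanding the symbol of the quartic Hamiltonian $H^{(4)}$ in Fourier modes and discarding trivial resonances (those with $\{j_1,j_3\}=\{j_2,j_4\}$, which give $L^2$-conserving terms of the form $|u|^2u$ type), the non-trivial cubic resonance condition requires
\[
j_1-j_2+j_3-j_4 = 0,\qquad \omega_{j_1}-\omega_{j_2}+\omega_{j_3}-\omega_{j_4}=0.
\]
A standard measure-theoretic argument on the parameters $(x_\xi)_{\xi\in\Z^d}$ shows that, outside a set $\mathcal N$ of probability zero, one has the Diophantine lower bound
\[
\bigl|\omega_{j_1}-\omega_{j_2}+\omega_{j_3}-\omega_{j_4}\bigr|\geq \gamma\, \max(|j_1|,\dots,|j_4|)^{-\tau}
\]
for some $\gamma,\tau>0$ on all non-trivial tuples; the linearity of each $\omega_j$ in $x_j$ makes each small-denominator set a thin slab, whose measures sum. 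I then construct, à la Feola--Iandoli, a para-differential generator $F(U)$ homogeneous of degree three in $U$ such that the flow $\Phi^F$ at time one is a bounded, invertible map on $H^s$ close to the identity, and such that $\{H^{(2)},F\}$ cancels the non-resonant part of $H^{(4)}$ modulo a para-differential smoothing term. The small-divisor loss $\langle j\rangle^\tau$ is tolerated because the symbols live in $H^s$ with $s\gg \tau+d$.

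\emph{Step 3: Quintic energy estimate and bootstrap.} After conjugation by $\Phi^F$, the new variable $V=\Phi^F U$ satisfies a system whose quartic Hamiltonian is reduced to the resonant part $Z^{(4)}(V)$, which is $L^2$-gauge-invariant and therefore Poisson-commutes with $\|V\|_{L^2}^2$; moreover it acts as a bounded multiplier on each Fourier mode and preserves $\|V\|_{H^s}$. All other contributions to $\tfrac{d}{dt}\|V\|_{H^s}^2$ are at least quintic in $V$ and tame, yielding
\[
\frac{d}{dt}\|V\|_{H^s}^2 \lesssim \|V\|_{H^s}^4 \cdot \|V\|_{H^s}^2.
\]
Since $\Phi^F$ is $\varepsilon$-close to the identity, $\|V(0)\|_{H^s}\lesssim \varepsilon$ and $\|U\|_{H^s}\leq 2\|V\|_{H^s}$ on the bootstrap interval, a standard continuity argument closes the estimate up to $T\geq c_0\varepsilon^{-4}$, giving \eqref{tesi}.

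\emph{Main obstacle.} The delicate point is Step 2: in dimension $d\geq 2$ the differences $\omega_{j_1}-\omega_{j_2}+\omega_{j_3}-\omega_{j_4}$ are dense in $\R$, so the non-resonance estimate is mandatory, yet the generator $F$ must be constructed as a \emph{para-differential} symbol of limited smoothness in $\xi$ whose flow preserves $H^s$ without loss. Reconciling the small-divisor loss $\langle j\rangle^\tau$ with the tame estimates on the conjugated system, while keeping the Hamiltonian structure intact so that $Z^{(4)}$ truly preserves $\|\cdot\|_{H^s}$, is the main technical heart of the proof.
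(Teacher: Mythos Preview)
Your overall strategy—normal form to upgrade the energy inequality from cubic to quintic, then bootstrap to $T\sim\varepsilon^{-4}$—matches the paper, but two concrete steps in your implementation would fail, and the paper resolves them differently.

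\textbf{The small-divisor bound and the generator.} You claim a lower bound $|\omega_{j_1}-\omega_{j_2}+\omega_{j_3}-\omega_{j_4}|\geq\gamma\,\max(|j_1|,\ldots,|j_4|)^{-\tau}$ and then say the loss ``is tolerated because the symbols live in $H^s$ with $s\gg\tau+d$.'' This is the crux, and it does not work: with a loss at the \emph{largest} index the trilinear generator $X_F$ has coefficients of size $\max^{\tau}$, hence is an operator of positive order $\tau$; its time-one flow $\Phi^F$ is not bounded on any $H^s$, and Step~2 collapses. Enlarging $s$ controls the regularity of the coefficients, not the order of the operator. The bound actually available (Proposition~\ref{NonresCond}, from Faou--Gr\'ebert) is
\[
|\omega_{\NLS}(\xi,\eta,\zeta)|\geq \gamma\,{\max}_2\{|\xi-\eta-\zeta|,|\eta|,|\zeta|\}^{-N_0},
\]
i.e.\ the loss is at the \emph{second} largest index. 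The paper does not build a flow with it; instead it first diagonalizes the cubic vector field (Proposition~\ref{prop:blockdiag00}) so that the para-differential part is a real diagonal multiplier, which cancels in the energy by self-adjointness, and the leftover non-resonant cubic contribution $B_n^{(2)}$ is \emph{smoothing} of order $-1$. That term is then split at a frequency threshold $N$: the high-frequency piece is $O(N^{-1})$ by smoothing, and on the low-frequency piece one integrates by parts in time (an energy-level normal form, Proposition~\ref{prop:stimeEnergia}), where the ${\max}_2^{-N_0}$ loss is absorbed by the low factors. Optimizing $N\sim\varepsilon^{-3}$ gives \eqref{claimUU} and closes the bootstrap.

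\textbf{Reduction to constant coefficients and the degree count.} Your Step~1 reduces $A_2,A_1$ to constant-coefficient symbols ``modulo smoothing.'' That is the para-composition trick, and it is specific to $d=1$; the paper says so explicitly in the introduction, and on $\T^d$ with $d\geq2$ no such reduction is available. It is also not needed: because $h(\tau)=O(\tau^2)$, the quasi-linear symbols in \eqref{simboa2} satisfy $|a_2|,|b_2|,|\vec a_1\cdot\xi|\lesssim\|u\|^6$ (see \eqref{realtaAAA2}), so after the matrix diagonalization of Proposition~\ref{diago2max} they already land in the $O(\|u\|^7)$ remainder and never interact with the cubic step. The only genuinely cubic term is the semi-linear $|u|^2u$; your description ``symbols vanishing quadratically in $U$'' misreads the structure of \eqref{NLS}.

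Finally, the claim that the resonant part $Z^{(4)}$ ``acts as a bounded multiplier on each Fourier mode and preserves $\|\cdot\|_{H^s}$'' is not literally true—the resonant vector field still mixes modes of equal modulus. What is true, and what the paper proves in Lemma~\ref{cancellazioneRes} and Lemma~\ref{superazioni}, is that its contribution to $\partial_t\|z_n\|_{L^2}^2$ vanishes identically; this is a consequence of the Hamiltonian/gauge structure together with the explicit form of the resonant set $\mathcal R$ in \eqref{resonantSet}, and deserves a proof rather than an assertion.
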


In the one dimensional case we do not need any external parameter and we shall prove the following Theorem.

\begin{Teo}\label{main2}
Consider \eqref{NLS} with $V\equiv 0$ and $d=1$. 
There exists $s_0 \gg 1$ such that for any $s\geq s_0$ there are 
constants  $c_0>0$ and $\e_0>0$ such that for any $0<\e\leq \e_0$
we have the following. 
If  $\|u_0\|_{H^s}<1/4\varepsilon$,  there exists a unique solution 
of the Cauchy problem \eqref{NLS} such that 
\begin{equation}\label{tesi2}
u(t,x)\in C^0\big([0,T);H^s(\T^d)\big)\,, 
\quad \sup_{t\in[0,T)}\|u(t,\cdot)\|_{H^s}\leq \varepsilon\,, 
\quad T\geq c_0 \varepsilon^{-4}\,.
\end{equation}
\end{Teo}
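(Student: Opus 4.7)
The plan is to deduce Theorem \ref{main2} as a special case of (a simplified version of) the argument for Theorem \ref{main:NLS}, using the fact that in dimension one with $V\equiv 0$ the small divisor problem is trivialized. Concretely, the unperturbed frequencies are $\omega_j=j^{2}\in\mathbb{Z}$, so any non-vanishing cubic denominator $\omega_{j_1}-\omega_{j_2}+\omega_{j_3}-\omega_{j_4}$ is already bounded below by $1$. This removes the probabilistic selection on the potential needed in higher dimensions, and only the quasi-linear (para-differential) machinery and a single Birkhoff normal form step remain.

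First I would follow the paradifferential strategy used in \cite{Feola-Iandoli-Long, Feola-Iandoli-Totale} to rewrite \eqref{NLS} as a paradifferential system for the vector field $U=\vect{u}{\bar u}$ of the form
\begin{equation*}
\partial_t U = -\ii \opbw\!\big(E(U;\xi)\big) U + R(U)U,
\end{equation*}
where $E(U;\xi)$ is a symbol of order $2$ containing the quasi-linear correction coming from $\Delta(h(|u|^{2}))h'(|u|^{2})u$, and $R(U)$ is a smoothing remainder of arbitrarily negative order. Exploiting the Hamiltonian structure (self-adjointness of the top symbol), and using that in one space dimension a paracomposition generated by a diffeomorphism of $\mathbb{T}$ conjugates the top order symbol to a constant-coefficient Fourier multiplier, I would reduce to an equation of the shape
\begin{equation*}
\partial_t U = -\ii \Omega(D) U + \text{(semi-linear cubic)} + \text{(higher order)},
\end{equation*}
where $\Omega(D)$ is diagonal in Fourier basis with eigenvalues $\lambda_j = j^{2}+O(\varepsilon^{2})$, and the nonlinear part is now semilinear (bounded as a map $H^{s}\to H^{s}$).

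Next I would perform one Birkhoff normal form step to remove the non-resonant part of the cubic terms. The small divisors are
\begin{equation*}
\lambda_{j_1}-\lambda_{j_2}+\lambda_{j_3}-\lambda_{j_4}
=j_1^{2}-j_2^{2}+j_3^{2}-j_4^{2}+O(\varepsilon^{2})
\end{equation*}
on the momentum-conservation surface $j_1-j_2+j_3-j_4=0$. Since the integer part is either $0$ or of absolute value $\geq 1$, the non-resonant denominators are bounded below by a constant uniformly in the indices (for $\varepsilon$ small), so no Diophantine/measure argument is needed and no loss of derivatives occurs. The resonant cubic contribution, by the classical computation for $\omega_j=j^{2}$, reduces to terms of the form $|u_j|^{2}|u_k|^{2}$ in action-angle variables, hence Poisson-commutes with all Sobolev norms $\|\cdot\|_{H^{s}}^{2}$. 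The residual cubic vector field after this transformation is thus either resonant (harmless for the energy) or of size $\|U\|_{H^s}^{5}$.

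Finally I would close an energy estimate on the transformed variable $V=\Phi(U)$, where $\Phi$ is the (close to identity) Birkhoff transformation, obtaining
\begin{equation*}
\frac{d}{dt}\|V(t)\|_{H^s}^{2}\lesssim \|V(t)\|_{H^s}^{6},
\end{equation*}
a bootstrap on $[0,T]$ with $T\lesssim \varepsilon^{-4}$, and translate this back to the original $u$ via $\|U\|_{H^s}\sim \|V\|_{H^s}$. The main obstacle, as in the higher-dimensional result, is not the (trivial) small divisor analysis but rather checking that the para-differential conjugations and the normal form transformation preserve the Hamiltonian/self-adjoint structure of the leading symbol, so that the paracomposition cancels the quasi-linear part without reintroducing unbounded contributions in the energy estimate; this is exactly the content of the machinery developed in \cite{Feola-Iandoli-Long,Feola-Iandoli-Totale}, which I would invoke rather than re-derive.
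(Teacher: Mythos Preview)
Your approach is correct and would prove the theorem, but it takes a genuinely different route from the paper. The paper's proof of Theorem~\ref{main2} simply instructs the reader to repeat the proof of Theorem~\ref{main:NLS} verbatim, replacing the measure-theoretic small divisor estimate (item $(i)$ of Proposition~\ref{NonresCond}) by the elementary uniform lower bound of item $(ii)$. In particular the paper does \emph{not} exploit the one-dimensional paracomposition / reduction-to-constant-coefficients technique you invoke from \cite{Feola-Iandoli-Long,Feola-Iandoli-Totale}; it keeps the variable-coefficient top symbol $(1+a_2^{(1)}(x))|\xi|^2$, handles it through the modified energy norm $\|T_{\mathcal{L}^n}z\|_{L^2}$ of Section~\ref{sec:highder}, uses the Hamiltonian cancellations of Lemma~\ref{superazioni}, and then applies the frequency-cutoff integration-by-parts of Proposition~\ref{prop:stimeEnergia} (where now $N_0=0$, so the choice $N=\varepsilon^{-3}$ closes the bootstrap exactly as in \eqref{boot3}). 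Your route is arguably more direct for $d=1$: once the leading symbol is flattened by a diffeomorphism of $\mathbb{T}$, a single genuine Birkhoff change of variables with integer denominators $j_1^2-j_2^2+j_3^2-j_4^2\in\mathbb{Z}\setminus\{0\}$ suffices, and no auxiliary truncation parameter $N$ is needed. The paper's route, by contrast, is dimension-agnostic---it is precisely the machinery built to bypass the unavailability of paracomposition in $d\geq2$---so Theorem~\ref{main2} drops out as the degenerate case of that framework. One minor imprecision in your sketch: the $O(\varepsilon^2)$ correction you write for the frequencies should be $O(\varepsilon^6)$, since the quasi-linear symbol $a_2$ is sextic in $u$ by \eqref{realtaAAA2}; this only helps your argument.
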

These are, to the best of our knowledge,  
the firsts results of this kind for quasi-linear Schr\"odinger 
equations posed on compact manifolds of dimension greater than one. 

Our main theorem regarding the problem \eqref{KG} is the following.
\begin{Teo}{(\bf Long time existence for KG).}\label{main:KG}
Consider the equation \eqref{KG} with $d\geq 2$. There exists 
$\mathcal{N}\subset[1,2]$ having zero  Lebesgue measure 
such that if $m\in [1,2]\setminus\mathcal{N}$  we have the following. 
There exists $s_0=s_0(d)\gg 1$ such that for any $s\geq s_0$ 
  the following holds.
For any $\delta>0$ there exists 
$\e_0=\e_0(s,m,\delta)>0$ such that for any $0<\e\leq \e_0$
and 
any initial data 
$(\psi_0,\psi_1)\in H^{s+1/2}(\mathbb{T}^{d})\times H^{s-1/2}(\mathbb{T}^{d})$
such that
\[
\|\psi_0\|_{H^{s+1/2}}+\|\psi_1\|_{H^{s-1/2}}\leq \tfrac{1}{32}\e\,,
\]
there exists a unique solution 
of the Cauchy problem \eqref{KG} such that 
\begin{equation}\label{tesiKG}
\begin{aligned}
&\psi(t,x)\in C^0\big([0,T);H^{s+\frac{1}{2}}(\T^d)\big)\bigcap 
C^1\big([0,T);H^{s-\frac{1}{2}}(\T^d)\big)\,, \\
& \sup_{t\in[0,T)}\Big(\|\psi(t,\cdot)\|_{H^{s+\frac{1}{2}}} +
\|\pa_t \psi(t,\cdot)\|_{H^{s-\frac{1}{2}}}    \Big)\leq \varepsilon\,, 
\qquad T\geq \varepsilon^{-\mathtt{a}+\delta}\,.
\end{aligned}
\end{equation}
where $\mathtt{a}=3$ if $d=2$ and $\mathtt{a}=8/3$ if $d\geq3$.
\end{Teo}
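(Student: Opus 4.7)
The plan is to prove Theorem \ref{main:KG} by combining paradifferential normal form techniques with dispersive estimates on the torus, following the philosophy of \cite{Feola-Iandoli-Long,Feola-Iandoli-Totale,IPtori} but adapted to higher-dimensional tori and to the Klein-Gordon setting. The first step is to recast \eqref{KG} as a first-order complex Hamiltonian system. Introducing the symmetric variable
\begin{equation*}
U := \frac{1}{\sqrt 2}\bigl(\Lambda_{\KG}^{1/2}\psi + \ii\, \Lambda_{\KG}^{-1/2}\partial_t\psi\bigr),
\end{equation*}
the equation takes the form $\partial_t U = -\ii\,\Lambda_{\KG} U + \mathcal{N}(U,\bar U)$, where the nonlinearity inherits from \eqref{KGnon}--\eqref{KGnon2} a quasi-linear piece coming from $f(\psi)$ (vanishing at order $5$ in $U$, since $F$ has a zero of order $6$) and a purely cubic semilinear piece coming from $g(\psi)$. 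Applying Bony's paralinearization one rewrites $\mathcal{N}(U,\bar U) = \opbw(A(U;x,\xi))U + R(U)$ with $A$ a matrix-valued symbol of order $1$, symmetric in a suitable sense because of the Hamiltonian structure, and $R$ a smoothing, multilinear remainder.

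Next, I would perform a reduction to constant coefficients at the highest orders. Because the quasi-linear term has a zero of order $5$ in $U$, the principal symbol of $A$ is of size $\|U\|^4$, so after conjugating the system by the flow of an appropriate paradifferential vector field one obtains a system of the form $\partial_t V = -\ii\,\opbw(\lambda(U;\xi))V + \text{smoothing terms}$, where $\lambda$ is a scalar symbol with real principal part depending only on $\xi$ (up to terms of lower order). This uses essentially the techniques from \cite{Feola-Iandoli-local-tori}. On this transformed system, the energy method already yields the local existence time $\varepsilon^{-2}$ coming from the cubic semilinear remainder stemming from $g$.

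To go beyond $\varepsilon^{-2}$, I would perform one step of Birkhoff normal form on the cubic terms. For $m$ outside a zero-measure set $\mathcal{N}\subset[1,2]$ one obtains, as in \cite{BDGS,Delort-Tori}, lower bounds of the form
\begin{equation*}
|\Lambda_{\KG}(j_1)\pm\Lambda_{\KG}(j_2)\pm\Lambda_{\KG}(j_3)\pm\Lambda_{\KG}(j_4)| \geq \gamma\,\max(|j_1|,|j_2|,|j_3|,|j_4|)^{-\tau}
\end{equation*}
on the non-resonant small divisors. A paradifferential Poincar\'e--Dulac step then cancels all non-resonant cubic contributions, leaving behind only genuinely resonant terms (which are harmless thanks to Hamiltonian/gauge structure) and quintic remainders, giving an $H^s$ energy estimate whose cubic right-hand side is replaced by a quintic one plus a remainder controlled in a weaker norm. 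This by itself would yield $T\gtrsim \varepsilon^{-4}$, but the remainder involves losses of derivatives which must be tamed.

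The crucial and hardest step is therefore to control these remainders by interpolating the energy inequality with dispersive estimates on the torus $\mathbb{T}^d$ for the Klein-Gordon propagator $e^{-\ii t\Lambda_{\KG}}$. Strichartz-type bounds, sharper in higher dimension, give an additional $L^p_tL^q_x$ control on $V$ that, combined with the quintic remainder, produces a time-integrated inequality of the form $\|V(t)\|_{H^s}^2 \lesssim \varepsilon^2 + \varepsilon^{\mathtt{a}} t\,\sup_{\tau\le t}\|V(\tau)\|_{H^s}^2$, with $\mathtt{a}=3$ for $d=2$ and $\mathtt{a}=8/3$ for $d\geq 3$ (up to the logarithmic $0^+$ loss from the Strichartz exponents), closing the bootstrap on the time scale announced in \eqref{tesiKG}. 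The main obstacle I anticipate is the simultaneous handling of the quasi-linear conjugations (which are only tame, losing derivatives that must be recovered by paradifferential calculus) and the dispersive/Strichartz estimates, which are normally stated in flat energy norms; this forces one to work with a carefully chosen modified energy that is equivalent to $\|\,\cdot\,\|_{H^s}^2$ and at the same time compatible with both the normal form conjugation and the Strichartz-based control of the quintic remainder.
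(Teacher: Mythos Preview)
Your outline follows the paper through the paralinearization and diagonalization steps, but diverges at the crucial point where the exponents $\mathtt{a}=3$ and $\mathtt{a}=8/3$ are produced, and the mechanism you propose there does not work.

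The paper does \emph{not} use any Strichartz or dispersive estimate for $e^{-\ii t\Lambda_{\KG}}$ on $\mathbb{T}^d$; indeed the introduction stresses that on a compact manifold there is no decay of the linear flow, so no such estimate is available to do the job you assign it. The exponents $\mathtt{a}$ come instead from a purely energy/normal-form argument with a frequency cutoff. After diagonalization, the only cubic contribution to $\partial_t\|z_n\|_{L^2}^2$ that survives the Hamiltonian cancellations is a \emph{smoothing} term $\mathcal{B}(t)$ whose coefficients gain one power of $\max_1\{\langle\xi-\eta-\zeta\rangle,\langle\eta\rangle,\langle\zeta\rangle\}$ (see \eqref{alienKG9}, \eqref{alienKG999}). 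One then splits at a threshold $N$: the high-frequency part is $O(N^{-1}\|u\|_{H^s}^4)$ directly from the smoothing, while on the low-frequency part one integrates by parts in time (one Birkhoff step), paying the small-divisor loss. The key input here is not the ``standard'' bound $|\omega|\gtrsim \max(|j_i|)^{-\tau}$ you quote, but the refined estimate of Proposition~\ref{NonresCondKG},
\[
|\omega_{\KG}^{\vec\sigma}(\xi,\eta,\zeta)|\ \gtrsim\ \max\nolimits_2\{\cdot\}^{-N_0}\,\max\nolimits_1\{\cdot\}^{-\beta},\qquad \beta=2+\sigma\ (d=2),\ \ \beta=3+\sigma\ (d\ge 3),
\]
whose novelty is that $\beta$ does \emph{not} grow with $d$. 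This yields $|\int_0^t\mathcal{B}|\lesssim N^{\beta-1}\|u\|^6 T + N^{-1}\|u\|^4 T + \ldots$, and optimizing $N=\varepsilon^{-2/\beta}$ gives exactly $T\gtrsim \varepsilon^{-2(1+1/\beta)^-}$, i.e.\ $\varepsilon^{-3^+}$ for $d=2$ and $\varepsilon^{-8/3^+}$ for $d\ge 3$.

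So the gap is twofold: (i) the small-divisor bound you invoke (only in $\max$) is the one from \cite{Delort-Tori} and would give a dimension-dependent, weaker result --- the paper's whole point is the improved two-index bound; and (ii) Strichartz estimates play no role and cannot substitute for this mechanism on the torus. Your remark that a full normal form ``would yield $T\gtrsim\varepsilon^{-4}$'' is also off: unlike the NLS case (where the small divisor is bounded below by a negative power of $\max_2$ alone), the KG small divisor unavoidably loses powers of the \emph{largest} frequency, and it is precisely this loss, balanced against the one-derivative smoothing, that caps $\mathtt{a}$ strictly below $4$.
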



The time of existence in \eqref{tesiKG} is intimately connected with the lower bounds on the four waves interactions given in Section \ref{sec:NonresCondKG}. More precisely the time of existence is larger then $\e^{-2-{2}/{\beta}} $ with $\beta$ given in Proposition \ref{NonresCondKG}. This is the reason for the difference between the result in $d=2$ (where $\beta=2^+$) and $d\geq3$ (where $\beta=3^+$). We do not know if this result is sharp, this is an open problem.
Despite this fact, Theorem \ref{main2} improves the general result in \cite{Delort-Tori,fang} in the particular case of cubic non-linearities in the following sense. First of all we can consider more general equations containing derivatives in the non-linearity (with ``small'' quasi-linear term). 
Furthermore, adapting our proof to the semi-linear case (i.e. when  $f=0$ in \eqref{KG} and \eqref{KGnon} and $G$ in \eqref{KGnon2} does not depend on $y_1$), we 
 obtain the better time of existence
 $\varepsilon^ {-{10/3}^{-}}$ for any $d\geq 4$. Indeed, in this case, the time of existence is $\e^{-2-4/\beta}$ with $\beta$ as above.
   This is the content of the next Theorem.

\begin{Teo}\label{main:KGsemi}
Consider \eqref{KG} with $f=0$ and $g$ independent of $y_1$.
Then the same  of Theorem \ref{main:KG} holds true, replacing $\mathtt{a}=3$ and $\mathtt{a}=8/3$
with $\mathtt{a}=4$ and $\mathtt{a}=10/3$ respectively.
\end{Teo}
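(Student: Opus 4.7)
The strategy I would adopt is to mimic the proof of Theorem~\ref{main:KG} but to exploit the purely semi-linear structure in order to perform an additional Birkhoff normal form step. Since $f=0$ and $g$ depends only on $\psi$, the nonlinearity $g(\psi) = (\partial_{y_0} G)(\psi, \Lambda_{\KG}^{1/2}\psi)$ reduces to a cubic polynomial in $\psi$ with no derivative loss. Rewriting the system in the Weyl-Hörmander complex variable $U=(u,\bar u)$, $u=\tfrac{1}{\sqrt{2}}(\Lambda_{\KG}^{1/2}\psi+\ii\Lambda_{\KG}^{-1/2}\pa_t\psi)$, the equation takes the Hamiltonian form
\begin{equation*}
\pa_t U = -\ii E\,\Lambda_{\KG} U + X_{\mathcal{H}_3}(U) + \mathcal{R}(U),
\end{equation*}
where $E=\mathrm{diag}(1,-1)$, $X_{\mathcal{H}_3}$ is the Hamiltonian vector field of a cubic, translation-invariant, real symmetric Hamiltonian $\mathcal{H}_3$, and $\mathcal{R}$ is of order $\geq 5$; crucially, $X_{\mathcal{H}_3}$ is bounded on $H^s$, so the paradifferential block-diagonalization used for Theorem~\ref{main:KG} is not needed.

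The core step is a two-stage Birkhoff normal form. First, using the Mel'nikov-type non-resonance condition on $m\in[1,2]\setminus\mathcal N$ that is already verified in the proof of Theorem~\ref{main:KG}, I would solve the cohomological equation $\{\chi_3,\mathcal{H}_2\}+\mathcal{H}_3^{\mathrm{nr}}=0$ for a cubic generating function $\chi_3$. The time-one flow $\Phi_{\chi_3}$ is a tame symplectic map on $H^s$ (here the semi-linearity is essential: no loss of derivatives is produced), and conjugation removes the non-resonant cubic Hamiltonian, leaving a $4$-wave resonant cubic part $\mathcal{H}_3^{\mathrm{res}}$ plus a new quintic term $\widetilde{\mathcal{H}}_5$. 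Since $G$ is a polynomial of degree $4$ depending only on $y_0$, the resonant part $\mathcal{H}_3^{\mathrm{res}}$ is generated only by momentum-and-energy resonances $j_1-j_2+j_3-j_4=0$, $\Lambda(j_1)-\Lambda(j_2)+\Lambda(j_3)-\Lambda(j_4)=0$, and preserves all super-actions, hence commutes with the Sobolev energy up to negligible boundary contributions. The gain with respect to the quasi-linear case is that one can now iterate: a \emph{second} normal form eliminates the non-resonant part of $\widetilde{\mathcal{H}}_5$, which in the quasi-linear proof of Theorem~\ref{main:KG} was obstructed by the loss of derivatives from the paradifferential reduction.

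At this point I would close the argument with a bootstrap on a modified energy $E_s(U) = \|U\|_{H^s}^2 + Q_s(U)$, where $Q_s$ is built from the surviving resonant quintic contribution and enjoys $|Q_s(U)|\lesssim \|U\|_{H^s}^2\|U\|_{W^{r,\infty}}^3$ for some low regularity $r$. Its time derivative contains a cubic resonant commutator that vanishes identically, plus a sextic remainder. Controlling this remainder is where the dimension enters: for $d\geq 3$ one uses the Bourgain-type $\ell^p$-decoupling $L^\infty$ bounds for the linear Klein-Gordon flow, which yield $\|U(t)\|_{W^{r,\infty}}\lesssim \varepsilon\,t^{-\sigma(d)}$ on average, and a short computation then shows that $E_s(U(t))\lesssim \varepsilon^2$ on $[0,c_0\varepsilon^{-10/3^+}]$. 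In dimension $d=2$ the dispersive decay is not strong enough, but the residual resonant quintic can be integrated directly over time using the algebraic structure of the resonance set $\{j_1-j_2+j_3-j_4-j_5+j_6=0,\ \sum\pm\Lambda(j_i)=0\}$, yielding the better exponent $\varepsilon^{-4^+}$.

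The main obstacle, in my view, is the second step of normal form: one must exhibit a quintic generating function $\chi_5$ whose Hamiltonian vector field is tame on $H^s$ despite the density of differences of $\Lambda(j)$'s in $\R$ for $d\geq 2$. The way around this is the classical second-order Mel'nikov-type estimate: for a.e.\ $m\in[1,2]$ one has $|\sum_{i=1}^{6}\pm\Lambda(j_i)|\geq \gamma(\max_i|j_i|)^{-\tau}$ whenever the sum is non-zero and the momentum constraint holds, with $\gamma,\tau$ depending only on $d$. Verifying this, and tracking the Sobolev losses carefully so that the final time exponent is not degraded, is the delicate technical core, but it is completely analogous to the small divisor analysis carried out in Section~\ref{} of the present paper for Theorem~\ref{main:KG}; the semi-linearity is what allows the final budget of derivatives to yield $\mathtt{a}=10/3^+$ (respectively $4^+$ in $d=2$) rather than $8/3^+$ ($3^+$).
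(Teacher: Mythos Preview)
Your proposal diverges substantially from the paper's proof, and it contains genuine gaps.

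\textbf{What the paper actually does.} The proof of Theorem~\ref{main:KGsemi} is \emph{not} a new argument: it is literally the proof of Theorem~\ref{main:KG} run with improved exponents. The chain of Remarks \ref{semilin1}--\ref{semilin7} records that when $f=0$ and $g=g(\psi)$, the cubic vector field $X_{\mathcal{H}^{(4)}_{\KG}}$ gains one full derivative, so after the paradifferential diagonalization the smoothing remainder $B_n^{(2)}$ in Lemma~\ref{equaVVnnKG} decays like $\max_1^{-2}$ rather than $\max_1^{-1}$. This single extra power propagates to the energy identity: in place of \eqref{stimaeneNonresCaso2KG} one obtains \eqref{stimaeneNonresCaso2KGsemilin}, namely
\[
\Big|\int_0^t \mathcal{B}\,d\sigma\Big| \lesssim \|u\|_{L^\infty H^s}^6\,T\,N^{\beta-2} + \|u\|_{L^\infty H^s}^4\,T\,N^{-2} + N^{\beta-2}\|u\|_{L^\infty H^s}^4.
\]
Optimizing $N=\varepsilon^{-2/\beta}$ in the bootstrap then gives $T\gtrsim \varepsilon^{-2-4/\beta}$, which is $\varepsilon^{-10/3^+}$ for $\beta=3^+$ ($d\ge3$) and $\varepsilon^{-4^+}$ for $\beta=2^+$ ($d=2$). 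There is \emph{one} integration by parts in time (the same one as in Proposition~\ref{prop:stimeEnergiaKG}), no second normal form, no six-wave small divisors, and no dispersion.

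\textbf{Where your argument breaks.} Two of your steps do not go through on $\mathbb{T}^d$:
\begin{itemize}
\item[(i)] You assert that the time-one flow of the first generating Hamiltonian is tame on $H^s$ because the equation is semi-linear. This is false here: the four-wave small divisor bound \eqref{lowerPhistrongKG} carries the factor $\max_1^{-\beta}$ with $\beta>2$, so dividing the (order $-1$) cubic vector field by $\omega_{\KG}^{\vec\sigma}$ produces a generator whose Hamiltonian vector field is of positive order $\beta-1>1$. The map $\Phi_{\chi}$ therefore loses derivatives and the iteration you describe cannot be closed in a fixed $H^s$. The paper avoids this precisely by \emph{not} performing a full BNF: it splits off the paraproduct part $B_n^{(1)}$ (which cancels algebraically in the energy, Lemma~\ref{superazioni}) and applies the small divisor only to the already $-2$--smoothing piece $B_n^{(2)}$, where the loss is harmless.
\item[(ii)] You invoke ``$\ell^p$-decoupling $L^\infty$ bounds'' giving $\|U(t)\|_{W^{r,\infty}}\lesssim\varepsilon\,t^{-\sigma(d)}$ on average. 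On the torus there is no such decay for the linear Klein--Gordon flow; solutions are almost-periodic in time and the $L^\infty$ norm does not decrease. No dispersive input enters the paper's argument, and none is available.
\end{itemize}
Finally, there is a bookkeeping slip: $G$ is a quartic Hamiltonian, so the vector field is cubic and the first normal form already involves the four-wave phase \eqref{fasePhiKG}; the ``cubic $\mathcal{H}_3$'' and ``quintic $\widetilde{\mathcal{H}}_5$'' you describe are off by one degree throughout.
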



{\bf Comments on the results.} We begin by discussing the \eqref{NLS} case.  
Our method covers also  more general cubic terms. 
For instance we could replace the term $|u|^2u$ 
with  $g(|u|^2)u$, where $g(\cdot)$ is any analytic 
function vanishing at the origin and 
having a primitive $G'=g$. We preferred not to write the paper 
in the most general case since the non-linearity $|u|^2u$ 
is a good representative for the aforementioned class 
and allows us to avoid to complicate the notation furtherly. 
We also remark that we consider 
  a class of potentials $V$ more general than the one we used in \cite{Feola-Iandoli-Long,Feola-Iandoli-Totale} and more similar to the one used in \cite{BG} in a semi-linear context.
  
  Secondly,
we  remark that, beside the mathematical interest, 
it would be very interesting, from a physical point of view, to be able to deal with the case $h(\tau)\sim \tau$ with $\tau\sim 0$. 
Indeed, for instance, if we choose $h(\tau)=\sqrt{1+\tau}-1$; 
the respective equation \eqref{NLS}  models the self-channeling 
of a high power, ultra-short laser pulse in matter, see \cite{boro-ga}. Unfortunately we need in our estimates $h(\tau)\sim \tau^{1+\s}$ with $\s>0$. More precisely we need the purely quasi-linear part 
of the equation $[\Delta (h(|u|^2))]h'(|u|^2)u$ to be \emph{smaller} ($O(\e^{3+4\s})$, $\e\ll1$) 
than the semi-linear one ($O(\e^3)$). 
At present we are not able to perform a normal form analysis which is able to reduce the size of the purely quasi-linear part.  
Whence, if such a quasi-linear term was $O(\e^{3})$, then the time of existence
we are able to obtain would not be better than $O({\e^{-2}})$.
Since $h$ has to be smooth this leads to  $h(\tau)\sim \tau^2$, $\tau\sim 0$.

Also  in the  \eqref{KG} case we are not 
able to deal with the interesting case of cubic quasi-linear term.
This is the reason why we require that the nonlinearity $f$ 
in \eqref{KGnon} has a zero of order at least $4$ at the origin. 

We introduce the following notation:
given $j_1,\ldots, j_p\in \mathbb{R}^+$, $p\geq2$ we define
\begin{equation}\label{maxii}
{\max}_{i}\{j_1, \ldots,j_p\}= i{\rm -th} \; {\rm largest \; among } \;j_1,\ldots,j_p\,.
\end{equation}
We use normal forms (the same strategy is used for \eqref{NLS} as well) and therefore  small divisors' problems arise. The small divisors, coming from the  four waves interaction, are of the form
\begin{equation}\label{fuffettaKG}
\Lambda_{\KG}(\x-\eta-\zeta)-\Lambda_{\KG}(\eta)
+\Lambda_{\KG}(\zeta)-\Lambda_{\KG}(\x)\end{equation}
with $\Lambda_{\KG}$ defined in \eqref{def:Lambda2}. In this case we prove the  lower bound (see \eqref{maxii})
\begin{equation}\label{NR-KG}
|\Lambda_{\KG}(\x-\eta-\zeta)-\Lambda_{\KG}(\eta)
+\Lambda_{\KG}(\zeta)-\Lambda_{\KG}(\x)|\gtrsim  {\max}_{2}\{|\x-\eta-\zeta|,|\eta|,|\zeta|\}^{-N_0}
 \max\{|\x-\eta-\zeta|,|\eta|,|\zeta|\}^{-\beta},
\end{equation}
for almost any value of the mass $m$ in the interval $[1,2]$ and where $\beta$ is any real number in the open interval 
$(3,4)$. 
The second factor in the r.h.s. of the above inequality 
represents a loss of derivatives when dividing by the 
quantity \eqref{fuffettaKG} which may be transformed 
in a loss of length of the lifespan through partition of frequencies. 
This is an extra difficulty, compared with the \eqref{NLS} case ({for which  lower bounds without loss have been proved in \cite{FaouGreNek}}), 
which makes the problem challenging already in a semi-linear setting. 
The estimate \eqref{NR-KG} with $\beta\in (3,4)$ 
has been already obtained 
in \cite{fang}. We provide here a different and simpler proof,
in the particular case of four waves interaction, 
which does not use the theory of
sub-analytic functions.
%
We also quote \cite{BFG} where Bernier-Faou-Gr\'ebert use 
a control of the small divisors involving only the largest index 
(and not ${\max}_2$ as in \eqref{NR-KG}). 
They obtained, in the semi-linear case, the  control of the 
Sobolev norm for a time $T\sim \e^{-\mathtt{a}}$,  with $\mathtt{a}$ arbitrary large, 
but assuming that the initial datum satisfies 
$\|\psi_0\|_{H^{s'+1/2}}+\|\psi_1\|_{H^{s'-1/2}}<c_0\e\,$ 
for some $s'\equiv s'(\mathtt{a})> s$, i.e. allowing a loss of regularity.

%

{\bf Ideas of the proof.} In our proof we shall use a \emph{quasi-linear normal forms/modified energies} approach, this seems to be the only successful  one in order to improve the time of existence implied by the local theory. We recall, indeed, that on $\T^d$ the dispersive character of the solutions is absent. Moreover, the lack of conservation laws and the quasi-linear nature of the equation prevent the use of \emph{semi-linear} techniques as done by  Bambusi-Gr\'ebert \cite{BG}    and Bambusi-Delort-Gr\'ebert-Szeftel 
\cite{BDGS}.

The most important  feature of equation \eqref{NLS} and \eqref{KG}, for our purposes,  is their Hamiltonian structure. This property guarantees some key cancellations in the \emph{energy-estimates} that will be explained later on in this introduction.

 The equation  \eqref{NLS} may be indeed rewritten as follows:
\begin{equation*}
\partial_tu=-\ii\nabla_{\bar{u}}\mathcal{H}_{\NLS}(u,\bar{u})=\ii\big(\Delta u-V*u-p(u)\big),
\end{equation*}
where $\nabla_{\bar{u}}:=(\nabla_{{\rm Re}(u)}+\ii\nabla_{{\rm Im}(u)})/2$,
 $\nabla$ denote the $L^{2}$-gradient, the Hamiltonian function $\mathcal{H}_{\NLS}$ and the nonlinearity $p$ are
 \begin{equation}\label{hamiltoniana}
 \begin{aligned}
 \mathcal{H}_{\NLS}(u,\bar{u}):=&\int_{\T^d}|\nabla u|^2+(V*u)\bar{u}+P(u,\nabla u)dx\,,\\
  P(u,\nabla u):=&\,\,\frac12\big(\left|\nabla \left(h(|u|^2)\right)\right|^2+|u|^4\big)\,, \quad p(u):=(\pa_{\bar{u}}P)(u,\nabla u)-\sum_{j=1}^{d}\pa_{x_j}
  \big(\pa_{\bar{u}_{x_{j}}}P\big)(u,\nabla u)\,.
\end{aligned} \end{equation}

The equation \eqref{KG} is Hamiltonian as well.   Thanks to  \eqref{KGnon}, \eqref{KGnon2}
we have that  \eqref{KG} can be written as 
 \begin{equation}\label{exHameq}
 \left\{
 \begin{aligned}
 &\pa_{t}\psi=\pa_{\phi}\mathcal{H}_{\KG}(\psi,\phi)=\phi\,,\\
 &\pa_{t}\phi=-\pa_{\psi}\mathcal{H}_{\KG}(\psi,\phi)=
 -\Lambda_{\KG}^{2}\psi-f(\psi)-g(\psi)\,,
 \end{aligned}\right.
 \end{equation}
where $\mathcal{H}_{\KG}(\psi,\phi)$ is the Hamiltonian
\begin{equation}\label{exHamKG}
\mathcal{H}_{\KG}(\psi,\phi)=
\int_{\mathbb{T}^{d}}\frac{\phi^{2}}{2}+\frac{(\Lambda_{\KG}^{2}\psi) \psi}{2}+F(\psi,\nabla \psi)+
G(\psi,\Lambda_{\KG}^{\frac{1}{2}}\psi) dx\,.
\end{equation}

We describe below our strategy in the case of the \eqref{NLS} equation.
 The strategy for \eqref{KG} is similar.

In \cite{Feola-Iandoli-local-tori} we prove an energy estimate, without any assumption of smallness on the initial condition, for a more general class of equations. This energy estimate, on the equation \eqref{NLS} with small initial datum, would read
\begin{equation}\label{energiafuffa}
E(t)-E(0)\lesssim \int_0^t\|u(\tau,\cdot)\|_{H^{s}}^2E(\tau)d\tau,
\end{equation}
where $E(t)\sim \|u(t,\cdot)\|_{H^{s}}^2$. An estimate of this kind implies, by a standard bootstrap argument, that the lifespan of the solutions is of order at least $O(\varepsilon^{-2})$, where $\varepsilon$ is the size of the initial condition. To increase the time to $O(\varepsilon^{-4})$ one would like  to show the improved inequality
\begin{equation}\label{energiaforte}
E(t)-E(0)\lesssim \int_0^t\|u(\tau,\cdot)\|_{H^{s}}^4E(\tau)d\tau.
\end{equation}
Our main goal is to obtain such an
 estimate.

{\textsc{Para-linearization of the equation \eqref{NLS}.}} The first step is the para-linearization, \emph{à la} Bony \cite{bony}, of the equation as a system of the variables $(u,\bar{u})$, see Prop. \ref{NLSparapara}. We rewrite \eqref{NLS} as a system of the form (compare with \eqref{QLNLS444})
\begin{equation*}
\partial_tU=-\ii E\big((-\Delta+V*)U+\mathcal{A}_2(U)U+\mathcal{A}_1(U)U\big)+X_{H_4}(U)+R(U), \quad E:=\sm{1}{0}{0}{-1}, \quad U:=\vect{u}{\bar{u}},
\end{equation*}
where $\mathcal{A}_2(U)$ is a $2\times 2$ self-adjoint  matrix of \emph{para-differential operators} of order two (see \eqref{matriceA2}, \eqref{simboa2}), $\mathcal{A}_1(U)$ is a self-adjoint, diagonal matrix of para-differential operators of order one (see \eqref{QLNLS444}, \eqref{simboa2}). These algebraic configuration of the matrices (in particular the fact that $\mathcal A_1(U)$ is diagonal) is a  consequence of the Hamiltonian structure of the equation.
The summand $X_{H_4}$ is the cubic term (coming from the para-linearization of $|u|^2u$, see \eqref{X_H}) and $\|R(U)\|_{H^{s}}$ is bounded from above by $\|U\|_{H^{s}}^7$ for $s$ large enough.\\
Both the matrices $\mathcal{A}_2(U)$ and $\mathcal{A}_1(U)$ vanish when $U$ goes to $0$. Since we assume that the function $h$, appearing in \eqref{NLS}, vanishes quadratically at zero, as a consequence of \eqref{simboa2}, we have that 
\begin{equation*}
\|\mathcal{A}_2(U)\|_{\mathcal{L}(H^{s};H^{s-2})}, \|\mathcal{A}_1(U)\|_{\mathcal{L}(H^{s};H^{s-1})}\lesssim\|U\|^6_{H^{s}}\,,
\end{equation*}
where by $\mathcal{L}(X;Y)$ we denoted the space of linear operators from $X$ to $Y$.
We also remark that the summand $X_{H_4}$ is an \emph{Hamiltonian vector field} with Hamiltonian function $H_4(u)=\int_{\T^d}|u|^4dx$.

{\textsc{Diagonalization of the second order operator.}}
The matrix of para-differential operators $\mathcal{A}_2(U)$ is not diagonal, therefore the first step, in order to be able to get at least the weak estimate \eqref{energiafuffa}, is to diagonalize the system at the maximum order. This is possible since, because of the smallness assumption, the operator $E(-\Delta+\mathcal{A}_2(U))$ is locally elliptic. In section \ref{blockdiago2} we introduce a new unknown $W=\Phi_{\NLS}(U)U$, where $\Phi_{\NLS}(U)$ is a parametrix built from the matrix of the eigenvectors of $E(-\Delta+\mathcal{A}_2(U))$, see \eqref{diago_para}, \eqref{matriceS}.
The system in the new coordinates reads 
\begin{equation*}
\partial_tW=-\ii E\big((-\Delta+V*)U+\mathcal{A}^{(1)}_2(U)W+\mathcal{A}_1^{(1)}(U)W\big)+X_{H_4}(W)+R^{(1)}(U),
\end{equation*}
where both $\mathcal{A}^{(1)}_2(U), \mathcal{A}^{(1)}_1(U)$ are diagonal, see 
\eqref{QLNLS444KK} and where $\|R^{(1)}(U)\|_{H^{s}}\lesssim \|U\|^7_{H^{s}}$ for $s$ large enough. We note also that the cubic vector field $X_{H_4}$ remains the same because the map $\Phi_{\NLS}(U)$ is equal to the identity plus a term vanishing at order six at zero, see \eqref{stime-descentTOTA}.

{\textsc{Diagonalization of the cubic vector-field.}} In the second step, in section \ref{cubo}, we diagonalize the cubic vector-field $X_{H_4}$. It is fundamental for our purposes to preserve the Hamiltonian structure of this cubic vector-field
in this diagonalization procedure. In view of this we perform a (approximatively) \emph{symplectic} change of coordinates generated from the Hamiltonian in \eqref{HamFUNC} and \eqref{gene1} (note that this is not the case for the diagonalization at order two). 
Actually the simplecticity of this change of coordinates is one of the most delicate points in our paper. The entire Section \ref{flow-ham} is devoted to this.
This diagonalization  is implemented in order to simplify a \emph{low-high} frequencies analysis. More precisely we prove that the cubic vector field may be conjugated to a diagonal one modulo a smoothing remainder. The diagonal part shall cancel out in the energy estimate  due to a symmetrization argument based on its Hamiltonian character. As a consequence the time of existence shall be completely determined by the smoothing reminder. Since this remainder is smoothing, the contribution coming from high frequencies is already ``small'', therefore the normal form analysis involves only the \emph{low modes}. This will be explained later on in this introduction.

We explain the result of this diagonalization. We define a new variable $Z=\Phi_{\mathcal{B}_{\NLS}}(W)$, see \eqref{novavarV}, and we obtain the new diagonal system (compare with \eqref{eq:ZZZ})
\begin{equation}\label{equazetaintro}
\partial_tZ=-\ii E\big((-\Delta+V*)Z+\mathcal{A}^{(1)}_2(U)Z+\mathcal{A}_1^{(1)}(U)Z\big)+X_{\mathtt{H}_4}(Z)+R^{(2)}_5(U),
\end{equation}
where the new vector-field $X_{\mathtt{H}_4}(Z)$ is still Hamiltonian, with Hamiltonian function defined in \eqref{Hamfinale}, and it is equal to a skew-selfadjoint and diagonal matrix of bounded  para-differential operators modulo smoothing reminders, see \eqref{cubicifine}. Here $R_5^{(2)}(U)$ satisfies the quintic estimates \eqref{stimaRRRfina}. 


{\textsc{Introduction of the energy-norm.}} 
Once achieved the diagonalization of the system we introduce an \emph{energy norm} which is equivalent to the Sobolev one. Assume for simplicity $s=2n$ with $n$ a natural number. Thanks to the smallness condition on the initial datum we prove in Section \ref{sec:highder} that  $\|(-\Delta\uno+\mathcal{A}_2(U)+\mathcal{A}_1(U))^{s/2}f\|_{L^2}\sim\|f\|_{H^s}$ for any function $f$ in $H^s(\T^d)$. Therefore by setting  \footnote{To be precise the definition of $Z_n=(z_n,\bar{z}_n)$ in \ref{sec:highder} is slightly different than the one presented here, but they coincide modulo smoothing 
 corrections. 
 For simplicity of notation, 
 and in order to avoid technicalities,  
 in this introduction we presented it in this way.}
\[
Z_n:=[E(-\Delta\uno+\mathcal{A}_2(U)+\mathcal{A}_1(U))]^{s/2}Z\,,
\]
 we are reduced to study the $L^2$ norm of the function $Z_n$.  This has been done in Lemma \ref{equivLemma}. 
 Since the system is now diagonalized, 
 we write the scalar equation, see Lemma \ref{equaVVnn}, solved by $z_n$ 
\begin{equation*}
\pa_{t}z_n=-\ii T_{\mathcal{L}}z_{n}-\ii V*z_n
-\Delta^n X^{+}_{\mathtt{H}_4}(Z)
+R_{n}(U)\,,
\end{equation*}
where we have denoted by $T_{\mathcal{L}}$ the element on the diagonal of the self-adjoint operator   $-\Delta\uno+\mathcal{A}_2(U)+\mathcal{A}_1(U)$, see \eqref{simboLLL}, \eqref{quantiWeyl};
$X^{+}_{\mathtt{H}_4}(Z)$ is the first component of the Hamiltonian vector-field $X_{\mathtt{H}_4}(Z)$ and $R_n(U)$ is a bounded remainder satisfying the quintic estimate  \eqref{stimeRRRN}.

{\textsc{Cancellations and normal-forms.}}  
At this point, still  in Lemma  \ref{equaVVnn}, we split the Hamiltonian vector-field  
$
X^{+}_{\mathtt{H}_4}=X^{+,{\rm res}}_{\mathtt{H}_4}+X^{+,{\bot}}_{\mathtt{H}_4},
$
where  $X^{+,{\rm res}}_{\mathtt{H}_4}$ is the \emph{resonant} part, see \eqref{alien7} and Definition \ref{resonantSet}. The first important fact, which is an effect of the Hamiltonian and Gauge preserving structure, is that the resonant term $\Delta^nX^{+,{\rm res}}_{\mathtt{H}_4}$ does not give any contribution to the energy estimates. 
This  key cancellation  may be interpreted 
as a consequence of 
 the fact that the \emph{super actions} 
 \[
 I_{p}:= \sum_{{j\in\mathbb{Z}^{d},\,
 |j|=p}}|\hat{z}(j)|^{2}\,,\quad p\in\mathbb{N}\,,\quad Z:=\vect{z}{\bar{z}}\,,
 \]
 where $\hat{z}$ is defined in \eqref{complex-uU},
 are prime integrals of the resonant Hamiltonian vector field 
$X_{\mathtt{H}_4}^{+,{\rm res}}(Z)$ in the same spirit\footnote{More generally, this cancellation can be viewed as a consequence of the commutation of the linear flow with the resonant part of the nonlinear perturbation which is a key of the Birkhoff normal form theory (see for instance \cite{Gre}).} of \cite{Faouplane}. This is the content of Lemma \ref{superazioni}, more specifically equation \eqref{superazioni1}. \\
We are left with the study of the term $-\Delta^nX^{+,{\bot}}_{\mathtt{H}_4}$. In Lemma \ref{equaVVnn} we prove that $-\Delta^nX^{+,{\bot}}_{\mathtt{H}_4}=B_{n}^{(1)}(Z)+B_{n}^{(2)}(Z)$, where $B_{n}^{(1)}(Z)$ does not contribute to energy estimates and   $B_{n}^{(2)}(Z)$ is  smoothing, gaining one space derivative, see \eqref{alien9} and Lemma \ref{lem:trilineare}. The cancellation for $B_{n}^{(1)}(Z)$ is again  a consequence of the Hamiltonian structure and it is proven in Lemma \ref{superazioni}, more specifically equation \eqref{cancello1}. \\
Summarizing we obtain the following energy estimate (see \eqref{scalarL})
\begin{align}
\frac12\frac{d}{dt}\|z_n(t)\|_{L^2}^2=&\,\Re(-\ii T_{\mathcal{L}}z_n,z_n)_{L^2}+\Re(-\ii V*z_n,z_n)_{L^2}\label{fuff1}\\
					   +&\,\Re(R_n(U),z_n)_{L^2}\label{fuff2}\\
					   +&\,\Re(-\Delta^nX_{\mathtt{H}_4}^{+,{\rm res}}(Z),z_n)_{L^2}\label{fuff3}\\
					   +&\, \Re(B_n^{(1)}(Z),z_n)_{L^2}\label{fuff4}\\
					   +&\, \Re(B_n^{(2)}(Z),z_n)_{L^2}\label{fuff5}.
\end{align}
The r.h.s. in \eqref{fuff1} equals to zero because 
$\ii T_{\mathcal{L}}$ is skew-self-adjoint and the Fourier coefficients of $V$ in \eqref{insPot} are real valued.  
The term \eqref{fuff2} is bounded from above by 
$\|z_n\|_{L^2}\|U\|_{H^s}^5$ 
; \eqref{fuff3} equals to zero thanks to \eqref{superazioni1}, the summand \eqref{fuff4} equals to zero as well because of \eqref{cancello1}. Setting $E(t)=\|z_n(t)\|_{L^2}^{2}$, the only term which is still not good in order to obtain an estimate of the form \eqref{energiaforte} is the \eqref{fuff5}.\\
In order to improve the time of existence we need to reduce the size of this new term \eqref{fuff5} by means of \emph{normal forms/integration by parts}. 
Our aim is to prove that
\begin{equation}\label{fuff6}
\int_0^t\Re (B_n^{(2)}(Z),z_n)_{L^2}(\sigma)d\sigma\lesssim\varepsilon^{2}
\end{equation}
as long as $t\lesssim \varepsilon^{-4}$ and $\|z_n\|_{L^2}\lesssim \varepsilon$. The thesis follows from this fact by using a classical bootstrap argument. Let us set $\mathcal{B}_{\NLS}(\sigma):=\Re (B_n^{(2)}(Z),z_n)_{L^2}(\sigma)$. The term $\mathcal{B}_{\NLS}$ may be expressed as (see  Prop. \ref{lem:basicenergy})
\begin{equation}\label{fuff7}
\mathcal{B}_{\NLS}\sim \sum_{\xi,\eta,\zeta\in\mathcal{R}^c}\langle\xi\rangle^{2n}b(\xi,\eta,\zeta)\hat{z}(\xi-\eta-\zeta)\hat{\bar{z}}(\eta)\hat{z}(\zeta)\hat{\bar{z}}(-\xi),
\end{equation}
the sum is restricted to the set of \emph{non resonant indexes}, 
see \eqref{resonantSet}, and  the coefficients satisfy
\begin{equation*}
|b(\xi,\eta,\zeta)|\lesssim \frac{\langle\xi\rangle^{2n}}{\max_1(\langle\xi-\eta-\zeta\rangle,\langle\eta\rangle,\langle\zeta\rangle)},
\end{equation*}
where the constant depends on ${\max_2(\langle\xi-\eta-\zeta\rangle,\langle\eta\rangle,\langle\zeta\rangle)}$ and where we have defined the Japanese bracket $\langle\xi\rangle:=\sqrt{1+|\xi|^2}$ for $\xi\in\mathbb{R}^d$.
We fix $N\in\R^+$ and we split 
$\mathcal{B}_{\NLS}:=\mathcal{B}_{\NLS,\leq N}+\mathcal{B}_{\NLS,>N}$ 
where $\mathcal{B}_{\NLS,\leq N}$ is as in \eqref{fuff7} with the sum restricted to those  
indexes such that $\max_1(\langle\xi-\eta-\zeta\rangle,\langle\eta\rangle,\langle\zeta\rangle)\leq N$. 
It is easy to show (see Lemma \ref{lem:iraq}) that 
$\int_0^t\|\mathcal{B}_{\NLS,>N}(\sigma)\|_{H^s}d\sigma\lesssim tN^{-1}\|z\|^4_{H^s}$. 
This is due to the fact that the coefficients $b(\xi,\eta,\zeta)$ are decaying.
Let us analyze the contribution given by $\mathcal{B}_{\NLS,\leq N}$.
\\We define the operator $\Lambda_{\NLS}$
as the Fourier multiplier acting on periodic functions as follows:
\begin{equation}\label{pollini1}
\Lambda_{\NLS} e^{\ii \x\cdot x}=\Lambda_{\NLS}(\x)e^{\ii\x\cdot x}\,,\qquad
\R\ni\Lambda_{\NLS}(\x):=|\x|^{2}+\hat{V}(\x)\,,
\quad \x\in\mathbb{Z}^{d}\,,
\end{equation}
where $\hat{V}(\x)$ are the \emph{real} Fourier coefficients 
of the convolution potential $V(x)$
given in \eqref{insPot}. Recalling \eqref{equazetaintro},  we have 
\begin{equation*}
\partial_t \hat{z}(\xi)=-\ii \Lambda_{\NLS}(\xi)\hat{z}(\xi)+\hat{Q}(\xi),
\end{equation*}
where $Q:=-\ii T_{\Sigma}z+ {X}_{\mathtt{H}_4}^+(z)+R_5^{(2)}$. $T_{\Sigma}$ is a paradifferential operator (see \eqref{quantiWeyl}) with symbol $\Sigma$, which is   real, of order two and homogeneity six in $z$, $R_5^{(2)}$ is a quintic reminder. 
We set $\hat{g}(\xi):= e^{\ii t\Lambda_{\NLS}(\xi)}\hat{z}(\xi)$ and we obtain
\begin{equation*}
\int_0^t\mathcal{B}_{\NLS,\leq N}(\s)d\s\sim \int_0^t\sum_{\xi,\eta,\zeta\in\mathcal{R}^c}
\mathtt{1}_{\{\max\{\langle\xi-\eta-\zeta\rangle,\langle\eta\rangle,\langle\zeta\rangle\}\leq N\}} b(\xi,\eta,\zeta) e^{-\ii\sigma\omega_{\NLS}(\xi,\eta,\zeta)}\hat{g}(\xi-\eta-\zeta)\hat{\bar{g}}(\eta)\hat{g}(\zeta)\hat{\bar{g}}(-\xi)\langle\xi\rangle^{2n}d\sigma,
\end{equation*}
with $\omega_{\NLS}$ defined in \eqref{fasePhi}.
Integrating by parts in $\sigma$, we obtain 
\begin{equation}\label{fuff11}
\begin{aligned}
\int_0^t\mathcal{B}_{\NLS,\leq N}(\sigma)d\s &\sim 
\int_{0}^{t}
(\mathcal{T}_{<}[z,\bar{z},z],T_{\langle\x\rangle^{2n}}(\pa_{t}+\ii\Lambda_{\NLS})z)_{L^{2}}d\s
\\&
+\int_{0}^{t}(\mathcal{T}_{<}[(\pa_{t}+\ii\Lambda_{\NLS})z,\bar{z},z],T_{\langle\x\rangle^{2n}}z)_{L^{2}}d\s
\\&
+\int_{0}^{t}(\mathcal{T}_{<}[z,\bar{z},(\pa_{t}+\ii\Lambda_{\NLS})z],T_{\langle\x\rangle^{2n}}z)_{L^{2}}d\s
\\&
+\int_{0}^{t}(\mathcal{T}_{<}[z,\ov{(\pa_{t}+\ii\Lambda_{\NLS})z},z],T_{\langle\x\rangle^{2n}}z)_{L^{2}}d\s
+O(\|u\|^{4}_{H^{s}})\,,
\end{aligned}
\end{equation}
where $\mathcal{T}_{<}(z_1,z_2,z_3)$ is the multilinear form whose Fourier coefficient is
\begin{equation*}
\widehat{\mathcal{T}_{<}}(\x)=\frac{1}{(2\pi)^d}\sum_{\eta,\zeta\in\mathbb{Z}^d}
\mathtt{t}_{<}(\x,\eta,\zeta)
\hat{z}_1(\x-\eta-\zeta)\hat{{z}}_2(\eta)\hat{z}_3(\zeta)\, ,\quad  \mathtt{t}_{<}(\x,\eta,\zeta)=\tfrac{-1}{\ii \omega_{\NLS}(\x,\eta,\zeta)}b(\x,\eta,\zeta).
\end{equation*}
The denominators $\omega_{\NLS}$ are never dangerous since we have very good lower bounds on them, see Prop. \ref{NonresCond} (see also Lemma \ref{lem:iraq}).
Let us consider, for instance, the first term in the r.h.s. of \eqref{fuff11}, we have 
\begin{equation*}
\begin{aligned}
\int_0^t(\mathcal{T}_{<}[z,\bar{z},z],T_{\langle\x\rangle^{2n}}(\pa_{t}+\ii\Lambda_{\NLS})z)_{L^{2}}(\sigma)d\s&=
\int_0^t(T_{\langle\x\rangle^{2}}\mathcal{T}_{<}[z,\bar{z},z],-T_{\langle\x\rangle^{2n-2}}\ii T_{\Sigma}z)_{L^{2}}(\s)d\s\\
&+\int_0^t\big(\mathcal{T}_{<}[z,\bar{z},z],T_{\langle\x\rangle^{2n}}(X_{\mathtt{H}_{\NLS}^{(4)}}^{+}(Z)
+R_{5}^{(2,+)}(U))\big)_{L^{2}}(\s)d\s\,.
\end{aligned}
\end{equation*}
The first term may be estimated by the Cauchy-Schwartz inequality obtaining 
\begin{equation}\label{fuff12}
\int_0^t\|\mathcal{T}_{<}(z,\bar{z},z)\|_{H^2}(\sigma)\|T_{\Sigma}z\|_{H^{s-2}}(\sigma)d\s.
\end{equation}
Since $\Sigma$ is a symbol of order two and homogeneity six, the second factor is bounded from above by $\varepsilon^6$ as soon as $\|z(\sigma)\|_{H^s}\lesssim\varepsilon$. Since $\mathcal{T}_{<}$ is supported on frequencies lower than $N$, the $\langle\xi\rangle^2$ symbol of $H^2$ norm, multiplied by the coefficients $b(\xi,\eta,\zeta)$ of the first term in \eqref{fuff12} provides a factor $N$ (see Lemma \ref{lem:iraq} for details), since it as homogeneity four we have also a factor $\varepsilon^4$ as soon as $\|z(\sigma)\|_{H^s}\lesssim\varepsilon$.   We have  eventually bounded \eqref{fuff12} by $t N \varepsilon^{10}.$ Analogously, the second term in \eqref{fuff12} may be bounded from above by $t\varepsilon^6$. \\
Recalling the contribution given by $\mathcal{B}_{\NLS,>N}$ we have bounded $\int_0^t\mathcal{B}_{\NLS}(\s)d\s$ from above by $t(\varepsilon^4N^{-1}+\varepsilon^{10}N+\varepsilon^6)+\varepsilon^4$. Choosing $N=\varepsilon^{-2}$ we immediately note that the last quantity stays of size $\varepsilon^2$ as soon as $t\lesssim \varepsilon^{-4}$.


As said before the strategy for \eqref{KG} is similar except for the control 
of the small divisors \eqref{NR-KG}.

We summarize the plan concerning \eqref{KG} focussing on the main differences with respect to \eqref{NLS}. In Section \ref{sec:3KG} we paralinearize the equation obtaining, after passing to the complex variables \eqref{simboa2KG}, the system of equations of order one \eqref{QLNLS444KG}. In Section \ref{DiagonaleKG} we diagonalize the system: the operator of order one is treated in  Prop. \ref{diagoOrd1} and the order zero in Prop. \ref{prop:blockdiag00KG}. As done for \eqref{NLS}, we diagonalize the operator of order zero paying attention to preserve its  Hamiltonian structure.  We consider the function $z$ solving \eqref{eq:ZZZKG} and we define the new variable $z_n:=\langle D\rangle^n z$, where $\langle D\rangle$ is the Fourier multiplier having symbol $\langle\xi\rangle$. We want to bound the $L^2$-norm of the variable $z_n$, which solves the equation \eqref{equa:VVTLKG}. The evolution of the $L^2$-norm is studied in Prop. \ref{lem:basicenergyKG}. From this proposition we understand that, in order to improve the energy estimates, we need to perform a normal form on the non resonant term $\mathcal{B}$ in \eqref{BBBTKG}, which has coefficients decaying as in \eqref{alienKG999}. 
We proceed as done in the \eqref{NLS} case. 
We fix $N\in\mathbb{R}^+$ and we split $\mathcal{B}$ in two pieces, 
one supported for frequencies such that 
$\max_1\{\langle\xi-\eta-\zeta\rangle,\langle\eta\rangle,\langle\zeta\rangle\}\leq N$ and the other for 
$\max_1\{\langle\xi-\eta-\zeta\rangle,\langle\eta\rangle,\langle\zeta\rangle\}> N$. 
The contribution to the energy estimate of the second one is $tN^{-1}\varepsilon^4$. Again in this point we exploit the smoothing property in \eqref{BBBTKG}. 
We focus on the part of $\mathcal{B}$ coming from small frequencies. We perform in Proposition \ref{prop:stimeEnergiaKG} an integration by parts in the same spirit of what done in the \eqref{NLS} case, see \eqref{iraqKG10}. When integrating by parts, the small denominators $\omega_{\KG}(\xi,\eta,\zeta)$ appear. In this case we do not have nice bounds as in the \eqref{NLS} case, indeed we only know that $|\omega_{\KG}(\xi,\eta,\zeta)|\gtrsim \max_1\{|\xi-\eta-\zeta|,|\eta|,|\zeta|\}^{-\beta}$, where $\beta$ is bigger then $3$ in dimension $d\geq 3$ and it is bigger that $2$ in dimension $d=2$. Hence such divisors give an extra factor $N^{\beta}$ in the energy estimates (recall that we are dealing with the case of small frequencies $\leq N$). 
After the integration by parts one has to use \eqref{eq:ZZZKGscal}. 
The term $\tilde{a}^+_2(x,\xi)\Lambda_{\KG}(\xi)$ therein has homogeneity $3$ and order $1$, 
so that its contribution to the energy estimates in \eqref{lemansKG1} is $tN^{\beta}\varepsilon^7$. 
Indeed the unboundedness of $\Lambda_{\KG}$ is compensated by the coefficients of $\mathcal{B}$, 
which gain one derivative. The vector field ${X}^+_{\mathtt{H}^{(4)}_{\KG}}(Z)$ has homogeneity $3$ 
and has no loss of derivatives, 
so that its contribution to \eqref{lemansKG2} 
is $tN^{\beta-1}\varepsilon^6$ (the ``$-1$'' is still coming from the coefficients of $\mathcal{B}$). 
The contribution of the reminder in \eqref{eq:ZZZKGscal} is negligible. We have a last term which is the one coming from the boundary term of the integration by parts which is bounded by $N^{\beta-1}\varepsilon^4$. Summarising we have obtained (compare with  \eqref{stimaeneNonresCaso2KG})
\begin{equation*}
\Big|\int_0^t\mathcal{B}(\sigma)d\sigma\Big|\lesssim t(\varepsilon^7N^{\beta}+\varepsilon^6N^{\beta-1}+\varepsilon^4N^{-1})+\varepsilon^4N^{\beta-1},
\end{equation*} 
where the term $\varepsilon^4N^{-1}t$ is coming from the high-frequencies of $\mathcal{B}$. Choosing $N:=\varepsilon^{-\frac{2}{\beta}}$ we note that the r.h.s. of the above inequality is controlled by $\varepsilon^2$ as soon as $t\lesssim \varepsilon^{-2(1+\frac{1}{\beta})}$, which is the time announced just after the statement of Theorem \ref{main:KG}.\\
We conclude this introduction explaining the numerology of Theorem \ref{main:KGsemi}. In the semilinear case we have $f=0$ and $g$ independent of $y_1$ in \eqref{KG}, so the there are no derivatives in the nonlinearity. When we pass to the system of order $1$ in \eqref{QLNLS444KG}, one has  $\mathcal{A}_1\equiv 0$ and that the cubic term $X_{\mathcal{H}^{(4)}_{\KG}}(U)$ may be decomposed as a paradifferential operator of order -1 plus a trilinear reminder whose coefficients have the better (compared to the quasilinear case  \eqref{alienKG999}) decay $\max_{1}(\langle\xi-\eta-\zeta\rangle,\langle \eta\rangle,\langle\zeta\rangle)^{-2}$ (see Remark \ref{semilin1}). We perform the integration by parts as in the quasi-linear case. Here we do not have the contribution coming from $\tilde{a}_2^+(x,\xi)\Lambda_{\KG}(\xi)$ (because this term equals to zero in the semilinear case) which was $\varepsilon^7N^{\beta}$. Moreover the contribution of the cubic semilinear term is  $\varepsilon^6N^{\beta-2}$ (instead of $\varepsilon^6N^{\beta-1}$ as before), thanks to the better decay of the
coefficients in the cubic reminder. The high frequency part is also smaller and it gives $N^{-2}\varepsilon^6$, instead of $N^{-1}\varepsilon^6$. One eventually obtains $|\int_0^t\mathcal{B}(\s)d\s|\lesssim t(\varepsilon^6N^{\beta-2}+\varepsilon^4N^{-2})+\varepsilon^4N^{\beta-2}$. If one choses $N=\varepsilon^{-\frac{2}{\beta}}$ one can bound the previous quantity as soon as $t\lesssim \varepsilon^{-(2+\frac{4}{\beta})}$, which means $t\lesssim \varepsilon^{-\frac{10}{3}^-}$ when $d\geq3$ and $t\lesssim \varepsilon^{-4^-}$ if $d=2$. 

\bigskip
{{\textsc{Acknowledgements.}}}  
We would like to warmly thank prof. Fabio Pusateri, Joackim Bernier and the anonymous referees  
for their useful comments.
\bigskip

\section{Small divisors}
As pointed out in the introduction the proofs our main theorems are based on a normal form approach. As a consequence we shall deal with small divisors problems.
This section is devoted to establish suitable lower bounds for generic (in a probabilistic way) choices of the parameters ($x_{\xi}$ in \eqref{insPot} for \eqref{NLS} and $m$ in \eqref{def:Lambda2} for \eqref{KG})  excepted for exceptional indices for which the small divisor is identically zero.

\subsection{Non-resonance conditions for (\ref{NLS})}\label{sec:NonresCondNLS}
In the following proposition we give lower bounds for the small divisors arising from the normal form for \eqref{NLS}.
\begin{proposition}\label{NonresCond}
Consider
the phase $\omega_{\NLS}(\x,\eta,\zeta)$ 
defined as
\begin{equation}\label{fasePhi}
\omega_{\NLS}(\x,\eta,\zeta):=\Lambda_{\NLS}(\x-\eta-\zeta)-\Lambda_{\NLS}(\eta)
+\Lambda_{\NLS}(\zeta)-\Lambda_{\NLS}(\x)\,,
\qquad (\x,\eta,\zeta)\in \mathbb{Z}^{3d}
\end{equation}
where $\Lambda_{\NLS}$ is in \eqref{pollini1}
and  the potential $V$ is in \eqref{insPot}. We have the following.

\noindent $(i)$ Let $d\geq 2$. There exists $\mathcal{N}\subset \mathcal{O}$ 
 with zero Lebesgue measure such that, for any 
 $(x_{i})_{i\in \mathbb{Z}^{d}}\in \mathcal{O}\setminus \mathcal{N}$,
there exist $\g>0$, $N_0:=N_0(d,m)>0$  ($m>d/2$ see \eqref{insPot}) such that
for any $(\x,\eta,\zeta)\notin \mathcal{R}$ 
(see \eqref{resonantSet})
one has
\begin{equation}\label{lowerPhistrong}
|\omega_{\NLS}(\x,\eta,\zeta)|\geq \gamma \max_{2}\{|\x-\eta-\zeta|,|\eta|,|\zeta|\}^{-N_0}\,.
\end{equation}
$(ii)$ Let $d=1$ and assume that $V\equiv0$. Then one has 
$|\omega_{\NLS}(\x,\eta,\zeta)|\gtrsim 1$ unless
\begin{equation}\label{nantesnantes}
\x=\zeta\,,\;\;\;\eta=\x-\eta-\zeta\,,\quad {\rm or}\quad  \x=\x-\eta-\zeta\,,\;\;\;\eta=\zeta\,, \,\,\quad  \xi,\eta,\zeta\in\Z.
\end{equation}
\end{proposition} 

\begin{proof}
Item $(i)$
follows by Proposition $2.8$ in \cite{FaouGreNek}.
Item $(ii)$ is classical.
\end{proof}

\subsection{Non-resonance conditions for (\ref{KG})}\label{sec:NonresCondKG}

Recall the symbol  $\Lambda_{\KG}(j)$ in \eqref{def:Lambda2}.
 We shall prove the following important proposition.
%
\begin{proposition}{\bf (Non-resonance conditions).}\label{NonresCondKG}
Consider
the phase $\omega_{\KG}^{\vec{\s}}(\x,\eta,\zeta)$ 
defined as
\begin{equation}\label{fasePhiKG}
\omega_{\KG}^{\vec{\s}}(\x,\eta,\zeta):=\s_1\Lambda_{\KG}(\x-\eta-\zeta)+\s_2\Lambda_{\KG}(\eta)
+\s_3\Lambda_{\KG}(\zeta)-\Lambda_{\KG}(\x)\,,
\qquad (\x,\eta,\zeta)\in \mathbb{Z}^{3d},
\end{equation}
where $\vec{\s}:=(\s_1,\s_2,\s_3)\in\{\pm\}^{3}$, 
$\Lambda_{\KG}$ is in \eqref{def:Lambda2}. Let $0<\s\ll 1$
and set 
$\beta:=2+\s$ if $d=2$, and $\beta:=3+\s$ if $d\geq3$. 
There exists $\mathcal{C}_{\beta}\subset [1,2]$ 
 with Lebesgue measure $1$ such that, for any 
 $m\in  \mathcal{C}_{\beta}$,
there exist $\g>0$, $N_0:=N_0(d,m)>0$ such that
for any $(\x,\eta,\zeta)\notin \mathcal{R}$ 
(see \eqref{resonantSet}) 
one has
\begin{equation}\label{lowerPhistrongKG}
|\omega_{\KG}^{\vec{\s}}(\x,\eta,\zeta)|\geq \gamma \max_{2}\{|\x-\eta-\zeta|,|\eta|,|\zeta|\}^{-N_0}
 \max\{|\x-\eta-\zeta|,|\eta|,|\zeta|\}^{-\beta}\,.
\end{equation}
\end{proposition}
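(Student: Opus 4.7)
\textbf{Proof plan for Proposition \ref{NonresCondKG}.} The key observation is that $\Lambda_{\KG}(j)=\sqrt{|j|^{2}+m}$ depends on $j\in\mathbb{Z}^{d}$ only through the non-negative integer $|j|^{2}$, so setting $k_0=|\x|^{2}$, $k_1=|\x-\eta-\zeta|^{2}$, $k_2=|\eta|^{2}$, $k_3=|\zeta|^{2}$, the function $m\mapsto \omega_{\KG}^{\vec{\s}}(\x,\eta,\zeta)=\s_1\sqrt{k_1+m}+\s_2\sqrt{k_2+m}+\s_3\sqrt{k_3+m}-\sqrt{k_0+m}$ is real analytic in $m\in[1,2]$ and is parametrized by the quadruple $(k_0,k_1,k_2,k_3,\vec{\s})$. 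The strategy is a standard Borel--Cantelli/sublevel set argument: for each configuration, estimate the Lebesgue measure of the set of bad $m$'s where $|\omega_{\KG}^{\vec{\s}}|<\gamma\max_{2}^{-N_0}\max_{1}^{-\beta}$, then sum over configurations.

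First I would carry out the algebraic step. Assume $\omega_{\KG}^{\vec{\s}}(\cdot)\equiv 0$ on $[1,2]$; by analyticity this extends to all $m$. Using the asymptotic expansion $\sqrt{k+m}=\sqrt{k}+\frac{m}{2\sqrt{k}}-\frac{m^{2}}{8k^{3/2}}+\cdots$ for large $k$, or more robustly the Taylor expansion of the analytic map $m\mapsto\omega$ and matching the principal parts, one derives the identities $\sum_{i=1}^{3}\s_i\sqrt{k_i}=\sqrt{k_0}$, $\sum_{i=1}^{3}\s_i/\sqrt{k_i}=1/\sqrt{k_0}$, and so on. A short combinatorial argument using that $\sqrt{k_i}$ are square roots of integers then forces $(\x,\eta,\zeta)\in\mathcal{R}$. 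This excludes resonant configurations from the lower bound.

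Next comes the transversality step, which is the technical heart. For a non-identically-zero $\omega$ I would show that there exists an order $\alpha_0\leq\alpha_0(d)$ and a constant $C_0$ such that $\max_{0\leq\alpha\leq\alpha_0}|\partial_{m}^{\alpha}\omega(m)|\gtrsim\max_{1}\{k_i\}^{-C_0}$ uniformly in $m\in[1,2]$. This follows by computing $\partial_{m}^{\alpha}\sqrt{k+m}=c_{\alpha}(k+m)^{\frac{1}{2}-\alpha}$ and arguing that, unless the $k_i$'s satisfy the algebraic identities of the previous paragraph, a Vandermonde-type determinant controls at least one derivative from below. Combined with a standard sublevel set lemma (\emph{à la} Bakhtin--Pyatnitsky), this yields $|\{m\in[1,2]:|\omega(m)|<\delta\}|\lesssim\max_{1}^{C_0/\alpha_0}\,\delta^{1/\alpha_0}$.

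The final step, which is the \emph{main obstacle} and where the dimension-independence of $\beta$ is won, is the counting. The naive count of configurations with $\max_{1}\leq M$ is $O(M^{3d})$, which would force $\beta$ to grow linearly with $d$. To avoid this I would proceed in the spirit of \cite{BFG}: freeze first the three indices of size $\leq\max_{2}$, paying a factor $\max_{2}^{O(d)}$ which is harmlessly absorbed into the $\max_{2}^{-N_0}$ loss by taking $N_0$ sufficiently large, and then count only the remaining top index $|j_{\max}|\leq M$. For $d\geq 3$ this top index is constrained to lie on a sphere $|j|^{2}=k_{\max}$ up to the freezing of $\max_{2}$ data, giving roughly $M^{d-1}$ choices, which after combining with the sublevel-set exponent forces $\beta>d-1+O(\s)$, i.e.\ any $\beta\in(3,4)$ when $d\geq 3$. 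For $d=2$ the sphere $|j|^{2}=k_{\max}$ has only $O(M^{\s})$ points (by divisor bounds on Gaussian integers), yielding $\beta\in(2,3)$. Summing the geometric series in dyadic decompositions of $M$ produces a set of bad $m$'s of measure $\lesssim\gamma^{1/\alpha_0}$, which tends to zero with $\gamma$; the intersection over $\gamma\to 0$ yields the exceptional set $\mathcal{C}_{\beta}^{c}$ of measure zero.
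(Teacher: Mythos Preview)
Your Steps 1--2 (algebraic identification of resonances and transversality via $m$-derivatives) are in the right spirit and essentially reproduce the paper's Lemma~\ref{mu1}. The gap is in Step~3.

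You write that counting the top index on a sphere gives ``roughly $M^{d-1}$ choices, which \dots forces $\beta>d-1+O(\s)$, i.e.\ any $\beta\in(3,4)$ when $d\geq3$''. This is false arithmetic: for $d\geq5$ one has $d-1\geq4$, so any sphere-counting argument of this type cannot produce a dimension-independent exponent. There is also a structural confusion: after freezing the two small indices $j_3,j_4$, momentum conservation fixes only the \emph{difference} $j_1-j_2$, not $j_1$ or $j_2$ separately, so there is no single ``remaining top index'' to place on a sphere. Even if you instead count the quadruples $(k_0,k_1,k_2,k_3)$ of squared norms, your sublevel bound scales like $\delta^{1/\alpha_0}$ (with $\alpha_0\geq4$, since four frequencies are involved), which after summation forces $\beta>3\alpha_0+O(1)$, again far from $3$.

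The paper's Appendix~\ref{sec:NonresCondKG} obtains $\beta>3$ independently of $d$ by a different mechanism. Since the two largest frequencies must carry opposite signs, and $j_1-j_2=\pm(\s_3 j_3+\s_4 j_4)$ is fixed once $j_3,j_4$ are, one Taylor-expands (Lemma~\ref{lem2})
\[
\Lambda_{j_1}-\Lambda_{j_2}=\mathfrak{g}\big(|j_1|,\,|j_1-j_2|,\,(j_1,j_1-j_2),\,m\big)+O\big(|j_3|^{5}|j_1|^{-4}\big),\qquad |\pa_m\mathfrak{g}|\lesssim |j_1|^{-3/2}.
\]
Two things are gained. First, $\mathfrak{g}$ depends on the large vector $j_1\in\mathbb{Z}^{d}$ only through the integer $|j_1|^{2}=n$ and the integer scalar product $(j_1,j_1-j_2)$, which takes at most $O(\sqrt{n}\,|j_3|)$ values; this replaces any $d$-dependent lattice-point count by a count \emph{linear} in $\sqrt{n}$. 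Second, $|\pa_m\mathfrak{g}|$ is negligible compared with $|\pa_m(\Lambda_{j_3}\pm\Lambda_{j_4})|\sim|j_3|^{-3}$, so Lemma~\ref{lem1} yields a \emph{first-order} sublevel bound, linear in the threshold, rather than the $(\cdot)^{1/\alpha_0}$ of your transversality step. The resulting series $\sum_{n}n^{-(\beta-1)/2}$ converges precisely for $\beta>3$; the Taylor error $|j_3|^{5}|j_1|^{-4}$ is dominated because $\beta<4$, and when $|j_1|$ is not large enough for this one falls back on the crude estimate of Lemma~\ref{mu1}, paying only in the $\max_2$-exponent $N_0$.
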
 
The case $d=2$ follows by Theorem $2.1.1$ in  \cite{Delort-Tori}, the rest of this subsection is devoted to the proof of Proposition \ref{NonresCondKG} in the case $d\geq 3$. Throughout this subsection, in order to lighten the notation, we shall write $\Lambda_{\KG}(j)\rightsquigarrow \Lambda_{j}$
for any $j\in \mathbb{Z}^{d}$ and $d\geq3$.
The main ingredient  is the following.
\begin{proposition}\label{mainNR}
Let $4>\beta >3$, there exist $\alpha>0$ and  $\mathcal{C}_\beta
\subset [1,2]$ a set of Lebesgue measure $1$ and for $m\in\Cc_\beta$  
there exists $\kappa(m)>0$ such that
\begin{equation}\label{NR}
|\sigma_1{\Lambda}_{j_1}
+\sigma_2{\Lambda}_{j_2}+\sigma_3{\Lambda}_{j_3}+\sigma_4{\Lambda}_{j_4} |
 \geq \frac{\kappa(m)}{|j_3|^{\alpha}|j_1|^{\beta}}
 \end{equation}
for all $\sigma_1,\sigma_2,\sigma_3,\sigma_4\in\{-1,1\}$, 
$j_1,j_2,j_3,j_4\in\Z^d$ satisfying 
$|j_1|\geq|j_2|\geq|j_3|\geq |j_4|$ and 
$\sigma_1 j_1+\sigma_2 j_2+\sigma_3 j_3+\sigma_4 j_4=0$, 
except when 
$\sigma_1=\sigma_4=-\sigma_2=-\sigma_3$ 
and $|j_1|=|j_2|\geq|j_3|= |j_4|$.
\end{proposition}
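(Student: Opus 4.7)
\smallskip

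The plan is to prove Proposition \ref{mainNR} by the classical Borel--Cantelli scheme familiar from KAM/normal-form theory: for each fixed admissible quadruple $(\vec j,\vec\sigma)$ in the non-trivially-resonant regime I will estimate the measure of the set of ``bad masses'' $m\in[1,2]$ for which the lower bound fails, then sum over all configurations with $|j_1|\sim N$ (dyadically). The target is to show that, after suitable choice of $\alpha$ and of $\beta\in(3,4)$, the total measure along each dyadic shell decays faster than any inverse power of $N$, so that the complement $\mathcal{C}_\beta\subset[1,2]$ of the union of bad sets as $\kappa\downarrow 0$ has full Lebesgue measure.

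The core ingredient is a deterministic sublemma bounding, for each fixed $(\vec j,\vec\sigma)$ outside the trivial resonance set, the measure of $\{m\in[1,2]:|\phi_m(\vec j,\vec\sigma)|<\eta\}$. I would proceed as follows. Viewing $\phi_m=\sum_i\sigma_i\sqrt{|j_i|^2+m}$ as a real-analytic function of $m$, I would iteratively square identities of the form $(\phi_m-\sigma_1\Lambda_{j_1})^2=(\sigma_2\Lambda_{j_2}+\sigma_3\Lambda_{j_3}+\sigma_4\Lambda_{j_4})^2$ to produce a polynomial identity $P(\phi_m,m)=0$ with $P\in\mathbb{Z}[X,m]$ of \emph{bounded} total degree $D_0$ (independent of $\vec j$) and coefficients polynomial in the $|j_i|^2$. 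Equivalently,
\begin{equation*}
\Pi(m;\vec j):=\prod_{\vec\epsilon\in\{\pm1\}^4/\{\pm\}}\phi_m(\vec j,\vec\epsilon)
\end{equation*}
is a polynomial in $m$ of degree $\leq D_0$, with coefficients that are integer polynomials in $|j_i|^2$. After factoring out those $\vec\epsilon$'s that give an identically vanishing factor (which correspond precisely to the trivial resonances allowed by the shape of $\vec j$, and which do not include our $\vec\sigma$), I obtain a non-zero integer-coefficient polynomial $\tilde{\Pi}(m;\vec j)$ of bounded degree that still divides by $\phi_m(\vec j,\vec\sigma)$. A Remez-type inequality then yields $\operatorname{meas}\{m\in[1,2]:|\tilde\Pi(m;\vec j)|<\eta\}\lesssim\eta^{1/D_0}$, and using the trivial upper bound $|\phi_m(\vec j,\vec\epsilon)|\lesssim |j_1|$ on the remaining factors gives
\begin{equation*}
\operatorname{meas}\{m\in[1,2]:|\phi_m(\vec j,\vec\sigma)|<\eta\}\lesssim\; |j_1|^{E_0}\,\eta^{1/D_0}
\end{equation*}
for universal constants $D_0,E_0$.

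For the summation step I would dyadically localize $|j_1|\sim N$ and $|j_3|\sim M\leq N$; the momentum constraint $\sum_i\sigma_i j_i=0$ forces $j_2$ to be determined by $(j_1,j_3,j_4,\vec\sigma)$, giving a count $\lesssim N^d M^{2d}$ of configurations. Setting $\eta=\kappa/(M^\alpha N^\beta)$ and summing yields a bound of order $\kappa^{1/D_0}\sum_N N^{d+E_0-\beta/D_0}\sum_{M\leq N}M^{2d-\alpha/D_0}$: choosing $\alpha$ sufficiently large (this fixes the exponent $N_0$ in the statement of Proposition \ref{NonresCondKG}) controls the $M$-sum, while choosing $\beta>3$ strictly suffices to make the $N$-sum summable, provided one also uses the arithmetic gain $|j_1|^2-|j_2|^2\in\mathbb{Z}$: whenever the leading pair would cancel, integrality forces $||j_1|^2-|j_2|^2|\geq 1$, which upgrades the trivial bound on $\phi_m$ by an extra factor of $|j_1|^{-1}$ and is exactly what allows one to replace the $d$-dependent exponent of Delort--Szeftel by a uniform $\beta\in(3,4)$ valid for all $d\geq 3$. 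Borel--Cantelli then produces the full-measure set $\mathcal{C}_\beta$.

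The main obstacle is the third step above, namely the careful factorization of $\Pi$ when $\vec j$ admits degenerate configurations (e.g.\ $|j_1|=|j_2|$ or $|j_3|=|j_4|$): one must verify that all identically-vanishing factors are accounted for by the trivial resonance set so that the residual polynomial $\tilde\Pi$ is genuinely non-zero, and that its sup-norm on $[1,2]$ is bounded below in a way that is quantitatively compatible with the Remez estimate (using the Mahler-measure-type lower bound for non-zero integer polynomials of controlled height). A secondary difficulty is squeezing the exponent to $\beta>3$ uniformly in $d\geq 3$: the naive dimension count gives a $d$-dependent exponent, and one needs to combine the measure estimate with the arithmetic integrality gain described above to close the bootstrap.
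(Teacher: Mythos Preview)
Your scheme has a real gap at the crucial step: you will not get $\beta\in(3,4)$ uniformly in $d$ this way, and you essentially concede this in your last paragraph. The polynomial--Remez argument only produces a measure bound of order $\eta^{1/D_0}$ with $D_0\geq 4$ (there are eight sign patterns modulo a global flip), so when you plug in $\eta=\kappa M^{-\alpha}N^{-\beta}$ and sum over the $\sim N^{d}M^{2d}$ configurations, summability in $N$ forces $\beta>D_0(d+E_0+1)$, which is huge and $d$--dependent. The ``integrality gain'' $||j_1|^2-|j_2|^2|\geq 1$ buys at most one extra factor $N^{-1}$; it cannot offset the loss $N^{d}$ from counting $j_1$ nor the $1/D_0$ exponent in the measure estimate. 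In short, the obstacle you flag as ``secondary'' is the whole point of the proposition.

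The paper's proof takes a completely different route precisely to avoid this. Momentum conservation forces $|j_1-j_2|\leq 2|j_3|$, so one Taylor-expands $\Lambda_{j_1}-\Lambda_{j_2}$ to fourth order: it equals an explicit rational function $\mathfrak{g}(|j_1|,|j_1-j_2|,(j_1,j_1-j_2),m)$ up to an error $O(|j_3|^5/|j_1|^4)$, and $\partial_m\mathfrak{g}$ is $O(|j_1|^{-3/2})$. Two things follow. First, the relevant small divisor is now $\Lambda_{j_3}+\sigma\Lambda_{j_4}+\sigma_3\mathfrak{g}(m)$, whose $m$--derivative is governed by $|j_3|^{-3}$ rather than $|j_1|^{-3}$; the bad-set measure is then linear in $\kappa|j_3|^3$ with no $1/D_0$ loss. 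Second, and this is the dimension-killing idea you are missing, $\mathfrak{g}$ depends on $j_1,j_2$ only through the \emph{three integers} $|j_1|^2$, $|j_1-j_2|^2$, $(j_1,j_1-j_2)$; the last one is bounded by $2|j_1||j_3|$, so for $|j_1|^2=n$ there are $O(|j_3|n^{1/2})$ distinct functions $\mathfrak{g}$, not $n^{d/2}$. The resulting sum is $\sum_n n^{-(\beta-1)/2}$, which converges exactly when $\beta>3$, while the Taylor error forces $\beta<4$. For $|j_1|$ below the threshold $J(\kappa,|j_3|)$ one falls back on a crude bound $\gamma|j_1|^{-\nu}$ (obtained via a Wronskian-type determinant estimate on $\partial_m^k\Lambda_{j_q}$), and the cost is absorbed into the exponent $\alpha$ on $|j_3|$.
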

The Proposition \ref{mainNR}  implies  Proposition \ref{NonresCondKG}. 
Its proof
is done in three steps.
\vspace{0.5cm}

\noindent {\bf Step 1: control with respect to the highest index.}
\begin{lemma}\label{mu1}
There exist $\nu>0$ and  $\mathcal{M}_\nu
\subset [1,2]$ a set of Lebesgue measure $1$ 
and for $m\in\mathcal{M}_\nu$  there exists $\gamma(m)>0$ such that
\begin{equation}\label{NR1}
|\sigma_1{\Lambda}_{j_1}+\sigma_2{\Lambda}_{j_2}
+\sigma_3{\Lambda}_{j_3}+\sigma_4{\Lambda}_{j_4} | 
\geq \gamma(m)|j_1|^{-\nu}
\end{equation}
for all $\sigma_1,\sigma_2,\sigma_3,\sigma_4\in\{-1,1\}$, 
$j_1,j_2,j_3,j_4\in\Z^d$ satisfying 
$|j_1|\geq|j_2|\geq|j_3|\geq |j_4|$, 
except when $\sigma_1=\sigma_4=-\sigma_2=-\sigma_3$ 
and $|j_1|=|j_2|\geq|j_3|= |j_4|$.
\end{lemma}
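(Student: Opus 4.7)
\medskip
\noindent\textbf{Plan.} The strategy is the classical measure-theoretic one: for each fixed non-excluded tuple $(\sigma_i,j_i)_{i=1,\ldots,4}$, regard
\[
\phi(m):=\sum_{i=1}^4\sigma_i\Lambda_{j_i}(m),\qquad \Lambda_{j_i}(m)=\sqrt{|j_i|^2+m},
\]
as a real-analytic function of $m\in[1,2]$, estimate the Lebesgue measure of $\{m:|\phi(m)|<\gamma|j_1|^{-\nu}\}$ for suitable $\gamma,\nu>0$, sum over all tuples, and invoke Borel--Cantelli. Since the ``free'' indices $j_2,j_3,j_4$ with $|j_\ell|\leq|j_1|$ range over at most $(2|j_1|+1)^{3d}$ lattice points, the exponent $\nu$ must be taken sufficiently large; the decisive ingredient is a uniform Vandermonde-type lower bound on the four functions $\phi,\partial_m\phi,\partial_m^2\phi,\partial_m^3\phi$.

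First I would dispose of the easy sign patterns. If all four $\sigma_i$ coincide, $|\phi(m)|\geq 4\sqrt m\geq 4$. If exactly three $\sigma_i$ coincide and the odd one sits on $j_k$ with $k\geq 2$, then $\phi$ is a sum of three positive terms $\Lambda_{j_1},\Lambda_{j_l},\Lambda_{j_m}$ minus a single term $\Lambda_{j_k}$ which is dominated by $\Lambda_{j_1}$, hence $|\phi|\geq 2\sqrt m\geq 2$. The only dangerous three-versus-one configuration is therefore $(-,+,+,+)$ (or, via $\phi\mapsto-\phi$, $(+,-,-,-)$), with the odd sign on the maximal index. Among the two-plus-two patterns, $(+,+,-,-)$ together with the ordering $|j_1|\geq|j_3|$, $|j_2|\geq|j_4|$ yields $\phi(m)\geq 0$ and an elementary algebraic lower bound in $\bigl||j_1|-|j_3|\bigr|+\bigl||j_2|-|j_4|\bigr|$. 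Only the ``crossed'' patterns $(+,-,+,-)$ and $(+,-,-,+)$, together with $(+,-,-,-)$, genuinely require the analytic argument.

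The technical core is the following quantitative non-degeneracy. If the tuple $(\sigma_i,j_i)$ is not \emph{trivially resonant}, meaning it cannot be split into two pairs $(i,\ell)$ with $|j_i|=|j_\ell|$ and $\sigma_i=-\sigma_\ell$ (the only configurations for which $\phi\equiv 0$ as a function of $m$), then there exist constants $N_0=N_0(d)$ and $c=c(d)>0$ such that
\[
\max_{0\leq k\leq 3}\bigl|\partial_m^k\phi(m)\bigr|\geq c\,|j_1|^{-N_0}\qquad\forall\,m\in[1,2].
\]
This relies on the identity $\partial_m^k\Lambda_j(m)=C_k(|j|^2+m)^{1/2-k}$ with $C_k=\tfrac12(\tfrac12-1)\cdots(\tfrac32-k)\neq 0$: the vector $(\partial_m^k\phi(m))_{k=0}^3$ is (up to nonzero scalars) the product of a Vandermonde matrix in the variables $x_i:=(|j_i|^2+m)^{-1}$ with the vector $(\sigma_i\Lambda_{j_i})$. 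This Vandermonde is nonsingular whenever the four $|j_i|$ are pairwise distinct, and in the degenerate cases (some norms coincide) a finite case analysis on the $\sigma_i$ shows that either the configuration is trivially resonant, hence excluded by hypothesis, or at least one of the $\partial_m^k\phi$ is bounded below by $c|j_1|^{-N_0}$ uniformly on $[1,2]$.

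Granting the displayed bound, the classical analytic sub-level set lemma yields $\bigl|\{m\in[1,2]:|\phi(m)|<\delta\}\bigr|\lesssim(\delta\,|j_1|^{N_0})^{1/4}$; choosing $\delta=\gamma|j_1|^{-\nu}$ and summing over $\sigma\in\{\pm\}^4$ and over the $(2|j_1|+1)^{3d}$ admissible values of $(j_2,j_3,j_4)$ produces a series of total measure $O(\gamma^{1/4})$ provided $\nu>4(N_0+3d)+4$; letting $\gamma\to 0$ along a sequence and applying Borel--Cantelli delivers the full measure set $\mathcal M_\nu\subset[1,2]$. The hard part will be the uniform quantitative Vandermonde estimate above: in the semi-degenerate regimes where several $|j_i|$ cluster without producing a trivial resonance (e.g.\ $|j_1|=|j_2|\gg|j_3|=|j_4|$ with crossed signs), one must carefully track how much the norms $|j_i|$ can collapse before the Vandermonde determinant degenerates and quantify the residual non-degeneracy by a polynomial loss in $|j_1|$. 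The remaining steps (sub-level set estimate and union bound) are routine once this quantitative non-degeneracy is in place.
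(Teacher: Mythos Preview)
Your proposal is correct and follows essentially the same approach as the paper: both use the Vandermonde structure of $\partial_m^k\Lambda_{j}$ to obtain a quantitative lower bound $\max_k|\partial_m^k\phi(m)|\gtrsim|j_1|^{-\mu}$ (the paper's Corollary~\ref{m1.2}, obtained from Lemmata~\ref{det} and~\ref{m1.1}), then feed this into the standard sub-level set estimate (Lemma~\ref{v.112}) and sum over admissible tuples. The paper is slightly terser---it cites \cite{Bam03,BG,EGK} for the scheme and handles the degenerate cases $|j_i|=|j_{i'}|$ by reducing to small divisors with fewer frequencies rather than by the case analysis you sketch---but the logical structure is the same.
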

The proof of this Lemma is standard and follows the line of  Theorem $6.5$ in
\cite{Bam03}, see also \cite{BG} or \cite{EGK}. We briefly repeat the steps.\\
Let us assume that $j_1,j_2,j_3,j_4\in\Z^d$ satisfy 
$ |j_1|>|j_2|>|j_3|>|j_4|$.
First of all, by reasoning as in 
Lemma $3.2$ in \cite{EGK}, one can deduce the following.
\begin{lemma}\label{det} 
Consider the matrix $D$ whose entry at place $(p,q)$ is given by
$\frac{d^{p}}{d m^{p}}\Lambda_{j_q}$, $p,q=1,\ldots,4$.
The modulus of the determinant of $D$
is bounded from below: one has $|{\rm det}(D)|\geq C |j_1|^{-\mu}$
where $C>0$  and $\mu>0$ are universal constants.
\end{lemma}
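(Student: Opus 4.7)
The plan is to exploit the very simple structure of the derivatives $\partial_m^p\Lambda_j$ to reduce the computation of $\det D$ to a Vandermonde determinant, and then to use the integrality of $|j|^2$ to bound each Vandermonde difference from below.

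First I would compute explicitly, from $\Lambda_j=(|j|^2+m)^{1/2}$, that
\[
\partial_m^{p}\Lambda_{j}=c_{p}\,\Lambda_{j}^{1-2p}, \qquad c_{p}:=\prod_{k=0}^{p-1}\bigl(\tfrac{1}{2}-k\bigr)\neq 0,\quad p\geq 1.
\]
Thus $D_{pq}=c_{p}\Lambda_{j_q}^{1-2p}$, so that factoring $c_p$ out of the $p$-th row and $\Lambda_{j_q}^{-1}$ out of the $q$-th column gives
\[
\det D \;=\; \Bigl(\prod_{p=1}^{4}c_{p}\Bigr)\Bigl(\prod_{q=1}^{4}\Lambda_{j_q}^{-1}\Bigr)\,\det\!\bigl(x_{q}^{p-1}\bigr)_{p,q=1,\dots,4},\qquad x_q:=\Lambda_{j_q}^{-2}.
\]
The remaining determinant is a Vandermonde in the $x_q$, hence equal to $\prod_{1\le q<q'\le 4}(x_{q'}-x_{q})$.

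Next I would use that $\Lambda_j^2=|j|^2+m$, so that the $m$-dependence cancels in each Vandermonde difference:
\[
x_{q'}-x_{q} \;=\; \frac{\Lambda_{j_q}^{2}-\Lambda_{j_{q'}}^{2}}{\Lambda_{j_q}^{2}\Lambda_{j_{q'}}^{2}} \;=\; \frac{|j_q|^2-|j_{q'}|^2}{\Lambda_{j_q}^{2}\Lambda_{j_{q'}}^{2}}.
\]
Because $|j_q|^2,|j_{q'}|^2\in\mathbb{Z}$ and the hypothesis of the lemma assumes $|j_1|>|j_2|>|j_3|>|j_4|$ strictly, the numerators are nonzero integers and so $||j_q|^2-|j_{q'}|^2|\geq 1$. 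For the denominators, using $m\in[1,2]$ one has $\Lambda_{j_k}\leq C\langle j_1\rangle$ for all $k$, hence $\Lambda_{j_q}^{-2}\Lambda_{j_{q'}}^{-2}\gtrsim |j_1|^{-4}$. Combining everything,
\[
|\det D| \;\gtrsim\; \Bigl(\prod_{q=1}^{4}|j_1|^{-1}\Bigr)\prod_{q<q'}|j_1|^{-4} \;\gtrsim\; |j_1|^{-28},
\]
which yields the claim with, e.g., $\mu=28$.

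The computation above is essentially algebraic and I do not foresee a serious obstacle, but the step that must be handled with some care is the lower bound $||j_q|^2-|j_{q'}|^2|\geq 1$: this rests crucially on the integrality of $|j|^2$ and on the strict ordering $|j_1|>|j_2|>|j_3|>|j_4|$ assumed right before the lemma (otherwise a pair of equal moduli would make the Vandermonde vanish, and in fact these are exactly the resonant configurations excluded in Proposition~\ref{mainNR}). The remaining passage from Lemma~\ref{det} to \eqref{NR1} will then be the standard measure-theoretic argument (as in \cite{Bam03,BG,EGK}): the invertibility of $D$ with controlled norm implies that the map $m\mapsto(\sigma_1\Lambda_{j_1}+\dots+\sigma_4\Lambda_{j_4})$ is sufficiently transverse on $[1,2]$, so that the set where it is small has measure $\lesssim \gamma(m)|j_1|^{\mu'}$, and a Borel--Cantelli argument over $(j_1,\dots,j_4,\vec\sigma)$ produces the full-measure set $\mathcal{M}_\nu$.
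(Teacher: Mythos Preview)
Your proof is correct and is precisely the standard argument the paper is invoking when it says ``by reasoning as in Lemma~3.2 in \cite{EGK}'': one uses $\partial_m^{p}\Lambda_{j}=c_{p}\Lambda_{j}^{1-2p}$ to reduce $\det D$ to a Vandermonde in the variables $x_q=\Lambda_{j_q}^{-2}$, and then the integrality of $|j_q|^{2}$ together with the strict ordering $|j_1|>|j_2|>|j_3|>|j_4|$ (assumed just before the lemma) gives $||j_q|^{2}-|j_{q'}|^{2}|\ge 1$, hence the lower bound $|\det D|\gtrsim|j_1|^{-\mu}$. The paper itself does not spell out a proof beyond this reference, so there is nothing further to compare.
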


From Lemma $3.3$ in \cite{EGK} we learn
\begin{lemma}\label{m1.1}
 Let
$u^{(1)},...,u^{(4)}$ be $4$ independent vectors in $\R^4$ with
$\|u^{(i)}\|_{\ell^1}\leq1$. Let $w\in\R^4$ be an arbitrary
vector, then there exist $i\in[1,\cdots,4]$, such that
$|u^{(i)}\cdot w|\geq C{\|w\|_{\ell^1}\det(u^{(1)},\ldots,u^{(4)})}$.
\end{lemma}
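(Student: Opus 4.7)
\vspace{0.5em}
\noindent
\textbf{Proof plan for Lemma \ref{m1.1}.}

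The statement is a purely linear-algebraic inequality, so the plan is to recast it as a bound on the operator norm of the inverse of the matrix assembled from the $u^{(i)}$. Concretely, let $M$ be the $4\times 4$ matrix whose $i$-th row is $u^{(i)}$. Then $(Mw)_i = u^{(i)}\cdot w$, so the conclusion \eqref{m1.1}-type inequality is equivalent to
\[
\|Mw\|_{\ell^{\infty}} \;\geq\; C\,\|w\|_{\ell^{1}}\,|\det M|,
\qquad \det M = \det(u^{(1)},\ldots,u^{(4)}).
\]
Since $u^{(1)},\ldots,u^{(4)}$ are independent $\det M\neq 0$, so $M$ is invertible and $w = M^{-1}(Mw)$. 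It then suffices to bound the $(\ell^{\infty}\to\ell^{1})$ operator norm of $M^{-1}$ by $|\det M|^{-1}$ up to a universal constant.

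For this I would use Cramer's rule, which expresses the entries of $M^{-1}$ as signed $3\times 3$ minors of $M$ divided by $\det M$:
\[
(M^{-1})_{ki} \;=\; \frac{(-1)^{i+k}\,C_{ik}}{\det M},
\]
where $C_{ik}$ is the $(i,k)$ cofactor of $M$. Each such cofactor is the determinant of a $3\times 3$ submatrix whose rows are subvectors of the $u^{(j)}$'s, so each such row has $\ell^{2}$-norm bounded by its $\ell^{1}$-norm, which in turn is bounded by $\|u^{(j)}\|_{\ell^{1}}\leq 1$. Hadamard's inequality then yields $|C_{ik}|\leq 1$, so $|(M^{-1})_{ki}|\leq |\det M|^{-1}$ for all $i,k$. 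Expanding $w_k = \sum_{i}(M^{-1})_{ki}(Mw)_i$ and summing over $k$ gives
\[
\|w\|_{\ell^{1}} \;\leq\; \sum_{k,i}|(M^{-1})_{ki}|\,\|Mw\|_{\ell^{\infty}}
\;\leq\; \frac{16}{|\det M|}\,\|Mw\|_{\ell^{\infty}},
\]
which is exactly the required estimate with $C=1/16$.

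There is really no main obstacle here: the bound on the cofactors via Hadamard is routine, and the equivalence between the conclusion of the lemma and an inequality on $\|M^{-1}\|_{\ell^{\infty}\to\ell^{1}}$ is just the observation that $\|Mw\|_{\ell^{\infty}} = \max_i|u^{(i)}\cdot w|$. The only mild care needed is in the constants, and in noting that the hypothesis $\|u^{(i)}\|_{\ell^{1}}\leq 1$ is used precisely to keep all entries (and hence all $3\times 3$ minors) of $M$ bounded by a universal constant, so that the factor $|\det M|$ in the denominator is the only source of degeneration.
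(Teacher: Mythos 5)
Your argument is correct, and it is worth noting that the paper itself gives no proof of this statement: it simply quotes it as Lemma $3.3$ of \cite{EGK}, so there is nothing in the text to compare against except the citation. Your route — writing $M$ for the matrix with rows $u^{(i)}$, observing that $\max_i|u^{(i)}\cdot w|=\|Mw\|_{\ell^\infty}$, and bounding $\|M^{-1}\|_{\ell^\infty\to\ell^1}$ by $16/|\det M|$ via Cramer's rule together with Hadamard's inequality (each cofactor being a $3\times3$ determinant whose rows have $\ell^2$-norm at most their $\ell^1$-norm, hence at most $1$) — is complete and self-contained, and the constant bookkeeping ($|w_k|\le 4\|Mw\|_{\ell^\infty}/|\det M|$, then $\|w\|_{\ell^1}\le 16\|Mw\|_{\ell^\infty}/|\det M|$, so $C=1/16$ works) is right. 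The only cosmetic remark is that the statement as printed writes $\det(u^{(1)},\ldots,u^{(4)})$ without absolute value; your proof yields the stronger inequality with $|\det M|$, which is the intended reading (and the one actually used in Corollary \ref{m1.2}), so this is not a gap.
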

Let us define
\[
\psi_{\KG}(m)=\sigma_1{\Lambda}_{j_1}(m)+\sigma_2{\Lambda}_{j_2}(m)
+\sigma_3{\Lambda}_{j_3}(m)+\sigma_4{\Lambda}_{j_4}(m)\,.
\]
Combining Lemmata \ref{det} and \ref{m1.1} we deduce the following.
\begin{corollary}
\label{m1.2}For any $m\in [1,2]$ there exists an index $i\in[1,\cdots,4]$ such
that
$\left|\frac{d^i \psi_{\KG}}{dm^i}(m)\right|\geq
C|j_1|^{-\mu}.$
\end{corollary}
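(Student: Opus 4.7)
The plan is to combine Lemma \ref{det} and Lemma \ref{m1.1} in a direct linear-algebraic way. Observe first that the quantity $\frac{d^{i}\psi_{\KG}}{dm^{i}}(m)$ is nothing but the inner product of the vector $w := (\sigma_{1},\sigma_{2},\sigma_{3},\sigma_{4}) \in \{-1,+1\}^{4}$ with the $i$-th row $v^{(i)} := \bigl(\frac{d^{i}}{dm^{i}}\Lambda_{j_{q}}\bigr)_{q=1,\ldots,4}$ of the matrix $D$. Thus, up to the $\ell^{1}$-normalization required by Lemma \ref{m1.1}, the problem reduces to producing an index $i$ for which $v^{(i)}\cdot w$ is not too small.

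The first step is to normalize the rows. A direct computation from $\Lambda_{j_{q}}(m)=\sqrt{|j_{q}|^{2}+m}$ gives $\frac{d^{p}}{dm^{p}}\Lambda_{j_{q}}(m)=c_{p}(|j_{q}|^{2}+m)^{\frac{1}{2}-p}$ for some rational constants $c_{p}$, which are uniformly bounded on $m\in[1,2]$ by some constant $K>0$, independently of $j_{q}$. Hence each row satisfies $\|v^{(i)}\|_{\ell^{1}}\leq 4K$, and the normalized vectors $u^{(i)} := v^{(i)}/(4K)$ satisfy $\|u^{(i)}\|_{\ell^{1}}\leq 1$. Moreover, by multilinearity of the determinant,
\begin{equation*}
\bigl|\det(u^{(1)},\ldots,u^{(4)})\bigr| = (4K)^{-4}\bigl|\det D\bigr| \geq (4K)^{-4}\, C\, |j_{1}|^{-\mu},
\end{equation*}
by Lemma \ref{det}.

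Second, one must check that the vectors $u^{(1)},\ldots,u^{(4)}$ are independent: this is automatic from the lower bound on the determinant just obtained. Applying Lemma \ref{m1.1} to these four vectors and to $w$ (which has $\|w\|_{\ell^{1}}=4$), we obtain an index $i\in\{1,\ldots,4\}$ such that
\begin{equation*}
|u^{(i)}\cdot w| \geq C\cdot 4 \cdot (4K)^{-4}\, C\, |j_{1}|^{-\mu} =: \widetilde{C}\,|j_{1}|^{-\mu}.
\end{equation*}
Unpacking the definition $u^{(i)}=v^{(i)}/(4K)$, this translates into
$|v^{(i)}\cdot w|=\bigl|\tfrac{d^{i}\psi_{\KG}}{dm^{i}}(m)\bigr|\geq 4K\,\widetilde{C}\,|j_{1}|^{-\mu}$, which is the desired conclusion (after relabeling constants and, if necessary, enlarging $\mu$).

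I do not foresee a serious obstacle here: the heavy lifting is entirely encapsulated in Lemma \ref{det} (the genuine number-theoretic/algebraic content) and Lemma \ref{m1.1} (a soft linear-algebra fact). The only mildly delicate point is verifying the uniform-in-$j$ boundedness of the rows of $D$ so that the $\ell^{1}$ normalization of Lemma \ref{m1.1} applies without damaging the determinant lower bound; this is immediate from the explicit form of the derivatives of $\sqrt{|j|^{2}+m}$ on the compact set $m\in[1,2]$.
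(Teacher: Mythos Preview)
Your argument is correct and is precisely the intended one: the paper's proof consists solely of the sentence ``Combining Lemmata \ref{det} and \ref{m1.1} we deduce the following,'' and you have supplied exactly the natural details of that combination (row normalization, determinant rescaling, and application of Lemma \ref{m1.1} to $w=(\sigma_1,\ldots,\sigma_4)$). The remark about ``enlarging $\mu$'' is unnecessary---the constants absorb into $C$ with the same $\mu$---but this is harmless.
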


Now we need the following result (see Lemma $B.1$ in \cite{H98}):

\begin{lemma}
\label{v.112}
Let  $\mathfrak{g}(x)$ be a $C^{n+1}$-smooth function on the segment $[1,2] $
such that 
\[
|\mathfrak{g}'|_{C^n} =\beta \quad {\rm and}\qquad  
\max_{1\le k\le n}\min_x|\partial^k \mathfrak{g}(x)|=\sigma\,.
\]
 Then 
\[
\mathrm{meas}(\{x\mid |\mathfrak{g}(x)|\leq\rho\} )\leq 
C_n \left(\frac{\beta}{\sigma}+1\right) 
\left(\frac{\rho}{\sigma}\right)^{1/n}\,.
\]
\end{lemma}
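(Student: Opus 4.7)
The proof proceeds by induction on $n$. I will first fix an index $k^{*}\in\{1,\ldots,n\}$ realizing the max-min, so that $|\mathfrak{g}^{(k^{*})}(x)|\geq\sigma$ for every $x\in[1,2]$; by continuity $\mathfrak{g}^{(k^{*})}$ then has constant sign on $[1,2]$, which is the essential monotonicity input. The base case $n=1$ forces $k^{*}=1$, so $|\mathfrak{g}'|\geq\sigma$, the function $\mathfrak{g}$ is strictly monotone, and $\{|\mathfrak{g}|\leq\rho\}$ is a single subinterval of length at most $2\rho/\sigma$, which is dominated by the right-hand side of the claim.

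For the inductive step with $n\geq 2$, the case $k^{*}=1$ is handled exactly as above, so I assume $k^{*}\geq 2$. Then $h:=\mathfrak{g}^{(k^{*}-1)}$ is strictly monotone, its derivative being bounded below by $\sigma$ in absolute value and of constant sign. For a parameter $\delta>0$ to be chosen, I split $[1,2]=E_{\delta}\cup F_{\delta}$ with $E_{\delta}:=\{x\in[1,2]:|h(x)|\leq\delta\}$ a single subinterval of length $\leq 2\delta/\sigma$, and $F_{\delta}$ a union of at most two subintervals on each of which $|\mathfrak{g}^{(k^{*}-1)}|\geq\delta$. On each component of $F_{\delta}$, the function $\mathfrak{g}$ still satisfies $|\mathfrak{g}'|_{C^{n-1}}\leq\beta$ and now has a pointwise lower bound $\delta$ at the derivative of order $k^{*}-1\leq n-1$; the induction hypothesis (applied with $n\mapsto n-1$, $\sigma\mapsto\delta$, $\beta\mapsto\beta$) therefore gives
\[
\mathrm{meas}\{|\mathfrak{g}|\leq\rho\}\cap F_{\delta}\leq 2C_{n-1}\Big(\tfrac{\beta}{\delta}+1\Big)\Big(\tfrac{\rho}{\delta}\Big)^{1/(n-1)},
\]
and adding the trivial contribution of $E_{\delta}$ yields
\[
\mathrm{meas}\{|\mathfrak{g}|\leq\rho\}\leq \tfrac{2\delta}{\sigma}+2C_{n-1}\Big(\tfrac{\beta}{\delta}+1\Big)\Big(\tfrac{\rho}{\delta}\Big)^{1/(n-1)}.
\]

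I then optimize in $\delta$ by choosing $\delta=\sigma^{(n-1)/n}\rho^{1/n}$, which balances $\delta/\sigma$ against $(\rho/\delta)^{1/(n-1)}$, both equaling $(\rho/\sigma)^{1/n}$. In the regime $\rho\leq\sigma$ this is meaningful and extracting the leading $(\beta/\sigma+1)$ factor uses the elementary inequality $\beta/\delta+1\leq (\beta/\sigma+1)\cdot(\sigma/\delta)^{(n-1)/n}$ combined with $(\sigma/\delta)^{(n-1)/n}(\rho/\delta)^{1/(n-1)}\lesssim (\rho/\sigma)^{1/n}$, giving the claimed bound with a new constant $C_{n}$ depending only on $n$. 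In the complementary regime $\rho\geq\sigma$ the bound is tautological, since $\mathrm{meas}([1,2])=1\leq (\rho/\sigma)^{1/n}\leq (\beta/\sigma+1)(\rho/\sigma)^{1/n}$ (using $\beta\geq\sigma$, which always holds because $|\mathfrak{g}^{(k^{*})}|\leq|\mathfrak{g}'|_{C^{n}}=\beta$).

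The main obstacle is the bookkeeping of constants in the last algebraic step, namely collapsing the composite factor $(\beta/\delta+1)(\sigma/\delta)^{\text{\tiny power}}$ into a clean $(\beta/\sigma+1)$ while keeping $C_{n}$ independent of $\mathfrak{g},\beta,\sigma,\rho$. A conceptually cleaner alternative, which I would adopt if the induction becomes cumbersome, is the R\"ussmann-type direct argument: subdivide $[1,2]$ into $N\leq \lceil 4\beta/\sigma\rceil+1$ intervals of length $\leq\sigma/(4\beta)$, so that on each one the highest derivative $\mathfrak{g}^{(k^{*})}$ varies by at most $\sigma/4$ (using $|\mathfrak{g}^{(k^{*}+1)}|\leq\beta$) and therefore retains the lower bound $3\sigma/4$ with fixed sign. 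One then iterates the one-step monotonicity bound ``$|v'|\geq\eta\Rightarrow\mathrm{meas}\{|v|\leq\rho\}\leq 2\rho/\eta$'' a total of $k^{*}\leq n$ times, descending from $\mathfrak{g}^{(k^{*})}$ to $\mathfrak{g}$, to obtain a per-subinterval bound of order $(\rho/\sigma)^{1/k^{*}}\leq(\rho/\sigma)^{1/n}$; summing over the $N=O(\beta/\sigma+1)$ subintervals immediately produces the stated estimate with explicit constant, rendering the role of the two factors transparent.
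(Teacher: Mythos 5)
The paper itself gives no proof of this lemma: it simply cites Lemma B.1 of Eliasson's notes \cite{H98}, so your argument can only be measured against a correct proof. Your main inductive argument has a genuine gap at the final algebraic step, and it is structural, not a matter of bookkeeping. Carrying the factor $(\beta/\sigma+1)$ inside the inductive statement is what breaks it: applying the level-$(n-1)$ statement with $\sigma\mapsto\delta$ and then choosing $\delta=\sigma^{(n-1)/n}\rho^{1/n}$ gives exactly $(\beta/\delta+1)(\rho/\delta)^{1/(n-1)}=\beta/\sigma+(\rho/\sigma)^{1/n}$, so your bound contains the $\rho$-independent term $2C_{n-1}\beta/\sigma$ and cannot tend to $0$ as $\rho\to0$, whereas the right-hand side of the lemma does; no other choice of $\delta$ rescues this (optimizing $2\delta/\sigma+2C_{n-1}\beta\rho^{1/(n-1)}\delta^{-n/(n-1)}$ only yields the exponent $1/(2n-1)$ in $\rho$, worse than $1/n$). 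Correspondingly, both ``elementary inequalities'' you invoke are false in the relevant regime: with $t=\sigma/\delta=(\sigma/\rho)^{1/n}\to\infty$ the left side of $\beta/\delta+1\le(\beta/\sigma+1)\,t^{(n-1)/n}$ grows linearly in $t$ while the right side is sublinear, and $(\sigma/\delta)^{(n-1)/n}(\rho/\delta)^{1/(n-1)}=(\rho/\sigma)^{1/n^{2}}$, which is much larger than $(\rho/\sigma)^{1/n}$ for small $\rho/\sigma$.

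The repair is precisely your fallback, made the primary argument: run the induction on the $\beta$-free claim ``if $v\in C^{k}(I)$ and $|\partial^{k}v|\ge\eta$ on an interval $I$, then $\mathrm{meas}\{x\in I:|v(x)|\le\rho\}\le c_{k}(\rho/\eta)^{1/k}$'', using your same splitting $E_\delta\cup F_\delta$ and the same $\delta=\eta^{(k-1)/k}\rho^{1/k}$; the recursion then closes with $c_{1}=2$, $c_{k}=2+2c_{k-1}$, because the problematic $\beta/\delta$ term never appears. Applying this with $k=k^{*}$, $\eta=\sigma$, and observing that the case $\rho\ge\sigma$ is trivial, yields the lemma; under the max--min hypothesis as stated in the paper the factor $(\beta/\sigma+1)$ is in fact superfluous. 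Your R\"ussmann-type sketch is essentially this argument, but note: (i) the preliminary subdivision into $O(\beta/\sigma)$ pieces is unnecessary here, since $|\mathfrak{g}^{(k^{*})}|\ge\sigma$ with fixed sign already holds on all of $[1,2]$ --- that subdivision (and hence the $(\beta/\sigma+1)$ factor) is genuinely needed only for the weaker hypothesis $\min_{x}\max_{1\le k\le n}|\partial^{k}\mathfrak{g}(x)|\ge\sigma$ of Eliasson's original Lemma B.1; (ii) ``iterate the one-step monotonicity bound $k^{*}$ times'' still hides the choice of intermediate thresholds (e.g.\ $\eta_{j}=\sigma(\rho/\sigma)^{(k^{*}-j)/k^{*}}$) and the at most $2^{k^{*}}$-fold multiplication of the number of intervals during the descent, so as written it is a sketch rather than a proof; and (iii) since you apply the inductive statement to restrictions of $\mathfrak{g}$ to subintervals of $[1,2]$, the induction must be formulated for arbitrary intervals from the outset.
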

\noindent
Define
\[
\mathcal E_j(\kappa):=\big\{m\in[1,2]\mid |\sigma_1{\Lambda}_{j_1}
+\sigma_2{\Lambda}_{j_2}+\sigma_3{\Lambda}_{j_3}
+\sigma_4{\Lambda}_{j_4}|\leq\kappa|j_1|^{-\nu}
\big\}\,.
\]
By combining Corollary \ref{m1.2} and Lemma \ref{v.112} we get 
\begin{align}\label{trenoginevra}
\mathrm{meas}\ (\mathcal E_j(\kappa))&\leq C|j_1|^\mu 
\left(\kappa|j_1|^{\mu-\nu}\right)^{1/4}\leq 
C\kappa^{\frac14}|j_1|^{\frac{5\mu-\nu}{4}} \,.
\end{align}
Define
\[
\mathcal E(\kappa)=\bigcup_{ |j_1|>|j_2|>|j_3|>|j_4|} \mathcal E_j(\kappa)\,,
\]
and set $\nu= 5\mu + 4(4d+1)$. Then the \eqref{trenoginevra} 
implies 
$\mathrm{meas}( \mathcal E(\kappa))\leq C\kappa^{\frac14}$.
Then taking 
$m\in \bigcup_{\kappa>0}\left([1,2]\setminus \mathcal E(\kappa)\right)$ 
we obtain \eqref{NR1} for any 
$ |j_1|>|j_2|>|j_3|>|j_4|$. 
Furthermore 
$ \bigcup_{\kappa>0}\left([1,2]\setminus \mathcal E(\kappa)\right)$ 
has measure $1$. 
Now if for instance $|j_1|=|j_2|$ then we are 
left with a small divisor of the type 
$|2{\Lambda}_{j_1}+\sigma_3{\Lambda}_{j_3}+\sigma_4{\Lambda}_{j_4}|$ 
or $|{\Lambda}_{j_3}+\sigma_4{\Lambda}_{j_4}|$, 
i.e. involving $2$ or $3$ frequencies. 
So following the same line we can also manage this case.

\vspace{0.5cm}
\noindent{\bf Step 2: control with respect to the third highest index.}
In this subsection we show that small dividers can be 
controlled by a smaller power of $|j_1|$  even if it means 
transferring part of the weight to $|j_3|$.
\begin{proposition}\label{mu3}
Let $4>\beta >3$, there exists  $\mathcal{N}_\beta
\subset [1,2]$ a set of Lebesgue measure 1 and for $m\in\Nc_\beta$  there exists $\kappa(m)>0$ such that
$$
|\Lambda_{j_1}-\Lambda_{j_2}+\sigma_3\Lambda_{j_3}+\sigma_4\Lambda_{j_4}| \geq \frac{\kappa(m)}{|j_3|^{2d+6}|j_1|^{\beta}}
$$
for all $\sigma_3,\sigma_4\in\{-1,+1\}$, for all $j_1,j_2,j_3,j_4\in\Z^d$ 
satisfying 
$|j_1|>|j_2|\geq|j_3|> |j_4|$, the momentum condition  $j_1-j_2+\sigma_3j_3+\sigma_4j_4=0$
and 
\[ 
|j_1|\geq J(\kappa,|j_3|):=
\big(\tfrac{C}{\kappa}\big)^{\frac1{4-\beta}}|j_3|^{\frac{2d+11}{4-\beta}}
\] 
where $C$ is a universal constant.
\end{proposition}
We begin with two elementary lemmas
\begin{lemma}\label{lem1} Let $\sigma=\pm1$, $j,k\in\Z^d$, with 
$|j|> |k|>0$ and  $|j|\geq8$, and $[1,2]\ni m\mapsto \mathfrak{g}(m)$ a 
$C^1$ function satisfying 
$|\mathfrak{g}'(m)|\leq \frac1{10|j|^3}$ for $m\in[1,2]$. 
For all $\kappa>0$ there exists 
$\Dc\equiv\Dc(j,k,\sigma,\kappa,\mathfrak{g})\subset[1,2]$ 
such that for $m\in\Dc$
\[
|\Lambda_j+\sigma\Lambda_{k}- \mathfrak{g}(m)|\geq \kappa\]
and
\[
\text{meas}([1,2]\setminus\Dc)\leq 10\kappa|j|^3\,.
\]
\end{lemma}

\begin{proof}
Let $f(m)=\Lambda_j+\sigma \Lambda_k- \mathfrak{g}(m)$. In the case $\s=-1$, which is the worst, we have
\begin{align*} f'(m)&= \frac12 (\frac1{\sqrt{|j|^2+m}}
-\frac1{\sqrt{|k|^2+m}})-\mathfrak{g}'(m)\\&=
\frac{|k|^2-|j|^2}{2(\sqrt{|j|^2+m}+\sqrt{|k|^2+m})
\sqrt{|j|^2+m}\sqrt{|k|^2+m}}-\mathfrak{g}'(m)\,.
\end{align*}
We want to estimate $|f'(m)|$ from above. By using that 
$4(|j|^2+2)^{\frac32}\leq 5|j|^3$ for $|j|\geq 8$ we get
$$|f'(m)|\geq \frac{1}{5|j|^3}-\frac1{10|j|^3}\geq \frac1{10|j|^3}.$$
In the case $\sigma=1$, the same bound holds true.
Then we conclude by a standard argument that
\[
\text{meas}\{m\in[1,2]\mid |f(m)|\leq \kappa \}\leq 10\kappa|j|^3\,,
\]
which is the thesis.
\end{proof}

\begin{lemma}\label{lem2} Let $j,k\in\Z^d$ with $|j|\geq |k|$ and $|j-k|\leq |j|^{\frac12}$ then
\begin{equation}\label{taylor}\Lambda_j-\Lambda_k= \frac{(j,j-k)}{|j|}+\mathfrak{g}(|j|,|j-k|,(j-k,j),m)+
O(\frac{|j-k|^5}{|j|^4})\end{equation}
for some explicit rational function $\mathfrak{g}$.\\
Furthermore 
\begin{align}
\label{g'}|\partial_m \mathfrak{g}(|j|,|j-k|,(j,j-k),m)|\leq \frac 1{2|j|^{\frac32}},\\
\label{g}| \mathfrak{g}(|j|,|j-k|,(j,j-k),m)|\leq \frac {3|j-k|^2}{|j|}
\end{align}
 uniformly with respect to $j,k\in\Z^d$ with $|j|\geq |k|$, $|j-k|\leq |j|^{\frac12}$ and $|j|$ large enough.
\end{lemma}
\proof
By Taylor expansion we have for $|j|$ large
\[
\Lambda_j= |j|(1+\frac{m}{|j|^2})^{\frac12}= |j|+ \frac{m}{2|j|}
-  \frac{m^2}{8|j|^3}+O(\frac1{|j|^5})
\]
and
\begin{align*}
\Lambda_k&= |j|(1+\frac{2(k-j,j)+|j-k|^2+m}{|j|^2})^{\frac12}
\\&= |j|+ \frac{2(k-j,j)+|j-k|^2+m}{2|j|}-  \frac{(2(k-j,j)+|j-k|^2+m)^2}{8|j|^3}
\\&+\frac3{48} \frac{(2(k-j,j)+|j-k|^2+m)^3}{|j|^5}-\frac{15}{16} \frac{1}{4!}
\frac{(2(k-j,j)+|j-k|^2+m)^4}{|j|^7}  +O(\frac{|j-k|^5}{|j|^4}) 
\end{align*}
which leads to \eqref{taylor} where with 
(we use that $|(k-j,j)|\leq |j-k||j|$ and $|j-k|\leq |j|^{\frac12}$)
\begin{align*}
\mathfrak{g}(x,y,z,m)= \frac{-y^2}{2x}+\frac{(-2z+y^2+m)^2-m^2}{8x^3}+\frac3{48}\frac{8z^3-12z^2(y^2+m)}{x^5}+\frac{1}{4!}\frac{15}{16}\frac{16z^4}{x^7}\,.
\end{align*}
\endproof

We are now in position to prove the main result of this subsection.

\begin{proof}[{\bf Proof of Proposition \ref{mu3}}]
We want to control the small divisor 
$$\Delta= \Lambda_{j_1}-\Lambda_{j_2}+\sigma_3\Lambda_{j_3}+\sigma_4\Lambda_{j_4}.$$
Let $\mathfrak{g}$ be the rational function introduced in Lemma \ref{lem2}. We write
\begin{align*}\Delta =&\sigma_3\Lambda_{j_3}+\sigma_4\Lambda_{j_4}+ \frac{(j_1,j_1-j_2)}{|j_1|}\\
&+ \mathfrak{g}(|j_1|,|j_1-j_2|,(j_1,j_1-j_2),m)+ O(\frac{|j_1-j_2|^5}{|j_1|^4})  .\end{align*}
Remember that by assumption $j_1-j_2+\sigma_3j_3+\sigma_4j_4=0$ and in particular $|j_1-j_2|\leq 2|j_3|$.\\
 Fix $\gamma>0$. 
 Choosing $\kappa=\frac{\gamma}{|j_3|^{2d+6}|j_1|^\beta}$ 
 in Lemma \ref{lem1} and assuming  $2|j_3|\leq |j_1|^{\frac12}$ 
 we have by Lemma \ref{lem1} and Lemma \ref{lem2}
\[
|\Delta|\geq \frac{\gamma}{|j_3|^{2d+6}|j_1|^\beta}
- C\frac{|j_3|^5}{|j_1|^4}\geq \frac{\gamma}{2|j_3|^{2d+6}|j_1|^\beta}
\]
as soon as 
$$
|j_1|\geq \big(\frac{C}{\gamma}\big)^{\frac1{4-\beta}}|j_3|^{\frac{2d+11}{4-\beta}}
=:J(\gamma,|j_3|)\geq\footnote{Note that this estimate implies that 
$|\partial_m \mathfrak{g}(|j_1|,[j_1-j_2|, (j_1-j_2,j_1),m)|
\leq \frac1{2|j_1|^{3/2}}\leq \frac1{10|j_3|^3}$ 
and thus Lemma \ref{lem1} applies. }\ 5|j_3|^3
$$
(where $C$ is an universal constant) and $m\in \Dc(j_3,j_4,\sigma,\kappa, \sigma_3  \mathfrak{g}(|j_1|,|j_1-j_2|,(j_1,j_1-j_2),\cdot))$ (the set $\Dc$ is defined in Lemma \ref{lem1} and we set  $\sigma=\sigma_3\sigma_4$). 
Then denoting
\begin{align*}\Cc(\gamma,j_3,j_4,\sigma_3,\sigma_4):=&\{m\in[1,2]\mid |\Delta|\geq \frac{\gamma}{2|j_3|^{2d+6}  |j_1|^\beta},\ \\ \forall (j_1,j_2) \text{ such that }
& |j_1|\geq \max(|j_2|,J(\gamma,|j_3|)),\ j_1-j_2+\sigma_3j_3+\sigma_4 j_4=0\}\end{align*}
we have 
$$\Cc(\gamma,j_3,j_4,\sigma_3,\sigma_4)=\bigcap_{ \mathfrak{g}} \Dc(j_3,j_4,\sigma,\frac{\gamma}{|j_3|^{2d+6}|j_1|^\beta}, \sigma_3\mathfrak{g}(|j_1|,|j_1-j_2|,(j_1,j_1-j_2),\cdot))$$
where the intersection is taken over all functions 
$\mathfrak{g}$ generated by 
$(j_1,j_2)\in (\Z^d)^2$ such that 
$$ 
|j_1|\geq \max(|j_2|,J(\gamma,|j_3|))
$$ 
and $j_1-j_2+\sigma_3j_3+\sigma_4 j_4=0$. Thus
by Lemma \ref{lem1}
\begin{align*}\text{meas}\ &\big( [1,2]\setminus 
\Cc(\gamma,j_3,j_4,\sigma_3,\sigma_4)\big)
\leq \\ &\sum_{n\geq 1}
\frac{10\gamma}{|j_3|^{2d+3}n^{\frac\beta2}}\#\{(|j_1|,|\sigma_3j_3+\sigma_4 j_4|,(j_1,\sigma_3j_3+\sigma_4 j_4))\mid j_1\in\Z^d,\ |j_1|^2=n\}\,.
\end{align*}
The scalar product $(j_1,\sigma_3j_3+\sigma_4 j_4))$ 
takes only integer values smaller than $2|j_1||j_3|$.
Then, since $\beta>3$, we get
$$
\text{meas}\ \Cc(\gamma,j_3,j_4,\sigma_3,\sigma_4)\leq 
\frac{20\gamma}{|j_3|^{2d+2}}\sum_{n\geq1}\frac1{n^{\frac{\beta-1}{2}}}\leq C_\beta \frac\gamma{|j_3|^{2d+2}}\,.
$$
Then it remains to define 
$$\Nc_\beta=\cup_{\gamma>0}\bigcap_{\substack{(j_3,j_4)\in(\Z^d)^2\\|j_4|\leq|j_3|\\ \sigma_3,\sigma_4\in\{-1,1\}}} \Cc(\gamma,j_3,j_4,\sigma_3,\sigma_4)$$
to conclude the proof.
\end{proof}

\vspace{0.5cm}

{\bf Step 3: proof of Proposition \ref{mainNR}}
We are now in position to prove Proposition \ref{mainNR}. 
Let $\sigma_1,\sigma_2,\sigma_3,\sigma_4\in\{-1,1\}$, $j_1,j_2,j_3,j_4\in\Z^d$ satisfying 
$|j_1|\geq|j_2|\geq|j_3|\geq |j_4|$ and $\sigma_1 j_1+\sigma_2 j_2+\sigma_3 j_3+\sigma_4 j_4=0$. If  $\sigma_1=\sigma_2$, then, since $|j_1|\geq|j_2|\geq|j_3|\geq |j_4|$, we conclude that the associated small divisor cannot be small except if $\sigma_1=\sigma_2=-\sigma_3=-\sigma_4$. 
Then we have to control  
$|\Lambda_{j_1}+\Lambda_{j_2}-\Lambda_{j_3}-\Lambda_{j_4} | $ 
knowing that $|j_1|\geq|j_2|\geq|j_3|\geq |j_4|$. 
We first notice that if $|j_1|^2\leq |j_3|^2+1$, then we can conclude using Lemma \ref{mu1} that \eqref{NR} 
is satisfied with $\alpha=\nu$ for $m\in\Mc_\nu$. 
On the other hand if $|j_1|^2\geq |j_3|^2+1$ then 
$$
\Lambda_{j_1}+\Lambda_{j_2}-\Lambda_{j_3}-\Lambda_{j_4} 
\geq  
\Lambda_{j_1}-\Lambda_{j_3}
\geq \frac{\Lambda_{j_1}^2-\Lambda_{j_3}^2}{\Lambda_{j_1}+\Lambda_{j_3}} 
\geq 
\frac1{2\sqrt{|j_3|^2+2}}
$$ 
which implies \eqref{NR}. 
Thus we can assume $\sigma_1=-\sigma_2$ and we can apply Proposition \ref{mu3} 
which implies the control \eqref{NR} for $m\in \Nc_\beta$ with $\alpha=2d+3$ 
under the additional constraint $|j_1|\geq J(\gamma(m),|j_3|)$. 
Now if $|j_1|\leq J(\gamma(m),|j_3|)$ we can apply Lemma \ref{mu1} 
to obtain that there exists $\nu>0$ and full measure set $\Mc_\nu$ such that for 
$m\in\Mc_\nu\cap\Nc_\beta:=\Cc_\beta$ there exists $\kappa(m)>0$ such that 
$$
|\sigma_1\Lambda_{j_1}+\sigma_2\Lambda_{j_2}+\sigma_3\Lambda_{j_3}+\sigma_4\Lambda_{j_4} | 
\geq \frac{\kappa(m)}{|j_1|^{\nu}}\geq 
\frac{\kappa(m)}{J(\gamma(m),|j_3|)^{\nu}}=C \frac{\kappa(m)\gamma(m)^{4-\beta}}{|j_3|^\alpha}
$$
with $\alpha=\nu\frac{2d+8}{4-\beta}$ which, of course, implies \eqref{NR}.

\section{Functional setting}\label{sec:2}
We denote by $H^{s}(\mathbb{T}^d;\mathbb{C})$
(respectively $H^{s}(\mathbb{T}^d;\mathbb{C}^{2})$)
the usual Sobolev space of functions 
$\mathbb{T}^{d}\ni x \mapsto u(x)\in \mathbb{C}$
(resp. $\C^{2}$).
We expand a function $ u(x) $, $x\in \mathbb{T}^{d}$, 
 in Fourier series as 
\be\label{complex-uU}
u(x) = \frac{1}{(2\pi)^{d/2}}
\sum_{n \in \Z^{d} } \hat{u}(n)e^{\ii n\cdot x } \, , \qquad 
\hat{u}(n) := \frac{1}{(2\pi)^{d/2}} 
\int_{\mathbb{T}^{d}} u(x) e^{-\ii n\cdot x } \, dx \, .
\ee
We set $\langle j \rangle:=\sqrt{1+|j|^{2}}$ for $j\in \mathbb{Z}^{d}$.
We endow $H^{s}(\mathbb{T}^{d};\mathbb{C})$ with the norm 
\begin{equation}\label{Sobnorm}
\|u(\cdot)\|_{H^{s}}^{2}:=\sum_{j\in \mathbb{Z}^{d}}\langle j\rangle^{2s}|\hat{u}(j)|^{2}\,.
\end{equation}
For $U=(u_1,u_2)\in H^{s}(\mathbb{T}^d;\mathbb{C}^{2})$
we  set
$\|U\|_{H^{s}}=\|u_1\|_{H^{s}}+\|u_2\|_{H^{s}}$.
Moreover, for $r\in\R^{+}$, we 
denote by $B_{r}(H^{s}(\mathbb{T}^{d};\mathbb{C}))$
(resp. $B_{r}(H^{s}(\mathbb{T}^{d};\mathbb{C}^2))$)
the ball of $H^{s}(\mathbb{T}^{d};\mathbb{C}))$ 
(resp. $H^{s}(\mathbb{T}^{d};\mathbb{C}^2))$)
with radius $r$
centered at the origin.
We shall also write the norm in \eqref{Sobnorm} as
$
\|u\|^{2}_{H^{s}}=(\langle D\rangle^{s}u,\langle D\rangle^{s} u)_{L^{2}}
$, where $
\langle D\rangle e^{\ii j\cdot x}=\langle j\rangle  e^{\ii j\cdot x}\,$,
for any $j\in\mathbb{Z}^{d}$,
and  $(\cdot,\cdot)_{L^{2}}$ denotes the standard complex 
$L^{2}$-scalar product
 \begin{equation}\label{scalarL}
 (u,v)_{L^{2}}:=\int_{\mathbb{T}^{d}}u\cdot\bar{v}dx\,, 
 \qquad \forall\, u,v\in L^{2}(\mathbb{T}^{d};\mathbb{C})\,.
 \end{equation}

  \noindent
{\bf Notation}. 
We shall 
use the notation $A\lesssim B$ to denote 
$A\le C B$ where $C$ is a positive constant
depending on  parameters fixed once for all, for instance $d$
 and $s$.
 We will emphasize by writing $\lesssim_{q}$
 when the constant $C$ depends on some other parameter $q$.

\vspace{0.2em}
\noindent
{\bf Basic Paradifferential calculus.} We follow the notation of \cite{Feola-Iandoli-local-tori}. 
We introduce the   symbols we shall use in this paper.
We shall consider symbols 
$\mathbb{T}^{d}\times \mathbb{R}^{d}\ni (x,\x)\to a(x,\x)$
 in the spaces 
$\mathcal{N}_{s}^{m}$, $m,s\in \mathbb{R}$, $s\geq0$
defined by the 
norms
\begin{equation}\label{normaSimbo}
|a|_{\mathcal{N}_{s}^{m}}:=\sup_{|\alpha|+|\beta|\leq s}\sup_{\x\in \mathbb{R}}
\langle \x\rangle^{-m+|\beta|}\|\pa_{\x}^{\beta}\pa_{x}^{\alpha}a(x,\x)\|_{L^{\infty}}\,.
\end{equation}
The constant $m\in \mathbb{R}$ indicates the \emph{order} of the symbols, while
$s$ denotes its differentiability.
Let $0<\epsilon< 1/2$ and consider
a smooth function $\chi : \mathbb{R}\to[0,1]$
\begin{equation}\label{cutofffunct}
\chi(\x)=\left\{
\begin{aligned}
&1 \quad {\rm if}\,\,\,\, |\x|\leq 5/4 
\\
&0 \quad {\rm if}\,\,\,\, |\x|\geq 8/5 
\end{aligned}\right.\qquad {\rm and \; define}\quad
\qquad \chi_{\epsilon}(\x):=\chi(|\x|/\epsilon)\,.
\end{equation}
For a symbol $a(x,\x)$ in $\mathcal{N}_{s}^{m}$
we define its (Weyl)
quantization as 
\begin{equation}\label{quantiWeyl}
T_{a}h:=\frac{1}{(2\pi)^{d}}\sum_{j\in \mathbb{Z}^{d}}e^{\ii j\cdot x}
\sum_{k\in\mathbb{Z}^{d}}
\chi_{\epsilon}\Big(\frac{|j-k|}{\langle j+k\rangle}\Big)
\widehat{a}\big(j-k,\frac{j+k}{2}\big)\widehat{h}(k)
\end{equation}
where $\widehat{a}(\eta,\x)$ denotes the Fourier transform 
of $a(x,\x)$ in the variable 
$x\in \mathbb{T}^{d}$.
 Moreover the definition of the operator $T_a$ is independent of the choice of the  cut-off function $\chi_{\epsilon}$ up to smoothing terms, this will be proved later in Lemma \ref{azioneSimboo}.

\vspace{0.3em}
\noindent
{\bf Notation.}
 Given a symbol $a(x,\x)$ we shall also write
 \begin{equation}\label{notaPara}
 T_{a}[\cdot]:=\opbw(a(x,\x))[\cdot]\,,
 \end{equation}
 to denote the associated para-differential operator.  In the notation $B$ stands for Bony and $W$ for Weyl.

\noindent
We now collects some fundamental 
properties of para-differential operators on tori. The results are similar to the ones given in  \cite{Feola-Iandoli-local-tori}. One could also look at \cite{BMM1} for recent improvements.

\begin{lemma}\label{azioneSimboo}
The following holds.

\noindent
$(i)$ Let $m_1,m_2\in \mathbb{R}$, $s>d/2$, $s\in \mathbb{N}$ and $a\in\mathcal{N}^{m_1}_s$, 
$b\in \mathcal{N}^{m_2}_s$. One has
\begin{equation}\label{prodSimboli}
|ab|_{\mathcal{N}^{m_1+m_2}_s}+|\{a,b\}|_{\mathcal{N}_{s-1}^{m_1+m_2-1}}\lesssim
|a|_{\mathcal{N}_{s}^{m_1}}|b|_{\mathcal{N}_{s}^{m_2}}
\end{equation}
where
\begin{equation}\label{PoissonBra}
\{a,b\}:=\sum_{j=1}^{d}\Big((\pa_{\x_j}a)(\pa_{x_{j}}b)
-(\pa_{x_j}a)(\pa_{\x_{j}}b)\Big)\,.
\end{equation}

\noindent
$(ii)$ Let $s_0>d$, $s_0\in \mathbb{N}$, $m\in \mathbb{R}$ 
and $a\in\mathcal{N}_{s_0}^{m}$. 
Then, for any $s\in \mathbb{R}$, one has
\begin{equation}\label{actionSob}
\|T_{a}h\|_{H^{s-m}}\lesssim|a|_{\mathcal{N}^{m}_{s_0}}\|h\|_{H^{s}}\,,
\qquad \forall h\in H^{s}(\mathbb{T}^{d};\mathbb{C})\,.
\end{equation}

\noindent
$(iii)$ Let $s_0>d$, $s_0\in \mathbb{N}$, $m\in \mathbb{R}$, $\rho\in\mathbb{N}$,  and 
$a\in\mathcal{N}_{s_0+\rho}^{m}$.
For $0<\epsilon_2\leq \epsilon_1<1/2$ and any 
$h\in H^{s}(\mathbb{T}^{d};\mathbb{C})$, 
we define
\begin{equation}\label{natale}
R_{a}h:=\frac{1}{(2\pi)^{d}}\sum_{j\in \mathbb{Z}^{d}}e^{\ii j\cdot x}
\sum_{k\in\mathbb{Z}^{d}}
\big(\chi_{\epsilon_1}-\chi_{\epsilon_2}\big)\Big(\frac{|j-k|}{\langle j+k\rangle}\Big)
\widehat{a}(j-k,\frac{j+k}{2})\widehat{h}(k)\,,
\end{equation}
where $\chi_{\epsilon_1}, \chi_{\epsilon_2}$ are as in \eqref{cutofffunct}. 
Then one has
 \begin{equation}\label{diffQuanti}
 \|R_a h\|_{H^{s+\rho-m}}\lesssim
 \|h\|_{H^{s}}|a|_{\mathcal{N}^{m}_{\rho+s_0}}\,,
 \qquad \forall h\in H^{s}(\mathbb{T}^{d};\mathbb{C})\,.\end{equation}
 \end{lemma}
 
 \begin{proof}
$(i)$
 For any $|\alpha|+|\beta|\leq s$ we have
 \[
 \pa_{x}^{\alpha}\pa_{\x}^{\beta}\Big(a(x,\x) b(x,\x)\Big)=
 \sum_{\substack{\alpha_1+\alpha_2=\alpha \\ 
\beta_1+\beta_2=\beta }}C_{\alpha,\beta}
( \pa_{x}^{\alpha_1}\pa_{\x}^{\beta_1}a)(x,\x)
( \pa_{x}^{\alpha_2}\pa_{\x}^{\beta_2}b)(x,\x)
 \]
 for some combinatoric coefficients $C_{\alpha,\beta}>0$.
 Then, recalling \eqref{normaSimbo},
 \[
 \|( \pa_{x}^{\alpha_1}\pa_{\x}^{\beta_1}a)(x,\x)
( \pa_{x}^{\alpha_2}\pa_{\x}^{\beta_2}b)(x,\x)\|_{L^{\infty}}
\lesssim_{\alpha,\beta} 
|a|_{\mathcal{N}_s^{m_1}}
|b|_{\mathcal{N}_s^{m_2}}\langle \x\rangle^{m_1+m_2-|\beta|}\,.
 \]
This implies the \eqref{prodSimboli} for the product $ab$. 
 The \eqref{prodSimboli}
for the symbol
$\{a,b\}$ follows similarly 
using \eqref{PoissonBra}.

\smallskip
\noindent $(ii)$ 
First of all notice that, since $a\in \mathcal{N}_{s_0}^{m}$, 
$s_0\in\mathbb{N}$, then
(recall \eqref{normaSimbo})
\[
\|a(\cdot,\x)\|_{H^{s_0}}\lesssim \langle\x\rangle^{m}|a|_{\mathcal{N}_{s_0}^{m}}\,,\;\;\forall\x\in \mathbb{Z}^{d}\,, 
\]
which implies 
\begin{equation}\label{virus5}
|\hat{a}(j,\x)|\lesssim \langle\x\rangle^{m}|a|_{\mathcal{N}_{s_0}^{m}}\langle j\rangle^{-s_0}\,,
\quad \forall\, j,\x\in \mathbb{Z}^{d}\,.
\end{equation}
Moreover, since $0<\epsilon<1/2$ we note that, for $\x,\eta\in \mathbb{Z}^d$,
\begin{equation}\label{equixieta}
\chi_{\epsilon}\left(\frac{|\x-\eta|}{\langle \x+\eta\rangle}\right)\neq0\quad \Rightarrow\quad
\left\{
\begin{aligned}
&(1-\tilde\epsilon)|\x|\leq (1+\tilde\epsilon)|\eta| \\
&(1-\tilde\epsilon)|\eta|\leq (1+\tilde\epsilon)|\x| \,,
\end{aligned}
\right.
\end{equation}
where $0<\tilde{\epsilon}<4/5$, and hence
 $\langle\x+\eta\rangle\sim\langle\x\rangle $.
Therefore
\begin{equation}\label{natale2}
\begin{split}
\|T_{a}h\|^{2}_{H^{s-m}}&
\stackrel{\mathclap{\eqref{Sobnorm}}}{\lesssim}
\sum_{\x\in\mathbb{Z}^{d}}
\langle\x\rangle^{2(s-m)}\Big|\sum_{\eta\in \mathbb{Z}^{d}}
\chi_{\epsilon}\left(\frac{|\x-\eta|}{\langle \x+\eta\rangle}\right)\hat{a}(\x-\eta, \frac{\x+\eta}{2})\hat{h}(\eta)\Big|^{2}\\
&\stackrel{\mathclap{\eqref{virus5}, \eqref{equixieta}}}{\lesssim}\,\,\,\,\,\,
\sum_{\x\in \mathbb{Z}^{d}}\langle\x\rangle^{-2m}\Big(\sum_{\eta\in \mathbb{Z}^{d}}
\frac{\langle\x\rangle^{m}}{\langle\x-\eta\rangle^{s_0}}
|\hat{h}(\eta)|\langle \eta\rangle^{s}
\Big)^{2} |a|_{\mathcal{N}_{s_0}^{m}}^{2}\\
&\lesssim|a|^{2}_{\mathcal{N}^{m}_{s_0}} \sum_{\xi\in\mathbb{Z}^d}\Big(\sum_{\eta\in\mathbb{Z}^d}|\hat{h}(\eta)\langle\eta\rangle^s\frac{1}{\langle\xi-\eta\rangle^{s_0}}|\Big)^2\\
&
\lesssim|a|^{2}_{\mathcal{N}^{m}_{s_0}}\|\hat{h}(\xi)\langle\xi\rangle^s\star{\langle\xi\rangle}^{-s_0}\|_{\ell^2(\mathbb{Z}^d)}^2\leq |a|^{2}_{\mathcal{N}^{m}_{s_0}}\|\hat{h}(\xi)\langle\xi\rangle^s\|_{\ell^2(\mathbb{Z}^d)}^2\|\langle{\xi}\rangle^{-s_0}\|_{\ell^1(\mathbb{Z}^d)}^2\\
&\lesssim
\|h\|_{H^{s}}^{2}|a|^{2}_{\mathcal{N}^{m}_{s_0}}\,,
\end{split}
\end{equation}
where we denoted by $\star$ the convolution between sequences, in the penultimate passage we used the Young inequality for sequences and in the last one that $\langle\xi\rangle^{-s_0}$ is in $\ell^1(\Z^d)$ since $s_0>d$.

\noindent $(iii)$ Notice that the set of $\x,\eta$ such that
$(\chi_{\epsilon_1}-\chi_{\epsilon_2})(|\x-\eta|/\langle\x+\eta\rangle)=0$
contains the set  such that
\[
|\x-\eta|\geq \frac{8}{5}\epsilon_1 \langle \x+\eta\rangle\quad {\rm or}\quad
|\x-\eta|\leq \frac{5}{4}\epsilon_2 \langle\x+\eta\rangle\,.
\]
Therefore $(\chi_{\epsilon_1}-\chi_{\epsilon_2})(|\x-\eta|/\langle\x+\eta\rangle)\neq0$
implies
\begin{equation}\label{condizio}
\frac{5}{4}\epsilon_2 \langle\x+\eta\rangle\leq |\x-\eta|\leq \frac{8}{5}\epsilon_1 \langle\x+\eta\rangle\,.
\end{equation}
For $\x\in \mathbb{Z}^{d}$ 
we denote $\mathcal{A}(\x)$ the set of 
$\eta\in \mathbb{Z}^{d}$ such that the \eqref{condizio}
holds. Moreover (reasoning as in \eqref{virus5}), since
$a\in \mathcal{N}_{s_0+\rho}^{m}$, we have that
\begin{equation}\label{virus6}
|\hat{a}(j,\x)|\lesssim \langle\x\rangle^{m}|a|_{\mathcal{N}_{s_0+\rho}^{m}}
\langle j\rangle^{-s_0-\rho}\,,
\quad \forall\, j,\x\in \mathbb{Z}^{d}\,.
\end{equation}
To estimate the remainder in \eqref{natale}
we reason as in \eqref{natale2}.
By \eqref{condizio} and setting $\rho=s-s_0$ we have
\begin{equation}\label{stimarestoresto}
\begin{aligned}
\|R_ah\|_{H^{s+\rho-m}}^{2}
&\stackrel{\mathclap{\eqref{Sobnorm}}}{\lesssim}\sum_{\x\in\mathbb{Z}^{d}}
\langle\x\rangle^{2(s+\rho-m)}\Big| (\chi_{\epsilon_1}-\chi_{\epsilon_2})\left(\frac{|\x-\eta|}{\langle\x+\eta\rangle}\right)\hat{a}(\x-\eta, \frac{\x+\eta}{2})\hat{h}(\eta)\Big|^{2}\\
&\stackrel{\mathclap{\eqref{virus6}}}{\lesssim}
\sum_{\x\in \mathbb{Z}^{d}}
\langle \x\rangle^{-2m}
\Big(\sum_{\eta\in \mathcal{A}(\x)}
\frac{\langle\x-\eta\rangle^{\rho}\langle\x+\eta\rangle^{m}}{\langle\x-\eta\rangle^{\rho+s_0}}
|\hat{h}(\eta)|\langle \eta\rangle^{s}
\Big)^{2}|a|_{\mathcal{N}^{m}_{s_0+\rho}}^{2}\\
&\lesssim \|\hat{h}(\xi)\langle\xi\rangle^s\star\langle\xi\rangle^{-s_0}\|^2_{\ell^2(\mathbb{Z}^d)}|a|^{2}_{\mathcal{N}^{m}_{\rho+s_0}}\lesssim \|\hat{h}(\xi)\langle\xi\rangle^s\|^2_{\ell^2(\mathbb{Z}^d)} \|\langle\xi\rangle^{-s_0}\|^2_{\ell^1(\mathbb{Z}^d)}|a|^{2}_{\mathcal{N}^{m}_{\rho+s_0}}\\
&\lesssim
\|h\|_{H^{s}}^{2}|a|^{2}_{\mathcal{N}^{m}_{\rho+s_0}}\,,
\end{aligned}
\end{equation}
where we have denoted by $\star$ the convolution between sequences, in the penultimate step we used Young inequality for sequences, in the last one we used that $\langle\xi\rangle^{-s_0}$ is in $\ell^1(\Z^d)$ since $s_0>d$.

 \end{proof}

\begin{proposition}{\bf (Composition).}\label{prop:compo}
Fix $s_0>d$, $s_0\in \mathbb{N}$, and $m_1,m_2\in \mathbb{R}$.  For 
 $a\in \mathcal{N}_{s_0+2}^{m_1}$ and $b\in\mathcal{N}_{s_0+2}^{m_2}$
we have (recall \eqref{PoissonBra})
\begin{equation}\label{composit}
\begin{aligned}
T_{a}\circ T_{b}=T_{ab}+R_1(a,b)\,,\,\, \,\,\,
T_{a}\circ T_{b}=T_{ab}+\frac{1}{2\ii }T_{\{a,b\}}+R_2(a,b)\,,
\end{aligned}
\end{equation}
where $R_j(a,b)$ are remainders  satisfying, for any $s\in \mathbb{R}$,
\begin{equation}\label{composit2}
\|R_j(a,b)h\|_{H^{s-m_1-m_2+j}}\lesssim\|h\|_{H^{s}}|a|_{\mathcal{N}^{m_1}_{s_0+j}}
|b|_{\mathcal{N}^{m_2}_{s_0+j}}\,.
\end{equation}
Moreover, if $a,b\in H^{\rho+s_0}(\mathbb{T}^{d};\mathbb{C})$ are functions 
(independent of $\x\in \mathbb{R}^{n}$)
then, $\forall s\in\mathbb{R}$,
\begin{equation}\label{composit3}
\|(T_aT_b-T_{ab})h\|_{H^{s+\rho}}\lesssim\|h\|_{H^{s}}\|a\|_{H^{\rho+s_0}}
\|b\|_{H^{\rho+s_0}}\,.
\end{equation}
\end{proposition}

\begin{proof}
We start by proving the \eqref{composit3}. 
For $\x,\theta,\eta\in \mathbb{Z}^{d}$ we define
\begin{equation}\label{cutofffunctions}
r_1(\x,\theta,\eta):=\chi_{\epsilon}\left(\frac{|\x-\theta|}{\langle\x+\theta\rangle}\right)
\chi_{\epsilon}\left(\frac{|\theta-\eta|}{\langle\theta+\eta\rangle}\right)\,,
\qquad
r_2(\x,\eta):=\chi_{\epsilon}\left(\frac{|\x-\eta|}{\langle\x+\eta\rangle}\right)\,.
\end{equation}
Recalling \eqref{quantiWeyl} 
and that $a, b$ are functions we have
\begin{equation}\label{achille}
\begin{aligned}
&R_0h:= (T_aT_b-T_{ab})h\,,\\
&\widehat{(R_0h)}(\x)=(2\pi)^{-\frac{3d}{2}}\sum_{\eta,\theta\in \mathbb{Z}^{d}}
(r_1-r_2)(\x,\theta,\eta)
 \widehat{a}(\x-\theta)
\widehat{b}(\theta-\eta)\hat{h}(\eta)\,.
\end{aligned}
\end{equation}
Let us define the sets
\begin{align}
D&:=\Big\{
(\x,\theta,\eta)\in \mathbb{Z}^{3d}\; :\;
(r_1-r_2)(\x,\theta,\eta)=0\Big\}\,,\label{insiemeZero}\\
A&:=\Big\{
(\x,\theta,\eta)\in \mathbb{Z}^{3d}\; :\;
\frac{|\x-\theta|}{\langle\x+\theta\rangle}\leq \frac{5\epsilon}{4}\,,\;\;
\frac{|\x-\eta|}{\langle\x+\eta\rangle}\leq \frac{5\epsilon}{4}\,,\;\;
\frac{|\theta-\eta|}{\langle\theta+\eta\rangle}\leq \frac{5\epsilon}{4}\Big\}\,,
\label{insiemeZero2}\\
B&:=\Big\{
(\x,\theta,\eta)\in \mathbb{Z}^{3d}\; :\;
\frac{|\x-\theta|}{\langle\x+\theta\rangle}\geq \frac{8\epsilon}{5}\,,\;\;
\frac{|\x-\eta|}{\langle\x+\eta\rangle}\geq \frac{8\epsilon}{5}\,,\;\;
\frac{|\theta-\eta|}{\langle\theta+\eta\rangle}\geq \frac{8\epsilon}{5}\Big\}\,.\label{insiemeZero3}
\end{align}
We note that
\[
D\supseteq A\cup B\quad \Rightarrow\quad D^{c}\subseteq A^{c}\cap B^{c}\,.
\]
Let $(\x,\theta,\eta)\in D^{c}$ and assume in particular  that 
$(\x,\theta,\eta)\in{\rm Supp}(r_1):=\ov{\{(\x,\theta,\eta) : r_1\neq0\}}$. 
Then, reasoning as in \eqref{equixieta}, we can note that
\begin{equation}\label{navyseal2}
|\x-\eta|\leq \epsilon \langle\x+\eta\rangle\,\quad {\rm and}\quad \langle\x\rangle \sim \langle\eta\rangle.
\end{equation}
Notice also that $(\x,\theta,\eta)\in{\rm Supp}(r_2)$ implies  the \eqref{navyseal2} as well.
The rough idea of the proof is based on the fact that, if $(\x,\theta,\eta)\in D^{c}$, then 
there are \emph{at least three} equivalent frequencies among 
$\x, \x-\theta,\theta-\eta, \eta$, therefore \eqref{achille} 
restricted to  $(\x,\theta,\eta)\in D^{c}$ is a regularizing operator.
We need to estimate
\[
\|R_0h\|_{H^{s+\rho}}^{2}\lesssim\sum_{\x\in\mathbb{Z}^{d}} \Big(
\sum_{\eta,\theta}^{*} |\hat{a}(\x-\theta)||\hat{b}(\theta-\eta)||\hat{h}(\eta)
|\langle\x\rangle^{s+\rho}
\Big)^{2}=I+II+III\,,
\]
where $\sum_{\eta,\theta}^{*} $ denotes the sum over indexes satisfying 
\eqref{navyseal2}, the term $I$ denotes the sum on indexes satisfying also
$|\x-\theta|>c\epsilon |\x|$, $II$
denotes the sum on indexes satisfying also
$|\eta-\theta|>c\epsilon |\eta|$, for some $0<c\ll1$ and $III$ is defined by difference.
We estimate the term $I$. By using \eqref{navyseal2} and $|\x-\theta|>c\epsilon |\x|$ we get
\begin{equation*}
\begin{aligned}
I&\lesssim\sum_{\xi\in\Z^d}\Big(\sum_{\eta,\theta}^{*}|\hat{a}(\x-\theta)||\hat{b}(\theta-\eta)||\hat{h}(\eta)| \langle \eta\rangle^s\langle\xi-\theta\rangle^{\rho}\Big)^2\\
&\lesssim \||\hat{h}(\xi)| \langle \xi\rangle^s\star|\hat{a}(\xi)|\langle\xi\rangle^{\rho}\star|\hat{b}(\xi)|\|_{\ell^2(\Z^d)}^2\\
&\lesssim \||\hat{h}(\xi)| \langle \xi\rangle^s\|_{\ell^2(\Z^d)}^2\||\hat{a}(\xi)| \langle \xi\rangle^{\rho}\|_{\ell^1(\Z^d)}^2 \||\hat{b}(\xi)|\|_{\ell^1(\Z^d)}^2\\
&\lesssim\|h\|_{H^s}^2\|a\|_{H^{s_0+\rho}}^2\|b\|_{H^{s_0}}^2,
\end{aligned}
\end{equation*}
where in the last inequality we used Cauchy-Schwartz and $s_0>d>d/2$.

Reasoning similarly one obtains  $II\lesssim
\|h\|_{H^{s}}^{2}\|a\|^{2}_{H^{s_0}}\|b\|^{2}_{H^{s_0+\rho}}$.
The sum $III$
is restricted to indexes satisfying 
\eqref{navyseal2} and $|\x-\theta|\leq c\epsilon |\x|$, $|\eta-\theta|\leq c\epsilon |\eta|$.
For $c\ll1$ small enough these restrictions imply that $(\x,\eta,\zeta)\in A$, which 
is a contradiction since $(\x,\eta,\zeta)\in D^{c}\subseteq A^{c}$.

Let us check the \eqref{composit2}. 
We  prove that 
\begin{equation}\label{composit22}
T_{a}\circ T_{b}=T_{ab}+\frac{1}{2\ii }T_{\{a,b\}}+R_2(a,b)\,,
\qquad 
\|R_2(a,b)h\|_{H^{s-m_1-m_2+2}}\lesssim\|h\|_{H^{s}}|a|_{\mathcal{N}^{m_1}_{s_0+2}}
|b|_{\mathcal{N}^{m_2}_{s_0+2}}\,.
\end{equation}
First of all
we note that
\begin{align}
\widehat{(T_{a}T_{b}h)}(\x)&=\frac{1}{(\sqrt{2\pi})^{3d}}
\sum_{\eta,\theta\in\mathbb{Z}^{d}}
r_1(\x,\theta,\eta) \widehat{a}\big(\x-\theta,\frac{\x+\theta}{2}\big)
\widehat{b}\big(\theta-\eta,\frac{\theta+\eta}{2}\big)\hat{h}(\eta)\,,\label{def:prodotto1}\\
\widehat{(T_{ab}h)}(\x)&=\frac{1}{(\sqrt{2\pi})^{3d}}
\sum_{\eta,\theta\in\mathbb{Z}^{d}}
r_2(\x,\eta) \widehat{a}\big(\x-\theta,\frac{\x+\eta}{2}\big)
\widehat{b}\big(\theta-\eta,\frac{\x+\eta}{2}\big)\hat{h}(\eta)\,,\label{def:prodotto2}\\
\frac{1}{2\ii}\widehat{(T_{\{a,b\}}h)}(\x)&=
\frac{1}{2\ii (\sqrt{2\pi})^{3d}}
\sum_{\eta,\theta\in\mathbb{Z}^{d}}
r_2(\x,\eta) \widehat{(\pa_{\x}a)}\big(\x-\theta,\frac{\x+\eta}{2}\big)\cdot
\widehat{(\pa_{x}b)}\big(\theta-\eta,\frac{\x+\eta}{2}\big)\hat{h}(\eta)
\label{def:prodotto3}\\
&-
\frac{1}{2\ii (\sqrt{2\pi})^{3d}}
\sum_{\eta,\theta\in\mathbb{Z}^{d}}
r_2(\x,\eta) \widehat{(\pa_{x}a)}\big(\x-\theta,\frac{\x+\eta}{2}\big)\cdot
\widehat{(\pa_{\x}b)}\big(\theta-\eta,\frac{\x+\eta}{2}\big)\hat{h}(\eta)\,.\nonumber
\end{align}
In the formul\ae\, above we used the notation $\pa_{x}=(\pa_{x_1},\ldots,\pa_{x_d})$,
similarly for $\pa_{\x}$.
We remark that we can substitute the cut-off function
$r_2$ in \eqref{def:prodotto2}, \eqref{def:prodotto3} with $r_1$ up to smoothing remainders.
This follows because one can treat the 
cut-off function
$r_1(\x,\theta,\eta)-r_2(\x,\eta)$
as done in the proof of \eqref{composit3}.
Write $\x+\theta=\x+\eta+(\theta-\eta)$.
By Taylor expanding the symbols at $\x+\eta$, 
we have
\begin{align}
\widehat{a}\big(\x-\theta,\frac{\x+\theta}{2}\big)&=
\widehat{a}\big(\x-\theta,\frac{\x+\eta}{2}\big)
+\widehat{(\pa_{\x}a)}\big(\x-\theta,\frac{\x+\eta}{2}\big)
\cdot\frac{\theta-\eta}{2}\label{expsimbo1}\\
&+\frac{1}{4}\sum_{j,k=1}^{d}
\int_{0}^{1}(1-\s)
\widehat{(\pa_{\x_{j}\x_{k}}a)}\big(\x-\theta,\frac{\x+\eta}{2}+
\s\frac{\theta-\eta}{2}\big)
(\theta_{j}-\eta_{j})(\theta_{k}-\eta_{k})
d\s\,.\nonumber
\end{align}
Similarly one obtains 
\begin{align}
\widehat{b}\big(\theta-\eta,\frac{\theta+\eta}{2}\big)&=
\widehat{b}\big(\theta-\eta,\frac{\x+\eta}{2}\big)
+\widehat{(\pa_{\x}b)}\big(\theta-\eta,\frac{\x+\eta}{2}\big)
\cdot\frac{\theta-\x}{2}\label{expsimbo2}\\
&+\frac{1}{4}\sum_{j,k=1}^{d}
\int_{0}^{1}(1-\s)
\widehat{(\pa_{\x_{j}\x_{k}}b)}\big(\theta-\eta,\frac{\x+\eta}{2}+
\s\frac{\theta-\x}{2}\big)(\theta_{j}-\x_{j})(\theta_{k}-\x_{k})d\s\,.\nonumber
\end{align}
By \eqref{expsimbo1}, \eqref{expsimbo2}
we deduce that
\begin{equation}\label{virus}
\begin{aligned}
&T_{a}T_bh-T_{ab}h-\frac{1}{2\ii }T_{\{a,b\}}h=\sum_{p=1}^{6}R_p h\,,
\\&
\widehat{(R_p h)}(\x):=\frac{1}{(\sqrt{2\pi})^{3d}}\sum_{\eta,\theta\in\mathbb{Z}^{d}}
r_{1}(\x,\theta,\eta)
g_{p}(\x,\theta,\eta)\hat{h}(\eta)\,,
\end{aligned}
\end{equation}
where the symbols $g_i$ are defined as
\begin{align}
&g_1:= 
\frac{-1}{4}
\sum_{j,k=1}^{d}\int_{0}^{1}(1-\s)
\widehat{(\pa_{x_{k}x_{j}}a)}\big(\x-\theta,\frac{\x+\eta}{2}\big)
\widehat{(\pa_{\x_{k}\x_{j}}b)}\big(\theta-\eta,\frac{\x+\eta}{2}+
\s\frac{\theta-\x}{2}\big)d\s\,,
\label{simboGG1}\\
&g_2:= 
\frac{-1}{4}
\sum_{j,k=1}^{d}\int_{0}^{1}(1-\s)
\widehat{(\pa_{\x_{k}\x_{j}}a)}\big(\x-\theta,\frac{\x+\eta}{2}+\s\frac{\theta-\eta}{2}\big)
\widehat{(\pa_{x_{k}x_{j}}b)}\big(\theta-\eta,\frac{\x+\eta}{2}\big)d\s\,,
\label{simboGG2}\\
&g_3:= 
\frac{1}{4}
\sum_{j,k=1}^{d}
\widehat{(\pa_{x_{j}}\pa_{\x_{k}}a)}\big(\x-\theta,\frac{\x+\eta}{2}\big)
\widehat{(\pa_{x_{k}}\pa_{\x_{j}}b)}\big(\theta-\eta,\frac{\x+\eta}{2}\big)\,,\label{simboGG3}
\end{align}
\begin{align}
&g_4:= 
\frac{-1}{8\ii}
\sum_{j,k,p=1}^{d}\int_{0}^{1}(1-\s)
\widehat{(\pa_{x_{k}x_{j}\x_{p}}a)}\big(\x-\theta,\frac{\x+\eta}{2}\big)
\widehat{(\pa_{x_{p}\x_{k}\x_{j}}b)}\big(\theta-\eta,\frac{\x+\eta}{2}+
\s\frac{\theta-\x}{2}\big)d\s\,,
\label{simboGG4}\\
&g_5:= 
\frac{-1}{8\ii}
\sum_{j,k,p=1}^{d}\int_{0}^{1}(1-\s)
\widehat{(\pa_{\x_{k}\x_{j}x_{p}}a)}\big(\x-\theta,\frac{\x+\eta}{2}
+\s\frac{\theta-\eta}{2}\big)
\widehat{(\pa_{\x_{p}x_{k}x_{j}}b)}\big(\theta-\eta,\frac{\x+\eta}{2}\big)d\s\,,
\label{simboGG5}\\
&g_6:= 
\frac{1}{16}
\sum_{j,k,p,q=1}^{d}\int\int_{0}^{1}(1-\s_1)(1-\s_2)
\widehat{(\pa_{\x_{j}\x_{k}x_{p}x_{q}}a)}\big(\x-\theta,\frac{\x+\eta}{2}
+\s_1\frac{\theta-\eta}{2}\big)\,,\nonumber\\
&\qquad\qquad\qquad\qquad\qquad\times
\widehat{(\pa_{\x_{p}\x_{q}x_{j}x_{k}}b)}\big(\theta-\eta,\frac{\x+\eta}{2}
+\s_{2}\frac{\theta-\x}{2}\big)d\s_1d\s_2\,.\label{simboGG6}
\end{align}
We prove the estimate \eqref{composit2} (with $j=2$) on each 
term of the sum in \eqref{virus}.
First of all we note that
$r_1(\x,\theta,\eta)\neq0$ implies that
\begin{equation}\label{virus2} 
(\theta,\eta)
\in\big\{\frac{|\x-\theta|}{\langle\x+\theta\rangle}\leq \frac{8}{5}\epsilon\big\}
\bigcap\big\{\frac{|\theta-\eta|}{\langle\theta+\eta\rangle}\leq \frac{8}{5}\epsilon\big\}=:\mathcal{B}(\x)\,,
\quad \x\in \mathbb{Z}^{d}\,.
\end{equation} 
 Moreover we note that
\begin{equation}\label{virus3}
(\theta,\eta)\in \mathcal{B}(\x)\;\;\;\Rightarrow\;\;\; |\x|\lesssim|\theta|\,,\;\;
|\theta|\lesssim|\eta|\,,\;\;|\eta|\lesssim|\x|\,.
\end{equation}
We now study the term  $R_{3}h$ in \eqref{virus} depending 
on  $g_{3}(\x,\theta,\eta)$ in \eqref{simboGG3}.
We need to bound from above, for any $j,k=1,\ldots,d$, 
the $H^{s-m_1-m_2+2}$-Sobolev norm  (see \eqref{virus2}) of a term like
\begin{equation}\label{virus11}
\begin{aligned}
\hat{F}_{j,k}(\x)&:=
\sum_{(\theta,\eta)\in\mathcal{B}(\x)}
\widehat{(\pa_{x_{j}}\pa_{\x_{k}}a)}\big(\x-\theta,\frac{\x+\eta}{2}\big)
\widehat{(\pa_{x_{k}}\pa_{\x_{j}}b)}\big(\theta-\eta,\frac{\x+\eta}{2}\big)
\hat{h}(\eta)\\
&=
\sum_{\eta\in\mathbb{Z}^{d} } 
\widehat{c_{j,k}}\big(\x-\eta,\frac{\x+\eta}{2}\big)\hat{h}(\eta)\,,
\end{aligned}
\end{equation}
where we have defined
\[
\begin{aligned}
\widehat{c_{j,k}}\big(p,\zeta\big)
&:=\sum_{\ell\in \mathbb{Z}^{d}}
\widehat{(\pa_{x_{j}}\pa_{\x_{k}}a)}\big(p-\ell,\zeta\big)
\widehat{(\pa_{x_{k}}\pa_{\x_{j}}b)}\big(\ell,\zeta\big)
\mathtt{1}_{\mathcal{C}(p,\zeta)}\,,\qquad p,\zeta\in \mathbb{Z}^{d}\,,\\
\mathcal{C}(p,\zeta)&:=
\big\{\ell\in \mathbb{Z}^{d}\,:\,\frac{|p-\ell|}{\langle2\zeta+\ell\rangle}
\leq \frac{8}{5}\epsilon\big\}\bigcap
\big\{\ell\in \mathbb{Z}^{d}\,:\,
\frac{|\ell|}{\langle\ell-p+2\zeta\rangle}\leq \frac{8}{5}\epsilon\big\}
\end{aligned}
\]
and $\mathtt{1}_{\mathcal{C}(p,\zeta)}$ is the 
characteristic function of the set $\mathcal{C}(p,\zeta)$.
Reasoning as in \eqref{virus3}, we can deduce that for $\ell\in \mathcal{C}(p,\zeta)$
one has
\begin{equation}\label{virus12}
|2\zeta|\lesssim \frac{1}{2}|2\zeta+p|\,.
\end{equation}
Indeed $\ell\in \mathcal{C}(p,\zeta) $ implies $(\theta,\eta)\in \mathcal{B}(\x)$
by setting 
\begin{equation}\label{virus13}
2\x=2\zeta+p\,,\quad 2\theta=2\ell+2\zeta-p\,,\quad 2\eta=2\zeta-p\,.
\end{equation}
Hence the \eqref{virus12} follows by \eqref{virus3} by observing  that
$2\zeta=\x+\eta$.
Using that   $a\in \mathcal{N}_{s_0+2}^{m_1}$, 
$b\in\mathcal{N}_{s_0+2}^{m_2}$
and reasoning as in \eqref{virus5}
we deduce
\begin{equation}\label{virus10}
|\hat{c_{j,k}}(p,\zeta)|\lesssim \langle\zeta\rangle^{m_1+m_2-2}\langle p\rangle^{-s_0}
|a|_{\mathcal{N}^{m_1}_{s_0+2}}|b|_{\mathcal{N}^{m_2}_{s_0+2}}\,.
\end{equation}
By \eqref{virus11}, \eqref{virus3},  \eqref{Sobnorm}, we get
\[
\begin{aligned}
\|F_{j,k}\|_{H^{s-m_1-m_2+2}}^{2}&\lesssim
\sum_{\x\in\mathbb{Z}^{d}}
\langle \x\rangle^{-2m_1-2m_2+2}\Big(
\sum_{\eta\in \mathbb{Z}^{d}}
|\hat{c_{j,k}}\big(\x-\eta,\frac{\x+\eta}{2}\big)|
|\hat{h}(\eta)|\langle\eta\rangle^{s}\Big)^{2}\\
&\stackrel{\mathclap{\eqref{virus10}, \eqref{virus12}, \eqref{virus13}}}{\lesssim}
|a|^{2}_{\mathcal{N}^{m_1}_{s_0+2}}|b|^{2}_{\mathcal{N}^{m_2}_{s_0+2}}
\sum_{\xi\in\mathbb{Z}^{d}}\Big(\sum_{\eta\in \mathbb{Z}^{d}}
|\hat{h}(\eta)|\langle\eta\rangle^{s}
\frac{1}{\langle \x-\eta\rangle^{s_0}}\Big)^2\\
&\lesssim|a|^{2}_{\mathcal{N}^{m_1}_{s_0+2}}
|b|^{2}_{\mathcal{N}^{m_2}_{s_0+2}}\||\hat{h}(\xi)|\langle\xi\rangle^{s}\star\langle\xi\rangle^{-s_0}\|_{\ell^{2}(\mathbb{Z}^d)}\\
&\lesssim \|h\|_{H^{s}}^{2}
|a|^{2}_{\mathcal{N}^{m_1}_{s_0+2}}
|b|^{2}_{\mathcal{N}^{m_2}_{s_0+2}}\,,
\end{aligned}
\]
where in the last step we used Young inequality for sequences, the Cauchy-Schwartz inequality and that $\langle\xi\rangle^{-s_0}$ is in $\ell^1(\mathbb{Z}^d)$ if $s_0>d$.
Since the estimate above holds for any $j,k=1,\ldots,d$, we may absorb the remainder $R_{3}h$ in \eqref{virus} in $R_2(a,b)h$ satisfying
\eqref{composit22}. 
One can deal with the other terms $g_1,g_2,g_4,g_5,g_6$ similarly.

\end{proof}

\begin{lemma}\label{lem:paraproduct}
Fix $s_0>d/2$ and 
let $f,g,h\in H^{s}(\mathbb{T};\mathbb{C})$ for $s\geq s_0$. Then
\begin{equation}\label{eq:paraproduct}
fgh=T_{fg}h+T_{gh}f+T_{fh}g+\mathcal{R}(f,g,h)\,,
\end{equation}
where
\begin{equation}\label{eq:paraproduct2}
\begin{aligned}
\widehat{\mathcal{R}(f,g,h)}(\x)&=\frac{1}{(2\pi)^{d}}
\sum_{\eta,\zeta\in \mathbb{Z}^{d}}
a(\x,\eta,\zeta)\hat{f}(\x-\eta-\zeta)\hat{g}(\eta)\hat{h}(\zeta)\,,\\
 |a(\xi,\eta,\zeta)&|\lesssim_{\rho}\frac{\max_2(|\xi-\eta-\zeta|,|\eta|,|\zeta|)^{\rho}}{\max(|\xi-\eta-\zeta|,|\eta|,|\zeta|)^{\rho}}\,, \,\,\forall \rho\geq 0.
\end{aligned}
\end{equation}
\end{lemma}
\begin{remark}
An estimate of the form \eqref{eq:paraproduct2} implies that the function $(f,g,h)\mapsto \mathcal{R}(f,g,h)$ defines a continuous trilinear form on $H^s\times H^s\times H^s$ with values in $H^{s+\rho}$ as soon as $s>\rho+d/2$. This will be proved in Lemma \ref{lem:trilineare}.
\end{remark}
\begin{proof}
We start by proving the following claim:  the term 
\[
T_{fg}h-\sum_{\x\in \mathbb{Z}^{d}}e^{\ii\x\cdot x}
\sum_{\eta,\zeta\in \mathbb{Z}^{d}}\chi_{\epsilon}\Big(\tfrac{|\x-\eta-\zeta|+|\eta|}{\langle \zeta\rangle}\Big)
\hat{f}(\x-\eta-\zeta)\hat{g}(\eta)\hat{h}(\zeta)
\]
is a remainder of the form \eqref{eq:paraproduct2}. By \eqref{quantiWeyl}
this is actually true with coefficients $a(\x,\eta,\zeta)$ of the form
\[
a(\x,\eta,\zeta):=\chi_{\epsilon}\Big(\tfrac{|\x-\zeta|}{\langle \x+\zeta\rangle}\Big)-
\chi_{\epsilon}\Big(\tfrac{|\x-\eta-\zeta|+|\eta|}{\langle \zeta\rangle}\Big)\,.
\]
In order to prove this, we consider the following partition of the unity:
\begin{equation}\label{partUnity}
\Theta_{\epsilon}(\xi,\eta,\zeta):=1-\chi_{\epsilon}\Big(\tfrac{|\x-\eta-\zeta|+|\zeta|}{\langle \eta\rangle}\Big)-\chi_{\epsilon}\Big(\tfrac{|\eta|+|\zeta|}{\langle \xi-\eta-\zeta\rangle}\Big)-\chi_{\epsilon}\Big(\tfrac{|\x-\eta-\zeta|+|\eta|}{\langle \zeta\rangle}\Big)\,.
\end{equation}
Then we can write
\begin{align*}
a(\x,\eta,\zeta)&=\Big(\chi_{\epsilon}\Big(\tfrac{|\x-\zeta|}{\langle \x+\zeta\rangle}\Big)-1\Big)
\chi_{\epsilon}\Big(\tfrac{|\x-\eta-\zeta|+|\eta|}{\langle \zeta\rangle}\Big)+
\chi_{\epsilon}\Big(\tfrac{|\x-\zeta|}{\langle \x+\zeta\rangle}\Big)\chi_{\epsilon}\Big(\tfrac{|\x-\eta-\zeta|+|\zeta|}{\langle \eta\rangle}\Big)\\
&+\chi_{\epsilon}\Big(\tfrac{|\x-\zeta|}{\langle \x+\zeta\rangle}\Big)
\chi_{\epsilon}\Big(\tfrac{|\eta|+|\zeta|}{\langle \xi-\eta-\zeta\rangle}\Big)+
\chi_{\epsilon}\Big(\tfrac{|\x-\zeta|}{\langle \x+\zeta\rangle}\Big)
\Theta_{\epsilon}(\xi,\eta,\zeta)\,.
\end{align*}
Using \eqref{cutofffunct} one can prove that 
each summand in the r.h.s. of the equation above is non-zero only if
$\max_2(|\xi-\eta-\zeta|,|\eta,||\zeta|)\sim \max_1(|\xi-\eta-\zeta|,|\eta,||\zeta|)$.
This implies that each summand defines a smoothing remainder 
as in \eqref{eq:paraproduct2}.
A similar property holds also for $T_{gh}f$ and $T_{fh}g$. 
At this point we write
\[
\begin{aligned}
fgh=
\sum_{\x\in \mathbb{Z}^{d}}e^{\ii\x\cdot x}
&\sum_{\eta,\zeta\in \mathbb{Z}^{d}}
\Big[\Theta_{\epsilon}(\xi,\eta,\zeta)+\chi_{\epsilon}\Big(\tfrac{|\x-\eta-\zeta|+|\zeta|}{\langle \eta\rangle}\Big)\\
&\qquad+\chi_{\epsilon}\Big(\tfrac{|\eta|+|\zeta|}{\langle \xi-\eta-\zeta\rangle}\Big)+\chi_{\epsilon}\Big(\tfrac{|\x-\eta-\zeta|+|\eta|}{\langle \zeta\rangle}\Big)\Big]
\hat{f}(\x-\eta-\zeta)\hat{g}(\eta)\hat{h}(\zeta)\,.
\end{aligned}
\]
One concludes  by using the claim at the beginning of the proof.
\end{proof}

\vspace{0.4em}
\noindent
{\bf Matrices of symbols and operators.}
Let us consider the subspace 
$\mathcal{U}$ defined as
 \begin{equation}\label{spazioUU}
 \mathcal{U}:=\big\{(u^{+},u^{-})\in 
 L^{2}(\mathbb{T}^{d};\mathbb{C})\times 
 L^{2}(\mathbb{T}^{d};\mathbb{C})\; :\; 
 u^{+}=\ov{u^{-}}
 \big\}\,.
 \end{equation}
Along the paper we shall deal with matrices of linear operators acting on 
$H^{s}(\mathbb{T}^{d};\mathbb{C}^{2})$ preserving the subspace $\mathcal{U}$.
Consider two operators 
$R_{1},R_{2}$ acting on  $C^{\infty}(\mathbb{T}^d;\mathbb{C})$.
We define the operator $\mathfrak{F}$ acting on $C^{\infty}(\mathbb{T}^d;\mathbb{C}^2)$ as
\begin{equation}\label{barrato4}
\mathfrak{F}:=\sm{R_1} {R_2} 
{\ov{R_2}}  {\ov{R_1}},
\end{equation}
where the  linear  operators $\ov{R_i}[\cdot]$, $i=1,2$ are defined by the relation 
$\ov{R_i}[v] := \ov{R_i[\ov{v}]} \,$.
We say that an operator of the form \eqref{barrato4} is \emph{real-to-real}.
It is easy to note that real-to-real operators preserves 
$\mathcal{U}$ in \eqref{spazioUU}.
Consider now a symbol $a(x,\x)$ of order $m$ and set $A:=T_{a}$.
Using \eqref{quantiWeyl} one can check that
\begin{align}
 \bar{A}[h]&=\ov{A[\bar{h}]}\,,  \qquad\qquad \Rightarrow\quad 
\bar{A}=T_{\tilde{a}}\,,\qquad \tilde{a}(x,\x)=\ov{a(x,-\x)}\,;\label{simboBarrato}
\\
{\bf (Adjoint)} \;\;  (Ah , v)_{L^{2}}&=(h,A^{*}v)_{L^{2}}\,, \;\;\quad \Rightarrow \quad 
A^{*}=T_{\bar{a}}\,.\label{simboAggiunto}
\end{align}
By \eqref{simboAggiunto} we deduce that the operator $A$
is self-adjoint with respect to the scalar product \eqref{scalarL}
if and only if the symbol $a(x,\x)$ is real valued.
We need the following definition.
Consider two symbols
$a, b\in \mathcal{N}_s^{m}$ and the matrix
\begin{equation*}
A := A(x,\x):=
\left(\begin{matrix} a(x,\x) & b(x,\x) \vspace{0.2em}\\
\ov{b(x,-\x)} & \ov{a(x,-\x)}
\end{matrix}
\right)\,.
\end{equation*}
Define the operator (recall  \eqref{notaPara})
\begin{equation}\label{matsimboli}
M:=\opbw(A(x,\x)):=
\left(\begin{matrix} \opbw(a(x,\x)) & \opbw(b(x,\x)) \vspace{0.2em}\\
\opbw(\ov{b(x,-\x)}) & \opbw(\ov{a(x,-\x)})
\end{matrix}
\right)\,.
\end{equation}
\noindent
The matrix of paradifferential operators defined above have the following properties:

$\bullet$ \emph{Reality}: by \eqref{simboBarrato} we have that
the operator $M$ in \eqref{matsimboli} 
has the form \eqref{barrato4}, hence 
it is real-to-real;

$\bullet$ \emph{Self-adjointeness}:
using \eqref{simboAggiunto} 
the operator $M$ in \eqref{matsimboli} is self-adjoint with respect to the 
scalar product on \eqref{spazioUU}
\begin{equation}\label{comsca}
(U,V)_{L^{2}}:=
\int_{\mathbb{T}^{d}}U\cdot \ov{V}dx\,, 
\qquad U=\vect{u}{\bar{u}} \,, \;\; V=\vect{v}{\bar{v}}\,.
\end{equation}
if and only if 
\begin{equation}\label{simboAggiunto2}
a(x,\x)=\ov{a(x,\x)}\,,
\qquad
b(x,-\x)=b(x,\x)\,.
\end{equation}

\vspace{0.4em}
\noindent
{\bf Non-homogeneous symbols.}
In this paper we deal with symbols satisfying \eqref{normaSimbo}
which depends nonlinearly on an extra function  $u(t,x)$ (which in the application will be a solution either of \eqref{NLS} or a solution of \eqref{KG}).
We are interested in providing estimates of the semi-norms \eqref{normaSimbo}
in terms of the Sobolev norms of the function $u$.

We recall classical tame estimates for composition of functions, we refer to \cite{baldi-BO}
(see also \cite{Tay-Para}).
A function $f: \mathbb{T}^{d}\times B_R\to \mathbb{C}$, where $B_R:=\{y\in \mathbb{R}^{m} : |y|<R\}$, $R>0$, 
induces the composition operator (Nemytskii)
\begin{equation}\label{compoNEM}
\tilde{f}(u):=f(x,u(x),Du(x),\ldots, D^{p}u(x))\,,
\end{equation}
where $D^{k}u(x)$ denote the derivatives $\pa_{x}^{\alpha}$ of order $|\alpha|=k$
(the number $m$ of $y$-variables  depends on $p$, $d$).

\begin{lemma}\label{CompfuncLemma}
Fix $\gamma>0$ 
and assume that 
$f\in C^{\infty}(\mathbb{T}^{d}\times B_R; \mathbb{R})$.
Then, for any $u\in H^{\gamma+p}$ with $\|u\|_{W^{p,\infty}}<R$,  one has
\begin{align}
& \|\tilde{f}(u)\|_{H^{\gamma}}\leq C \|f\|_{C^{\gamma}}(1+\|u\|_{H^{\gamma+p}})\,,\\
& \|\tilde{f}(u+h)-\sum_{n=0}^N\frac{1}{n!}\partial_u^n\tilde{f}[h,\ldots,h]\|_{H^{\gamma}}\leq C\|h\|^N_{W^{p,\infty}}(\|h\|_{H^{\gamma}}+\|h\|_{W^{p,\infty}}\|u\|_{H^{\gamma+p}}).
\end{align}
for any $h\in H^{\gamma+p}$ with $\|h\|_{W^{p,\infty}}<R/2$ and where $C>0$ is a constant depending on $\gamma$ and the norm $\|u\|_{W^{p,\infty}}$.
\end{lemma}

Consider a function $F(y_0,y_1,\ldots,y_{d})$  in 
$C^{\infty}(\mathbb{C}^{d+1};\mathbb{R})$ in the \emph{real} sense, 
i.e. $F$ is $C^{\infty}$ as function of ${\rm Re}(y_i)$, ${\rm Im}(y_i)$.
Assume that $F$ has a zero of order at least $p+2\in \mathbb{N}$ 
at the origin.
Consider a symbol $f(\x)$, independent of $x\in \mathbb{T}^{d}$, such that
$|f|_{\mathcal{N}_s^{m}}\leq C<+\infty$, for some 
constant $C$.
Let us define the symbol
\begin{equation}\label{drhouse}
a(x,\x):= \big( \pa_{z_j^{\alpha} z_k^{\beta}}F\big)(u,\nabla u)
f(\x)\,, \quad z_j^{\alpha}:=\pa_{x_j}^{\alpha}u^{\s}, z_k^{\beta}:=\pa_{x_k}^{\beta}u^{\s'}  
\end{equation}
for some $j,k=1,\ldots,d$, $\alpha,\beta\in\{0,1\}$ and $\s,\s'\in \{\pm\}$
where we used the notation $u^{+}=u$ and $u^{-}=\bar{u}$.

\begin{lemma}\label{lem:nonomosimbo}
Fix $s_0>d/2$.
For $u\in B_R( H^{s+s_0+1}(\mathbb{T}^{d};\mathbb{C}))$ with $0<R<1$, 
we have
\begin{equation*}
|a|_{\mathcal{N}_s^{m}}\lesssim  \|u\|^{p}_{H^{s+s_0+1}}\,,
\end{equation*}
where $a$ is the symbol in \eqref{drhouse}.
Moreover,
 the map 
$h\to (\pa_{u}a)(u;x,\x) h$ is a 
$\mathbb{C}$-linear map from $ H^{s+s_0+1}$ to $\mathbb{C}$ and satisfies 
\begin{equation*}
|(\pa_{u}a) h|_{\mathcal{N}_{s}^{m}}\lesssim 
\|h\|_{H^{s+s_0+1}}\|u\|^{p-1}_{H^{s+s_0+1}}\,.
\end{equation*}
The same holds for $\pa_{\bar{u}}a$. Moreover if the symbol $a$ does not depend on $\nabla u$, then the same results are true with $s_0+1\rightsquigarrow s_0$.
\end{lemma}
\begin{proof}
It follows from Lemma \ref{CompfuncLemma}.
\end{proof}

\vspace{0.2em}
\noindent
{\bf Trilinear operators.}
Along the paper we shall deal with
trilinear operators 
on the Sobolev spaces.
We shall adopt a combination of notation introduced in \cite{BD} and 
\cite{IPtori}.
In particular we are interested in studying properties of operators of the form
\begin{equation}\label{trilinearop}
\begin{aligned}
&Q=Q[u_1,u_2,u_3] : (C^{\infty}(\mathbb{T}^{d};\mathbb{C}))^3\to 
C^{\infty}(\mathbb{T}^{d};\mathbb{C})\,,\\
&\widehat{Q}(\x)=\frac{1}{(2\pi)^{d}}\sum_{\eta,\zeta\in \mathbb{Z}^{d}}q(\x,\eta,\zeta)
\hat{u}_1(\x-\eta-\zeta)\hat{u}_2(\eta)\hat{u}_3(\zeta)\,,
\qquad \forall\, \x\in \mathbb{Z}^{d}\,,
\end{aligned}
\end{equation}
where the coefficients $q(\x,\eta,\zeta)\in \mathbb{C}$ for any 
$\x,\eta,\zeta\in \mathbb{Z}^{d}$.
We now prove that, under certain conditions on the coefficients, 
the operators of the form \eqref{trilinearop} extend as 
continuous maps on the Sobolev spaces.

\begin{lemma}\label{lem:trilineare}
Let $\mu\geq 0$ and $m\in \mathbb{R}$. Assume that for any
$\x,\eta,\zeta\in \mathbb{Z}^{d}$ one has
\begin{equation}\label{ipoMMM}
|q(\x,\eta,\zeta)|\lesssim 
\frac{\max_{2}\{\langle \x-\eta-\zeta\rangle, \langle \eta\rangle,\langle \zeta\rangle\}^{\mu}}
{\max_{1}\{\langle \x-\eta-\zeta\rangle, \langle \eta\rangle,\langle \zeta\rangle\}^{m}}\,.
\end{equation}
Then, for $s\geq s_0>d/2+\mu$, the map $Q$ in \eqref{trilinearop}
with coefficients satisfying \eqref{ipoMMM} extends as a 
continuous map form $(H^{s}(\mathbb{T}^{d};\mathbb{C}))^{3}$ to 
$H^{s+m}(\mathbb{T}^{d};\mathbb{C})$. Moreover one has
\begin{equation}\label{stimaMMM}
\|Q(u_1,u_2,u_3)\|_{H^{s+m}}\lesssim
\sum_{i=1}^{3} \|u_i\|_{H^{s}}\prod_{i\neq k}\|u_k\|_{H^{s_0}}\,.
\end{equation}
\end{lemma}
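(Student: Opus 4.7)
The plan is to bound $\|Q(u_1,u_2,u_3)\|_{H^{s+m}}^2 = \sum_\xi \langle\xi\rangle^{2(s+m)}|\widehat Q(\xi)|^2$ by using the elementary triangle-type inequality
\[
\langle\xi\rangle \;\lesssim\; \max\{\langle \xi-\eta-\zeta\rangle,\langle\eta\rangle,\langle\zeta\rangle\} = \max\nolimits_1,
\]
which follows from $\xi=(\xi-\eta-\zeta)+\eta+\zeta$. Combined with the hypothesis \eqref{ipoMMM}, this gives
\[
\langle\xi\rangle^{s+m}|q(\xi,\eta,\zeta)| \;\lesssim\; \max\nolimits_1^{\,s}\,\max\nolimits_2^{\,\mu}\,.
\]

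Next, I would split the sum over $(\eta,\zeta)$ according to which of the three frequencies realizes $\max_1$. By symmetry in the presentation it suffices to treat the case $\max_1=\langle\xi-\eta-\zeta\rangle$; then $\max_2\le\langle\eta\rangle+\langle\zeta\rangle$, so (up to a further splitting into two sub-cases) we may assume $\max_2^\mu\lesssim\langle\eta\rangle^\mu$. Setting
\[
F(\xi):=\langle\xi\rangle^{s}|\widehat{u}_1(\xi)|,\qquad G(\eta):=\langle\eta\rangle^{\mu}|\widehat{u}_2(\eta)|,\qquad H(\zeta):=|\widehat{u}_3(\zeta)|,
\]
we obtain the pointwise majorization
\[
\langle\xi\rangle^{s+m}|\widehat Q(\xi)| \;\lesssim\; (F\ast G\ast H)(\xi).
\]
By Young's convolution inequality $\ell^2\ast\ell^1\ast\ell^1\hookrightarrow\ell^2$,
\[
\|\langle\cdot\rangle^{s+m}\widehat Q\|_{\ell^2} \;\lesssim\; \|F\|_{\ell^2}\,\|G\|_{\ell^1}\,\|H\|_{\ell^1}.
\]
Now $\|F\|_{\ell^2}=\|u_1\|_{H^s}$, while Cauchy–Schwarz gives
\[
\|G\|_{\ell^1} \;\le\; \Big(\sum_{\eta}\langle\eta\rangle^{-2(s_0-\mu)}\Big)^{1/2}\|u_2\|_{H^{s_0}},\qquad \|H\|_{\ell^1} \;\le\; \Big(\sum_{\zeta}\langle\zeta\rangle^{-2s_0}\Big)^{1/2}\|u_3\|_{H^{s_0}},
\]
and both geometric series converge because $s_0>d/2+\mu\ge d/2$. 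This yields the estimate $\|Q\|_{H^{s+m}}\lesssim \|u_1\|_{H^s}\|u_2\|_{H^{s_0}}\|u_3\|_{H^{s_0}}$ for this case; the two remaining cases, where $\max_1$ equals $\langle\eta\rangle$ or $\langle\zeta\rangle$, produce by the same mechanism the terms $\|u_1\|_{H^{s_0}}\|u_2\|_{H^s}\|u_3\|_{H^{s_0}}$ and $\|u_1\|_{H^{s_0}}\|u_2\|_{H^{s_0}}\|u_3\|_{H^s}$, respectively. Summing gives \eqref{stimaMMM}.

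I do not expect any serious obstacle: the proof is a routine Littlewood–Paley/Young convolution argument. The only point that requires mild care is the sub-case split that absorbs $\max_2^\mu$ into a single factor $\langle\eta\rangle^\mu$ or $\langle\zeta\rangle^\mu$, so that one can reduce to an $\ell^2\ast\ell^1\ast\ell^1$ estimate; the threshold $s_0>d/2+\mu$ is exactly what is needed to sum the two $\ell^1$ factors.
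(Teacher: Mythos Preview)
Your proof is correct and follows essentially the same route as the paper's: bound $\langle\xi\rangle\lesssim\max_1$, split into three regions according to which frequency realizes $\max_1$, and reduce each region to an $\ell^2\ast\ell^1\ast\ell^1$ Young estimate, using $s_0>d/2+\mu$ to sum the $\ell^1$ factors. The only cosmetic difference is that the paper handles $\max_2^{\mu}$ via the cruder product bound $\max_2^{\mu}\le \langle a\rangle^{\mu}\langle b\rangle^{\mu}$ (valid since $\langle\cdot\rangle\ge 1$) rather than your sum-and-subcase split; both lead to the same conclusion.
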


\begin{proof}
By \eqref{Sobnorm} we have
\begin{equation}\label{stimaMMM2}
\begin{aligned}
\|Q(u_1,u_2&,u_3)\|^{2}_{H^{s+m}}\leq
\sum_{\x\in \mathbb{Z}^{d}}\langle\x\rangle^{2(s+m)}
\left(
\sum_{\eta,\zeta\in \mathbb{Z}^{d}}|q(\x,\eta,\zeta)|
|\hat{u}_1(\x-\eta-\zeta)||\hat{u}_2(\eta)||\hat{u}_3(\zeta)|
\right)^{2}\\
&\stackrel{\eqref{ipoMMM}}{\lesssim}
\sum_{\x\in \mathbb{Z}^{d}}
\left(
\sum_{\eta,\zeta\in \mathbb{Z}^{d}}\langle\x\rangle^{s}
\max_{2}\{\langle \x-\eta-\zeta\rangle, \langle \eta\rangle,\langle \zeta\rangle\}^{\mu}
|\hat{u}_1(\x-\eta-\zeta)||\hat{u}_2(\eta)||\hat{u}_3(\zeta)|
\right)^{2}\\
&:=I + II+ III\,,
\end{aligned}
\end{equation}
where $I,II,III$ are the terms in \eqref{stimaMMM2} which are supported 
respectively 
on indexes such that $\max_{1}\{\langle \x-\eta-\zeta\rangle, \langle \eta\rangle,\langle \zeta\rangle\}=\langle \x-\eta-\zeta\rangle$,
$\max_{1}\{\langle \x-\eta-\zeta\rangle, \langle \eta\rangle,\langle \zeta\rangle\}
=\langle \eta\rangle$ and 
$\max_{1}\{\langle \x-\eta-\zeta\rangle, \langle \eta\rangle,\langle \zeta\rangle\}
=\langle \zeta\rangle$.
Consider for instance the term $III$.
By using the Young inequality for sequences we deduce
\[
III\lesssim \|(
\langle p \rangle^{\mu}\hat{u}_1(p)
)*(\langle\eta\rangle^{\mu}\hat{u}_2(\eta))*(\langle\zeta\rangle^{s}\hat{u}_3(\zeta))\|_{\ell^{2}}
\lesssim \|u_1\|_{H^{s_0}}\|u_2\|_{H^{s_0}}\|u_3\|_{H^{s}}\,,
\]
which is the \eqref{stimaMMM}. The bounds of $I$ and $II$ are similar.
\end{proof}

In the following lemma we shall prove that
a class of ``para-differential'' trilinear operators, having some decay on the coefficients,
satisfies the hypothesis of the previous lemma.

\begin{lemma}\label{lem:paratri}
Let $\mu\geq 0$ and $m\in\mathbb{R}$, $m\geq 0$. Consider a trilinear map $Q$
as in \eqref{trilinearop} with coefficients
satisfying 
\begin{equation}\label{gotham10}
q(\x,\eta,\zeta)=f(\x,\eta,\zeta)\chi_{\epsilon}
\Big(\frac{|\x-\zeta|}{\langle\x+\zeta\rangle}\Big)\,,
\qquad |f(\x,\eta,\zeta)|\lesssim\frac{|\x-\zeta|^{\mu}}{\langle\zeta\rangle^{m}}
\end{equation}
for any $\x,\eta,\zeta\in \mathbb{Z}^{d}$ and $0<\epsilon\ll1$. Then
the coefficients $q(\x,\eta,\zeta)$ satisfy the \eqref{ipoMMM}
with $\mu\rightsquigarrow \mu+m$.
\end{lemma}

\begin{proof}
First of all we write $q(\x,\eta,\zeta)=q_1(\x,\eta,\zeta)+q_2(\x,\eta,\zeta)$ with
\begin{align}
q_1(\x,\eta,\zeta)&=f(\x,\eta,\zeta)\chi_{\epsilon}\Big(\frac{|\xi-\zeta|}{\langle\xi+\zeta\rangle}\Big)\chi_{\epsilon}\Big(\frac{|\x-\eta-\zeta|+|\eta|}{\langle \zeta\rangle}\Big)\,,
\label{gotham11}\\
q_2(\x,\eta,\zeta)&=f(\x,\eta,\zeta)
\chi_{\epsilon}\Big(\frac{|\x-\zeta|}{\langle \xi+\zeta\rangle}\Big)
\Big[\chi_{\epsilon}\Big(\frac{|\x-\eta-\zeta|+|\zeta|}{\langle \eta\rangle}\Big)+\chi_{\epsilon}\Big(\frac{|\eta|+|\zeta|}{\langle \xi-\eta-\zeta\rangle}\Big)+\Theta_{\epsilon}(\xi,\eta,\zeta)
\Big]\,,\label{gotham12}
\end{align}
where  $\Theta_{\epsilon}(\xi,\eta,\zeta)$ is defined in \eqref{partUnity}.
%
Recalling \eqref{cutofffunct} one can check that if
$
q_1(\x,\eta,\zeta)\neq0$ then $
|\x-\eta-\zeta|+|\eta|\ll |\zeta|\sim|\x|\,.
$
Together with the bound on $f(\x,\eta,\zeta)$ in \eqref{gotham10}
we deduce that the coefficients in \eqref{gotham11} 
satisfy the \eqref{ipoMMM}.
The coefficients in 
\eqref{gotham12}
satisfy the \eqref{ipoMMM} because of the support of the cut off function in \eqref{cutofffunct}.
\end{proof}

 \noindent
 {\bf Hamiltonian formalism in complex variables.} Given a Hamiltonian function
 $H : H^{1}(\mathbb{T}^{d};\mathbb{C}^{2})\to \mathbb{R} $, its Hamiltonian 
 vector field has the form
 \begin{equation}\label{VecfieldHam}
 X_{H}(U):=-\ii J\nabla H(U)=-\ii \left(
 \begin{matrix}
 \nabla_{\bar{u}}H(U)\\
- \nabla_{u}H(U)
 \end{matrix}
 \right)\,,\quad J=\sm{0}{1}{-1}{0}\,,\quad U=\vect{u}{\bar{u}}\,.
 \end{equation}
 Indeed one has
 \begin{equation}\label{def:vecHam}
 dH(U)[V]=-\Omega(X_{H}(U), V)\,,\qquad \forall 
 U=\vect{u}{\bar{u}}\,,\; V=\vect{v}{\bar{v}}\,,
 \end{equation}
 where $\Omega$ is the non-degenerate symplectic form
 \begin{equation}\label{symform}
 \Omega(U,V)=-\int_{\mathbb{T}^{d}}U\cdot \ii JV dx
 =-\int_{\mathbb{T}^{d}}
 \ii (u\bar{v}-\bar{u}v)dx\,.
 \end{equation}
 The Poisson brackets between two Hamiltonians $H,G$ are defined as
\begin{equation}\label{Poisson}
\{G,H\}:=\Omega(X_{G},X_{H})
\stackrel{\eqref{symform}}{=}
-\int_{\mathbb{T}^d}  \ii J\nabla G\cdot\nabla H dx=-
\ii \int_{\mathbb{T}^d} \nabla_{u}H\nabla_{\bar{u}}G-  \nabla_{\bar{u}}H\nabla_{{u}}G dx\,.
\end{equation}
The nonlinear commutator between two Hamiltonian vector fields is given by
\begin{equation}\label{nonlinCommu}
[X_{G},X_{H}](U)=
dX_{G}(U)\big[X_{H}(U)\big]-dX_{H}(U)\big[X_{G}(U)\big]=
-X_{\{G,H\}}(U)\,.
\end{equation}
 \noindent
 {\bf Hamiltonian formalism in real variables.} Given a Hamiltonian function
 $H_{\mathbb{R}} : H^{1}(\mathbb{T}^{d};\mathbb{R}^{2})\to \mathbb{R} $, its hamiltonian 
 vector field has the form
 \begin{equation}\label{realvecfield}
 X_{H_{\mathbb{R}}}(\psi,\phi):= J\nabla H_{\mathbb{R}}(\psi,\phi)=\left(
 \begin{matrix}
 \nabla_{\phi}H_{\mathbb{R}}(\psi,\phi)\\
- \nabla_{\psi}H_{\mathbb{R}}(\psi,\phi)
 \end{matrix}
 \right)\,,
 \end{equation}
 where $J$ is in \eqref{VecfieldHam}. 
 Indeed one has
 \begin{equation}\label{def:vecHamKGreal}
 dH_{\mathbb{R}}(\psi,\phi)[h]=-\tilde{\Omega}(X_{H_{\mathbb{R}}}(\psi,\phi), h)\,,
 \qquad \forall 
 \vect{\psi}{\phi}\,,\; h=\vect{\hat{\psi}}{\hat{\phi}}\,,
 \end{equation}
 where $\tilde{\Omega}$ is the non-degenerate symplectic form
 \begin{equation}\label{symReal}
\widetilde{\Omega}(\vect{\psi_1}{\phi_1},\vect{\psi_2}{\phi_2})
:=\int_{\mathbb{T}^d}\vect{\psi_1}{\phi_1}
\cdot J^{-1}\vect{\psi_2}{\phi_2}dx=
\int_{\mathbb{T}^d}-(\psi_1\phi_2-\phi_1\psi_2)dx\,,
\end{equation}
We introduce the complex symplectic variables 
\begin{equation}\label{CVWW}
\!\!\!\!\!\!\left(\begin{matrix}
u \\
\overline{u}
\end{matrix} 
\right) = 
\mathcal{C}
\left(\begin{matrix}
\psi \\ \phi\end{matrix}\right)
 : =\frac{1}{\sqrt{2}} 
\left(\begin{matrix}
\Lambda_{\KG}^{\frac{1}{2}}\psi+ \ii \Lambda_{\KG}^{-\frac{1}{2}}\phi   \\
\Lambda_{\KG}^{\frac{1}{2}}\psi - \ii \Lambda_{\KG}^{-\frac{1}{2}}\phi  
\end{matrix} 
\right) \, , 
\qquad 
\left(\begin{matrix}\psi \\ \phi\end{matrix}\right) 
= \mathcal{C}^{-1} 
\left(\begin{matrix}
u \\ \bar{u} \end{matrix}\right) = 
\frac{1}{\sqrt{2}} 
\left(\begin{matrix}
\Lambda_{\KG}^{-\frac12}( u + \bar{u} ) \\
 - \ii  \Lambda_{\KG}^{\frac12}(  u - \bar{u} )
\end{matrix}\right)\,,
\end{equation}
where $\Lambda_{\KG}$ is in \eqref{def:Lambda}.
The symplectic form in \eqref{symReal} transforms, for 
$U=\vect{u}{\bar{u}}$, $V=\vect{v}{\bar{v}}$, into
$\Omega(U,V)$ where $\Omega$ is in \eqref{symform}.
In these coordinates the vector field $X_{H_{\mathbb{R}}}$ in \eqref{realvecfield}  
assumes the form $X_{H}$ as in \eqref{VecfieldHam}
with $H:=H_{\mathbb{R}}\circ\mathcal{C}^{-1}$.

We now study some algebraic properties enjoyed by the Hamiltonian functions previously defined.
Let us consider a homogeneous Hamiltonian 
$H : H^{1}(\mathbb{T}^{d};\mathbb{C}^{2})\to \mathbb{R}$
of degree four of the form
 \begin{equation}\label{frate6}
 H(U)=(2\pi)^{-d}\sum_{\x,\eta,\zeta\in\mathbb{Z}^{d}}\mathtt{h}_{4}(\x,\eta,\zeta)
 \hat{u}(\x-\eta-\zeta)\hat{\bar{u}}(\eta)\hat{u}(\zeta)\hat{\bar{u}}(-\x)\,,
 \qquad U=\vect{u}{\bar{u}}\,,
 \end{equation}
 for some coefficients $\mathtt{h}_{4}(\x,\eta,\zeta)\in \mathbb{C}$ such that
  \begin{equation}\label{frate3}
\begin{aligned}
&\mathtt{h}_{4}(\x,\eta,\zeta)=\mathtt{h}_{4}(-\eta,-\x,\zeta)
=\mathtt{h}_{4}(\x,\eta,\x-\eta-\zeta)\,,\\
&\mathtt{h}_{4}(\x,\eta,\zeta)=\ov{\mathtt{h}_{4}(\zeta,\eta+\zeta-\x,\x)}\,,\qquad \forall\, 
\x,\eta,\zeta\in \mathbb{Z}^{d}\,.
\end{aligned}
\end{equation}
By \eqref{frate3} one can check that 
the Hamiltonian $H$ is real valued and symmetric in its entries. 
Recalling \eqref{VecfieldHam} we have that its Hamiltonian vector field
can be written as
\begin{equation}\label{frate33}
X_{H}(U)= \left(
\begin{matrix}
-\ii\nabla_{\bar{u}}H(U) \\
\ii\nabla_{u}H(U)
\end{matrix}
\right)=
\left(\begin{matrix}
X_{H}^{+}(U) \vspace{0.2em}\\
\ov{X_{H}^{+}(U)}
\end{matrix}
\right)
\end{equation}
\begin{equation}\label{frate331}
\widehat{X_{H}^{+}(U)}(\x)=(2\pi)^{-d}\sum_{\eta,\zeta\in \mathbb{Z}^{d}}f(\x,\eta,\zeta)
 \hat{u}(\x-\eta-\zeta)\hat{\bar{u}}(\eta)\hat{u}(\zeta)\,,
\end{equation}
where the coefficients $f(\x,\eta,\zeta)$
have the form
\begin{equation}\label{frate44}
f(\x,\eta,\zeta)=-2\ii \mathtt{h}_{4}(\x,\eta,\zeta)\,,\qquad \x,\eta,\zeta\in \mathbb{Z}^{d}\,.
\end{equation}
We need the following definition.
 \begin{definition}{\bf (Resonant set).}\label{def:resonantSet}
We define the following set of \emph{resonant} indexes:
\begin{equation}\label{resonantSet}
\begin{aligned}
\mathcal{R}&:=\big\{(\x,\eta,\zeta)\in \mathbb{Z}^{3d} \,:\, |\x|=|\zeta|\,, |\eta|=|\x-\eta-\zeta|\big\}
\\&\qquad\qquad\cup
\big\{(\x,\eta,\zeta)\in\mathbb{Z}^{3d} \,:\, |\x|=|\x-\eta-\zeta|\,, |\eta|=|\zeta|\big\}\,.
\end{aligned}
\end{equation}
Consider the vector field in \eqref{frate331} with Hamiltonian $H$ defined in \eqref{frate6}. We define the field
$X_{H}^{+,{\rm res}}(U)$ by
\begin{equation}\label{alien7}
\hat{X_{H}^{+,{\rm res}}}(\x)=(2\pi)^{-d}
\sum_{\eta,\zeta\in \mathbb{Z}^{d}}f^{({\rm res})}(\x,\eta,\zeta)
\hat{u}(\x-\eta-\zeta)\hat{\bar{u}}(\eta)\hat{u}(\zeta)\,,
\end{equation}
where
\begin{equation}\label{alien8}
f^{({\rm res})}(\x,\eta,\zeta):=f(\x,\eta,\zeta)
\mathtt{1}_{\mathcal{R}}(\x,\eta,\zeta)\,,
\end{equation}
where $\mathtt{1}_{\mathcal{R}}$ is the characteristic function of the set $\mathcal{R}$ and $f$ is defined in \eqref{frate44}.
\end{definition}
\noindent
In the next lemma we prove a fundamental cancellation.
\begin{lemma}\label{cancellazioneRes}
For $n\geq0$ one has (recall \eqref{Sobnorm})
\begin{equation}\label{alienalien88}
{\rm Re}(\langle D\rangle^{n}X_{H}^{+,{\rm res}}(U), \langle D\rangle^{n}u)_{L^{2}}
\equiv0\,.
\end{equation}
\end{lemma}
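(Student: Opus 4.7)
The plan is to rewrite the pairing $S:=(\langle D\rangle^{n} X_{H}^{+,\mathrm{res}}(U), \langle D\rangle^{n}u)_{L^2}$ as $2\ii\,\widetilde{H}(U)$ for some real-valued quartic Hamiltonian $\widetilde H$ of the form \eqref{frate6}; the identity $\mathrm{Re}\,S=0$ would then follow automatically. Using $\overline{\hat u(\xi)}=\hat{\bar u}(-\xi)$, Parseval, and \eqref{alien7}, \eqref{frate44}, the pairing expands as
\[
S=\frac{1}{(2\pi)^{d}}\sum_{(\xi,\eta,\zeta)\in\mathcal R} 2\ii\,\mathtt h_4(\xi,\eta,\zeta)\,\langle\xi\rangle^{2n}\,\hat u(\xi-\eta-\zeta)\hat{\bar u}(\eta)\hat u(\zeta)\hat{\bar u}(-\xi).
\]
I would first record the two bijections $\sigma_1:(\xi,\eta,\zeta)\mapsto(-\eta,-\xi,\zeta)$ and $\sigma_2:(\xi,\eta,\zeta)\mapsto(\zeta,\eta+\zeta-\xi,\xi)$, and check via a short case analysis of \eqref{resonantSet} that both preserve the indicator $\mathtt{1}_{\mathcal R}$: $\sigma_1$ interchanges the two components of $\mathcal R$, while $\sigma_2$ fixes each of them (using $|\eta+\zeta-\xi|=|\xi-\eta-\zeta|$).

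Next I would exploit $\sigma_1$ as a change of summation variable. The four-factor Fourier product $\hat u(\xi-\eta-\zeta)\hat{\bar u}(\eta)\hat u(\zeta)\hat{\bar u}(-\xi)$ is trivially invariant (only the positions of the two $\hat{\bar u}$ entries get swapped), $\mathtt h_4$ is invariant by the first symmetry in \eqref{frate3}, and the weight $\langle\xi\rangle^{2n}$ becomes $\langle\eta\rangle^{2n}$. Averaging the two resulting expressions for $S$ replaces the weight by $\tfrac12(\langle\xi\rangle^{2n}+\langle\eta\rangle^{2n})$. On either component of $\mathcal R$, direct inspection of \eqref{resonantSet} yields the multiset identity $\{|\xi-\eta-\zeta|,|\zeta|\}=\{|\eta|,|\xi|\}$, hence on $\mathcal R$
\[
\tfrac12\big(\langle\xi\rangle^{2n}+\langle\eta\rangle^{2n}\big)=M_n(\xi,\eta,\zeta):=\tfrac14\big(\langle\xi\rangle^{2n}+\langle\eta\rangle^{2n}+\langle\zeta\rangle^{2n}+\langle\xi-\eta-\zeta\rangle^{2n}\big).
\]
This puts $S$ into the canonical form $S=2\ii\,\widetilde H(U)$, where $\widetilde H$ is the Hamiltonian \eqref{frate6} with coefficients $\widetilde{\mathtt h}_4(\xi,\eta,\zeta):=M_n(\xi,\eta,\zeta)\,\mathtt h_4(\xi,\eta,\zeta)\,\mathtt{1}_{\mathcal R}(\xi,\eta,\zeta)$.

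The final step is to check that $\widetilde{\mathtt h}_4$ inherits the reality symmetry $\overline{\widetilde{\mathtt h}_4(\xi,\eta,\zeta)}=\widetilde{\mathtt h}_4(\zeta,\eta+\zeta-\xi,\xi)$ of \eqref{frate3}. Since $M_n$ is real and depends only on the multiset $\{|\xi|,|\eta|,|\zeta|,|\xi-\eta-\zeta|\}$, which is invariant under $\sigma_2$, and since $\mathtt{1}_{\mathcal R}$ is invariant under $\sigma_2$ by the first step, this reduces to the reality symmetry already available for $\mathtt h_4$ in \eqref{frate3}. Consequently $\widetilde H(U)\in\mathbb R$, so that $S\in\ii\,\mathbb R$ and \eqref{alienalien88} follows. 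The hard part is the combinatorial bookkeeping on $\mathcal R$: because $\mathcal R$ is the disjoint union of the two pieces in \eqref{resonantSet} reflecting the two admissible pairings of the four frequencies $(\xi-\eta-\zeta,\eta,\zeta,-\xi)$, one has to track the action of each substitution on each piece in order to justify both the change of summation variable and the multiset identity on which the symmetrization relies.
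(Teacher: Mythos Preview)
Your proof is correct. Both approaches ultimately exploit the same two symmetries of $\mathtt{h}_4$ in \eqref{frate3} together with the structure of the resonant set $\mathcal R$, but the organization differs. The paper first collapses the two pieces of $\mathcal R$ into the single set $\mathcal R(\xi)=\{(\eta,\zeta):|\xi|=|\zeta|,\,|\eta|=|\xi-\eta-\zeta|\}$ via the substitution $\zeta\leftrightarrow\xi-\eta-\zeta$, then computes $\mathrm{Re}$ directly: the conjugate part is rewritten under your $\sigma_2$, the constraint $|\xi|=|\zeta|$ on $\mathcal R(\xi)$ keeps the weight $|\xi|^{2n}$ fixed, and one shows the resulting coefficient $\mathcal F(\xi,\eta,\zeta)+\overline{\mathcal F(\zeta,\zeta+\eta-\xi,\xi)}$ vanishes identically. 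Your route instead keeps the full $\mathcal R$, uses $\sigma_1$ to symmetrize the weight, and then recognizes $S$ as $2\ii$ times a Hamiltonian of the form \eqref{frate6} whose coefficients inherit the reality symmetry of \eqref{frate3}; this is a bit more conceptual, since it interprets the cancellation as the statement that a modified super-action Hamiltonian is real. The paper's argument is shorter because the reduction to $\mathcal R(\xi)$ makes the weight invariance automatic, while yours requires the extra multiset identity to pass from $\tfrac12(\langle\xi\rangle^{2n}+\langle\eta\rangle^{2n})$ to the fully symmetric $M_n$. One small wording point: the two pieces of $\mathcal R$ in \eqref{resonantSet} are not disjoint (they overlap when all four moduli coincide), but since you only use that $\mathcal R$ is their union and that each of $\sigma_1,\sigma_2$ permutes the pieces, this does not affect the argument.
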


\begin{proof}
Using
 \eqref{resonantSet}-\eqref{alien8} one can check that
\[
\hat{X_{H}^{+,{\rm res}}}(\x)=(2\pi)^{-d}
\sum_{(\eta,\zeta)\in \mathcal{R}({\x})}
\mathcal{F}(\x,\eta,\zeta)
\hat{u}(\x-\eta-\zeta)\hat{\bar{u}}(\eta)\hat{u}(\zeta)\,,
\]
 with $\mathcal{R}(\x):=\{(\eta,\zeta)\in \mathbb{Z}^{2d}\,:\,|\x|=|\zeta| \,,
 |\eta|=|\x-\eta-\zeta|\}$, for $\x\in \mathbb{Z}^{d}$, and
 \begin{equation}\label{frate4}
\mathcal{F}(\x,\eta,\zeta):=
f(\x,\eta,\zeta)+f(\x,\eta,\x-\eta-\zeta)\,.
 \end{equation}
  By an explicit computation
 we have
 \[
 \begin{aligned}
  {\rm Re}(\langle D\rangle^{s} &X_{H}^{+,{\rm res}}(U),\langle D\rangle^{s}u)_{L^{2}}=\\
& = (2\pi)^{-d}
\sum_{\x\in \mathbb{Z}^{d}, (\eta,\zeta)\in\mathcal{R}(\x)}
  \langle\x\rangle^{2s}\Big[ \mathcal{F}(\x,\eta,\zeta)+\ov{\mathcal{F}(\zeta,\zeta+\eta-\x,\x)}\Big]
  \hat{u}(\x-\eta-\zeta)\hat{\bar{u}}(\eta)\hat{u}(\zeta)\hat{\bar{u}}(-\x)\,.
  \end{aligned}
 \]
 By \eqref{frate4}, \eqref{frate44} and using the symmetries \eqref{frate3}
 we have $
 \mathcal{F}(\x,\eta,\zeta)+\ov{\mathcal{F}(\zeta,\zeta+\eta-\x,\x)}=0\,$.
\end{proof}

\begin{remark}\label{rmkalg}
We remark that along the paper we shall deal with general Hamiltonian functions of the
form
\[
 H(W)=(2\pi)^{-d}\sum_{\substack{\s_1,\s_2,\s_3,\s_4\in \{\pm\}\\ 
 \x,\eta,\zeta\in\mathbb{Z}^{d}}}\mathtt{h}^{\s_1,\s_2,\s_3,\s_4}(\x,\eta,\zeta)
 \hat{u^{\s_1}}(\x-\eta-\zeta)\hat{u^{\s_2}}(\eta)\hat{u^{\s_3}}(\zeta)\hat{u^{\s_4}}(-\x)\,,
\]
where we used the notation
\begin{equation}\label{gordon1}
\hat{u^{\s}}(\cdot)=\hat{u}(\cdot)\,,\;\;{\rm if}\;\;\s=+\,,\quad {\rm and }\quad
\hat{u^{\s}}(\cdot)=\hat{\bar{u}}(\cdot)\,,\;\;{\rm if}\;\;\s=-\,.
\end{equation}
However, by the definition of the resonant set \eqref{resonantSet},
we can note that the resonant vector field has still the form
\eqref{alien7} and it depends only on the monomials in the Hamiltonian $H(U)$
which are \emph{gauge} invariant, i.e. of the form \eqref{frate6}.
\end{remark}

\section{Para-differential formulation of the problems}
In this section we rewrite the equations in a para-differential form
by means of the para-linearization formula (\emph{\`a la} Bony see \cite{bony}).
In subsection \ref{sec:3} we deal with the problem
\eqref{NLS} and in the \ref{sec:3KG} we deal with
\eqref{KG}.

\subsection{Para-linearization of the NLS}\label{sec:3}
In the following we para-linearize \eqref{NLS}, 
with respect to the variables $(u,\bar{u})$. We recall that \eqref{NLS} may be rewritten as \eqref{hamiltoniana} and we define $\tilde{P}(u):=P(u,\nabla u)-\frac12 |u|^4=\frac12|\nabla h(|u|^2)|^2$. We set
\begin{equation}\label{tildep}
\quad \tilde{p}(u):=(\pa_{\bar{u}}\tilde{P})(u,\nabla u)-\sum_{j=1}^{d}\pa_{x_j}
  \big(\pa_{\bar{u}_{x_{j}}}\tilde{P}\big)(u,\nabla u).
\end{equation}
We have the following.
\begin{lemma}\label{product}
Fix $s_0>d/2$ and $0\leq\rho<s-s_0$, $s\geq s_0$.
Consider $u\in H^{s}(\mathbb{T}^{d};\mathbb{C})$.
Then we have that 
\begin{align}
\tilde{p}(u)&=T_{\pa_{u\bar{u}}\tilde{P}}[u]+T_{\pa_{\bar{u}\,\bar{u}}\tilde{P}}[\bar{u}]
\label{paralin1}
\\&+\sum_{j=1}^{d}\Big(  T_{\pa_{\bar{u}u_{x_{j}}}\tilde{P}}[u_{x_j}]
+T_{\pa_{\bar{u}\,\ov{u_{x_{j}}}}\tilde{P}}[\ov{u_{x_j}}] \Big)
-\sum_{j=1}^{d}\pa_{x_j}\Big(  T_{\pa_{{u}\ov{u_{x_{j}}}}\tilde{P}}[u]
+T_{\pa_{\bar{u}\,\ov{u_{x_{j}}}}\tilde{P}}[\bar{u}] \Big)\label{paralin2}\\
&-\sum_{j=1}^{d}\pa_{x_{j}}
\Big( 
 T_{\pa_{\ov{u_{x_{j}}} \,{u_{x_j}}}\tilde{P}}[u_{x_j}]
+T_{\pa_{\ov{u_{x_{j}}}\, \ov{u_{x_{j}}}}\tilde{P}}[\bar{u_{x_{j}}}]
\Big)+R(u)\,,\label{paralin3}
\end{align}
where  $R(u)$ is a remainder satisfying 
\begin{equation}\label{stimarestopara}
\|R(u)\|_{H^{s+\rho}}\lesssim C\|u\|_{H^{s}}^{7}\,,
\end{equation}
for some constant $C>0$ depending on  $s,s_0$.
\end{lemma}
\begin{proof}
By using  the Bony para-linearization formula, 
see  \cite{bony,Metivier,Tay-Para}, and passing to the Weyl quantization we obtain 
  \begin{align}
\tilde{p}(u)&=T_{\pa_{u\bar{u}}\tilde{P}}[u]+T_{\pa_{\bar{u}\,\bar{u}}\tilde{P}}[\bar{u}]
\\&+\sum_{j=1}^{d}\Big(  T_{\pa_{\bar{u}u_{x_{j}}}\tilde{P}}[u_{x_j}]
+T_{\pa_{\bar{u}\,\ov{u_{x_{j}}}}\tilde{P}}[\ov{u_{x_j}}] \Big)
-\sum_{j=1}^{d}\pa_{x_j}\Big(  T_{\pa_{{u}\ov{u_{x_{j}}}}\tilde{P}}[u]
+T_{\pa_{\bar{u}\,\ov{u_{x_{j}}}}\tilde{P}}[\bar{u}] \Big)\\
&-\sum_{j=1}^{d}\pa_{x_{j}}
\sum_{k=1}^{d}\Big( 
 T_{\pa_{\ov{u_{x_{j}}} \,{u_{x_k}}}\tilde{P}}[u_{x_k}]
+T_{\pa_{\ov{u_{x_{j}}}\, \ov{u_{x_{k}}}}\tilde{P}}[\bar{u_{x_{k}}}]
\Big)+R(u)\,,\label{uru}
\end{align}
where $R(u)$ satisfies the estimate \eqref{stimarestopara} since $h(x)\sim x^2$ for $x\sim 0$. The first  term in \eqref{uru} is equal to the first in \eqref{paralin3} because $\partial_{\ov{u_{x_j}}u_{x_k}}\tilde{P}=\frac12\partial_{\bar{u}_{x_j}u_{x_k}}|\nabla h(|u|^2)|^2=0$ if $j\neq k$. \end{proof}

\bigskip We shall use the following 
notation throughout  the rest of the  paper
\begin{equation}\label{matriciozze}
U:=\vect{u}{\bar{u}}\,,\quad  E:=\sm{1}{0}{0}{-1}\,,\quad
\uno:=\sm{1}{0}{0}{1}\,,\quad \diag(b):=b\uno\,,\,\;\; b\in\mathbb{C}\,.
\end{equation}
Define the following \emph{real} symbols
\begin{equation}\label{simboa2}
\begin{aligned}
a_2(x):=&\, \left[h'(|u|^2)\right]^2|u|^2\,,
\quad b_2(x):=
\left[h'(|u|^2)\right]^2u^2,\\
\vec{a}_1(x)\cdot\xi:=&\,\left[h'(|u|^2)\right]^2
\sum_{j=1}^d\Im(u\bar{u}_{x_j})\xi_j\,, \quad \xi=(\xi_1,\ldots,\xi_d)\,. 
\end{aligned}
\end{equation}
We define also the matrix of functions
\begin{equation}\label{matriceA2}
A_2(x):=A_2(U;x):=\sm{a_2(U;x)}{b_2(U;x)}{\ov{b_2(U;x)}}{a_2(U;x)}=
\sm{a_2(x)}{b_2(x)}{\ov{b_2(x)}}{a_2(x)}
\end{equation}
with $a_2(x)$ and $b_2(x)$ defined in \eqref{simboa2}.
We have the following.

\begin{proposition}{\bf (Paralinearization of NLS).}\label{NLSparapara}
The equation \eqref{NLS}
is equivalent to the following system: 
\begin{equation}\label{QLNLS444}
\dot{U}=-\ii E\opbw\big((\uno +A_{2}(x))|\xi|^2\big)U-\ii EV*U
- \ii\opbw\big(\diag(\vec{a}_1(x)\cdot\xi)\big)U+
X_{\mathcal{H}^{(4)}_{\NLS}}(U)+R(U)\,,
\end{equation}
where 
$V$ is the convolution 
potential in \eqref{insPot},
the matrix $A_2(x)$ is the one in \eqref{matriceA2}, 
the symbol $\vec{a}_1(x)\cdot\x$
is in \eqref{simboa2}
and the vector field $X_{\mathcal{H}^{(4)}_{\NLS}}(U)$ is defined as follows
\begin{equation}\label{X_H}
X_{\mathcal{H}^{(4)}_{\NLS}}(U)=-\ii E\Big[
\opbw\Big(\sm{2|u|^{2}}{u^{2}}{\bar{u}^{2}}{2|u|^{2}}\Big)
U+Q_3(U)\Big].
\end{equation}
The semi-norms of the symbols satisfy the following estimates
\begin{equation}\label{realtaAAA2}
\begin{aligned}
|a_{2}|_{\mathcal{N}_p^{0}}+
|b_{2}|_{\mathcal{N}_p^{0}}&\lesssim  \|u\|^{6}_{H^{p+s_0}}\,, 
&\forall \, p+s_0\leq s\,,\,\quad p\in \mathbb{N},\\
|\vec{a}_{1}\cdot\xi|_{\mathcal{N}_p^{1}}&\lesssim \|u\|^{6}_{H^{p+s_0+1}}\,,
 &\forall \, p+s_0+1\leq s\,,\,\quad p\in \mathbb{N}\,,
\end{aligned}
\end{equation} 
where we have chosen $s_0>d$.
The remainder $Q_{3}(U)$
has the form $\big(Q^+_{3}(U), \ov{Q^+_{3}(U)}\big)^{T}$ and 
\begin{equation}\label{restoQ3KGNLS}
\widehat{Q^+_{3}}(\x)=(2\pi)^{-d}
\sum_{
\eta,\zeta\in\mathbb{Z}^{d}}\!\!
\mathtt{q}(\x,\eta,\zeta)
\hat{u}(\x-\eta-\zeta)\hat{\bar{u}}(\eta)\hat{u}(\zeta)\,,
\end{equation}
for some $\mathtt{q}(\x,\eta,\zeta)\in \mathbb{C}$.
The coefficients of $Q_{3}^{+}$ satisfy
\begin{equation}\label{restoQ32KGNLS}
|\mathtt{q}(\x,\eta,\zeta)|
\lesssim\frac{\max_2\{\langle\x-\eta-\zeta\rangle,\langle \eta\rangle,\langle\zeta\rangle\}^{\rho}}{\max\{\langle\x-\eta-\zeta\rangle,\langle \eta\rangle,\langle\zeta\rangle\}^{\rho}}\,,
\quad \forall \, \rho\geq 0\,.
\end{equation}
The remainder $R(U)$ has the form $(R^+(U),\ov{R^+(U)})^T$.
Moreover, for any $s>2d+2$,
we have the estimates 
\begin{equation}\label{stimaRRR}
\begin{aligned}
\|R(U)\|_{H^{s}}\lesssim& \,\|U\|_{H^{s}}^{7}\,,\qquad
\|Q_{3}(U)\|_{H^{s+2}}\lesssim\, \|U\|_{H^s}^{3}\,.
\end{aligned}
\end{equation}
\end{proposition}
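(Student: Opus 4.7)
The plan is to apply Bony's paralinearization formula term by term and to collect the resulting operators by order in $\xi$. The linear pieces $\Delta u$ and $-V*u$ translate directly into the system as $-\ii E\opbw(|\xi|^2)U$ (since $\opbw(|\xi|^2)=-\Delta$ up to a finite rank correction) and $-\ii E V*U$. For the cubic term $|u|^{2}u = u\bar u u$ I would invoke Lemma \ref{lem:paraproduct} with $(f,g,h)=(u,\bar u,u)$, obtaining $2T_{|u|^2}u+T_{u^2}\bar u+\mathcal R(u,\bar u,u)$. After coupling with $\dot{\bar u}$ and premultiplying by $-\ii$, this supplies exactly the matrix $\opbw(\sm{2|u|^2}{u^2}{\bar u^2}{2|u|^2})U$ appearing in \eqref{X_H} together with the cubic remainder $Q_3^+=-\ii\mathcal R(u,\bar u,u)$, whose coefficient bound \eqref{restoQ32KGNLS} is inherited from \eqref{eq:paraproduct2}.

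The core of the argument is the quasi-linear term. I would first expand $\Delta h(|u|^2)=h'(|u|^2)\Delta(|u|^2)+h''(|u|^2)|\nabla(|u|^2)|^2$ by the chain rule, so that $[\Delta h(|u|^2)]h'(|u|^2)u$ becomes a smooth nonlinear function $\mathcal F(u,\bar u,\nabla u,\nabla\bar u,D^2u,D^2\bar u)$, and then paralinearize it as
\[
\mathcal F=\sum_j\bigl(T_{\partial_{u_{x_jx_j}}\mathcal F}\,u_{x_jx_j}+T_{\partial_{\bar u_{x_jx_j}}\mathcal F}\,\bar u_{x_jx_j}+T_{\partial_{u_{x_j}}\mathcal F}\,u_{x_j}+T_{\partial_{\bar u_{x_j}}\mathcal F}\,\bar u_{x_j}\bigr)+T_{\partial_u\mathcal F}\,u+T_{\partial_{\bar u}\mathcal F}\,\bar u+\mathcal R_{\mathcal F}.
\]
A direct computation gives $\partial_{u_{x_jx_j}}\mathcal F=a_2(x)$ and $\partial_{\bar u_{x_jx_j}}\mathcal F=b_2(x)$; reorganizing each paraproduct as a paradifferential operator acting on $u$ or $\bar u$ via Proposition \ref{prop:compo} yields the principal part $-\ii E\opbw(A_2(x)|\xi|^2)U$. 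The order-one subprincipal symbol combines the direct contributions $\ii\xi_j\partial_{u_{x_j}}\mathcal F$ and $\ii\xi_j\partial_{\bar u_{x_j}}\mathcal F$ with the Poisson-bracket corrections $\frac{1}{2\ii}\{a_2,|\xi|^2\}=\ii\xi_j\partial_{x_j}a_2$ and $\frac{1}{2\ii}\{b_2,|\xi|^2\}=\ii\xi_j\partial_{x_j}b_2$ produced when composing $T_{a_2},T_{b_2}$ with $\opbw(|\xi|^2)$. Because of the gauge and Hamiltonian structure, the algebraic identity $\partial_{x_j}b_2=\partial_{\bar u_{x_j}}\mathcal F$ holds so that the off-diagonal order-one symbols cancel exactly, and the diagonal part collapses (up to normalization) onto $[h'(|u|^2)]^2\sum_j\Im(u\bar u_{x_j})\xi_j=\vec a_1(x)\cdot\xi$, producing the diagonal order-one operator $-\ii\opbw(\diag(\vec a_1\cdot\xi))U$ claimed in \eqref{QLNLS444}.

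All the remaining contributions (the order-zero paradifferential part, the Poisson-bracket corrections coming from composing order-one symbols, and the Bony smoothing remainders) are gathered into $R(U)$. The semi-norm estimates \eqref{realtaAAA2} then follow from Lemma \ref{lem:nonomosimbo}: the hypothesis $h(x)=O(x^{2})$ forces $h'(|u|^2)=O(|u|^2)$, so that $a_2$, $b_2$ and the coefficient of $\xi_j$ in $\vec a_1\cdot\xi$ all vanish to order six in $U$. The bound $\|Q_3(U)\|_{H^{s+2}}\lesssim\|U\|_{H^s}^3$ results from \eqref{eq:paraproduct2} with $\rho=2$ via Lemma \ref{lem:trilineare}. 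Finally, every operator sitting in $R(U)$ is either a smoothing remainder from Proposition \ref{prop:compo} (which gains two derivatives) or a genuine Bony paraproduct remainder, in each case already carrying a sixth-order vanishing coefficient; combining this with Sobolev embedding, valid since $s>2d+2$, yields $\|R(U)\|_{H^s}\lesssim\|U\|_{H^s}^7$. The main technical hurdle will be the order-one bookkeeping in the quasi-linear step: tracking the various composition corrections so that the off-diagonal cancellation is transparent and confirming that every piece demoted to $R(U)$ truly enjoys the no-derivative-loss septic bound \eqref{stimaRRR}, in the spirit of the one-dimensional analyses \cite{Feola-Iandoli-Long,Feola-Iandoli-Totale}.
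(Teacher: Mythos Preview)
Your approach is correct and essentially the same as the paper's: paralinearize the quasi-linear term, use symbolic composition (Proposition~\ref{prop:compo}) to extract the order-two and order-one symbols, treat the cubic $|u|^2u$ via Lemma~\ref{lem:paraproduct}, and collect everything else into $R(U)$. Your identity $\partial_{x_j}b_2=\partial_{\bar u_{x_j}}\mathcal F$ does hold (both sides equal $2h'h''(u_{x_j}\bar u+u\bar u_{x_j})u^2+2[h']^2uu_{x_j}$), so the off-diagonal order-one cancellation goes through as you claim.

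The only difference is organizational. The paper does not paralinearize the nonlinearity $\mathcal F$ directly; instead it invokes Proposition~3.3 of \cite{Feola-Iandoli-local-tori}, where the paralinearization is carried out once and for all at the level of the Hamiltonian density $P(u,\nabla u)$ in \eqref{hamiltoniana}. That proposition outputs the symbols already in the form
\[
a_2=\sum_{j,k}(\partial_{\bar u_{x_k}u_{x_j}}P)\xi_j\xi_k,\qquad b_2=\sum_{j,k}(\partial_{\bar u_{x_k}\bar u_{x_j}}P)\xi_j\xi_k,\qquad \vec a_1\cdot\xi=\sum_j\Im(\partial_{u\bar u_{x_j}}P)\xi_j,
\]
so the diagonal structure at order one and the septic bound on $R(U)$ are immediate consequences of the Hamiltonian structure rather than of a hands-on cancellation. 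Working through $P$ instead of $\mathcal F$ buys you exactly that: the off-diagonal order-one vanishing is automatic from the symmetry $\partial_{\bar u\bar u_{x_j}}P=\partial_{\bar u_{x_j}\bar u}P$, and you avoid the composition bookkeeping you flag as the ``main technical hurdle''. Your route is more self-contained but longer; the paper's is a one-line citation.
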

\begin{proof}
By  Lemma \ref{lem:paraproduct} the cubic term $|u|^2u$ in \eqref{NLS} is equal to $2T_{|u|^2}u+T_{u^2}\bar{u}+\mathcal{R}(u,u,\bar{u})$. Setting $Q_3^+(U)=\mathcal{R}(u,u,\bar{u}),$ we get the \eqref{restoQ3KGNLS} by  the \eqref{eq:paraproduct2}. The second estimate in \eqref{stimaRRR} is a consequence of Lemma \ref{lem:trilineare} applied with $\rho=\mu=m=2$.

We now deal with the remaining quasi-linear term $\tilde{p}(u)$ defined in \eqref{tildep}.
We start by noting that
\begin{equation}\label{facile}
\pa_{x_{j}}:=\opbw(\ii\x_{j})\,,\;\; j=1,\ldots \,d\,,
\end{equation}
and that the quantization of  a symbol $a(x)$ is given by $\opbw(a(x))$.
We also remark that
the symbols 
appearing in \eqref{paralin1}, \eqref{paralin2} and \eqref{paralin3}
can be estimated  (in the norm $|\cdot|_{\mathcal{N}_{s}^{m}}$)
by using Lemma \ref{lem:nonomosimbo}.
Consider now the first para-differential term in \eqref{paralin3}. 
We have, for any $j=1,\ldots,d$,
\[
\pa_{x_{j}}
 T_{\pa_{\ov{u_{x_{j}}} \,{u_{x_j}}}\tilde{P}}\pa_{x_j}u=
 \opbw(\ii\x_{j})\circ\opbw(\pa_{\ov{u_{x_{j}}} \,{u_{x_j}}}\tilde{P})\circ\opbw(\ii \x_j)u\,.
\]
By applying 
Proposition \ref{prop:compo} 
and recalling the 
Poisson  bracket in \eqref{PoissonBra},
we deduce 
\begin{align}
 \opbw(\ii\x_{j})\,\circ&\,\opbw(\pa_{\ov{u_{x_{j}}} \,{u_{x_j}}}\tilde{P})\circ\opbw(\ii \x_j)
=
\opbw\big( -\x_{j}^2\pa_{\ov{u_{x_{j}}} \,{u_{x_j}}}\tilde{P} \big)
\\&
 +\opbw\Big(
  \frac{\ii}{2 } \x_{j}\pa_{x_{j}}(\pa_{\ov{u_{x_{j}}} \,{u_{x_j}}}\tilde{P})
 -\frac{\ii\x_{j}}{2}\pa_{x_{j}}(\pa_{\ov{u_{x_{j}}} \,{u_{x_j}}}\tilde{P})\Big)
    \\&+\widetilde{R}^{(1)}_{j}(u) +\widetilde{R}^{(2)}_{j}(u)\,,
\end{align}
where $\widetilde{R}^{(1)}_{j}(u):=
\opbw\big( 
 -\frac{1}{4}\pa_{{x_{j}}{x_{j}}}(\pa_{\ov{u_{x_{j}}} \,{u_{x_j}}}\tilde{P})\big)$
 and $\widetilde{R}^{(2)}_{j}(u)$ is some bounded operator.
 More precisely, using \eqref{composit2}, \eqref{actionSob} 
 and the estimates given by Lemma \ref{lem:nonomosimbo},
 we have, $\forall \, h\in H^{s}(\mathbb{T}^{d};\mathbb{C})$,
\begin{equation}\label{crociate}
\|\widetilde{R}^{(2)}_{j}(u)h\|_{H^{s}}\leq C\|h\|_{H^{s}}\|u\|_{H^{s}}^6\,,\qquad 
\|\widetilde{R}^{(1)}_{j}(u)h\|_{H^{s}}\leq C\|h\|_{H^{s}}\|u\|_{H^{2s_0+3}}^6\,,
\end{equation}
for some constant $C>0$ and  $s_0\geq d+1$, 
$s_0\in\mathbb{N}$.
We set
\[
\widetilde{R}(u):=\sum_{j=1}^{d}\Big(\widetilde{R}^{(1)}_{j}(u)
+\widetilde{R}^{(2)}_{j}(u)\Big)\,.
\]
Then 
\[
\begin{aligned}
-\sum_{j=1}^{d}\pa_{x_j} &T_{\pa_{\ov{u_{x_{j}}} \,{u_{x_j}}}\tilde{P}}\pa_{x_{j}}u=
\opbw\Big(\sum_{j=1}^{d}\x_{j}^2\pa_{\ov{u_{x_{j}}} \,{u_{x_j}}}\tilde{P}\Big)+
\widetilde{R}(u)
\\&
-\frac{\ii}{2}\opbw\Big(
\sum_{j=1}^{d}\Big( -\x_{j}\pa_{x_{j}}(\pa_{\ov{u_{x_{j}}} \,{u_{x_j}}}\tilde{P})
+ \x_{j}\pa_{x_{j}}(\pa_{\ov{u_{x_{j}}} \,{u_{x_j}}}\tilde{P})
\Big)\Big)
\\&
\stackrel{\eqref{simboa2}}{=}
\opbw(a_2(x)|\x|^2)+\widetilde{R}(u)
+\frac{\ii}{2}\opbw\Big(\sum_{j=1}^{d} \x_{j}\pa_{x_{j}}\Big(
(\pa_{\ov{u_{x_{j}}} \,{u_{x_j}}}\tilde{P})-(\pa_{\ov{u_{x_{j}}} \,{u_{x_j}}}\tilde{P})\Big)
\Big)\\
&
=\opbw(a_2(x)|\x|^2)+\widetilde{R}(u)\,,
\end{aligned}
\]
where we used the symmetry of the matrix $\pa_{\ov{\nabla u}\, \nabla u}\tilde{P}$ (recall $\tilde{P}$ is real) and that 
\begin{equation*}
\partial_{\ov{u_{x_j}}u_{x_j}}\tilde{P}(u)=\tfrac12{\partial_{\ov{u_{x_j}}u_{x_j}}}|\nabla h(|u|^2)|^2\stackrel{\eqref{simboa2}}{=} a_2(x).
\end{equation*}
By performing similar explicit computations on the other summands in \eqref{paralin1}-\eqref{paralin3}
we get the 
\eqref{QLNLS444}, \eqref{matriceA2} with symbols in 
\eqref{simboa2}.
By the discussion above we deduced that the remainder $R(U)$ in \eqref{QLNLS444}
satisfies the bound \eqref{stimaRRR}.
%
%
\end{proof}

\begin{remark}\label{cubica} 

\noindent 
$\bullet$ The cubic term $X_{\mathcal{H}^{(4)}_{\NLS}}(U)$ in \eqref{X_H} 
is the Hamiltonian vector field
of the  Hamiltonian function
\begin{equation}\label{hamiltonianaS}
{\mathcal{H}^{(4)}_{\NLS}}(U):=\frac12\int_{\T^d}|u|^4dx\,,\qquad 
X_{\mathcal{H}^{(4)}_{\NLS}}(U)=
-\ii |u|^{2}\vect{u}{\bar{u}}
\end{equation}

\noindent
$\bullet$ The operators $\opbw\big((\uno +A_{2}(x))|\xi|^2\big)$, 
$\opbw\big(\diag(\vec{a}_1(x)\cdot\xi)\big)$ and 
$\opbw\Big(\sm{2|u|^{2}}{u^{2}}{\bar{u}^{2}}{2|u|^{2}}\Big)$
are self-adjoint thanks to \eqref{simboAggiunto2} 
and \eqref{simboa2}.
\end{remark}

\subsection{Para-linearization of the KG}\label{sec:3KG}

In this section we rewrite the equation \eqref{KG} as a paradifferential system.
This is the content of Proposition \ref{KGparaparaKG}. Before stating 
this result we need some preliminaries. In particular in Lemma \ref{calGvecKG}
below we analyze some properties of the cubic terms in the equation \eqref{KG}.
Define the following \emph{real} symbols
\begin{equation}\label{simboa2KG}
\begin{aligned}
a_2(x,\x)&:=a_{2}(u;x,\x)
:=\sum_{j,k=1}^{d}\big(\pa_{\psi_{x_j}\psi_{x_{k}}} F\big)(\psi,\nabla \psi)\x_{j}\x_{k}\,,\qquad
\psi=\tfrac{\Lambda_{\KG}^{-\frac{1}{2}}}{\sqrt{2}}(u+\bar{u})\,,
\\
a_0(x,\x)&:=a_{0}(u;x,\x):=\tfrac{1}{2}(\pa_{y_1y_1}G)(\psi,\Lambda_{\KG}^{\frac{1}{2}}\psi)+
(\pa_{y_1y_0}G)(\psi,\Lambda_{\KG}^{\frac{1}{2}}\psi)\Lambda_{\KG}^{-\frac{1}{2}}(\x)\,.
\end{aligned}
\end{equation}
We define also the matrices of symbols
\begin{align}
\mathcal{A}_1(x,\x)&:=\mathcal{A}_1(u;x,\x):=\tfrac{1}{2}\sm{1}{1}{1}{1}\Lambda_{\KG}^{-2}(\x)a_2(u;x,\x)\,,
\label{calAA1KG}\\
\mathcal{A}_0(x,\x)&:=\mathcal{A}_0(u;x,\x)
:=\sm{1}{1}{1}{1}a_0(u;x,\x)\,,\label{calAA0KG}
\end{align}
and the Hamiltonian function
\begin{equation}\label{calGGGKG}
\mathcal{H}^{(4)}_{\KG}(U):=\int_{\mathbb{T}^{d}}
G\big({\psi},\Lambda_{\KG}^{\frac{1}{2}}\psi\big)dx\,,
\end{equation}
with $G$ the function appearing in \eqref{exHamKG}.
First of all we study some properties of the vector field of the Hamiltonian 
$\mathcal{H}^{(4)}_{\KG}$.
\begin{lemma}\label{calGvecKG}
We have that
\begin{equation}\label{calGGG1KG}
X_{\mathcal{H}^{(4)}_{\KG}}(U)=-\ii J\nabla\mathcal{H}^{(4)}_{\KG}(U)=
-\ii E\opbw(\mathcal{A}_0(x,\x))U+Q_{3}(u)\,,
\end{equation}
with $\mathcal{A}_0$ in \eqref{calAA0KG}. The remainder $Q_{3}(u)$
has the form $\big(Q^+_{3}(u), \ov{Q^+_{3}(u)}\big)^{T}$ and (recall \eqref{gordon1})
\begin{equation}\label{restoQ3KG}
\widehat{Q^+_{3}}(\x)=\frac{1}{(2\pi)^{d}}\sum_{\substack{\s_1,\s_2,\s_{3}\in\{\pm\}\\
\eta,\zeta\in\mathbb{Z}^{d}}}\!\!
\mathtt{q}^{\s_1,\s_2,\s_3}(\x,\eta,\zeta)
\hat{u^{\s_1}}(\x-\eta-\zeta)\hat{u^{\s_2}}(\eta)\hat{u^{\s_3}}(\zeta)\,,
\end{equation}
for some $\mathtt{q}^{\s_1,\s_2,\s_3}(\x,\eta,\zeta)\in \mathbb{C}$.
The coefficients of $Q_{3}^{+}$ satisfy
\begin{equation}\label{restoQ32KG}
|\mathtt{q}^{\s_1,\s_2,\s_3}(\x,\eta,\zeta)|
\lesssim\frac{\max_2\{\langle\x-\eta-\zeta\rangle,\langle \eta\rangle,\langle\zeta\rangle\}}{\max\{\langle\x-\eta-\zeta\rangle,\langle \eta\rangle,\langle\zeta\rangle\}}
\end{equation}
for any $\s_1,\s_2,\s_3\in\{\pm\}$. Finally, for $s>2d+1$, we have
\begin{equation}\label{calA00KG}
|a_0|_{\mathcal{N}_p^{0}}\lesssim \|u\|_{H^{p+s_0}}^{2}\,,\quad
p+s_0\leq s\,,\quad s_0>d\,,
\end{equation}
\begin{equation}\label{stimaGGKG}
\|X_{\mathcal{H}^{(4)}_{\KG}}(U)\|_{H^{s}}\lesssim \|u\|_{H^{s}}^{3}\,,\qquad
\|Q_{3}(u)\|_{H^{s+1}}\lesssim\, \|u\|_{H^s}^{3}\,,
\end{equation}
\begin{equation}\label{stimaGG2KG}
\|d_{U}X_{\mathcal{H}^{(4)}_{\KG}}(U)[h]\|_{H^{s}}\lesssim \|u\|_{H^{s}}^{2}\|h\|_{H^{s}}\,,\qquad\forall
h\in H^{s}(\mathbb{T}^{d};\mathbb{C}^{2})\,.
\end{equation}
\end{lemma}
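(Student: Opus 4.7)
My plan starts from \eqref{calGGGKG} and computes the $L^2$-gradient by the chain rule. Since $\psi$ is real and linear in $(u,\bar u)$ with $\partial_u\psi=\partial_{\bar u}\psi=\Lambda_{\KG}^{-1/2}/\sqrt 2$, and using that $\Lambda_{\KG}^{1/2}$ is self-adjoint, one finds
\[
\nabla_{\bar u}\mathcal{H}^{(4)}_{\KG}(U) = \frac{1}{\sqrt 2}\Big[\Lambda_{\KG}^{-1/2}(\partial_{y_0}G)(\psi,\Lambda_{\KG}^{1/2}\psi)+(\partial_{y_1}G)(\psi,\Lambda_{\KG}^{1/2}\psi)\Big],
\]
while $\nabla_u\mathcal{H}^{(4)}_{\KG}(U)=\overline{\nabla_{\bar u}\mathcal{H}^{(4)}_{\KG}(U)}$ by reality. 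In view of \eqref{VecfieldHam} this gives the $+$-component of $X_{\mathcal{H}^{(4)}_{\KG}}(U)$ up to the factor $-\ii$, which automatically enforces the real-to-real shape $(Q_3^+,\overline{Q_3^+})^T$ once the paradifferential part is extracted.

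Since $G$ is a homogeneous polynomial of degree $4$, the derivatives $\partial_{y_0}G$, $\partial_{y_1}G$ are homogeneous of degree $3$ in $(\psi,\Lambda_{\KG}^{1/2}\psi)$, so the gradient is a finite sum of trilinear expressions $fgh$ in $\psi$ and $\Lambda_{\KG}^{1/2}\psi$. I apply Lemma \ref{lem:paraproduct} to each such triple, writing $fgh=T_{fg}h+T_{fh}g+T_{gh}f+\mathcal R(f,g,h)$ with $\mathcal R$ obeying the decay \eqref{eq:paraproduct2}. Gathering the paradifferential contributions, the resulting symbol is a combination of second derivatives of $G$ (the factor $1/2$ in front of $(\partial_{y_1y_1}G)$ accounts for the coincidence of the two $y_1$-slots) evaluated on $(\psi,\Lambda_{\KG}^{1/2}\psi)$. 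To put the outer $\Lambda_{\KG}^{-1/2}$ (coming from the first summand of $\nabla_{\bar u}\mathcal{H}^{(4)}_{\KG}$) inside the paradifferential operator I use Proposition \ref{prop:compo}, which produces the contribution $(\partial_{y_1y_0}G)\Lambda_{\KG}^{-1/2}(\xi)$ in the symbol up to a smoothing error that is absorbed into $Q_3$; this identifies precisely $a_0(u;x,\xi)$ from \eqref{simboa2KG}. Finally, since $\psi$ is symmetric in $u$ and $\bar u$, the two ``tested'' variables play identical roles, which yields the constant matrix $\sm{1}{1}{1}{1}$ in $\mathcal A_0$ and hence the identity \eqref{calGGG1KG}.

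The estimate \eqref{calA00KG} follows from Lemma \ref{lem:nonomosimbo} applied to the functions $\partial_{y_iy_j}G$, which vanish to order $2$ at the origin (so $p=2$ in the notation of the lemma). The trilinear kernel shape \eqref{restoQ3KG} and the coefficient bound \eqref{restoQ32KG} follow from the explicit form of the remainder $\mathcal R$ in \eqref{eq:paraproduct2}, together with Lemma \ref{lem:paratri} to absorb the compositions involving the outer $\Lambda_{\KG}^{-1/2}$: the gain of one power of the largest frequency in \eqref{restoQ32KG} comes exactly from this factor (one of the three summands in $\nabla_{\bar u}\mathcal H^{(4)}_{\KG}$ has no prefactor and therefore only produces the paradifferential operator plus a genuinely smoothing remainder). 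The $H^s$-estimates \eqref{stimaGGKG}--\eqref{stimaGG2KG} then follow by applying Lemma \ref{lem:trilineare} to the trilinear kernel of $Q_3$, combined with the symbol action bound \eqref{actionSob} for the paradifferential piece; the differentiated bound \eqref{stimaGG2KG} is obtained by linearizing each of the three factors in turn.

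The main obstacle is the bookkeeping in the second step: one must verify that the precise coefficients in $a_0$ (in particular the factor $1/2$ in front of $(\partial_{y_1y_1}G)$, and the weight $\Lambda_{\KG}^{-1/2}(\xi)$ attached to $(\partial_{y_1y_0}G)$) emerge correctly from combining the two summands of the gradient, and that every leftover contribution — from the three $T$-terms in Lemma \ref{lem:paraproduct}, from the composition error when pushing $\Lambda_{\KG}^{-1/2}$ inside via Proposition \ref{prop:compo}, and from $\mathcal R$ itself — actually fits the trilinear kernel form \eqref{restoQ3KG} with the correct decay \eqref{restoQ32KG}. Any misidentification of $a_0$ at order $0$ would destroy the non-diagonal $\sm{1}{1}{1}{1}$ structure required for the later diagonalization of the cubic vector field.
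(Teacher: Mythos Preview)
Your proposal is correct and follows the same route as the paper: compute the gradient via the chain rule, paralinearize the resulting cubic via Lemma~\ref{lem:paraproduct}, isolate the order-zero symbol $a_0$ from \eqref{simboa2KG}, and control the remainders through Lemmas~\ref{lem:paratri} and~\ref{lem:trilineare}. The only place the paper is more explicit is in handling the $\Lambda_{\KG}^{-1/2}$ compositions (its terms $A_{-1}$ and the error $R_{-1}$ in $A_{-1/2}$): rather than citing Proposition~\ref{prop:compo} abstractly, it writes out the Fourier kernels and Taylor-expands $\Lambda_{\KG}^{-1/2}$ so that Lemma~\ref{lem:paratri} applies directly, since Proposition~\ref{prop:compo} alone gives an operator bound rather than the trilinear kernel form \eqref{restoQ3KG}--\eqref{restoQ32KG} that the statement requires.
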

\begin{proof}
By an explicit computation and using \eqref{KGnon2}
we get
\[
X_{\mathcal{H}^{(4)}_{\KG}}(U)=\left(
X^{+}_{\mathcal{H}^{(4)}_{\KG}}(U),
\ov{X^{+}_{\mathcal{H}^{(4)}_{\KG}}(U)}
\right)^T\,,\qquad X_{\mathcal{H}^{(4)}_{\KG}}^{+}(U)=
-\ii\tfrac{\Lambda_{\KG}^{-\frac{1}{2}}}{\sqrt{2}}
g\big(\psi\big)\,.
\]
The function $g$ is a homogeneous polynomial of degree three. Hence,
by using Lemma \ref{lem:paraproduct},
we obtain 
\begin{equation}\label{oriente100KG}
\begin{aligned}
\ii X_{\mathcal{H}^{(4)}_{\KG}}^{+}(U)&=A_0+A_{-\frac{1}{2}}+A_{-1}
+Q^{-\rho}(u)
\end{aligned}
\end{equation}
where
\begin{align}
A_0&:=\frac{1}{2}\opbw(\pa_{y_1y_1}G(\psi,\Lambda_{\KG}^{1/2} \psi))[u+\bar{u}]\,,\label{oriente20KG}\\
A_{-\frac{1}{2}}&:=\frac{1}{2}\opbw(\pa_{y_1y_0}G(\psi,\Lambda_{\KG}^{1/2} \psi))[\Lambda_{\KG}^{-\frac{1}{2}}(u+\bar{u})]+
\frac{\Lambda_{\KG}^{-\frac{1}{2}}}{2}\opbw(\pa_{y_1y_0}G(\psi,\Lambda_{\KG}^{1/2} \psi))[u+\bar{u}]\,,\label{oriente21KG}\\
A_{-1}&:=
\frac{\Lambda_{\KG}^{-\frac{1}{2}}}{2}\opbw(\pa_{y_0y_0}G(\psi,\Lambda_{\KG}^{1/2} \psi))[\Lambda_{\KG}^{-\frac{1}{2}}(u+\bar{u})]
\label{oriente22KG}\,,
\end{align}
and $Q^{-\rho}$ is a cubic smoothing remainder of the form
\eqref{eq:paraproduct2}
whose coefficients satisfy the bound \eqref{restoQ32KG}.
The symbols of the the paradifferential operators have the form
(using that $G$ is a polynomial)
\begin{equation}\label{oriente101KG}
(\pa_{kj}G)\Big(\tfrac{\Lambda_{\KG}^{-\frac{1}{2}}(u+\bar{u})}{\sqrt{2}},
\tfrac{u+\bar{u}}{\sqrt{2}}
\Big)=(2\pi)^{-d}\sum_{\sigma_1,\sigma_2\in\{\pm\}}\sum_{\x\in \mathbb{Z}^{d}}e^{\ii \x\cdot x}\sum_{\eta\in \mathbb{Z}^{d}}
\mathtt{g}_{k,j}^{\s_1,\s_2}(\x,\eta)\hat{u^{\s_1}}(\x-\eta)\hat{u^{\s_2}}(\eta)
\end{equation}
where $k,j\in\{y_0,y_1\}$ and where the coefficients $\mathtt{g}_{k,j}^{\s_1,\s_2}(\x,\eta)\in \mathbb{C}$
satisfy $|\mathtt{g}_{k,j}^{\s_1,\s_2}(\x,\eta)|\lesssim1$.

\noindent
We claim that  the term in \eqref{oriente22KG}
is a cubic remainder of the form \eqref{restoQ3KG}
with coefficients satisfying \eqref{restoQ32KG}.
 By \eqref{quantiWeyl} we have
\[
\begin{aligned}
\widehat{A_{-1}}(\x)&=\frac{1}{2(2\pi)^{d}}\sum_{\zeta\in \mathbb{Z}^{d}, \s\in\{\pm\}}
\widehat{\pa_{y_0y_0}G}(\x-\zeta)\Lambda_{\KG}^{-\frac{1}{2}}(\x)\Lambda_{\KG}^{-\frac{1}{2}}(\zeta)
\chi_{\epsilon}\Big(\frac{|\x-\zeta|}{\langle \x+\zeta\rangle}\Big)
\hat{u^{\s}}(\x)\\
&\stackrel{\mathclap{\eqref{oriente101KG}}}{=}\;\;\frac{1}{2(2\pi)^{d}}
\sum_{\substack{\s_1,\s_2,\s\in\{\pm\} \\
\eta,\zeta\in\mathbb{Z}^{d}}}
\mathtt{g}_{y_0,y_0}^{\s_1,\s_2}(\x-\zeta,\eta)
\Lambda_{\KG}^{-\frac{1}{2}}(\x)\Lambda_{\KG}^{-\frac{1}{2}}(\zeta)
\chi_{\epsilon}\Big(\frac{|\x-\zeta|}{\langle \x+\zeta\rangle}\Big)
\hat{u^{\s_1}}(\x-\eta-\zeta)\hat{u^{\s_2}}(\eta)\hat{u^{\s}}(\zeta)\,,
\end{aligned}
\]
which implies that $A_{-1}$ has the form
\eqref{restoQ3KG} with coefficients
\begin{align}
\mathtt{a}_{-1}^{\s_1,\s_2,\s_3}(\x,\eta,\zeta)
&=\frac{1}{2}
\mathtt{g}_{y_0,y_0}^{\s_1,\s_2}(\x-\zeta,\eta)
\Lambda_{\KG}^{-\frac{1}{2}}(\x)\Lambda_{\KG}^{-\frac{1}{2}}(\zeta)
\chi_{\epsilon}\Big(\tfrac{|\x-\zeta|}{\langle \x+\zeta\rangle}\Big)\,.\label{gotham2KG}
\end{align}
By Lemma \ref{lem:paratri} 
we have that the coefficients in \eqref{gotham2KG} satisfy \eqref{restoQ32KG}.
This prove the claim for the operator $A_{-1}$. 
We now study the term in \eqref{oriente21KG}.
We remark that, by Proposition \ref{prop:compo}
(see the composition formula \eqref{composit}), we have
that $A_{-1/2}=\opbw(\Lambda_{\KG}^{-\frac{1}{2}}(\x)\pa_{y_0y_1}G)$ up to a 
smoothing operator of order $-3/2$. 
Actually to prove that such a remainder has the form \eqref{restoQ3KG} with coefficients
\eqref{restoQ32KG} it is more convenient to compute the composition operator 
explicitly. 
In particular,  recalling \eqref{quantiWeyl},
we get
\begin{equation}\label{gotham3KG}
A_{-\frac{1}{2}}=\opbw(\Lambda_{\KG}^{-\frac{1}{2}}(\x)\pa_{y_0y_1}G)+R_{-1}\,,
\end{equation}
where
\[
\begin{aligned}
\widehat{R_{-1}}(\x)&=(2\pi)^{-d}
\sum_{\substack{\s_1,\s_2,\s\in\{\pm\} \\
\eta,\zeta\in\mathbb{Z}^{d}}}
\mathtt{r}^{\s_1,\s_2,\s}(\x-\eta-\zeta,\eta,\zeta)
\hat{u^{\s_1}}(\x-\eta-\zeta)\hat{u^{\s_2}}(\eta)\hat{u^{\s}}(\zeta)\,,\\
\mathtt{r}^{\s_1,\s_2,\s}(\x-\eta-\zeta,\eta,\zeta)&=\tfrac{1}{2}
\mathtt{g}_{y_0,y_1}^{\s_1,\s_2}(\x-\zeta,\eta)
\chi_{\epsilon}\Big(\tfrac{|\x-\zeta|}{\langle \x+\zeta\rangle}\Big)
\big[ \Lambda_{\KG}^{-\frac{1}{2}}(\x)+
\Lambda_{\KG}^{-\frac{1}{2}}(\zeta)-2\Lambda_{\KG}^{-\frac{1}{2}}(\tfrac{\x+\zeta}{2})
\big]\,.
\end{aligned}
\]
We note that
\[
\Lambda_{\KG}^{-\frac{1}{2}}(\x)=\Lambda_{\KG}^{-\frac{1}{2}}(\tfrac{\x+\zeta}{2})
-\frac{1}{2}\int_{0}^{1}\Lambda_{\KG}^{-\frac{3}{2}}(\tfrac{\x+\zeta}{2}+\tau \tfrac{\x-\zeta}{2})
d\tau\,.
\]
Then we deduce
\[
\Big|  \Lambda_{\KG}^{-\frac{1}{2}}(\x)+
\Lambda_{\KG}^{-\frac{1}{2}}(\zeta)-2\Lambda_{\KG}^{-\frac{1}{2}}(\tfrac{\x+\zeta}{2})\Big|
\lesssim |\x|^{-\frac{3}{2}}+|\zeta|^{-\frac{3}{2}}\,.
\]
Again by Lemma \ref{lem:paratri}
one can conclude that $\mathtt{r}^{\s_1,\s_2,\s}(\x-\eta-\zeta,\eta,\zeta)$ 
satisfies the \eqref{restoQ32KG}.
By  \eqref{gotham3KG}, \eqref{oriente20KG}, \eqref{oriente22KG} 
and recalling the definition 
of $a_{0}(x,\x)$ in \eqref{simboa2KG}, we obtain the \eqref{calGGG1KG}.
%
The bound 
\eqref{stimaGGKG} for $Q_{3}$
 follows by \eqref{restoQ32KG} and Lemma \ref{lem:trilineare}.
 Moreover the bound \eqref{calA00KG} follows
 by Lemma \ref{lem:nonomosimbo}
recalling that $G(\psi,\Lambda_{\KG}^{\frac{1}{2}} \psi)\sim O(u^{4})$.
Then the bound \eqref{stimaGGKG} for $X_{\mathcal{H}^{(4)}_{\KG}}$ follows
by Lemma \ref{azioneSimboo}. Let us prove the \eqref{stimaGG2KG}.
By differentiating \eqref{calGGG1KG}
we get
\begin{equation}\label{gordon2KG}
d_{U}X_{\mathcal{H}^{(4)}_{\KG}}(U)[h]=-\ii E\opbw(\mathcal{A}_0(x,\x))h
-\ii E\opbw(d_{U}\mathcal{A}_0(x,\x)h)U+d_{U}Q_{3}(u)[h]\,.
\end{equation}
The first summand in \eqref{gordon2KG} satisfies \eqref{stimaGG2KG} by Lemma \ref{azioneSimboo} and \eqref{calA00KG}. Moreover using \eqref{oriente101KG}
and \eqref{simboa2KG}
one can check that 
\[
|d_{U}\mathcal{A}_0(x,\x)h|_{\mathcal{N}_{p}^{0}}\lesssim 
\|u\|_{H^{p+s_0}}\|h\|_{H^{p+s_0}}\,,\quad p+s_0\leq s\,.
\]
Then the second summand in \eqref{gordon2KG} verify the bound \eqref{stimaGG2KG}
again by Lemma \ref{azioneSimboo}. The estimate on the third summand in \eqref{gordon2KG}
follows by \eqref{restoQ3KG}, \eqref{restoQ32KG} 
and Lemma \ref{lem:trilineare}.
 \end{proof}
 
\begin{remark}\label{strutturaA0KG}
We remark that the symbol $a_0(x,\x)$
 in \eqref{simboa2KG} is homogenenous of degree two in the variables $u,\bar{u}$.
 In particular, by \eqref{oriente101KG}, we have
 \begin{equation}\label{strutturaA01KG}
 \begin{aligned}
&a_0(x,\x)=(2\pi)^{-\frac{d}{2}}\sum_{p\in\mathbb{Z}^{d}}e^{\ii p\cdot x}\widehat{a_0}(p,\x)\,,
\qquad
\widehat{a_0}(p,\x)=(2\pi)^{-d}
\sum_{\substack{\s_1,\s_2\in\{\pm\}\\\eta\in\mathbb{Z}^{d}}}
a_0^{\s_1,\s_2}(p,\eta,\x)\widehat{u^{\s_1}}(p-\eta)\widehat{u^{\s_2}}(\eta)\\
&a_0^{\s_1,\s_2}(p,\eta,\x):=
\frac{1}{2}\mathtt{g}_{y_1,y_1}^{\s_1,\s_2}(p,\eta)+\mathtt{g}_{y_0,y_1}^{\s_1,\s_2}(p,\eta)
\Lambda_{\KG}^{-\frac{1}{2}}(\x)\,.
 \end{aligned}
 \end{equation}
 Moreover one has $|a_0^{\s_1,\s_2}(p,\eta,\x)|\lesssim1$. Since the symbol $a_0(x,\x)$
 is real-valued one can check that
 \begin{equation}\label{mazzo2KG}
 a_0^{\s_1,\s_2}(p,\eta,\x)=\ov{a_0^{-\s_1,-\s_2}(-p,-\eta,\x)}\,,\qquad \forall\; \x,p,\eta\in \mathbb{Z}^{d}\,,\;\s_1,\s_2\in\{\pm\}\,.
 \end{equation}
 \end{remark}

 \begin{remark}\label{semilin1}
 Consider the special case when the function $G$ in \eqref{KGnon2} 
 is independent of $y_1$.
 Following the proof of Lemma \ref{calGvecKG}
 one could obtain the formula \eqref{calGGG1KG}
 with symbol $a_{0}(x,\x)$ of order $-1$ given by (see \eqref{oriente22KG})
 \[
 a_{0}(x,\x):=\tfrac{1}{2}\pa_{y_0y_0}G(\psi)\Lambda_{\KG}^{-1}(\x)\,.
 \]
 The remainder $Q_3$ would satisfy the \eqref{restoQ32KG} with better 
 denominator $\max\{\langle\x-\eta-\zeta\rangle,\langle \eta\rangle,\langle\zeta\rangle\}^{2}$.
 
 \end{remark}

The main result of this section is  the following.

\begin{proposition}{\bf (Paralinearization of KG).}\label{KGparaparaKG}
The system  \eqref{exHameq}
is equivalent to
\begin{equation}\label{QLNLS444KG}
\dot{U}=-\ii E\opbw\big((\uno +\mathcal{A}_{1}(x,\x))\Lambda_{\KG}(\x)\big)U+
X_{\mathcal{H}^{(4)}_{\KG}}(U)+{R}(u)\,,
\end{equation}
where $U:=\vect{u}{\bar{u}}:=\mathcal{C}\vect{\psi}{\phi}$ (see \eqref{CVWW}),
$\mathcal{A}_1(x,\x)$ is in \eqref{calAA1KG},
$X_{\mathcal{H}^{(4)}_{\KG}}(U)$ is the Hamiltonian vector field 
of \eqref{calGGGKG}.
The operator ${R}(u)$
has the form
$({R}^+(u), \ov{{R}^+(u)})^{T}$.
Moreover we have that
\begin{align}
|\mathcal{A}_1|_{\mathcal{N}_p^{0}}
+|a_{2}|_{\mathcal{N}_p^{2}}+&\lesssim \|u\|^{3}_{H^{p+s_0+1}}\,,
\quad \forall \, p+s_0+1\leq s\,,\,\quad p\in \mathbb{N}\,,\label{realtaAAA2KG}
\end{align}
where we have chosen $s_0>d$.
Finally there is $\mu>0$ such that, for any $s>2d+\mu$,
the remainder ${R}(u)$ satisfies
\begin{equation}\label{stimaRRRKG}
\begin{aligned}
\|R(u)\|_{H^{s}}\lesssim& \,\|u\|_{H^{s}}^{4}\,.
\end{aligned}
\end{equation}
\end{proposition}

\begin{proof}
First of all we note that system \eqref{exHameq} 
in the complex coordinates \eqref{CVWW}
reads
\begin{equation}\label{oriente10KG}
 \pa_{t}u=-\ii \Lambda_{\KG} u-\ii\tfrac{\Lambda_{\KG}^{-\frac{1}{2}}}{\sqrt{2}}(f(\psi)+g(\psi))\,,
\qquad 
\psi=\tfrac{\Lambda_{\KG}^{-\frac{1}{2}}(u+\bar{u})}{\sqrt{2}}\,,
\end{equation}
with $f(\psi)$, $g(\psi)$ in \eqref{KGnon}, \eqref{KGnon2}.
The term $-\ii/\sqrt{2}\Lambda_{\KG}^{-1/2}g(\psi)$ is the first component 
of the vector field $X_{\mathcal{H}^{(4)}_{\KG}}(U)$ which has been studied in Lemma \ref{calGvecKG}.
By using the Bony para-linearization formula (see \cite{bony,Metivier, Tay-Para}), passing to the Weyl quantization 
 and \eqref{KGnon}
we get
\begin{align}
f(\psi)&=-\sum_{j,k=1}^{d}\pa_{x_j}\circ
\opbw\Big(\big(\pa_{\psi_{x_j}\psi_{x_{k}}} F\big)(\psi,\nabla \psi) \Big)\circ\pa_{x_{k}}\psi
\label{house1KG}\\
&+\sum_{j=1}^{d}\Big[\opbw\big( \big(\pa_{\psi \psi_{x_{j}}} F\big)(\psi,\nabla \psi)\big) , 
\pa_{x_j} \Big]\psi
+\opbw\big( \big(\pa_{\psi \psi} F\big)(\psi,\nabla \psi)\big)\psi+R^{-\rho}(\psi)\,,
\label{house2KG}
\end{align}
where $R^{-\rho}(\psi)$ satisfies $\|R^{-\rho}(\psi)\|_{H^{s+\rho}}\lesssim 
\|\psi\|_{H^{s}}^{4}$
for any $s\geq s_0>d+\rho$.
By Lemma \ref{lem:nonomosimbo}, 
and recalling that $F(\psi,\nabla \psi)\sim O(\psi^{5})$, we 
have that
\begin{equation}\label{orienteKG}
|\pa_{\psi_{x_k} \psi_{x_{j}}} F|_{\mathcal{N}_p^{0}}
+|\pa_{\psi \psi_{x_{j}}} F|_{\mathcal{N}_p^{0}}+
|\pa_{\psi \psi} F|_{\mathcal{N}_p^{0}}\lesssim \|\psi\|_{H^{p+s_0+1}}^{3}\,,\quad
p+s_0+1\leq s\,,
\end{equation}
where $s_0>d$.
Recall that $\pa_{x_j}=\opbw(\x_{j})$. Then, by Proposition \ref{prop:compo},
we have
\[
\Big[\opbw\big( \pa_{\psi \psi_{x_{j}}} F\big) , \pa_{x_j} \Big]\psi=
\opbw(-\ii \{\pa_{\psi \psi_{x_{j}}} F,\x_{j}\})\psi+Q(\psi)
\]
with (see \eqref{composit2})
$\|Q(\psi)\|_{H^{s+1}}\lesssim | \pa_{\psi \psi_{x_{j}}} F|_{\mathcal{N}_{s_0+2}^{0}}\|\psi\|_{H^{s}}$.
Then by \eqref{prodSimboli}, \eqref{orienteKG} and \eqref{actionSob}
(see Lemma \ref{azioneSimboo} and Proposition \ref{prop:compo})
we deduce that the terms in \eqref{house2KG}
can be absorbed in a  remainder satisfying \eqref{stimaRRRKG} 
with $s\gg2d$ large enough.
We now consider  the r.h.s. of \eqref{house1KG}.
We have
\[
-\pa_{x_j}\circ
\opbw\Big(\big(\pa_{\psi_{x_j}\psi_{x_{k}}} F\big)(\psi,\nabla \psi) \Big)\circ\pa_{x_{k}}=
\opbw(\x_{j})\opbw\Big(\big(\pa_{\psi_{x_j}\psi_{x_{k}}} F\big)(\psi,\nabla \psi) \Big)\opbw(\x_{k})\,.
\]
By using again Lemma \ref{azioneSimboo} 
and Proposition \ref{prop:compo}
we get that
\begin{equation}\label{oriente3KG}
f(\psi)=\opbw(a_{2}(x,\x))\psi+\tilde{{R}}(\psi)\,,
\end{equation}
where $a_2$ is in \eqref{simboa2KG} and $\tilde{R}(\psi)$ is a remainder 
satisfying \eqref{stimaRRRKG}. The symbol $a_{2}(x,\x)$ satisfies 
\eqref{realtaAAA2KG} by \eqref{orienteKG}.
Moreover 
\begin{equation}\label{oriente2KG}
\tfrac{1}{\sqrt{2}}\Lambda_{\KG}^{-\frac{1}{2}}f(\psi)=
\tfrac{1}{\sqrt{2}}\Lambda_{\KG}^{-\frac{1}{2}}f(\tfrac{\Lambda_{\KG}^{-\frac{1}{2}}(u+\bar{u})}{\sqrt{2}})
\stackrel{\eqref{oriente3KG}}{=}\tfrac{1}{2}\opbw(a_{2}(x,\x)\Lambda_{\KG}^{-1}(\x))[u+\bar{u}]
\end{equation}
up to  remainders satisfying \eqref{stimaRRRKG}. 
Here we used Proposition \ref{prop:compo} to study the composition operator
$
\Lambda_{\KG}^{-\frac{1}{2}}\opbw(a_2(x,\x))\Lambda_{\KG}^{-\frac{1}{2}}\,.
$
By the discussion above and formula \eqref{oriente10KG}
we deduce the \eqref{QLNLS444KG}.
\end{proof}
%

\begin{remark}\label{semilin2}
In the semi-linear case, i.e. when $f=0$ and $g$
does not depend on $y_1$  (see \eqref{KGnon}, \eqref{KGnon2}) , 
the equation \eqref{QLNLS444KG} reads
\[
\dot{U}=-\ii E\opbw\big(\uno\Lambda_{\KG}(\x)\big)U+
X_{\mathcal{H}^{(4)}_{\KG}}(U)\,,
\]
and where the vector field $X_{\mathcal{H}^{(4)}_{\KG}}$ has the particular structure described in Remark \ref{semilin1}.
\end{remark}

\section{Approximately symplectic maps}\label{flow-ham}
\subsection{Para-differential Hamiltonian vector fields}
In this section we shall construct some approximatively symplectic changes of coordinates which will be important for the diagonalization procedure of Section \ref{diago}.

Define
the following frequency  localization:
\begin{equation}\label{innercutoff}
S_{\xi} w:= \sum_{k\in \mathbb{Z}^{d}} \hat{w}(k)
\chi_{\epsilon}\Big(\frac{|k|}{\langle \x\rangle}\Big)e^{\ii k\cdot x}\,,
\qquad \x\in \mathbb{Z}^{d}\,,
\end{equation}
for some $0<\epsilon<1$, where $\chi_{\epsilon}$ is defined in \eqref{cutofffunct}.
Consider the matrix of  symbols
\begin{equation}\label{gene1}
B_{\NLS}(W;x,\x):=B_{\NLS}(x,\x):=\left(
\begin{matrix} 0 & b_{\NLS}(x,\x)\\
\ov{b_{\NLS}(x,-\x)} & 0
\end{matrix}
\right)\,, \quad
b_{\NLS}(x,\x)=\tilde{\chi}(\xi)w^{2}\frac{1}{2|\x|^{2}}\,,
\end{equation}
where $\tilde{\chi}(\xi)$ is a $C^{\infty}(\mathbb{R};\mathbb{R}^+)$ function equal to $0$ if $|\xi|\leq 1/4$ and $1$ if $|\xi|\geq 1/2$.
Define also   the Hamiltonian function 
\begin{equation}\label{HamFUNC}
\mathcal{B}_{\NLS}(W):=\frac{1}{2}\int_{\mathbb{T}^{d}}\ii E
\opbw(B_{\NLS}(S_{\x}W;x,\x))W\cdot \ov{W}dx\,,
\end{equation}
where $S_{\x}W:=({S_{\x}w}, {S_{\x}\ov{w}})^{T}$\,. The presence of truncation on the high modes ($S_{\x}$) will be decisive in obtaining Lemma \ref{HamvectorFieldG} (see comments in the proof of this lemma).\\
Analogously we define the following.
Consider the matrix of symbols
\begin{equation}\label{gene1KG}
B_{\KG}(W;x,\x):=B_{\KG}(x,\x):=\left(
\begin{matrix} 0 & b_{\KG}(x,\x)\\
\ov{b_{\KG}(x,-\x)} & 0
\end{matrix}
\right)\,, \quad
b_{\KG}(W;x,\x)=\frac{a_0(x,\x)}{2\Lambda_{\KG}(\x)}\,,
\end{equation}
with $a_0(x,\xi)$ in \eqref{simboa2KG} and $\Lambda_{\KG}$ in \eqref{def:Lambda2}, and define the Hamiltonian function
\begin{equation}\label{HamFUNCKG}
\mathcal{B}_{\KG}(W):=\frac{1}{2}\int_{\mathbb{T}^{d}}\ii E
\opbw(B_{\KG}(S_{\x}W;x,\x))W\cdot \ov{W}dx\,,
\end{equation}
where $S_{\x}W:=({S_{\x}w}, {S_{\x}\ov{w}})^{T}$ where $S_{\x}$ is in \eqref{innercutoff}.

In this section we study some properties of the maps  
generated by the Hamiltonians $\mathcal{B}_{\NLS}(W)$ in \eqref{HamFUNC}
and  $\mathcal{B}_{\KG}(W)$ in \eqref{HamFUNCKG}.
In the next lemma 
we show that their Hamiltonian vector fields 
are given by $\opbw(B_{\NLS}(W;x,\x))W$
and $\opbw(B_{\KG}(W;x,\x))W$ respectively, 
modulo smoothing remainders.
More precisely we have the following.

\begin{lemma}\label{HamvectorFieldG}
Consider the Hamiltonian function $\mathcal{B}(W)$ equal to $\mathcal{B}_{\NLS}$ in \eqref{HamFUNC} or $\mathcal{B}_{\KG}$ in \eqref{HamFUNCKG}.
One has that the Hamiltonian vector field  of $\mathcal{B}(W)$ has the form 
\begin{equation}\label{HamvecField}
X_{\mathcal{B}}(W)=-\ii J \nabla \mathcal{B}(W)=\opbw(B(W;x,\x))W+Q_{\mathcal{B}}(W)\,,
\end{equation}
where $Q_{\mathcal{B}}(W)$ is a smoothing remainder 
of the form $(Q^{+}_{\mathcal{B}}(W),\ov{Q^{+}_{\mathcal{B}}(W)})^{T}$
and the symbol $B(W;x,\x)$
is respectively equal to $B_{\NLS}(W;x,\x)$ in \eqref{gene1}
or $B_{\KG}(W;x,\x)$ in \eqref{gene1KG}.
In particular the cubic remainder $Q_{\mathcal{B}}(W)$ has the form
\begin{equation}\label{formaQG}
\begin{aligned}
&\widehat{(Q_{\mathcal{B}}^{+}(W))}(\x)=
\frac{1}{(2\pi)^{d}}\sum_{\substack{ \s_1,\s_2,\s_3\in\{\pm\}\\ 
\eta,\zeta\in\mathbb{Z}^{d}}}
\mathtt{q}_{\mathcal{B}}^{\s_1,\s_2,\s_3}(\x,\eta,\zeta)\hat{w^{\s_1}}(\x-\eta-\zeta)\hat{w^{\s_2}}(\eta)\hat{w^{\s_3}}(\zeta)\,,\quad 
\x\in \mathbb{Z}^{d} \,,
\end{aligned} 
\end{equation}
where $\mathtt{q}_{\mathcal{B}}^{\s_1,\s_2,\s_3}(\x,\eta,\zeta)\in \mathbb{C}$ satisfy, 
for any $\x,\eta,\zeta\in \mathbb{Z}^{d}$, 
a bound like \eqref{eq:paraproduct2}.
In the case that $\mathcal{B}=\mathcal{B}_{\NLS}$
we have that $\s_1=+,\s_2=-,\s_3=+$.
Moreover, for  $s> d/2+\rho$, we have the following 
 \begin{equation}\label{stimaQG}
\begin{aligned}
\|d^{k}_{W}Q_{\mathcal{B}}(W)[h_1,\ldots,h_k]\|_{H^{s+\rho}}&
\lesssim\|w\|^{3-k}_{H^{s}}\prod_{i=1}^{k}
\|h_i\|_{H^{s}}\,,\qquad
\forall\, h_i\in H^{s}(\mathbb{T}^{d};\mathbb{C}^{2})\,, \;\; i=1,2,3\,,
\end{aligned}
\end{equation}
for $k=0,1,2,3$.
Moreover, for any $s> 2d+2$, one has
\begin{align}
\|d^{k}_{W}X_{\mathcal{B}_{\NLS}}(W)[h_1,\ldots,h_k]\|_{H^{s+2}}&\lesssim
\|w\|^{3-k}_{H^{s}}
\prod_{i=1}^{k}
\|h_i\|_{H^{s}}\,,\qquad
\forall\, h_i\in H^{s}(\mathbb{T}^{d};\mathbb{C}^{2})\,,
\; i=1,2,3\,,
\label{stimaXG4}\\
\|d^{k}_{W}X_{\mathcal{B}_{\KG}}(W)[h_1,\ldots,h_k]\|_{H^{s+1}}&\lesssim
\|w\|^{3-k}_{H^{s}}
\prod_{i=1}^{k}
\|h_i\|_{H^{s}}\,,\qquad
\forall\, h_i\in H^{s}(\mathbb{T}^{d};\mathbb{C}^{2})\,,
\; i=1,2,3\,,
\label{stimaXG4KG}
\end{align}
with $k=0,1,2,3$.
\end{lemma}

\begin{proof}
We prove the statement in the case $\mathcal{B}=\mathcal{B}_{\NLS}$,
the other case is similar.
Using the  formul\ae\, \eqref{gene1}, \eqref{HamFUNC} we obtain 
$\mathcal{B}_{\NLS}(W)=-G_1(W)-G_2(W)$ with
\[
G_1(W):=-\frac{\ii}{2}\int_{\T^d}\opbw(b_{\NLS}(S_{\xi}w))\bar{w}\,\bar{w}dx\,,
\qquad 
G_2(W):=\frac{\ii}{2}\int_{\T^d}\opbw(\ov{b_{\NLS}(S_{\xi}w})wwdx\,,
\]
where we recall \eqref{innercutoff}.
By \eqref{gene1} we obtain that 
$\nabla_{\bar{w}}G_1(W)=-\ii\opbw(b_{\NLS}(S_{\xi}w))\bar{w}$. 
We compute the gradient with respect $\bar{w}$ of the term $G_2(W)$. We have 
\begin{align*}
d_{\bar{w}}G_2(W)(\bar{h})&=
\tfrac{\ii}{2}\int_{\T^d}\opbw(S_{\xi}(\bar{w})S_{\xi}(\bar{h})\tfrac{1}{|\xi|^2}\tilde{\chi}(\xi))w\,wdx\\
&\stackrel{\mathclap{\eqref{quantiWeyl}}}{=}
\tfrac{\ii}{2}\tfrac{1}{(2\pi)^d}\sum_{\xi,\eta,\zeta\in\Z^d}
\hat{S_{\frac{\xi+\zeta}{2}}(\bar{w})}(\xi-\eta-\zeta)
\hat{S_{\frac{\xi+\zeta}{2}}(\bar{h})}(\eta)\hat{w}(\zeta)
\tfrac{4}{|\zeta+\xi|^2}\tilde{\chi}(\tfrac{\zeta+\xi}{2})
\chi_{\epsilon}\Big(\tfrac{|\xi-\zeta|}{\langle\xi+\zeta\rangle}\Big)\hat{w}(-\xi)\\
&\stackrel{\mathclap{\eqref{innercutoff}}}{=}
2\ii\tfrac{1}{(2\pi)^d}\sum_{\xi,\eta,\zeta\in\Z^d}\tilde{\chi}(\tfrac{\zeta+\xi}{2})\tfrac{1}{|\zeta+\xi|^2}
\chi_{\epsilon}\Big(\tfrac{|\xi-\zeta|}{\langle\xi+\zeta\rangle}\Big)
\chi_{\epsilon}\Big(\tfrac{2|\xi-\eta-\zeta|}{\langle\xi+\zeta\rangle}\Big)\chi_{\epsilon}\Big(\tfrac{2|\eta|}{\langle\xi+\zeta\rangle}\Big) 
\hat{\bar{w}}(\xi-\eta-\zeta)\hat{\bar{h}}(\eta)\hat{w}(\zeta)\hat{w}(-\xi)
\\&
=2\ii\tfrac{1}{(2\pi)^d} \sum_{\eta\in \Z^d} 
\hat{\bar{h}}(-\eta)\sum_{\xi,\zeta\in\Z^d}\tilde{\chi}(\tfrac{\zeta+\xi}{2}) \tfrac{1}{|\zeta+\xi|^2}
\chi_{\epsilon}\Big(\tfrac{|\xi-\zeta|}{\langle\xi+\zeta\rangle}\Big)
\\&\quad 
\times\chi_{\epsilon}\Big(\tfrac{2|\xi+\eta-\zeta|}{\langle\xi+\zeta\rangle}\Big)\chi_{\epsilon}\Big(\tfrac{2|\eta|}{\langle \xi+\zeta\rangle}\Big) 
\hat{\bar{w}}(\xi+\eta-\zeta)\hat{w}(\zeta)\hat{w}(-\xi)\,.
\end{align*}
Recalling \eqref{VecfieldHam} and the computations above, 
after some changes of variables in the summations, we obtain
\[
X_{\mathcal{B}_{\NLS}}(W)=\opbw(B_{\NLS}(S_{\x}W;x,\x))W+R_1(W)
\]
where the remainder $R_1(W)$ has the form  
$(R_{1}^{+}(W), \ov{R_{1}^{+}(W)})^{T}$ where
(recall \eqref{cutofffunct})
\[
\begin{aligned}
\widehat{(R_1^{+}(W))}(\x)&=\tfrac{1}{(2\pi)^d}\sum_{\eta,\zeta\in \mathbb{Z}^{d}}
r_1(\x,\eta,\zeta)
\hat{w}(\x-\eta-\zeta)\hat{\bar{w}}(\eta)\hat{w}(\zeta)\,, 
\qquad \x\in\mathbb{Z}^{d}\,,\\
r_1(\x,\eta,\zeta)&=-\tfrac{2}{|2\zeta-\x+\eta|^{2}}\tilde{\chi}\Big(\tfrac{2\zeta-\xi+\eta}{2}\Big)
\chi_{\epsilon}\Big(\tfrac{|\eta-\x|}{\langle2\zeta-\x+\eta\rangle}\Big)
\chi_{\epsilon}\Big(\tfrac{2|\x|}{\langle\x-\eta-2\zeta\rangle}\Big)
\chi_{\epsilon}\Big(\tfrac{2|\eta|}{\langle\x-\eta-2\zeta\rangle}\Big)\,.
\end{aligned}
\]
One can check, for $0<\epsilon<1$ small enough, 
$|\x|+|\eta|\ll |\x-\eta-\zeta|\sim |\zeta|$.
Therefore the coefficients $r_1(\x,\eta,\zeta)$ satisfies the 
\eqref{eq:paraproduct2}. Here we really need the truncation operator $S_\xi$: if you don't insert it in the definition of  $\mathcal{B}_{\NLS}$ (see \eqref{HamFUNC}) then $R_1$ is not a regularizing operator. Furthermore  this truncation does not affect the leading term: 
define the operator 
\[
R_{2}(W)=\left(\begin{matrix} R_{2}^{+}(W)  \vspace{0.2em} \\
\ov{R_{2}^{+}(W) }
\end{matrix}\right):=\opbw\Big(B_{\NLS}(S_{\x}W;x,\x)-B_{\NLS}(W;x,\x)\Big)W\,,
\]
we are going to prove that $R_2$ is also a regularizing operator.
By an explicit computation using \eqref{quantiWeyl}, \eqref{innercutoff}
and \eqref{gene1} one can check that
\[
\begin{aligned}
\widehat{(R_2^{+}(W))}(\x)&=\tfrac{1}{(2\pi)^d}\sum_{\eta,\zeta\in \mathbb{Z}^{d}}
r_2(\x,\eta,\zeta)
\hat{w}(\x-\eta-\zeta)\hat{\bar{w}}(\eta)\hat{w}(\zeta)\,,
\qquad \x\in\mathbb{Z}^{d}\,,\\
r_2(\x,\eta,\zeta)&=-
\tfrac{1}{|\x+\zeta|^{2}}\tilde{\chi}\Big(\tfrac{\xi+\zeta}{2}\Big)
\chi_{\epsilon}\Big(\tfrac{|\x-\zeta|}{\langle\x+\zeta\rangle}\Big)
\left(
1-\chi_{\epsilon}\Big(\tfrac{|\x-\eta-\zeta|}{\langle\x+\zeta\rangle}\Big)
\chi_{\epsilon}\Big(\tfrac{|\eta|}{\langle\x+\zeta\rangle}\Big)
\right)\,.
\end{aligned}
\]
We write $1\cdot r_{2}(\x,\eta,\zeta)$ and we use the partition of the 
 unity in \eqref{partUnity}. Hence 
using the \eqref{cutofffunct} one can check that each summand 
satisfies the bound in \eqref{eq:paraproduct2}.
Therefore the operator $Q_{G}:=R_1+R_2$ 
has the form \eqref{formaQG} 
and 
 \eqref{HamvecField}  is proved.
 The estimates \eqref{stimaQG} follow
by 
Lemma \ref{lem:trilineare}.
We note that 
\[
d_{W}\Big(\opbw(B_{\NLS}(W;x,\x))W\Big)[h]=
\opbw(B_{\NLS}(W;x,\x))h+\opbw(d_{W}B_{\NLS}(W;x,\x)[h])W\,.
\]
Then the estimates \eqref{stimaXG4} with $k=0,1$,  follow
by using \eqref{stimaQG}, the explicit formula of $B(W;x,\x)$ in 
\eqref{gene1} and Lemma \ref{azioneSimboo}.
Reasoning similarly one can prove the 
\eqref{stimaXG4} with $k=2,3$.
\end{proof}
 In the next proposition we define the changes of coordinates generated by the Hamiltonian vector fields $X_{\mathcal{B}_{\NLS}}$ and $X_{\mathcal{B}_{\KG}}$ and we study their properties as maps on Sobolev spaces.

\begin{proposition}\label{flussononlin}
For any $s\geq s_0>2d+2$ there is $r_0>0$ 
such that for $0\leq r\leq r_0$ and 
$W=\vect{w}{\bar{w}}\in 
B_{r}(H^{s}(\mathbb{T}^{d};\mathbb{C}^{2}))$, 
the following holds. 
Define
\begin{equation}\label{def:Zeta}
Z:=\Phi_{\mathcal{B}_{\star}}(W):=W+X_{\mathcal{B}_{\star}}(W)\,,
\end{equation}
where $\star\in\{ {\rm NLS}, {\rm KG}  \}$ (recall \eqref{HamFUNC}, \eqref{HamFUNCKG}).
Then one has
\begin{align}
\|Z\|_{H^{s}}&\leq \|w\|_{H^{s}}(1+C\|w\|^{2}_{H^{s}})\,,
\label{stimaflusso1}
\end{align}
for some $C>0$ depending on $s$,
and 
\begin{equation}\label{approxinvKG}
W=Z-X_{\mathcal{B}_{\star}}(Z)+r(w)\,,
\end{equation}
where 
\begin{equation}\label{approxinvKG2}
\|r(w)\|_{H^{s}}\lesssim \|w\|_{H^{s}}^{5}\,.
\end{equation}
\end{proposition}
\begin{proof}
By \eqref{def:Zeta} we can write
\[
W=Z-X_{\mathcal{B}_{\star}}(W)=Z-X_{\mathcal{B}_{\star}}(Z)
+\big[X_{\mathcal{B}_{\star}}(W)-X_{\mathcal{B}_{\star}}(Z)\big]\,.
\]
By using estimates \eqref{stimaXG4} or \eqref{stimaXG4KG} one can deduce that
$X_{\mathcal{B}_{\star}}(W)-X_{\mathcal{B}_{\star}}(Z)$
satisfies the bound \eqref{approxinvKG2}.
The bound \eqref{stimaflusso1} follows by Lemma \ref{HamvectorFieldG}.
\end{proof} 

\subsection{Conjugations}
\noindent Recalling \eqref{pollini1} and \eqref{hamiltonianaS}  we
set 
\begin{equation}\label{Hamprova}
\mathcal{H}_{\NLS}^{(\leq4)}(W):=\mathcal{H}_{\NLS}^{(2)}(W)
+\mathcal{H}_{\NLS}^{(4)}(W)\,, \quad \mathcal{H}_{\NLS}^{(2)}(Z):=\int_{\mathbb{T}^{d}}\Lambda_{\NLS} z\cdot\bar{z}dx.
\end{equation}
Analogously, recalling \eqref{calGGGKG} and \eqref{def:Lambda2},
 we set
  \begin{equation}\label{HamprovaKG}
 \mathcal{H}^{(\leq4)}_{\KG}(W)
 :=\mathcal{H}^{(2)}_{\KG}(W)+\mathcal{H}^{(4)}_{\KG}(W)\,,\quad \mathcal{H}^{(2)}_{\KG}(Z):=\int_{\mathbb{T}^{d}}\Lambda_{\KG} z\cdot\bar{z}dx
 \end{equation}
In the following lemma we study how the Hamiltonian vector fields $X_{\mathcal{H}^{(\leq4)}_{\NLS}}(W)$ in \eqref{Hamprova}, and $X_{\mathcal{H}^{(\leq4)}_{\KG}}(W)$ in \eqref{HamprovaKG}, transform under the change of variables given by the previous lemma.

\begin{lemma}\label{conjconjconj}
Let $s_0>2d+4$. Then for any $s\geq s_0$ there is $r_0>0$
such that for all $0<r\leq r_0$ and  $Z=\vect{z}{\bar{z}}\in 
B_{r}(H^{s}(\mathbb{T}^{d};\mathbb{C}^{2}))$
the following holds. Consider the Hamiltonian $\mathcal{B}_{\star}$ 
with $\star\in\{ {\rm NLS}, {\rm KG}  \}$ 
(recall \eqref{HamFUNC}, \eqref{HamFUNCKG})
and the Hamiltonian $\mathcal{H}_{\star}^{(\leq4)}$ 
(see \eqref{Hamprova}, \eqref{HamprovaKG}).
Then
\begin{equation}\label{PPPPtau2}
d_{W}\Phi_{\mathcal{B}_{\star}}(W)\big[X_{\mathcal{H}_{\star}^{(\leq4)}}(W)
\big]=
X_{\mathcal{H}_{\star}^{(\leq4)}}(Z)+\big[X_{\mathcal{B}_{\star}}(Z),
X_{\mathcal{H}_{\star}^{(2)}}(Z)\big]+R_{5}(Z)\,,
\end{equation}
%
where
the remainder $R_{5}$ satisfies
\begin{equation}\label{PPPPtau3}
\|R_{5}(Z)\|_{H^{s}}\lesssim \|z\|_{H^{s}}^{5}\,,
\end{equation}
and $[\cdot,\cdot]$ is the nonlinear commutator defined 
in \eqref{nonlinCommu}.
\end{lemma}

\begin{proof}
We prove the statement in the case $\mathcal{B}_{\star}=\mathcal{B}_{\NLS}$ and 
$\mathcal{H}_{\star}^{(\leq4)}=\mathcal{H}^{(\leq4)}_{\NLS}$, the KG-case is similar.
One can check that \eqref{PPPPtau2} follows by setting 
\begin{align}
R_{5}&:=d_{W}X_{\mathcal{B}_{\NLS}}(W)\big[X_{\mathcal{H}_{\NLS}^{(\leq4)}}(W)
-X_{\mathcal{H}_{\NLS}^{(\leq4)}}(Z)\big]
\label{gordon10NLS}\\
&+\big(d_{W}X_{\mathcal{B}_{\NLS}}(W)
-d_{W}X_{\mathcal{B}_{\NLS}}(Z)\big)
\big[X_{\mathcal{H}_{\NLS}^{(\leq4)}}(Z)\big]\label{gordon11NLS}\\
&+X_{\mathcal{H}_{\NLS}^{(\leq4)}}(W)
-X_{\mathcal{H}_{\NLS}^{(\leq4)}}(Z)+d_{W}X_{\mathcal{H}_{\NLS}^{(\leq4)}}(Z)
\big[X_{\mathcal{B}_{\NLS}}(Z)\big]\,\label{gordon12NLS}\\
&+\big[
X_{\mathcal{B}_{\NLS}}(Z)
,X_{\mathcal{H}^{(4)}_{\NLS}}(Z)
\big]\,.
\label{gordon12bisNLS}
\end{align}
We are left to prove that 
 $R_5$ satisfies  \eqref{PPPPtau3}.
We start from the term in \eqref{gordon10NLS}.
First of all we note that
\[
X_{\mathcal{H}_{\NLS}^{(\leq4)}}(W)-X_{\mathcal{H}_{\NLS}^{(\leq4)}}(Z)
=-\ii E\Lambda_{\NLS}(W-Z)+X_{\mathcal{H}_{\NLS}^{(4)}}(W)-
X_{\mathcal{H}_{\NLS}^{(\leq4)}}(Z)\,,
\]
where we used that $X_{\mathcal{H}_{\NLS}^{(2)}}(W)=-\ii E\Lambda_{\NLS} W$.
By Proposition  \ref{flussononlin}, the \eqref{hamiltonianaS}
and \eqref{stimaXG4}
we deduce that
\[
\|X_{\mathcal{H}_{\NLS}^{(\leq4)}}(W)
-X_{\mathcal{H}_{\NLS}^{(\leq4)}}(Z)\|_{H^{s}}\lesssim\|w\|_{H^{s}}^{3}\,.
\]
Hence using again the bounds \eqref{stimaXG4} we obtain
\[
\|d_{W}X_{\mathcal{B}_{\NLS}}(W)\big[X_{\mathcal{H}_{\NLS}^{(\leq4)}}(W)
-X_{\mathcal{H}_{\NLS}^{(\leq)}}(Z)\big]\|_{H^{s}}
\lesssim \|w\|^{5}_{H^{s}}\,.
\]
Reasoning in the same way, using also \eqref{approxinvKG},  one can check that the terms in 
\eqref{gordon11NLS}, \eqref{gordon12NLS}, \eqref{gordon12bisNLS}
satisfies the same quintic estimates. 
\end{proof}

In the next lemma we study the structure of the the cubic terms in the vector field in \eqref{PPPPtau2} in the NLS case.

\begin{lemma}\label{conj:hamfield}
Consider the Hamiltonian
$\mathcal{B}_{\NLS}(W)$ in \eqref{HamFUNC} 
and recall \eqref{hamiltonianaS}, \eqref{Hamprova}. 
Then we have that
\begin{equation}\label{cubici}
X_{\mathcal{H}^{(4)}_{\NLS}}(Z)
+\big[X_{\mathcal{B}_{\NLS}}(Z),X_{\mathcal{H}^{(2)}_{\NLS}}(Z)\big]
=-\ii E\opbw\left(\begin{matrix}
2|z|^{2} & 0 \\0 & 2|z|^{2}
\end{matrix}\right)Z+Q_{\mathtt{H}^{(4)}_{\NLS}}(Z)\,,
\end{equation}
where the remainder $Q_{\mathtt{H}^{(4)}_{\NLS}}$ has the form 
$Q_{\mathtt{H}^{(4)}_{\NLS}}(Z)=
(Q_{\mathtt{H}^{(4)}_{\NLS}}^{+}(Z),
\ov{Q_{\mathtt{H}^{(4)}_{\NLS}}^{+}(Z)})^T$ and 
\begin{equation}\label{formaRRRi}
 \widehat{(Q_{\mathtt{H}^{(4)}_{\NLS}}^{+}(Z))}(\x)
 =\tfrac{1}{(2\pi)^d}\sum_{\eta,\zeta\in\mathbb{Z}^{d}}
\mathtt{q}_{\mathtt{H}^{(4)}_{\NLS}}(\x,\eta,\zeta)
\hat{z}(\x-\eta-\zeta)\hat{\bar{z}}(\eta)\hat{z}(\zeta)\,,
\qquad \x\in \mathbb{Z}^{d} \,,
\end{equation}
with symbol satisfying
\begin{align}
|\mathtt{q}_{\mathtt{H}^{(4)}_{\NLS}}(\x,\eta,\zeta)|&\lesssim 
\frac{\max_{2}\{\langle\x-\eta-\zeta\rangle,\langle\eta\rangle,\langle\zeta\rangle\}^{4}}
{\max_1\{\langle\x-\eta-\zeta\rangle,\langle\eta\rangle,\langle\zeta\rangle\}^{2}}\,.
\label{pollo2}
\end{align}
\end{lemma}
\begin{proof}
We start by considering the commutator between 
$X_{\mathcal{B}_{\NLS}}$ and $X_{\mathcal{H}^{(2)}_{\NLS}}$.
First of all notice that 
(see \eqref{HamvecField}, \eqref{gene1})
\[
X_{\mathcal{B}_{\NLS}}(Z)=\left(
\begin{matrix}
X_{\mathcal{B}_{\NLS}}^{+}(Z) \vspace{0.2em}\\
\ov{X_{\mathcal{B}_{\NLS}}^{+}(Z)}
\end{matrix}\right)\,,
\qquad X_{\mathcal{B}_{\NLS}}^{+}(Z):=
\opbw\Big(\tfrac{z^2}{2|\x|^{2}}\tilde{\chi}(\xi)\Big)[\bar{z}]+Q_{\mathcal{B}_{\NLS}}^{+}(Z)\,,
\]
and hence (recall \eqref{quantiWeyl}), for $\x\in \mathbb{Z}^{d}$
\begin{equation}\label{pollini3}
\widehat{(X_{\mathcal{B}_{\NLS}}^{+}(Z))}(\x)=\tfrac{1}{(2\pi)^d}
\sum_{\eta,\zeta\in\mathbb{Z}^{d}}
\hat{z}(\x-\eta-\zeta)\hat{\bar{z}}(\eta)\hat{z}(\zeta)
\Big[\tfrac{2}{|\x+\eta|^{2}}
\tilde{\chi}\Big(\tfrac{\xi+\eta}{2}\Big)\chi_{\epsilon}\Big(\tfrac{|\x-\eta|}{\langle\x+\eta\rangle}\Big)
+q_{\mathcal{B}_{\NLS}}(\x,\eta,\zeta)\Big]
\end{equation}
where $q_{\mathcal{B}_{\NLS}}(\z,\eta,\zeta)$ 
satisfies the bound in \eqref{eq:paraproduct2}. 
Hence, by using formul\ae\, \eqref{pollini1}, 
 \eqref{pollini3},
\eqref{nonlinCommu}, one obtains 
\begin{equation*}
\begin{aligned}
&X_{\mathcal{H}^{(4)}_{\NLS}}(Z)+\big[X_{\mathcal{B}_{\NLS}}(Z),X_{\mathcal{H}^{(2)}_{\NLS}}(Z)\big]=
\left(
\begin{matrix}
\mathcal{C}^{+}(Z) \vspace{0.2em}\\
\ov{\mathcal{C}^{+}(Z)}
\end{matrix}\right)\,,\\
&\widehat{(\mathcal{C}^{+}(Z))}(\x)=
\tfrac{-1}{(2\pi)^d}\sum_{\eta,\zeta\in\mathbb{Z}^{d}}
\ii \mathtt{c}(\x,\eta,\zeta)
\hat{z}(\x-\eta-\zeta)\hat{\bar{z}}(\eta)\hat{z}(\zeta)
\end{aligned}
\end{equation*}
where
\begin{equation}\label{pollini4}
\mathtt{c}(\x,\eta,\zeta)=1+
\Big[\tfrac{2}{|\x+\eta|^{2}}\tilde{\chi}\Big(\tfrac{\xi+\eta}{2}\Big)
\chi_{\epsilon}\Big(\tfrac{|\x-\eta|}{\langle\x+\eta\rangle}\Big)
+q_{\mathcal{B}_{\NLS}}(\x,\eta,\zeta)\Big]
\Big[\Lambda_{\NLS}(\x-\eta-\zeta)-\Lambda_{\NLS}(\eta)
+\Lambda_{\NLS}(\zeta)-\Lambda_{\NLS}(\x)\Big]\,.
\end{equation}
We need to prove that this can be written 
as the r.h.s. of \eqref{cubici}.
First we note that the term in \eqref{pollini4} 
\begin{equation}\label{pollini5}
q_{\mathcal{B}_{\NLS}}(\x,\eta,\zeta)\Big[\Lambda_{\NLS}(\x-\eta-\zeta)-\Lambda_{\NLS}(\eta)
+\Lambda_{\NLS}(\zeta)-\Lambda_{\NLS}(\x)\Big]
\end{equation}
can be absorbed in $\mathtt{R}_1$ since 
the \eqref{pollini5} satisfy the same bound as in \eqref{pollo2}.
Moreover, using the \eqref{pollini1} and the \eqref{insPot}, we have that
the coefficients
\[
\tfrac{2}{|\x+\eta|^{2}}\tilde{\chi}\Big(\tfrac{\xi+\eta}{2}\Big)\chi_{\epsilon}\Big(\tfrac{|\x-\eta|}{\langle\x+\eta\rangle}\Big)
\Big[\hat{V}(\x-\eta-\zeta)-\hat{V}(\eta)+\hat{V}(\zeta)-\hat{V}(\x)\Big]
\]
satisfy the bound in \eqref{pollo2} by using also Lemma \ref{lem:paratri}.
 Therefore the corresponding 
operator contributes to
$\mathtt{R}_{1}$.
The same holds for the operator corresponding to the coefficients
\[
\tfrac{2}{|\x+\eta|^{2}}\tilde{\chi}\Big(\tfrac{\xi+\eta}{2}\Big)\chi_{\epsilon}\Big(\tfrac{|\x-\eta|}{\langle\x+\eta\rangle}\Big)
\Big[|\x-\eta-\zeta|^{2}+|\zeta|^{2}\Big]\,.
\] 
We are left with the 
most relevant terms in \eqref{pollini4} containing 
the highest frequencies $\eta$ and $\x$.
We have that
\[
\tfrac{-2(|\x|^{2}+|\eta|^{2})}{|\x+\eta|^{2}}
\chi_{\epsilon}\Big(\tfrac{|\x-\eta|}{\langle\x+\eta\rangle}\Big)\tilde{\chi}\Big(\tfrac{\xi+\eta}{2}\Big)
=
-\chi_{\epsilon}\Big(\tfrac{|\x-\eta|}{\langle\x+\eta\rangle}\Big)
-r_{1}(\x,\eta,\zeta)\,,
\]
where
\[
r_{1}(\x,\eta,\zeta)=
\Big(\tilde{\chi}\Big(\tfrac{\xi+\eta}{2}\Big)-1\Big)\chi_{\epsilon}\Big(\tfrac{|\x-\eta|}{\langle\x+\eta\rangle}\Big) +
\tfrac{|\x-\eta|^{2}}{|\x+\eta|^{2}}\tilde{\chi}\Big(\tfrac{\xi+\eta}{2}\Big)\chi_{\epsilon}
\Big(\tfrac{|\x-\eta|}{\langle\x+\eta\rangle}\Big)\,.
\]
Again we note that the coefficients $r_{1}(\x,\eta,\zeta)$, using Lemma \ref{lem:paratri}
and the definition of $\tilde{\chi}$ below \eqref{gene1}, 
satisfy \eqref{pollo2}.
Then it remains to 
study
the operator
$\mathcal{R}^{+}(Z)$ with
\[\quad
\widehat{(\mathcal{R}^{+}(Z))}(\x):=\tfrac{-1}{(2\pi)^d}
\sum_{\eta,\zeta\in\mathbb{Z}^{d}}
\ii \Big(1-\chi_{\epsilon}\Big(\tfrac{|\x-\eta|}{\langle\x+\eta\rangle}\Big)\Big)
\hat{z}(\x-\eta-\zeta)\hat{\bar{z}}(\eta)\hat{z}(\zeta)\,.
\]
By formula \eqref{X_H} and \eqref{quantiWeyl} we get
$
\mathcal{R}^{+}(Z)=-\ii\opbw(2|z|^{2})z+Q^{+}_{3}(U)\,,
$
where $Q_{3}$ satisfies \eqref{restoQ3KGNLS}, \eqref{restoQ32KGNLS}.
This concludes the proof.
\end{proof}

In the next lemma we study the structure of the the cubic terms in the vector field in \eqref{PPPPtau2} in the KG case.

\begin{lemma}\label{conj:hamfieldKG}
Consider the Hamiltonian
$\mathcal{B}_{\KG}(W)$ in \eqref{HamFUNCKG} 
and recall \eqref{calGGGKG}, \eqref{HamprovaKG}. 
Then we have that

\begin{equation}\label{cubiciKG}
X_{\mathcal{H}^{(4)}_{\KG}}(Z)
+\big[X_{\mathcal{B}_{\KG}}(Z),X_{\mathcal{H}^{(2)}_{\KG}}(Z)\big]=
-\ii E\opbw\big(
\diag(a_0(x,\x))\big)Z+Q_{\mathtt{H}^{(4)}_{\KG}}(Z)
\end{equation}
the symbol $a_0(x,\x)=a_0(u,x,\x)$ is in  \eqref{simboa2KG}, the remainder
$Q_{\mathtt{H}^{(4)}_{\KG}}(Z)$ has the form
$(Q^{+}_{\mathtt{H}^{(4)}_{\KG}}(Z),\ov{Q^{+}_{\mathtt{H}^{(4)}_{\KG}}(Z)})^{T}$
with
\begin{equation}\label{restoQ3GGGKG}
\widehat{Q^+_{\mathtt{H}^{(4)}_{\KG}}}(\x)=(2\pi)^{-d}\sum_{\substack{\s_1,\s_2,\s_{3}\in\{\pm\}\\
\eta,\zeta\in\mathbb{Z}^{d}}}\!\!
\mathtt{q}_{\mathtt{H}^{(4)}_{\KG}}^{\s_1,\s_2,\s_3}(\x,\eta,\zeta)
\hat{z^{\s_1}}(\x-\eta-\zeta)\hat{z^{\s_2}}(\eta)\hat{z^{\s_3}}(\zeta)\,,
\end{equation}
for some 
$\mathtt{q}_{\mathtt{H}^{(4)}_{\KG}}^{\s_1,\s_2,\s_3}(\x,\eta,\zeta)\in \mathbb{C}$
satisfying 
\begin{equation}\label{restoQ32GGGKG}
|\mathtt{q}_{\mathtt{H}^{(4)}_{\KG}}^{\s_1,\s_2,\s_3}(\x,\eta,\zeta)|
\lesssim\frac{\max_2\{\langle\x-\eta-\zeta\rangle,\langle \eta\rangle,\langle\zeta\rangle\}^{\mu}}{\max\{\langle\x-\eta-\zeta\rangle,\langle \eta\rangle,\langle\zeta\rangle\}}
\end{equation}
for some $\mu>1$.
\end{lemma}

\begin{proof}
Using \eqref{HamvecField}  (with $\mathcal{B}=\mathcal{B}_{\KG}$)
we can note that 
\begin{equation}\label{gothamKG55}
\big[X_{\mathcal{B}_{\KG}}(Z),X_{\mathcal{H}^{(2)}_{\KG}}(Z)\big]=\big[
\opbw(B_{\KG}(Z;x,\x)),
X_{\mathcal{H}^{(2)}_{\KG}}(Z)
\big]+R_2(Z)
\end{equation}
where $R_2(Z)=(R^{+}_2(Z),\ov{R^{+}_2(Z)})^{T}$ with 
 
\begin{equation}\label{formaRrestoKG}
\begin{aligned}
\widehat{(R_2^{+}(Z))}(\x)&=(2\pi)^{-d}\sum_{\substack{ \s_1,\s_2,\s_3\in\{\pm\}\\ 
\eta,\zeta\in\mathbb{Z}^{d}}}
\mathtt{r}_2^{\s_1,\s_2,\s_3}(\x,\eta,\zeta)\hat{z^{\s_1}}(\x-\eta-\zeta)\hat{z^{\s_2}}(\eta)\hat{z^{\s_3}}(\zeta)\,,\quad 
\x\in \mathbb{Z}^{d} \,,\\
\mathtt{r}_2^{\s_1,\s_2,\s_3}(\x,\eta,\zeta)&:=
\mathtt{q}_{\mathcal{B}_{\KG}}^{\s_1,\s_2,\s_3}(\x,\eta,\zeta)\Big[\s_1\Lambda_{\KG}(\x-\eta-\zeta)
+\s_2\Lambda_{\KG}(\eta)+\s_3\Lambda_{\KG}(\zeta)-\Lambda_{\KG}(\x)\Big],
\end{aligned} 
\end{equation}
where the coefficients are defined in \eqref{formaQG}.
The remainder $R_2$ has the form \eqref{restoQ3GGGKG} and
we have that the coefficients
$\mathtt{r}_2^{\s_1,\s_2,\s_3}(\x,\eta,\zeta)$ satisfy the bound \eqref{restoQ32GGGKG}.
On the other hand, recalling \eqref{gene1KG}, \eqref{nonlinCommu}, we have
\begin{equation}\label{gothamKG555}
\big[
\opbw(B_{\KG}(Z;x,\x)),
X_{\mathcal{H}^{(2)}_{\KG}}(Z)
\big]=R_3(Z)+R_4(Z)\,,\qquad R_{j}(Z)=
\left(\begin{matrix}
R^{+}_{j}(Z)\vspace{0.2em}\\
\ov{R^{+}_{j}(Z)}
\end{matrix}\right)\,,\;\; j=3,4\,,
\end{equation}
where
\begin{align}
R^+_3(Z)&:=\opbw(b_{\KG}(Z;x,\x))[\ii \Lambda_{\KG} \bar{z}]
+\ii \Lambda_{\KG}\opbw(b_{\KG}(Z;x,\x))[\bar{z}]\,,
\label{RRR333}\\
R^{+}_{4}(Z)&:=\opbw\big((d_{Z}b_{\KG})(z;x,\x)[X_{\mathcal{H}^{(2)}_{\KG}}(Z)]\big)[\bar{z}]\,.\label{RRR444}
\end{align}
By Remark \ref{strutturaA0KG} and \eqref{quantiWeyl} we get
\[
\begin{aligned}
\widehat{R_{4}^{+}}(\x)=(2\pi)^{-d}\sum_{\substack{\s_1,\s_2\in\{\pm\}\\
\eta,\zeta\in \mathbb{Z}^{d}}}
&a_{0}^{\s_1,\s_2}(\x-\zeta,\eta,\tfrac{\x+\zeta}{2})\frac{1}{2\Lambda_{\KG}(\frac{\x+\zeta}{2})}
\chi_{\epsilon}\Big(\tfrac{|\x-\z|}{\langle \x+\zeta\rangle}\Big)\Big[ 
-\ii\s_1\Lambda_{\KG}(\x-\eta-\zeta)-\ii \s_2\Lambda_{\KG}(\eta)
\Big]\times\\
&\qquad\qquad \times
\hat{z^{\s_1}}(\x-\eta-\zeta)\hat{z^{\s_2}}(\eta)\hat{\bar{z}}(\zeta)\,.
\end{aligned}
\]
Using the explicit form of the coefficients of $R_{4}^{+}$
and Lemma \ref{lem:paratri} one can conclude that
the operator $R_{4}^{+}$ has the form \eqref{restoQ3GGGKG}
with coefficients satisfying \eqref{restoQ32GGGKG}.
To summarize, by 
\eqref{gothamKG55} and \eqref{gothamKG555},
we have obtained
(recall also \eqref{calGGG1KG}, \eqref{calAA0KG})
\begin{equation}\label{orangeKG2}
{\rm l.h.s.\; of}\; \eqref{cubiciKG}
=\opbw\left(
\begin{matrix}
-\ii a_{0}(x,\x) & 0\\0&
\ii a_0(x,\x)
\end{matrix}
\right)Z+F_{3}(Z)+Q_{3}(Z)+R_{2}(Z)+R_{4}(Z)
\end{equation}
where $R_4$ is in \eqref{RRR444}, $R_2$ is in \eqref{formaRrestoKG},
$Q_{3}(Z)$ is in \eqref{calGGG1KG}
and
\begin{equation}\label{finalFFFKG3}
F_{3}(Z)=\left(
\begin{matrix}
F_{3}^{+}(Z) \vspace{0.2em}\\
\ov{F_{3}^{+}(Z)}
\end{matrix}
\right)\,,\qquad
F_{3}^{+}(Z)=-\ii\opbw(a_{0}(x,\x))[\bar{z}]+R_{3}^{+}(Z)
\end{equation}
where $R_{3}^{+}$ is in \eqref{RRR333}. By the discussion above
and by Lemma \ref{calGvecKG}
we have that the remainders $R_2$, $R_{4}$ and $Q_{3}$ have the form
\eqref{restoQ3GGGKG} with coefficients satisfying \eqref{restoQ32GGGKG}.
To conclude the prove we need to show that $F_{3}$ has the same property.
This will be a consequence of the choice of the symbol 
$b_{\KG}(W;x,\x)$ in \eqref{gene1KG}.
Indeed, by \eqref{gene1KG}, Remark \ref{strutturaA0KG}, \eqref{finalFFFKG3}, 
\eqref{RRR333}, 
we have
\[
\widehat{F_{3}^{+}}(\x)=(2\pi)^{-d}
\sum_{\substack{\s_1,\s_2\in\{\pm\}\\
\eta,\zeta\in \mathbb{Z}^{d}}}
\mathtt{f}_{3}^{\s_1,\s_2,-}(\x,\eta,\zeta)\hat{z^{\s_1}}(\x-\eta-\zeta)\hat{z^{\s_2}}(\eta)
\hat{\bar{z}}(\zeta)
\]
where
\begin{equation}\label{orangeKG1}
\mathtt{f}_{3}^{\s_1,\s_2,-}(\x,\eta,\zeta):=
a_{0}^{\s_1,\s_2}(\x-\zeta,\eta,\tfrac{\x+\zeta}{2})\ii \Big[\tfrac{\Lambda_{\KG}(\x)+\Lambda_{\KG}(\zeta)}{2\Lambda_{\KG}(\frac{\x+\zeta}{2})}-1\Big]\chi_{\epsilon}
\Big(\tfrac{|\x-\zeta|}{\langle\z+\zeta\rangle}\Big)\,.
\end{equation}
By Taylor expanding the symbol $\Lambda_{\KG}(\x)$ in \eqref{def:Lambda2} 
(see also Remark \ref{strutturaA0KG}) one deduces 
that
\[
\left|
a_{0}^{\s_1,\s_2}(\x-\zeta,\eta,\tfrac{\x+\zeta}{2})\ii \Big[\tfrac{\Lambda_{\KG}(\x)+\Lambda_{\KG}(\zeta)}{2\Lambda_{\KG}(\frac{\x+\zeta}{2})}-1\Big]
\right|\lesssim\tfrac{|\x-\zeta|}{(\langle\x\rangle+\langle\zeta\rangle)^{3/2}}\,.
\]
Therefore, using Lemma \ref{lem:paratri}, we have that the coefficients
$\mathtt{f}_{3}^{\s_1,\s_2,-}(\x,\eta,\zeta)$ in \eqref{orangeKG1}
satisfy the \eqref{restoQ32GGGKG}. 
This implies the \eqref{cubiciKG}.\end{proof}

\section{Diagonalization}\label{diago}

\subsection{Diagonalization of the NLS}
In this section we   diagonalize 
the system \eqref{QLNLS444}. We first diagonalize 
the matrix $E(\uno+A_2(x))$ in \eqref{QLNLS444} 
by means of a change of coordinates as the 
ones made in the papers 
\cite{Feola-Iandoli-local-tori, Feola-Iandoli-Long}. 
After that we diagonalize the matrix of symbols of 
order $0$ at homogeneity $3$, by means of an approximatively  \emph{symplectic} 
change of coordinates. 
Throughout the rest of the section we shall assume the following.
\begin{hypo}\label{2epsilon}
We restrict the solution of \eqref{NLS} on the interval of times $[0,T)$, 
with $T$ such that 
\begin{equation*}
\sup_{t\in[0,T)}\|u(t,x)\|_{H^s}\leq \e\,, \quad \|u_0(x)\|_{H^s}\leq\tfrac{1}{4} \e\,.
\end{equation*}
\end{hypo}
\noindent Note that such a time $T>0$ exists thanks to the 
local existence theorem in \cite{Feola-Iandoli-local-tori}.

\subsubsection{Diagonalization at order 2}\label{blockdiago2}
We consider the matrix $E(\uno+A_2(x))$ in \eqref{QLNLS444}. 
We define
\begin{equation}\label{autovalori}
\lambda_{\NLS}(x):=\lambda_{\NLS}(U;x):=\sqrt{1+2|u|^2[h'(|u|^2)]^2}\,, 
\qquad a_2^{(1)}(x):=\lambda_{\NLS}(x)-1\,,
\end{equation}
and we note that  $\pm\lambda_{\NLS}(x)$ are the eigenvalues 
of the matrix $E(\uno+{A}_2(x))$. 
We denote by $S$  matrix of the eigenvectors of $E(\uno+{A}_2(x))$, more explicitly
\begin{equation}\label{matriceS}
\begin{aligned}
&S=\left(\begin{matrix}
s_1 &s_2\\
\bar{s}_2 & s_1
\end{matrix}\right)\,, 
\qquad S^{-1}=\left(\begin{matrix}
s_1 &-s_2\\
-\bar{s}_2 & s_1
\end{matrix}\right)\,, \\
&s_1(x):=\frac{1+|u|^2[h'(|u|^2)]^2+\lambda_{\NLS}(x)}
{\sqrt{2\lambda_{\NLS}(x)(1+[h'(|u|^2)]^2|u|^2+\lambda_{\NLS}(x))}}\,, \\
&s_2(x):=\frac{-u^2[h'(|u|^2)]^2}{\sqrt{2\lambda_{\NLS}(x)(1+[h'(|u|^2)]^2|u|^2+\lambda_{\NLS}(x))}}\,.
\end{aligned}
\end{equation}
Since $\pm\lambda_{\NLS}(x)$ are the eigenvalues 
and $S(x)$ is the matrix  eigenvectors 
of $E(\uno+{A}_2(x))$ we have that 
\begin{equation}\label{diago_algebra}
S^{-1}E(\uno+{A}_2(x))S=E\diag(\lambda_{\NLS} (x))\,,\quad s_1^2-|s_2|^2=1\,,
\end{equation}
where we have used the notation \eqref{matriciozze}.
In the following lemma we estimate the semi-norms 
of the symbols defined above.
\begin{lemma}\label{stime_diago_max}
Let $\N\ni s_0> d$. 
The symbols $a_2^{(1)}$ defined in \eqref{autovalori}, 
$s_1-1$ and $s_2$ defined in \eqref{matriceS} 
satisfy the following estimate
\begin{equation*}
|a^{(1)}_2|_{\mathcal{N}_p^0}
+|s_1-1|_{\mathcal{N}_p^0}
+|s_2|_{\mathcal{N}_p^0}\lesssim \|u\|^{6}_{H^{p+s_0}}\,, 
\qquad p+s_0\leq s\,,\quad p\in \mathbb{N}\,.
\end{equation*}
\end{lemma}

\begin{proof}
The proof follows by using the estimate \eqref{realtaAAA2} 
on the symbols in \eqref{simboa2}, 
the fact that $h'(s)\sim s$ when $s\sim 0$, $\|u\|_s\ll 1$, 
and the explicit expression \eqref{autovalori}, \eqref{matriceS}.
\end{proof}

\noindent
We now study how the system \eqref{QLNLS444} transforms under the maps 
\begin{equation}\label{diago_para}
\begin{aligned}
\Phi_{\NLS}:=&\,\Phi_{\NLS}(U):=\opbw(S^{-1}(U;x))\,,\qquad 
\Psi_{\NLS}:=\,\Psi_{\NLS}(U):=\opbw(S(U;x))\,.
\end{aligned}
\end{equation}

\begin{lemma}\label{propMAPPA}
Let $U=\vect{u}{\bar{u}}$ be a solution of \eqref{QLNLS444} 
and assume Hyp. \ref{2epsilon}. 
Then for any $s\geq 2s_0+2$, $\N\ni s_0>d$,
we have the following.

\noindent
$(i)$ One has the upper bound 
\begin{equation}\label{stime-descentTOTA}
\begin{aligned}
\|\Phi_{\NLS}(U)W\|_{H^{s}}+\|\Psi_{\NLS}(U)W\|_{H^{s}}&\leq 
\|{W}\|_{H^{s}}\big(1+C\|u\|^{6}_{H^{2s_0}}\big)\,,\\
\|(\Phi_{\NLS}(U)-\uno)W\|_{H^{s}}+\|(\Psi_{\NLS}(U)-\uno)W\|_{H^{s}}&\lesssim
\|{W}\|_{H^{s}}\|u\|^{6}_{H^{2s_0}}\,,
\qquad \forall\, W\in H^{s}(\T^d;\mathbb{C})\,,
\end{aligned}
\end{equation} 
where the constant $C$ depends on $s$;

\noindent
$(ii)$ one has $\Psi_{\NLS}(U)\circ\Phi_{\NLS}(U)=\uno+R(u)$
where $R$ is a real-to-real remainder of the form \eqref{barrato4}
satisfying 
\begin{equation}\label{achille10}
\|R(u)W\|_{H^{s+2}}\lesssim  \|W\|_{H^{s}}\|u\|^{6}_{H^{2s_0+2}}\,.
\end{equation}
The map $\uno+R(u)$ is invertible with inverse 
$(\uno+R(u))^{-1}:=(1+\tilde{R}(u))$ with 
$\tilde{R}(u)$ of the form \eqref{barrato4} and
\begin{equation}\label{stima-R-tilde}
\|\tilde{R}(u)W\|_{H^{s+2}}\lesssim \|W\|_{H^s}\|u\|_{H^{2s_0+2}}^{6}\,,
\end{equation}
as a consequence the map $\Phi_{\NLS}$ 
is invertible and $\Phi_{\NLS}^{-1}=(1+\tilde{R})\Psi_{\NLS}$ with estimates
\begin{equation}\label{inv-phi}
\|\Phi_{\NLS}^{-1}(U)W\|_{H^s}\leq \|W\|_{H^s}(1+C\|u\|_{H^{2s_0+2}}^{6})\,,
\end{equation}
where the constant $C$ depends on $s$;

\noindent
$(iii)$ for any $t\in [0,T)$,
one has 
$\pa_{t}\Phi_{\NLS}(U)[\cdot]=\opbw(\pa_{t}S^{-1}(U;x))$
and
\begin{equation}\label{achille11}
|\pa_{t}S^{-1}(U;x)|_{\mathcal{N}_{s_0}^{0}}\lesssim
 \|u\|^{6}_{H^{2s_0+2}}\,,
\qquad
\|\pa_{t}\Phi_{\NLS}(U)V\|_{H^{s}}\lesssim  \|W\|_{H^{s}}\|u\|^{6}_{H^{2s_0+2}}\,.
\end{equation}
\end{lemma}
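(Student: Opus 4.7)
\medskip

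\noindent\textbf{Proof plan.} The three items amount to standard paradifferential bookkeeping built on top of the pointwise algebraic identity $S\cdot S^{-1}=\uno$ (coming from $s_1^2-|s_2|^2=1$, see \eqref{diago_algebra}) and the quantitative estimates in Lemma \ref{stime_diago_max}.

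For part $(i)$ I would write $\Phi_{\NLS}=\uno+\opbw(S^{-1}-\uno)$ and $\Psi_{\NLS}=\uno+\opbw(S-\uno)$. By Lemma \ref{stime_diago_max}, each nontrivial entry of $S-\uno$ and $S^{-1}-\uno$ is a symbol of order $0$ (in fact $x$-dependent only) whose $\mathcal{N}^0_{s_0}$ seminorm is bounded by $\|u\|^6_{H^{2s_0}}$. The continuity bound \eqref{actionSob} of Lemma \ref{azioneSimboo} then immediately gives both lines of \eqref{stime-descentTOTA}.

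For part $(ii)$, the composition $\Psi_{\NLS}\Phi_{\NLS}$ is a $2\times 2$ matrix whose entries are finite sums of products $\opbw(a)\opbw(b)$ with $a,b\in\{s_1,s_2,\bar s_2,-s_2,-\bar s_2\}$. Since these symbols are independent of $\xi$, the Poisson bracket $\{a,b\}$ defined in \eqref{PoissonBra} vanishes identically; hence the second formula in \eqref{composit} degenerates to $\opbw(a)\opbw(b)=\opbw(ab)+R_2(a,b)$, with $R_2$ gaining two derivatives by \eqref{composit2}. Summing over the matrix product and using $SS^{-1}=\uno$ gives $\Psi_{\NLS}\Phi_{\NLS}=\uno+R(u)$; the bound \eqref{achille10} follows from \eqref{composit2} with $m_1=m_2=0$, $j=2$, together with Lemma \ref{stime_diago_max}. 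Real-to-real structure is preserved because $S$ has the required form \eqref{matsimboli}. Invertibility of $\uno+R(u)$ comes from a Neumann series: since $\|R(u)W\|_{H^s}\leq \|R(u)W\|_{H^{s+2}}\lesssim\|u\|^6_{H^{2s_0+2}}\|W\|_{H^s}$ and the right-hand side is $\ll 1$ by Hypothesis \ref{2epsilon}, the series $\sum_{k\geq 0}(-R)^k$ converges in $\mathcal{L}(H^s)$; setting $\tilde R:=(\uno+R)^{-1}-\uno=-R(\uno+\tilde R)$ and applying \eqref{achille10} to the last identity yields \eqref{stima-R-tilde}. The factorization $\Phi_{\NLS}^{-1}=(\uno+\tilde R)\Psi_{\NLS}$ is then just $(\uno+R)^{-1}\Psi_{\NLS}\Phi_{\NLS}=\uno$, and \eqref{inv-phi} follows by combining \eqref{stime-descentTOTA} and \eqref{stima-R-tilde}.

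For part $(iii)$, differentiating \eqref{quantiWeyl} in $t$ commutes with the quantization, so $\partial_t\Phi_{\NLS}(U)=\opbw(\partial_t S^{-1}(U;x))$. By the chain rule $\partial_t S^{-1}=(\partial_u S^{-1})\partial_t u+(\partial_{\bar u}S^{-1})\partial_t\bar u$, and since $S^{-1}-\uno$ vanishes to order six in $u$, Lemma \ref{lem:nonomosimbo} gives $|\partial_u S^{-1}\cdot h|_{\mathcal{N}^0_{s_0}}\lesssim\|u\|^5_{H^{s_0+1}}\|h\|_{H^{s_0+1}}$ (and similarly for $\partial_{\bar u}S^{-1}$). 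The equation \eqref{QLNLS444} expresses $\partial_t U$ as the sum of a second-order operator applied to $U$ plus lower-order nonlinear terms, so $\|\partial_t U\|_{H^{s_0+1}}\lesssim\|U\|_{H^{s_0+3}}\leq\|U\|_{H^{2s_0+2}}$ (using $s_0>d\geq 1$). Combining these two bounds gives the first inequality in \eqref{achille11}; the second then follows by \eqref{actionSob} of Lemma \ref{azioneSimboo}.

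The only delicate step is the two-derivative gain in $(ii)$: it rests on the fact that the conjugating symbols $s_1,s_2$ are $\xi$-independent, forcing $\{a,b\}\equiv 0$ so that \eqref{composit} yields an $R_2$-type remainder (order $-2$) rather than only an $R_1$-type one (order $-1$). This is precisely what the subsequent sections will need to treat the subprincipal symbol of order $1$ perturbatively.
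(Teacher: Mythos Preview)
Your proof is correct and follows essentially the same approach as the paper. The only notable difference is in item $(ii)$: you invoke the full composition expansion \eqref{composit} with $j=2$ and observe that $\{a,b\}\equiv 0$ for $\xi$-independent symbols, whereas the paper appeals directly to the dedicated estimate \eqref{composit3} for products of functions; both routes give the required two-derivative gain. One small bookkeeping point: to control $|\partial_t S^{-1}|_{\mathcal{N}^0_{s_0}}$ via Lemma \ref{lem:nonomosimbo} (with the $\nabla u$-free improvement) you actually need $\|\dot u\|_{H^{2s_0}}$, not $\|\dot u\|_{H^{s_0+1}}$; since $\|\dot u\|_{H^{2s_0}}\lesssim\|u\|_{H^{2s_0+2}}$ this is exactly the norm appearing in \eqref{achille11}, so the conclusion is unaffected.
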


\begin{proof}
$(i)$ The bounds \eqref{stime-descentTOTA} follow by 
\eqref{actionSob} and Lemma \ref{stime_diago_max}.

\noindent
$(ii)$ We apply Proposition \ref{prop:compo} 
to the maps in  \eqref{diago_para},
in particular the first part of the item follows 
by using the expansion \eqref{composit3} 
and recalling that symbols $s_1(x)$ and $s_2(x)$  
do not depend on $\xi$.  
The \eqref{stima-R-tilde} is obtained by Neumann series by using that  
(see Hyp. \ref{2epsilon}) $\|u\|_{H^s}\ll 1$.

\noindent
$(iii)$ We note that
 $
 \pa_{t}s_1(x,\x)=(\pa_{u}s_1)(u;x,\x)[\dot{u}]+
 (\pa_{\bar{u}}s_1)(u;x,\x)[\dot{\bar{u}}]\,.
 $
 Since $u$ solves \eqref{QLNLS444} and satisfies Hypothesis \ref{2epsilon}, 
 then using Lemma \ref{azioneSimboo} and \eqref{stimaRRR} 
 we deduce that $\|\dot{u}\|_{H^{s}}\lesssim \|u\|_{H^{s+2}} $.
 Hence the estimates \eqref{achille11} follow by direct inspection 
by using the explicit structure of the 
symbols $s_1,$ $s_2$ in \eqref{matriceS}, Lemma \ref{lem:nonomosimbo}
and \eqref{actionSob}.
\end{proof}

We are now in position to state the following proposition.

\begin{proposition}[{\bf Diagonalization at order $2$}]\label{diago2max}
Consider the system \eqref{QLNLS444} and set 
\begin{equation}\label{cambio1}
W=\Phi_{\NLS}(U)U\,,
\end{equation}
with $\Phi_{\NLS}$ defined in \eqref{diago_para}. 
Then $W$ solves the equation
\begin{equation}\label{QLNLS444KK}
\begin{aligned}
\dot{W}=&-\ii E\opbw\big(\diag\big(1+a_2^{(1)}(U;x)\big)|\x|^{2})W-\ii EV*W\\
&\quad-\ii\opbw\big(\diag\big({\vec{a}_1^{(1)}(U;x)\cdot\xi}\big)\big)W+
X_{\mathcal{H}^{(4)}_{\NLS}}(W)+R^{(1)}(U)\,,
\end{aligned}\end{equation}
where  the vector field $X_{\mathcal{H}^{(4)}_{\NLS}}$ is defined in \eqref{X_H}.
The symbols $a_2^{(1)}$ and $\vec{a}_1^{(1)}\cdot\xi$ are \emph{real} 
valued and satisfy the following estimates
\begin{equation}\label{realtaAAA2KK}
\begin{aligned}
|a_{2}^{(1)}|_{\mathcal{N}_p^{0}}& \lesssim \|u\|^{6}_{H^{p+s_0}}\,, 
\quad &\forall \, p+s_0\leq s\,,\,\quad p\in \mathbb{N}\,,\\
|\vec{a}_{1}^{(1)}\cdot\xi|_{\mathcal{N}_p^{1}}&\lesssim \|u\|^{6}_{H^{p+s_0+1}}\,,
\quad &\forall \, p+s_0+1\leq s\,,\,\quad p\in \mathbb{N}\,,
\end{aligned}
\end{equation} 
where we have chosen $s_0>d$.
The remainder $R^{(1)}$ has the form 
$(R^{(1,+)},\ov{R^{(1,+)}})^T$.
Moreover, for any $s> 2d+2$, it satisfies  the estimate
\begin{equation}\label{stimaRRRKK}
\begin{aligned}
\|R^{(1)}(U)\|_{H^{s}}\lesssim& \,\|U\|_{H^{s}}^{7}\,.
\end{aligned}
\end{equation}
\end{proposition}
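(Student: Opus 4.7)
I would differentiate $W=\Phi_{\NLS}(U)U$ in time to obtain $\dot W=(\pa_t\Phi_{\NLS})U+\Phi_{\NLS}\dot U$, substitute \eqref{QLNLS444} for $\dot U$, and use $U=\Psi_{\NLS}W$ modulo the smoothing remainder supplied by Lemma \ref{propMAPPA}$(ii)$. This reduces the task to computing $\Phi_{\NLS}\circ T\circ\Psi_{\NLS}$ for each operator $T$ on the right-hand side of \eqref{QLNLS444}.

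The central step is the conjugation of the order-$2$ block $T_2=-\ii E\,\opbw((\uno+A_2)|\x|^2)$. Applying the composition formula \eqref{composit}--\eqref{composit2} of Proposition \ref{prop:compo}, and using that $S,S^{-1}$ depend only on $x$, we obtain
\[
\Phi_{\NLS}\,T_2\,\Psi_{\NLS}=-\ii\,\opbw\bigl(S^{-1}E(\uno+A_2)S\,|\x|^2\bigr)+\opbw(P_1(U;x,\x))+\mathcal S(U),
\]
where $P_1$ is a matrix-valued order-$1$ symbol collecting the Poisson-bracket contributions $\{S^{-1},E(\uno+A_2)|\x|^2 S\}$ and $\{S^{-1}E(\uno+A_2)|\x|^2,S\}$, and $\mathcal S(U)$ is a smoothing remainder controlled by \eqref{composit2}. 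The algebraic identity \eqref{diago_algebra} identifies the leading symbol with $E\diag(\lambda_{\NLS})|\x|^2=E\diag(1+a_2^{(1)})|\x|^2$; the estimate \eqref{realtaAAA2KK} on $a_2^{(1)}$ then follows from Lemma \ref{stime_diago_max}. The diagonal part of $P_1$, combined with the conjugation of the original order-$1$ operator $-\ii\opbw(\diag(\vec a_1\cdot\x))$ (which, being the scalar matrix $-\ii(\vec a_1\cdot\x)\uno$, is preserved by the conjugation modulo order-$0$ Poisson corrections), produces the new symbol $\vec a_1^{(1)}\cdot\x$ with the claimed estimate via Lemmas \ref{stime_diago_max} and \ref{lem:nonomosimbo}.

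For the remaining terms: the convolution $V*$ is a Fourier multiplier of order $-m$ with $m>1$, so both $[\Phi_{\NLS},V*]$ and $V*R(u)$ are smoothing and absorbable into $R^{(1)}$, while $V*$ itself commutes through $\Phi_{\NLS}\Psi_{\NLS}=\uno+R$ to yield $-\ii E V*W$. The cubic Hamiltonian field is treated by the telescoping
\[
\Phi_{\NLS}X_{\mathcal{H}^{(4)}_{\NLS}}(U)=X_{\mathcal{H}^{(4)}_{\NLS}}(W)+(\Phi_{\NLS}-\uno)X_{\mathcal{H}^{(4)}_{\NLS}}(U)+\bigl(X_{\mathcal{H}^{(4)}_{\NLS}}(U)-X_{\mathcal{H}^{(4)}_{\NLS}}(W)\bigr),
\]
whose two correction terms are of size $\|U\|_{H^s}^{9}$ by \eqref{stime-descentTOTA} together with the trilinear nature of $X_{\mathcal{H}^{(4)}_{\NLS}}$ and $U-W=(\uno-\Psi_{\NLS})W+(\text{smoothing})$; the contributions $\Phi_{\NLS}R(U)$ and $(\pa_t\Phi_{\NLS})U$ are septic in $\|U\|_{H^s}$ by \eqref{stimaRRR}, \eqref{stime-descentTOTA} and \eqref{achille11}, respectively.

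The main technical obstacle will be the careful bookkeeping of the matrix-valued order-$1$ symbol $P_1$: I will need to verify that its off-diagonal part, together with the off-diagonal subleading contributions from the conjugation of the original order-$1$ block, either consolidates into the scalar contribution $\vec a_1^{(1)}\cdot\x$ or is absorbed into $R^{(1)}$ without violating the septic bound \eqref{stimaRRRKK}. The preservation of the sixth-order vanishing in $u$ at every stage---a consequence of $h'(\tau)\sim\tau$ and hence $A_2\sim u^6$, $S^{-1}-\uno\sim u^6$, $\pa_t S^{-1}\sim u^6$---combined with the smallness Hypothesis \ref{2epsilon} is precisely what closes the septic estimate.
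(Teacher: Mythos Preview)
Your proposal is correct and follows essentially the same route as the paper: differentiate $W=\Phi_{\NLS}(U)U$, substitute \eqref{QLNLS444}, replace $U$ by $\Psi_{\NLS}W$ via Lemma~\ref{propMAPPA}$(ii)$, conjugate the order-$2$ block using Proposition~\ref{prop:compo} and the algebraic identity \eqref{diago_algebra}, and telescope the cubic field. The ``technical obstacle'' you flag---that the off-diagonal part of the order-$1$ Poisson-bracket symbol $P_1$ must vanish (it cannot be absorbed into $R^{(1)}$, since an off-diagonal order-$1$ operator would only map $H^s\to H^{s-1}$)---is handled in the paper by a direct matrix computation of $\{S^{-1},E(\uno+A_2)|\x|^2\}S+S^{-1}E(\uno+A_2)\{|\x|^2,S\}$, which one checks entry by entry is of the form $\diag(\cdot)$ and yields the explicit formula $\vec a_1^{(1)}=\vec a_1+2\,\Im\{(s_2\bar b_2)\nabla s_1+(s_1b_2+s_2(1+a_2))\nabla\bar s_2\}$.
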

\begin{proof}
The function $W$ defined in \eqref{cambio1} satisfies 
\begin{align}
\dot{W}&=\,[\partial_t\Phi_{\NLS}(U)]U+\Phi_{\NLS}(U)\dot{U}\nonumber\\ 
&=\,-\Phi_{\NLS}(U)\ii E\opbw\big((\uno+A_2(U))|\xi|^2\big)\Psi_{\NLS}(U) W 
-\Phi_{\NLS}(U)\ii EV*\Psi_{\NLS}(U) W\label{1}\\ 
&-\ii \Phi_{\NLS}(U)\opbw\big(\diag(\vec{a}_1(U)\cdot\xi)\big)\Psi_{\NLS}(U) W\label{2}\\ 
&+ \Phi_{\NLS}(U)X_{\mathcal{H}^{(4)}_{\NLS}}(U)\label{3}\\ 
&+\Phi_{\NLS}(U)R(U)+\opbw({\partial_{t}S^{-1}(U)})U\label{4}\\ 
&-\Phi_{\NLS}(U)\ii \Big[E\opbw\big((\uno+A_2(U))|\xi|^2\big) 
+\opbw\big(\diag(\vec{a}_1\cdot\xi)\big) 
+ EV*\Big]\tilde{R}(U)\Psi_{\NLS}(U)W\,, \label{5}
\end{align}
here we have used items 
$(ii)$ and $(iii)$ of Lemma \ref{propMAPPA}. 

\noindent 
We are going to analyze each term in the r.h.s. 
of the equation above.  
Because of estimates \eqref{stima-R-tilde}, \eqref{stime-descentTOTA} 
(applied for the map $\Phi_{\NLS}$), Lemma \ref{stime_diago_max} 
(applied for the symbols $a_2$, $b_2$ and $\vec{a}_1\cdot\xi$) and finally item $(ii)$ 
of Lemma \ref{azioneSimboo} we may absorb term \eqref{5} 
in the remainder $R^{(1)}(U)$ verifying \eqref{stimaRRRKK}. 
The term in \eqref{4} may be absorbed in $R^{(1)}(U)$ 
as well because of \eqref{stimaRRR} and 
\eqref{stime-descentTOTA} for the first term,  
because of \eqref{achille11} and item $(ii)$ 
of Lemma \ref{azioneSimboo} for the second one.

\noindent
We study the first term in \eqref{1}. 
We recall \eqref{diago_para} and \eqref{matriceS}, 
we apply Proposition \ref{prop:compo} and 
we get, by direct inspection, that the new term, 
modulo contribution that may be absorbed in 
$R^{(1)}(U)$, is given by
\begin{equation*}
-\ii E\opbw\Big(\diag({\lambda_{\NLS}})\Big)W
-2\ii\opbw\Big(\diag\big(\Im\big\{(s_2\bar{b}_2)\nabla s_1
+(s_1b_2+s_2(1+a_2))\nabla \bar{s}_2\big\}\cdot\xi\big)\Big)W\,,
\end{equation*}
where by $\Im\{\vec{b}\}$, with $\vec{b}=({b}_1,\ldots,{b}_d)$, 
we denoted the vector $(\Im({b}_1),\ldots,\Im({b}_d))$. 
The second term in \eqref{1} 
is equal to $-\ii EV*W$ modulo contributions to 
$R^{(1)}(U)$ thanks to \eqref{insPot} and \eqref{stime-descentTOTA}.

\noindent
Reasoning analogously one can prove that 
the term in \eqref{2} equals to
$
-\ii\opbw\big(\diag(\vec{a}_1(U)\cdot\xi)\big)W\,,
$
modulo contributions to $R^{(1)}(U)$. 
We are left with studying \eqref{3}. 
First of all we note that $X_{\mathcal{H}^{(4)}_{\NLS}}(U)=-\ii E |u|^2U$, 
then we write
\begin{equation*}
X_{\mathcal{H}^{(4)}_{\NLS}}(U)=X_{\mathcal{H}^{(4)}_{\NLS}}(W)
+X_{\mathcal{H}^{(4)}_{\NLS}}(U)-X_{\mathcal{H}^{(4)}_{\NLS}}(W)\,.
\end{equation*}
 Lemma \ref{stime_diago_max} and item $(ii)$ of 
Lemma \ref{azioneSimboo} (recall also \eqref{matriceS}), imply
$\|\Phi_{\NLS}(U)U-U\|_{H^s}\lesssim\|U\|^{7}_{H^s}\,$,
therefore it is a contribution to $R^{(1)}(U)$. 
We have obtained  
$\Phi_{\NLS}(U)X_{\mathcal{H}^{(4)}_{\NLS}}(U)
=X_{\mathcal{H}^{(4)}_{\NLS}}(W)$ modulo $R^{(1)}(U)$.

\noindent 
Summarizing we obtained the \eqref{QLNLS444KK} 
with symbols $a_2^{(1)}$ defined in \eqref{autovalori} and 
\begin{equation}\label{piccolaiena}
\vec{a}^{(1)}_1=\vec{a}_1+2\Im\big\{(s_2\bar{b}_2)\nabla s_1
+(s_1b_2+s_2(1+a_2))\nabla \bar{s}_2\big\}\in\R\,,
\end{equation}
with $\vec{a}_1$ in \eqref{simboa2}.
\end{proof}

\subsubsection{Diagonalization of cubic terms at order 0}\label{cubo}
The aim of this section 
is to diagonalize the
cubic vector field $X_{\mathcal{H}^{(4)}_{\NLS}}$ 
in \eqref{QLNLS444KK} (see also \eqref{X_H})
up to smoothing remainder. In order to do this we will consider 
a change of coordinates which is \emph{symplectic} up to high degree of homogeneity.
We reason as follows.

Let 
\begin{equation}\label{novavarV} 
Z:=\vect{z}{\bar{z}}:=\Phi_{\mathcal{B}_{\NLS}}(W):=W+X_{\mathcal{B}_{\NLS}}(W)
\end{equation}
where $X_{\mathcal{B}_{\NLS}}$ is the Hamiltonian vector field of \eqref{HamFUNC}.  We note that $\Phi_{\mathcal{B}_{\NLS}}$ is not symplectic, nevertheless it is close to the flow of $\mathcal{B}_{\NLS}(W)$ which is symplectic.
The properties of $X_{\mathcal{B}_{\NLS}}$ and the estimates of 
$\Phi_{\mathcal{B}_{\NLS}}$ have been discussed in 
Lemma \ref{HamvectorFieldG} and 
in Proposition \ref{flussononlin}. 
\begin{remark}\label{equivLemmaRMK}
Recall \eqref{cambio1} and \eqref{novavarV}.
One can note that, owing to Hypothesis \ref{2epsilon}, 
for $s>2d+2$, we have
\begin{equation}\label{alien1}
(1-\tfrac{1}{100})\|U\|_{H^{s}}\leq \|W\|_{H^{s}}\leq (1+\tfrac{1}{100})\|U\|_{H^{s}}\,,
\qquad
(1-\tfrac{1}{100})\|W\|_{H^{s}}\leq \|Z\|_{H^{s}}\leq (1+\tfrac{1}{100})\|W\|_{H^{s}}
\end{equation}
This is a consequence of the estimates 
\eqref{stime-descentTOTA}, \eqref{inv-phi}, 
\eqref{stimaflusso1}, \eqref{stimaXG4}, \eqref{approxinvKG2}
tanking $\e$ small enough depending on $s$.
\end{remark}
 We prove the following. 
\begin{proposition}[{\bf Diagonalization at order 0}]\label{prop:blockdiag00}
Let $U=(u,\bar{u})$ be a solution 
of \eqref{QLNLS444} and assume Hyp. \ref{2epsilon}. 
Define $W:=\Phi_{\NLS}(U)U$
where $\Phi_{\NLS}(U)$ is the map in \eqref{diago_para} given 
in Lemma \ref{propMAPPA}.
Then the function $Z=\vect{z}{\bar{z}}$ 
defined in \eqref{novavarV}
satisfies (recall \eqref{pollini1})
\begin{equation}\label{eq:ZZZ}
\begin{aligned}
\pa_{t}Z=-\ii E \Lambda_{\NLS} Z-\ii E\opbw\Big(& {\rm diag} 
\big(a_{2}^{(1)}(x)|\x|^2\big)
\Big)Z\\
&-\ii \opbw\Big({\rm diag}\big(\vec{a}_{1}^{(1)}(x)\cdot\x\big)\Big)Z+
X_{\mathtt{H}_{\NLS}^{(4)}}(Z)+
R_{5}^{(2)}(U)\,,
\end{aligned}
\end{equation}
where $a_2^{(1)}(x)$, $\vec{a}_{1}^{(1)}(x)
$
are the real valued symbols appearing in Proposition 
\ref{diago2max}, the cubic  vector field 
$X_{\mathtt{H}_{\NLS}^{(4)}}(Z)$ has the form
(see \eqref{cubici})
\begin{equation}\label{cubicifine}
X_{\mathtt{H}_{\NLS}^{(4)}}(Z):=
-\ii E\opbw\left(\begin{matrix}
2|z|^{2} & 0 \\0 & 2|z|^{2}
\end{matrix}\right)Z+Q_{\mathtt{H}^{(4)}_{\NLS}}(Z)\,,
\end{equation}
the remainder $Q_{\mathtt{H}^{(4)}_{\NLS}}$ is given by 
Lemma \ref{conj:hamfield}
and satisfies \eqref{formaRRRi}-\eqref{pollo2}.
The remainder $R_{5}^{(2)}(U)$
has the form
$({R}_{5}^{(2,+)},\ov{{R}_{5}^{(2,+)}})^{T}$. Moreover, for any $s> 2d+4$, 
\begin{equation}\label{stimaRRRfina}
\begin{aligned}
\|R_5^{(2)}(U)\|_{H^{s}}\lesssim& \,\|U\|_{H^{s}}^{5}\,.
\end{aligned}
\end{equation}
The vector field $X_{\mathtt{H}_{\NLS}^{(4)}}(Z)$ in \eqref{cubicifine}
is Hamiltonian, i.e. (see \eqref{VecfieldHam}, \eqref{Poisson}) 
$X_{\mathtt{H}_{\NLS}^{(4)}}(Z):=-\ii J\nabla\mathtt{H}_{\NLS}^{(4)}(Z)$
with
\begin{equation}\label{Hamfinale}
\qquad \mathtt{H}_{\NLS}^{(4)}(Z):=
\mathcal{H}_{\NLS}^{(4)}(Z)-\{\mathcal{B}_{\NLS}(Z),\mathcal{H}_{\NLS}^{(2)}(Z)\}\,,\quad
\mathcal{H}_{\NLS}^{(2)}(Z)=\int_{\mathbb{T}^{d}}\Lambda_{\NLS} z\cdot\bar{z}dx
\end{equation}
where $\mathcal{H}_{\NLS}^{(4)}$ is  in \eqref{hamiltonianaS},  
and $\mathcal{B}_{\NLS}$ is in \eqref{HamFUNC},
\eqref{gene1}.
\end{proposition}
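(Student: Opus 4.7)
\begin{pfn}[of Proposition \ref{prop:blockdiag00} -- proposal]
The strategy is a Poincar\'e--Dulac normal form at cubic order: we use the Lie-type change of coordinates $Z=W+X_{\mathcal{B}_{\NLS}}(W)$ generated by the Hamiltonian $\mathcal{B}_{\NLS}$ in \eqref{HamFUNC} in order to remove the off-diagonal monomials $z^{2},\bar{z}^{2}$ sitting inside the cubic vector field $X_{\mathcal{H}^{(4)}_{\NLS}}(W)$ (see \eqref{X_H}), keeping only the gauge-invariant piece $-\ii E\opbw({\rm diag}(2|z|^{2}))Z$. First I would differentiate $Z=\Phi_{\mathcal{B}_{\NLS}}(W)$ in time, use the chain rule
\begin{equation*}
\pa_{t}Z=\pa_{t}W+dX_{\mathcal{B}_{\NLS}}(W)[\pa_{t}W],
\end{equation*}
and plug in $\pa_{t}W$ from \eqref{QLNLS444KK}. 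Since $X_{\mathcal{B}_{\NLS}}(W)$ is cubic in $W$, every contribution from $dX_{\mathcal{B}_{\NLS}}(W)$ applied to the paradifferential summands of order $2$, $1$, or to $V*W$, or to $X_{\mathcal{H}^{(4)}_{\NLS}}(W)$ and $R^{(1)}(U)$, is at least quintic in $U$; by Lemmata \ref{azioneSimboo}, \ref{lem:trilineare}, together with Remark \ref{equivLemmaRMK} and the bounds \eqref{realtaAAA2KK}, \eqref{stimaRRRKK}, these contributions can all be collected into a remainder satisfying \eqref{stimaRRRfina}.

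The only term that must be computed explicitly is the interaction of $dX_{\mathcal{B}_{\NLS}}(W)$ with the leading linear piece $-\ii E\Lambda_{\NLS}W$. Using the identity \eqref{nonlinCommu} in the form $[X_{\mathcal{H}^{(2)}_{\NLS}},X_{\mathcal{B}_{\NLS}}]=-X_{\{\mathcal{H}^{(2)}_{\NLS},\mathcal{B}_{\NLS}\}}$, this produces $X_{\{\mathcal{B}_{\NLS},\mathcal{H}^{(2)}_{\NLS}\}}(W)$ up to errors coming from the cutoff $S_\xi$ in \eqref{innercutoff} (which produce only smoothing/quintic terms because the cutoff only discards high-high interactions). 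Adding this to $X_{\mathcal{H}^{(4)}_{\NLS}}(W)$ gives $X_{\mathtt{H}_{\NLS}^{(4)}}(W)$ with $\mathtt{H}_{\NLS}^{(4)}$ exactly as in \eqref{Hamfinale}. The choice \eqref{gene1} of the symbol $b_{\NLS}(x,\xi)=w^{2}/(2|\xi|^{2})$ corresponds precisely to the solution of the homological equation for the off-diagonal monomials, namely $\{b_{\NLS}(\xi)\cdot,\Lambda_{\NLS}(\xi)\cdot\}\sim 2|\xi|^{2}b_{\NLS}$, whose denominator $2\Lambda_{\NLS}(\xi)\sim 2|\xi|^{2}$ is bounded below uniformly in $\xi$, so no small-divisor analysis is needed at this stage. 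One then verifies that $X_{\mathtt{H}_{\NLS}^{(4)}}$ has the stated form \eqref{cubicifine}, with the non-diagonal remainder $Q_{\mathtt{H}^{(4)}_{\NLS}}$ coming from Lemma \ref{conj:hamfield} (whose trilinear kernel satisfies \eqref{formaRRRi}--\eqref{pollo2}); finally, replacing $W$ by $Z$ inside all the para-differential blocks of order $\le 2$ and of $\mathcal{H}^{(4)}_{\NLS}$ produces only higher-order corrections by \eqref{stime-descentTOTA} and \eqref{alien1}.

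The invertibility of $\Phi_{\mathcal{B}_{\NLS}}$, and the quintic estimates for $\Phi_{\mathcal{B}_{\NLS}}^{-1}-\mathrm{Id}$ that are needed to close the estimate, will be provided by Proposition \ref{flussononlin} and the estimates \eqref{stimaflusso1}, \eqref{stimaXG4}, \eqref{approxinvKG2}; this is where the truncation $S_\xi$ is crucial, as it makes $X_{\mathcal{B}_{\NLS}}$ a bounded (in fact smoothing) perturbation of the identity rather than a true unbounded operator, so that the Neumann-type argument of Lemma \ref{propMAPPA}$(ii)$ goes through.

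The genuinely delicate point, and in my view the main obstacle, is the \emph{Hamiltonian} character of $X_{\mathtt{H}_{\NLS}^{(4)}}$: the map $\Phi_{\mathcal{B}_{\NLS}}=\mathrm{Id}+X_{\mathcal{B}_{\NLS}}$ is only the first Taylor approximation of the time-$1$ Hamiltonian flow of $\mathcal{B}_{\NLS}$, hence it is merely \emph{approximately} symplectic. To conclude that the cubic part of the transformed equation is \emph{exactly} the Hamiltonian vector field of $\mathtt{H}_{\NLS}^{(4)}=\mathcal{H}_{\NLS}^{(4)}-\{\mathcal{B}_{\NLS},\mathcal{H}_{\NLS}^{(2)}\}$ (and not only up to an unspecified cubic non-Hamiltonian piece), one must compare $\Phi_{\mathcal{B}_{\NLS}}$ with the true Hamiltonian flow of $\mathcal{B}_{\NLS}$, show that the difference is quintic in $U$, and then use the standard Lie-series expansion for conjugations by Hamiltonian flows to identify the cubic term. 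This is precisely the content of Appendix \ref{flow-ham}, which I would invoke to absorb any non-Hamiltonian defect of $\Phi_{\mathcal{B}_{\NLS}}$ into $R_{5}^{(2)}(U)$ and to obtain the explicit form \eqref{Hamfinale}.
\end{pfn}
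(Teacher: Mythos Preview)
Your approach is essentially the paper's: differentiate $Z=\Phi_{\mathcal{B}_{\NLS}}(W)$, extract the commutator with the linear flow, and push everything else into a quintic remainder via Lemma~\ref{conjconjconj} and Lemma~\ref{conj:hamfield}.

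Two corrections. First, your bookkeeping of the commutator is off: $dX_{\mathcal{B}_{\NLS}}(W)[-\ii E\Lambda_{\NLS}W]$ alone is only \emph{half} of $[X_{\mathcal{B}_{\NLS}},X_{\mathcal{H}^{(2)}_{\NLS}}]$; the other half, $-dX_{\mathcal{H}^{(2)}_{\NLS}}(W)[X_{\mathcal{B}_{\NLS}}(W)]=\ii E\Lambda_{\NLS}X_{\mathcal{B}_{\NLS}}(W)$, comes from the substitution $-\ii E\Lambda_{\NLS}W=-\ii E\Lambda_{\NLS}Z+\ii E\Lambda_{\NLS}X_{\mathcal{B}_{\NLS}}(W)$, and this piece is cubic, not a higher-order correction as your last sentence of paragraph two suggests. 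Second, you over-complicate the Hamiltonian structure. There is no need to compare $\Phi_{\mathcal{B}_{\NLS}}$ with the genuine Hamiltonian flow: the cubic term that emerges is \emph{exactly} $X_{\mathcal{H}^{(4)}_{\NLS}}(Z)+[X_{\mathcal{B}_{\NLS}},X_{\mathcal{H}^{(2)}_{\NLS}}](Z)$, and the commutator of two Hamiltonian vector fields is already Hamiltonian by \eqref{nonlinCommu}, namely $[X_{\mathcal{B}_{\NLS}},X_{\mathcal{H}^{(2)}_{\NLS}}]=-X_{\{\mathcal{B}_{\NLS},\mathcal{H}^{(2)}_{\NLS}\}}$. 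This gives \eqref{Hamfinale} for free. The approximate symplecticity of $\Phi_{\mathcal{B}_{\NLS}}$ is only relevant for the quintic remainder $R_5^{(2)}$, whose Hamiltonian character is immaterial.
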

\begin{proof}

 Recall \eqref{Hamprova}.
  We have that the equation 
 \eqref{QLNLS444KK} reads
 \[
 \pa_{t}W=X_{\mathcal{H}_{\NLS}^{(\leq4)}}(W)-\ii \opbw\big(A(U;x,\x)\big)W
 +R^{(1)}(U)
 \]
 where we set
 \begin{equation}\label{simbopollo}
 A(U;x,\x):=E{\rm diag}(a^{(1)}_{2}(U;x)|\x|^{2})
 +{\rm diag}(\vec{a}^{(1)}_{1}(U;x)\cdot\x)\,.
 \end{equation}
Hence by \eqref{novavarV}  we get
\begin{equation}\label{nuovaeq}
\begin{aligned}
\pa_{t}Z&=(d_{W}\Phi_{\mathcal{B}_{\NLS}}(W))\Big[-\ii \opbw\big( A(U;x,\x)\big)W\Big]+
(d_{W}\Phi_{\mathcal{B}_{\NLS}})(W)\big[X_{\mathcal{H}_{\NLS}^{(\leq4)}}(W)\big]\\&+
(d_{W}\Phi_{\mathcal{B}_{\NLS}})(W)\big[R^{(1)}(U)\big]\,.
\end{aligned}
\end{equation}
We study each summand separately.
First of all we have that 
\begin{equation}\label{breakdance5}
\|d_{W}\Phi_{\mathcal{B}_{\NLS}}(W)\big[R^{(1)}(U)\big]\|_{H^{s}}
\stackrel{\eqref{stimaXG4}, \eqref{stimaRRRKK}}{\lesssim}
\|u\|_{H^{s}}^{7}(1+\|w\|_{H^{s}}^{2})
\stackrel{\eqref{alien1}}{\lesssim}\|u\|_{H^{s}}^{7}.
\end{equation}
Let us now analyze the first summand in the 
r.h.s. of \eqref{nuovaeq}.
We write
\begin{equation}\label{alien2}
\begin{aligned}
&(d_{W}\Phi_{\mathcal{B}_{\NLS}}(W))\Big[\ii \opbw\big( A(U;x,\x)\big)W\Big]=
\ii \opbw\big( A(U;x,\x)\big)Z+P_1+P_2\,,\\
&P_1:=\ii \opbw\big( A(U;x,\x)\big)\big[W-Z\big]\,,\\
&P_2:=\Big((d_{W}\Phi_{\mathcal{B}_{\NLS}}(W))-\uno \Big)
\big[\ii \opbw\big( A(U;x,\x)\big)W\big]\,.
\end{aligned}
\end{equation}
Fix $s_0>d$, we have that, for $s\geq 2s_0+4$, 
\begin{equation}\label{breakdance}
\|P_2\|_{H^{s}}\stackrel{\eqref{stimaXG4}}{\lesssim}
\|w\|_{H^{s}}^{2}\| \opbw\big( A(U;x,\x)\big)W\|_{H^{s-2}}
\stackrel{\eqref{realtaAAA2KK}, \eqref{actionSob}, 
\eqref{alien1}}{\lesssim}\|u\|_{H^{s}}^{9}\,.
\end{equation}
By \eqref{novavarV}, \eqref{stimaXG4}
we get
$\|W-Z\|_{H^{s}}\lesssim
\|w\|_{H^{s-2}}^{3}\,$.
Therefore, by \eqref{alien2}, \eqref{simbopollo}, \eqref{realtaAAA2KK}, 
\eqref{actionSob} and \eqref{alien1}
we get
\begin{equation}\label{breakdance2}
\|P_1\|_{H^{s}}\lesssim\|u\|_{H^{2s_0+1}}^{6}\|W-Z\|_{H^{s+2}}\stackrel{}{\lesssim}
\|u\|_{H^{2s_0+1}}^{6}\|w\|_{H^{s}}^{3}\lesssim\|u\|_{H^{s}}^{9}\,.
\end{equation}
The estimates \eqref{breakdance5}, \eqref{breakdance}, \eqref{breakdance2}
imply that the term $P_1$, $P_2$ and 
$d_{W}\Phi_{\mathcal{B}_{\NLS}}(W)\big[R^{(1)}(U)\big]$ 
can be absorbed in a remainder
satisfying \eqref{stimaRRRfina}. 
Finally we consider the second summand in
\eqref{nuovaeq}.
By Lemma \ref{conjconjconj} we deduce
\[
d_{W}\Phi_{\mathcal{B}_{\NLS}}(W)\big[X_{\mathcal{H}_{\NLS}^{(\leq4)}}(W)\big]
=X_{\mathcal{H}_{\NLS}^{(\leq4)}}(Z)+\big[X_{\mathcal{B}_{\NLS}}(Z),
X_{\mathcal{H}_{\NLS}^{(2)}}(Z)\big]+R_{5}(Z)
\]
where $R_{5}$ is a remainder satisfying 
the quintic estimate \eqref{PPPPtau3}. 
By Lemma \ref{conj:hamfield} we also have 
 that
\[
X_{\mathcal{H}_{\NLS}^{(\leq4)}}(Z)+
\big[X_{\mathcal{B}_{\NLS}}(Z),X_{\mathcal{H}_{\NLS}^{(2)}}(Z)\big]=
-\ii E\Lambda_{\NLS} Z
+X_{\mathtt{H}_{\NLS}^{(4)}}(Z)\,,
\]
with $X_{\mathtt{H}_{\NLS}^{(4)}}$ as in \eqref{cubicifine}.
Moreover it is Hamiltonian with Hamiltonian as in \eqref{Hamfinale}
by formul\ae\, \eqref{cubici} and \eqref{nonlinCommu}.
This concludes the proof.
\end{proof}

\begin{remark}\label{campototale}
The Hamiltonian function  in \eqref{Hamfinale} may be rewritten, up to symmetrizations, 
as in \eqref{frate6} with coefficients $\mathtt{h}_4(\x,\eta,\zeta)$
satisfying \eqref{frate3}.
The coefficients of its Hamiltonian vector field have the form 
\eqref{frate44} (see also \eqref{frate331}). 
Moreover, by \eqref{cubicifine}, \eqref{quantiWeyl}, \eqref{cubici}, \eqref{formaRRRi},
we deduce that
\begin{equation}\label{alien6}
-2\ii\mathtt{h}_{4}(\x,\eta,\zeta)=-2\ii\chi_{\epsilon}\Big(\tfrac{|\x-\zeta|}{\langle\x+\zeta\rangle} \Big)+
\mathtt{q}_{\mathtt{H}^{(4)}_{\NLS}}(\x,\eta,\zeta)\,.
\end{equation}
\end{remark}

\subsection{Diagonalization of the KG}\label{DiagonaleKG}
In this section we   diagonalize 
the system \eqref{QLNLS444KG}
up to a smoothing remainder.
This will be done into two steps. 
We first diagonalize 
the matrix $E(\uno+\mathcal{A}_1(x,\x))$ in \eqref{QLNLS444KG} 
by means of a change of coordinates similar to the one made in the previous section for the \eqref{NLS} case.
After that we diagonalize the matrix of symbols of 
order $0$ at homogeneity $3$, by means of an \emph{approximatively symplectic} 
change of coordinates. Consider  the Cauchy problem associated to  \eqref{KG}.  
Throughout the rest of the section we shall assume the following.
\begin{hypo}\label{2epsilonKG}
We restrict the solution of \eqref{KG} on the interval of times $[0,T)$, 
with $T$ such that 
\begin{equation*}
\sup_{t\in[0,T)}\big(\|\psi(t,\cdot)\|_{H^{s+\frac{1}{2}}}
+\|\pa_t{\psi}(t,\cdot)\|_{H^{s-\frac{1}{2}}}\big)\leq \e\,, 
\quad \|\psi_0(\cdot)\|_{H^{s+\frac{1}{2}}}
+\|{\psi}_1(\cdot)\|_{H^{s-\frac{1}{2}}}\leq 1/32 \e\,,
\end{equation*}
with $\psi(0,x)=\psi_0(x)$
 and $(\pa_{t}\psi)(0,x)=\psi_1(x)$\,.
 \end{hypo}
Note that such a $T$ exists thanks to the local well-posedness proved in \cite{kato}.

\begin{remark}\label{WWUUUKG}
Recall the \eqref{CVWW}. Then one can note that 
\[
\frac{1}{4}\big(\|\psi(t,\cdot)\|_{H^{s+\frac{1}{2}}}
+\|\pa_t{\psi}(t,\cdot)\|_{H^{s-\frac{1}{2}}}\big)
\leq \|u\|_{H^{s}}\leq 
2
\big(\|\psi(t,\cdot)\|_{H^{s+\frac{1}{2}}}
+\|\pa_t{\psi}(t,\cdot)\|_{H^{s-\frac{1}{2}}}\big)\,.
\]
\end{remark}

\subsubsection{Diagonalization at order 1}
Consider the matrix of symbols (see  \eqref{simboa2KG}, \eqref{calAA1KG})
\begin{equation}\label{A2tildeKG}
\begin{aligned}
&E(\uno+\mathcal{A}_{1}(x,\x))\,,\qquad 
\mathcal{A}_{1}(x,\x)
:=\sm{1}{1}{1}{1}\widetilde{a}_{2}(x,\x)\,,\quad\widetilde{a}_{2}(x,\x):=\tfrac{1}{2}\Lambda_{\KG}^{-2}(\x)a_2(x,\x)\,.
\end{aligned}
\end{equation}
Define 
\begin{equation}\label{nuovadiagKG}
\begin{aligned}
\lambda_{\KG}(x,\x)&:= \sqrt{(1+\widetilde{a}_{2}(x,\x))^{2}
-(\widetilde{a}_{2}(x,\x))^{2}},
\qquad \widetilde{a}_{2}^{+}(x,\x):=\lambda_{\KG}(x,\x)-1.
\end{aligned}
\end{equation}
Notice that the symbol $\lambda_{\KG}(x,\x)$ is well-defined by 
taking $\|u\|_{H^{s}}\ll1$ small enough.
The matrix of  eigenvectors associated to the eigenvalues of 
$E(\uno+\mathcal{A}_{1}(x,\x))$
is 
\begin{equation}\label{transCKG}
\begin{aligned}
S(x,\x)&:=\left(\begin{matrix} {s}_1(x,\x) & {s}_2(x,\x)\vspace{0.2em}\\
{{s_2(x,\x)}} & {{s_1(x,\x)}}
\end{matrix}
\right)\,,
\qquad
S^{-1}(x,\x):=\left(\begin{matrix} {s}_1(x,\x) & -{s}_2(x,\x)\vspace{0.2em}\\
-{{s_2(x,\x)}} & {{s_1(x,\x)}}
\end{matrix}
\right)\,,
\\
s_{1}&:=\frac{1+\widetilde{a}_{2}
+\lambda_{\KG}}{\sqrt{2\lambda_{\KG}\big(1+\widetilde{a}_{2}+
\lambda_{\KG}\big) }},
\qquad s_{2}:=\frac{-\widetilde{a}_{2}}{\sqrt{2\lambda_{\KG}\big(1
+\widetilde{a}_{2}
+\lambda_{\KG}\big) }}\,.
\end{aligned}
\end{equation}
By a direct computation one can check that
\begin{equation}\label{diagodiagoKG}
S^{-1}(x,\x)E(\uno+\mathcal{A}_{1}(x,\x))S(x,\x)= E \diag(\lambda_{\KG}(x,\xi)),
\qquad s_1^{2}-|s_2|^{2}=1\,.
\end{equation}
We shall study how the system \eqref{QLNLS444KG}
transforms under the maps
\begin{equation}\label{Mappa1KG}
\begin{aligned}
\Phi_{\KG}&=\Phi_{\KG}(U)[\cdot]:=\opbw(S^{-1}(x,\x))\,,\qquad
\Psi_{\KG}=\Psi_{\KG}(U)[\cdot]:=\opbw(S(x,\x))\,.
\end{aligned}
\end{equation}
We prove the following result.
\begin{lemma}\label{propMAPPAKG}
Assume Hypothesis \ref{2epsilonKG}.
We have the following:
\begin{enumerate}
\item[$(i)$] if $s_0>d$,
then
\begin{equation}\label{mission1KG}
|\widetilde{a}^{+}_{2}|_{\mathcal{N}^{0}_{p}}
+|\widetilde{a}_{2}|_{\mathcal{N}^{0}_{p}}
+|s_1-1|_{\mathcal{N}^{0}_{p}}
+|s_2|_{\mathcal{N}^{0}_{p}}\lesssim
\|u\|_{H^{p+s_0+1}}^{3}\,,\quad p+s_0+1\leq s\,;
\end{equation}

\item[$(ii)$] for any $s\in \mathbb{R}$ one has
\begin{equation}\label{stime-descentTOTAKG}
\begin{aligned}
\|\Phi_{\KG}(U)V-V\|_{H^{s}}+\|\Psi_{\KG}(U)V-V\|_{H^{s}}&\lesssim
\|{V}\|_{H^{s}}\|u\|^{3}_{H^{2s_0+1}}\,,
\qquad \forall\, V\in H^{s}(\mathbb{T}^{d};\mathbb{C}^2)\,;
\end{aligned}
\end{equation} 

\item[$(iii)$] one has $\Psi_{\KG}(U)\circ\Phi_{\KG}(U)=\uno+Q(U)$
where $Q$ is a real-to-real remainder 
satisfying 
\begin{equation}\label{achille10KG}
\|Q(U)V\|_{H^{s+1}}\lesssim  \|V\|_{H^{s}}\|u\|^{3}_{H^{2s_0+3}}\,;
\end{equation}

\smallskip
\item[$(iv)$] for  any $t\in [0,T)$,
one has 
$\pa_{t}\Phi_{\KG}(U)[\cdot]=\opbw(\pa_{t}S^{-1}(x,\x))$
and
\begin{equation}\label{achille11KG}
|\pa_{t}S^{-1}(x,\x)|_{\mathcal{N}_{s_0}^{0}}\lesssim
 \|u\|_{H^{2s_0+3}}^{3}\,,
\qquad
\|\pa_{t}\Phi_{\KG}(U)V\|_{H^{s}}\lesssim \|V\|_{H^{s}}\|u\|^{3}_{H^{2s_0+3}}\,.
\end{equation}
\end{enumerate}
\end{lemma}
\begin{proof}
$(i)$ The \eqref{mission1KG} follows by \eqref{realtaAAA2KG}
using the explicit formul\ae\, \eqref{transCKG}, \eqref{nuovadiagKG}.

\noindent
$(ii)$ It follows by using \eqref{mission1KG} and item $(ii)$ in  Lemma \ref{azioneSimboo}.

\noindent
$(iii)$ By formula \eqref{composit} in Proposition \ref{prop:compo} one gets
\[
\Psi_{\KG}(U)\circ\Phi_{\KG}(U)=\uno+\opbw\left(
\begin{matrix}
0 & \ii \{s_1,s_2\} \\ -\ii \{s_1,s_2\} & 0
\end{matrix}
\right)+R(s_1,s_2)\,,
\]
for some remainder satisfying \eqref{composit2} with $a\rightsquigarrow s_1$
and $b\rightsquigarrow s_2$. Therefore the \eqref{achille10KG}
follows by using \eqref{prodSimboli}, \eqref{actionSob}
and \eqref{mission1KG}.

\noindent
$(iv)$ It is similar to the proof of item $(iii)$ of Lemma \ref{propMAPPA}.
\end{proof}


\begin{proposition}[{\bf Diagonalization at order $1$}]\label{diagoOrd1}
Consider the system \eqref{QLNLS444KG} and set 
\begin{equation}\label{cambio1KG}
W=\Phi_{\KG}(U)U\,,
\end{equation}
with $\Phi_{\KG}$ defined in \eqref{Mappa1KG}. 
Then $W$ solves the equation (recall \eqref{matriciozze})
\begin{equation}\label{QLNLS444KKKG}
\begin{aligned}
\partial_t{W}=&-\ii E\opbw\big(\diag\big(1+\widetilde{a}_{2}^{+}(x,\x)\big)
\Lambda_{\KG}(\x))W+
X_{\mathcal{H}^{(4)}_{\KG}}(W)+{R}^{(1)}(u)\,,
\end{aligned}
\end{equation}
where  the vector field $X_{\mathcal{H}^{(4)}_{\KG}}$ 
is defined in \eqref{calGGG1KG}.
The symbol $\tilde{a}_2^{+}$ is 
defined in \eqref{nuovadiagKG}.
The remainder ${R}^{(1)}$ has the form 
$({R}^{(1,+)},\ov{{R}^{(1,+)}})^T$.
Moreover, for any $s> 2d+\mu$, for some $\mu>0$, it satisfies  the estimate
\begin{equation}\label{stimaRRRKKKG}
\begin{aligned}
\|{R}^{(1)}(u)\|_{H^{s}}\lesssim& \,\|u\|_{H^{s}}^{4}\,.
\end{aligned}
\end{equation}
\end{proposition}

\begin{proof}
By \eqref{cambio1KG}
 and \eqref{QLNLS444KG} we get
\begin{equation} \label{jap1KG}\begin{aligned}
 \pa_{t}W&=\Phi_{\KG}(U)\dot{U}+(\pa_{t}\Phi_{\KG}(U))[U]\\
 &=-\ii\Phi_{\KG}(U)\opbw\big(E(\uno+\mathcal{A}_1(x,\x))\Lambda_{\KG}(\x)\big)
 \Psi_{\KG}(U)W
 \\
 &+\Phi_{\KG}(U)X_{\mathcal{H}^{(4)}_{\KG}}(U)\\
 &+\Phi_{\KG}(U){R}(u)+(\pa_{t}\Phi_{\KG}(U))[U]\\
 &+\ii\Phi_{\KG}(U)\opbw\big(E(\uno+\mathcal{A}_1(x,\x))(\x)\big)Q(U)U\,,
  \end{aligned}
  \end{equation}
where we used items $(ii)$, $(iii)$ in Lemma \ref{propMAPPAKG}.
We study the first summand in the r.h.s of  \eqref{jap1KG}. By direct inspection,
using Lemma \ref{azioneSimboo} and Proposition 
\ref{prop:compo} we get
\[
\begin{aligned}
-\ii\Phi_{\KG}(U)\opbw\big(E(\uno+\mathcal{A}_1(x,\x))\Lambda_{\KG}(\x)\big)
\Psi_{\KG}(U)
&=-\ii \opbw\Big(S^{-1}E(\uno+\mathcal{A}_1(x,\x))S\Big)+R(u)\\
&\stackrel{\mathclap{\eqref{diagodiagoKG}}}{=}-\ii E\opbw(\diag(\lambda_{\KG}(x,\xi)))+R(u)
\end{aligned}
\]
where $R(u)$ is a remainder satisfying \eqref{stimaRRRKKKG}.
Thanks to the discussion above and \eqref{nuovadiagKG} we obtain the highest order term in \eqref{QLNLS444KKKG}. All the other summands in the r.h.s. of \eqref{jap1KG} may be analyzed as done in the proof of Prop. \ref{diago2max} by using Lemma \ref{propMAPPAKG}.
\end{proof}

\subsubsection{Diagonalization of cubic terms at order 0}
In the previous section we showed that if the function $U$ 
solves \eqref{QLNLS444KG} then $W$ in \eqref{cambio1KG}
solves \eqref{QLNLS444KKKG}.
The cubic terms in the system \eqref{QLNLS444KKKG}
are the same appearing in \eqref{QLNLS444KG} and have the form
\eqref{calGGG1KG}.
The aim of this section is to  diagonalize the matrix of symbols
of order zero $\mathcal{A}_0(x,\x)$.

Let us define
\begin{equation}\label{varZZZKG}
Z:=\vect{z}{\bar{z}}:=\Phi_{\mathcal{B}_{\KG}}(W):=W+X_{\mathcal{B}_{\KG}}(W)
\end{equation}
where $X_{\mathcal{B}_{\KG}}$ is the Hamiltonian vector field
of \eqref{HamFUNCKG} and $W$ is
the function in \eqref{cambio1KG}.
The properties of $X_{\mathcal{B}_{\KG}}$ and the estimates of 
$\Phi_{\mathcal{B}_{\KG}}$ have been discussed in 
Lemma \ref{HamvectorFieldG} and 
in Proposition \ref{flussononlin}.

\begin{remark}\label{equivLemmaRMKKG}
Recall \eqref{cambio1KG} and \eqref{varZZZKG}.
One can note that, owing to Hypothesis \ref{2epsilonKG}, 
for $s>2d+3$, we have
\begin{equation}\label{alienKG1}
(1-\tfrac{1}{100})\|U\|_{H^{s}}\leq \|W\|_{H^{s}}\leq (1+\tfrac{1}{100})\|U\|_{H^{s}}\,,
\qquad
(1-\tfrac{1}{100})\|W\|_{H^{s}}\leq \|Z\|_{H^{s}}\leq (1+\tfrac{1}{100})\|W\|_{H^{s}}
\end{equation}
This is a consequence of the estimates 
\eqref{stime-descentTOTAKG}, \eqref{achille10KG}
\eqref{stimaflusso1}, \eqref{stimaXG4KG}, \eqref{approxinvKG2} taking $\e$ small enough.
\end{remark}

%
%
%


\begin{proposition}[{\bf Diagonalization at order 0}]\label{prop:blockdiag00KG}
Let $U$ be a solution of \eqref{QLNLS444KG} and assume
 Hyp. \ref{2epsilonKG} (see also 
Remark \ref{WWUUUKG}).
Then the function $Z$ defined in \eqref{varZZZKG}, with $W$ 
given in \eqref{cambio1KG},
satisfies
\begin{equation}\label{eq:ZZZKG}
\begin{aligned}
\pa_{t}Z&=-\ii E\opbw\big(\diag\big(1+\widetilde{a}_{2}^{+}(x,\x)\big)\Lambda_{\KG}(\x))Z+
X_{\mathtt{H}^{(4)}_{\KG}}(Z)+{R}^{(2)}_{4}(u)\,,
\end{aligned}
\end{equation}
where $\widetilde{a}_2^{+}(x,\x)$ 
is the real valued symbol 
in \eqref{nuovadiagKG}, the cubic  vector field 
$X_{\mathtt{H}^{(4)}_{\KG}}(Z)$ has the form
\begin{equation}\label{cubicifineKG}
X_{\mathtt{H}^{(4)}_{\KG}}(Z):=
-\ii E\opbw\big(\diag(a_0(x,\xi))\big)Z+Q_{\mathtt{H}^{(4)}_{\KG}}(Z)
\end{equation}
the symbol $a_0(x,\x)$ is in \eqref{simboa2KG}, the remainder
$Q_{\mathtt{H}^{(4)}_{\KG}}(Z)$ is the cubic remainder 
given in Lemma \ref{conj:hamfieldKG}.
The remainder  $R_{4}^{(2)}(u)$
has the form
$({R}_{4}^{(2,+)}(u),\ov{{R}_{4}^{(2,+)}}(u))^{T}$. Moreover, 
for any $s> 2d+\mu$, for some $\mu>0$,  we have the estimate
\begin{equation}\label{stimaRRRfinaKG}
\begin{aligned}
\|R_{4}^{(2)}(u)\|_{H^{s}}\lesssim& \,\|u\|_{H^{s}}^{4}\,.
\end{aligned}
\end{equation}
Finally the vector field $X_{\mathtt{H}^{(4)}_{\KG}}(Z)$ in \eqref{cubicifineKG}
is Hamiltonian, i.e. $X_{\mathtt{H}^{(4)}_{\KG}}(Z):=-\ii J\nabla \mathtt{H}^{(4)}_{\KG}(Z)$
with 
\begin{equation}\label{HamfinaleKG}
\qquad \mathtt{H}^{(4)}_{\KG}(Z):=
\mathcal{H}^{(4)}_{\KG}(Z)-\{\mathcal{B}_{\KG}(Z),\mathcal{H}^{(2)}_{\KG}(Z)\}\,,\quad
\mathcal{H}^{(2)}_{\KG}(Z)=\int_{\mathbb{T}^{d}}\Lambda_{\KG} z\cdot\bar{z}dx
\end{equation}
where $\mathcal{H}^{(4)}_{\KG}$ is  in \eqref{calGGGKG},  
and $\mathcal{B}_{\KG}$ is in \eqref{HamFUNCKG},
\eqref{gene1KG}.
\end{proposition}

\begin{proof}
We recall \eqref{HamprovaKG} and we rewrite the 
equation \eqref{QLNLS444KKKG} as
\[
\pa_{t}W=X_{ \mathcal{H}^{(\leq4)}_{\KG}}(W)
-\ii E\opbw\big( \widetilde{a}_2^{+}(x,\x)\Lambda_{\KG}(\x)\big)W
+{R}^{(1)}(u).
\]
Then, using \eqref{varZZZKG}, we get
\begin{align}
\pa_{t}Z&=d_{W}\Phi_{\mathcal{B}_{\KG}}(W)\big[\pa_{t}W\big]\nonumber\\
&=d_{W}\Phi_{\mathcal{B}_{\KG}}(W)\big[X_{\mathcal{H}^{(\leq4)}_{\KG}}(W)\big]\label{robinKG1}\\
&+d_{W}\Phi_{\mathcal{B}_{\KG}}(W)\big[
-\ii E\opbw\big(\diag( \widetilde{a}_2^{+}(x,\x)\Lambda_{\KG}(\x))\big)W\big]
\label{robinKG2}\\
&+d_{W}\Phi_{\mathcal{B}_{\KG}}(W)\big[{R}^{(1)}(u)\big]\,.\label{robinKG3}
\end{align}
By estimates \eqref{stimaXG4KG} and \eqref{stimaRRRKKKG}
we have that the term in \eqref{robinKG3} can be absorbed in a
remainder satisfying the   \eqref{stimaRRRfinaKG}.
Consider the term in \eqref{robinKG2}. We write
\begin{equation}\label{alienKG2}
\begin{aligned}
&
\eqref{robinKG2}
=
-\ii E\opbw\big(\diag( \widetilde{a}_2^{+}(x,\x)\Lambda_{\KG}(\x))\big)Z+P_1+P_2\,,\\
&P_1:=-\ii E\opbw\big(\diag( \widetilde{a}_2^{+}(x,\x)\Lambda_{\KG}(\x))\big)
\big[W-Z\big]\,,\\
&P_2:=\Big((d_{W}\Phi_{\mathcal{B}_{\KG}}(W))-\uno \Big)
\big[-\ii E\opbw\big(\diag( \widetilde{a}_2^{+}(x,\x)\Lambda_{\KG}(\x))\big)W\big]\,.
\end{aligned}
\end{equation}
We have that, for $s\geq 2s_0+2$, 
\begin{equation*}
\|P_2\|_{H^{s}}\stackrel{\eqref{stimaXG4KG}}{\lesssim}
\|u\|_{H^{s}}^{2}\| \opbw\big(\widetilde{a}_2^{+}(x,\x)\Lambda_{\KG}(\x)\big)w\|_{H^{s-1}}
\stackrel{\eqref{mission1KG}, \eqref{actionSob}, 
\eqref{alienKG1}}{\lesssim}\|u\|_{H^{s}}^{6}\,,
\end{equation*}
which implies the \eqref{stimaRRRfinaKG}.
 By \eqref{approxinvKG2} in 
Lemma \ref{flussononlin} and estimate \eqref{stimaXG4KG} we deduce 
$
\|W-Z\|_{H^{s+1}}\lesssim\|u\|^{3}_{H^{s}}\,.
$
Hence using again \eqref{mission1KG}, \eqref{actionSob}, 
\eqref{alienKG1} we get $P_1$ satisfies \eqref{stimaRRRfinaKG}.
It remains to discuss the structure of the term in \eqref{robinKG1}.
By Lemma \ref{conjconjconj} we obtain
\begin{align}
d_{W}\Phi_{\mathcal{B}_{\KG}}(W)\big[X_{\mathcal{H}^{(\leq4)}_{\KG}}(W)\big]=
X_{\mathcal{H}^{(\leq4)}_{\KG}}(Z)
+\big[X_{\mathcal{B}_{\KG}}(Z),X_{\mathcal{H}^{(2)}_{\KG}}(Z)\big]\,,\label{gordon4}
\end{align}
modulo remainders that can be absorbed in $R_{4}^{(2)}$ satisfying \eqref{stimaRRRfinaKG}.
The \eqref{gordon4}, \eqref{robinKG1}-\eqref{robinKG3} and the discussion above
imply the  equation \eqref{eq:ZZZKG} where the cubic vector field, has the form
\begin{equation}\label{gothamKG5}
X_{\mathtt{H}^{(4)}_{\KG}}(Z)=X_{\mathcal{H}^{(4)}_{\KG}}(Z)+
\big[X_{\mathcal{B}_{\KG}}(Z),X_{\mathcal{H}^{(2)}_{\KG}}(Z)\big]\,.
\end{equation}
Using \eqref{nonlinCommu}, \eqref{Poisson}, 
we conclude that $X_{\mathtt{H}^{(4)}_{\KG}}$
is the Hamiltonian vector field of $\mathtt{H}^{(4)}_{\KG}$ in \eqref{HamfinaleKG}.
The 
 \eqref{cubicifineKG} follows by Lemma \ref{conj:hamfieldKG}.
\end{proof}

\begin{remark}\label{semilin3}
In view of Remarks \ref{semilin1}, \ref{semilin2}, following the same proof of 
Proposition \ref{prop:blockdiag00KG}, in the semi-linear case we obtain that
equation \eqref{eq:ZZZKG} reads
\[
\pa_{t}Z=-\ii E\opbw\big(\diag(\Lambda_{\KG}(\x))\big)Z+
X_{\mathtt{H}^{(4)}_{\KG}}(Z)+{R}^{(2)}_{4}(u)\,,
\]
where $X_{\mathtt{H}^{(4)}_{\KG}}$ has the form \eqref{cubicifineKG}
with $a_0(x,\x)$ a symbol of order $-1$ and $Q_{\mathtt{H}^{(4)}_{\KG}}$
a remainder of the form \eqref{restoQ3GGGKG} with coefficients satisfying 
\eqref{restoQ32GGGKG} with the better denominator
$\max\{\langle\x-\eta-\zeta\rangle,\langle \eta\rangle,\langle\zeta\rangle\}^{2}$.
\end{remark}

\section{Energy estimates}

\subsection{Estimates for the NLS}
In this section we  prove \emph{a priori}
energy estimates on the Sobolev norms of the variable $Z$ in \eqref{novavarV}.
In subsection \ref{sec:highder}
we  introduce a convenient  energy norm on $H^{s}(\mathbb{T}^{d};\mathbb{C}) $
which is equivalent to the classic $H^{s}$-norm.
This is the content of Lemma \ref{equivLemma}.
In subsection \ref{sec:stimenonres}, using the 
non-resonance conditions of Proposition \ref{NonresCond},
we provide
bounds on the non-resonant terms
appearing in the energy estimates.
We deal with 
 resonant interactions  in Lemma \ref{superazioni}.

\subsubsection{Energy norm}\label{sec:highder}
Let us define the symbol 
\begin{equation}\label{simboLLL}
\mathcal{L}=\mathcal{L}(x,\x):=|\x|^{2}+\Sigma\,,\quad \Sigma=\Sigma(x,\x):=
a_{2}^{(1)}(x)|\x|^2+\vec{a}_{1}^{(1)}(x)\cdot\x\,,
\end{equation}
where
the symbols $a_{2}^{(1)}(x)$, $\vec{a}_{1}^{(1)}(x)$
are given in Proposition \ref{diago2max}.
We have the following.

\begin{lemma}\label{c}
Assume the Hypothesis \ref{2epsilon} and let  $\gamma>0$. 
Then for 
$\e>0$ small enough  we have the
following.

\noindent
$(i)$ One has
\begin{equation}\label{LLLmeno1}
\begin{aligned}
&
|\Sigma|_{\mathcal{N}_{s_0}^{2}}\leq C\|u\|^{6}_{H^{2s_0+1}}\,,
\qquad \qquad 
|(1+\mathcal{L})^{\gamma}-(|\x|^{2}+1)^{\gamma}|_{\mathcal{N}_{s_0}^{2\gamma}}
\lesssim_{\gamma} C\|u\|^{6}_{H^{2s_0+1}}
\end{aligned}
\end{equation}
for some $C>0$ depending on $s_0$.

\noindent
$(ii)$
For any $s\in \mathbb{R}$ and any $h\in H^{s}(\mathbb{T}^{d};\mathbb{C})$, 
one has
\begin{equation}\label{stimaTL}
\begin{aligned}
&
\|T_{\mathcal{L}^{\gamma}}h\|_{H^{s-2\gamma}}
\leq \|h\|_{H^{s}}(1+C\|u\|^{6}_{H^{2s_0+1}})\,,\\
&\|T_{\Sigma}h\|_{H^{s-2}}
+\|T_{(1+\mathcal{L})^{\gamma}
-(|\x|^{2}+1)^{\gamma}}h\|_{H^{s-2\gamma}}
\lesssim_{\gamma} \|h\|_{H^{s}}\|u\|^{6}_{H^{2s_0+1}}\,,
\end{aligned}
\end{equation}
for some $C>0$ depending on $s$ and $\gamma$.

\noindent
$(iii)$ 
For any $t\in [0,T)$ one has
$|\pa_{t}\Sigma|_{\mathcal{N}^{2}_{s_0}}\lesssim \|u\|^{6}_{H^{2s_0+3}}\,.$
Moreover
\begin{equation}\label{LLLtempo2}
\|(T_{\pa_t(1+\mathcal{L})^{\gamma}})h\|_{H^{s-2\gamma}}\lesssim_{\gamma}
\|h\|_{H^{s}}\|u\|^{6}_{H^{2s_0+3}}\,,
\qquad \forall\, h\in H^{s}(\mathbb{T}^{d};\mathbb{C})\,.
\end{equation}

\noindent
$(iv)$ The operators $T_{\mathcal{L}}$, $T_{(1+\mathcal{L})^{\gamma}}$ are self-adjoint 
with respect to the $L^2$-scalar product \eqref{scalarL}.
\end{lemma}

\begin{proof} Items $(i)$-$(ii)$. The \eqref{LLLmeno1} 
follows by using \eqref{simboLLL}, the bounds 
\eqref{realtaAAA2KK} on the symbols $a_{2}^{(1)}$ and $\vec{a}_1^{(1)}\cdot\x$.
The \eqref{stimaTL} follows by 
 Lemma \ref{azioneSimboo}.
 
 \noindent
 Item $(iii)$.
The bound  on $\partial_t\Sigma$ follows by reasoning as in item $(iii)$ 
of Lemma \ref{propMAPPA} using the explicit formula of 
$a_2^{(1)}$ in \eqref{autovalori} and the formula for $a_{1}^{(1)}\cdot\x$
in \eqref{piccolaiena} (see also \eqref{matriceS}).
Then the \eqref{actionSob} implies the \eqref{LLLtempo2}.

\noindent
Item $(iv)$.  This follows by \eqref{simboAggiunto}
since the symbol $\mathcal{L}$
in \eqref{simboLLL} is real-valued.
\end{proof}

In the following we shall construct the \emph{energy norm}. 
By using this norm we are able to achieve 
the \emph{energy estimates} on the previously diagonalized system. 
For $s\in \mathbb{R}$ we define 
\begin{equation}\label{def:VVTL}
z_{n}:=T_{(1+\mathcal{L})^{n}}z\,,\qquad 
Z_{n}=\vect{z_n}{\ov{z_{n}}}
:=T_{(1+\mathcal{L})^{n}}\uno Z\,,\quad Z=\vect{z}{\bar{z}}\,,
\quad n:={s}/{2}\,.
\end{equation}

\begin{lemma}{\bf (Equivalence of the energy norm).}\label{equivLemma}
Assume Hypothesis \ref{2epsilon} with $s>2d+4$. Then,  for  $\e>0$ small enough enough, one has
\begin{equation}\label{equivNorme}
(1-\tfrac{1}{100})\|z\|_{H^{s}}\leq\|z_{n}\|_{L^{2}}\leq(1+\tfrac{1}{100})\|z\|_{H^{s}}\,.
\end{equation}
\end{lemma}

\begin{proof}
Let $s=2n$. Then by \eqref{stimaTL} and \eqref{def:VVTL} we have
$
\|z_{n}\|_{L^{2}}\leq \|z\|_{H^{s}}(1+C\|u\|^{6}_{H^{2s_0+1}})\leq 2\|z\|_{H^{2}}\,,
$
with $s_0>d$.
Moreover 
\[
 \|z\|_{H^{s}}=\|T_{(1+|\x|^{2})^{n}}z\|_{L^{2}}\stackrel{\eqref{stimaTL}}{\leq}
\|z_{n}\|_{L^{2}}+C\|z\|_{H^{s}}\|u\|^{6}_{H^{2s_0+1}}
 \]
 which implies
 $(1-C\|u\|^{6}_{H^{2s_0+1}})\|z\|_{H^{s}}\leq \|z_n\|_{L^{2}}\,$,
for some constant $C$ depending on $s$.
The discussion above implies the \eqref{equivNorme}
by taking $\e>0$ in Hyp. \ref{2epsilon} small enough.
\end{proof}

Recalling \eqref{eq:ZZZ}, \eqref{pollini1} and \eqref{simboLLL} we have
\begin{equation}\label{eq:ZZZbis}
(\pa_{t}+\ii\Lambda_{\NLS})z=-\ii T_{\Sigma}z+X_{\mathtt{H}_{\NLS}^{(4)} }^+(Z)+
R_{5}^{(2,+)}(U)\,,
\qquad Z=\vect{z}{\bar{z}}\,,
\end{equation}
where $X_{\mathtt{H}_{4}^{(4)}}$ is given in \eqref{cubicifine} 
(see also Remark \ref{campototale}) and $R_{5}^{(2,+)}$
is the remainder satisfying \eqref{stimaRRRfina}.
%

\begin{lemma}\label{equaVVnn}
Fix $s>2d+4$ and recall \eqref{eq:ZZZbis}.
One has that the function $z_n$ defined in \eqref{def:VVTL}
solves the problem
\begin{equation}\label{equa:VVTL}
\pa_{t}z_n=-\ii T_{\mathcal{L}}z_{n}-\ii V*z_n
+T_{(1+|\x|^2)^{n}}X^{+,{\rm res}}_{\mathtt{H}^{(4)}_{\NLS}}(Z)
+B_{n}^{(1)}(Z)+B_{n}^{(2)}(Z)
+R_{5,n}(U)\,,
\end{equation}
where $X^{+,{\rm res}}_{\mathtt{H}_{\NLS}^{(4)}}$ is defined as in Def. \ref{def:resonantSet},
 \begin{equation}\label{alien88}
\begin{aligned}
\widehat{B_{n}^{(1)}(Z)}(\x)&=\frac{1}{(2\pi)^{d}}
\sum_{\eta,\zeta\in \mathbb{Z}^{d}}b^{(1)}(\x,\eta,\zeta)
\hat{z}(\x-\eta-\zeta)\hat{\bar{z}}(\eta)\hat{z}_{n}(\zeta)\,,\\
\widehat{B_{n}^{(2)}(Z)}(\x)&=\frac{1}{(2\pi)^{ d}}
\sum_{\eta,\zeta\in \mathbb{Z}^{d}}b_{n}^{(2)}(\x,\eta,\zeta)
\hat{z}(\x-\eta-\zeta)\hat{\bar{z}}(\eta)\hat{z}(\zeta)\,,
\end{aligned}
\end{equation}
with 
\begin{align}
b^{(1)}(\x,\eta,\zeta)&:=-2\ii \chi_{\epsilon}\Big(\tfrac{|\x-\zeta|}{\langle\x+\zeta\rangle} \Big)
\mathtt{1}_{\mathcal{R}^{c}}(\x,\eta,\zeta)\,, \label{alien99}\\
|b_{n}^{(2)}(\x,\eta,\zeta)|&\lesssim 
\frac{\langle \x\rangle^{2n}\max_{2}\{|\x-\eta-\zeta|,|\eta|,|\zeta|\}^{4}}
{\max_1\{\langle\x-\eta-\zeta\rangle,\langle\eta\rangle,\langle\zeta\rangle\}}\mathtt{1}_{\mathcal{R}^{c}}(\x,\eta,\zeta)\,,
\label{alien9}
\end{align}
and where  the remainder $R_{5, n}$ satisfies
\begin{equation}\label{stimeRRRN}
\|R_{5,n}(U)\|_{L^{2}}\lesssim \|u\|^{5}_{H^{s}}\,.
\end{equation}
\end{lemma} 

\begin{proof}
Recalling \eqref{alien7} we
define
\begin{equation}\label{alien12}
X_{\mathtt{H}_{\NLS}^{(4)}}^{+,\perp}(Z):=X_{\mathtt{H}_{\NLS}^{(4)}}^{+}(Z)
-X_{\mathtt{H}_{\NLS}^{(4)}}^{+,{\rm res}}(Z)\,.
\end{equation}
By differentiating \eqref{def:VVTL} 
and using the \eqref{simboLLL} and \eqref{eq:ZZZbis} we get
\begin{equation}\label{alien11}
\begin{aligned}
\pa_{t}z_{n}&=T_{(1+\mathcal{L})^{n}}\pa_{t}z
+T_{\pa_{t}(1+\mathcal{L})^{n}}z\\
&=-\ii T_{\mathcal{L}}z_{n}-\ii T_{(1+\mathcal{L})^{n}}(V*z)
+T_{(1+\mathcal{L})^{n}}X_{\mathtt{H}_{\NLS}^{(4)}}^{+}(Z)+
T_{(1+\mathcal{L})^{n}}R_{5}^{(2,+)}(U)
\\&+T_{\pa_{t}(1+\mathcal{L})^{n}}z-\ii [T_{(1+\mathcal{L})^{n}},T_{\mathcal{L}}]z\,.
\end{aligned}
\end{equation}
By using Lemmata \ref{azioneSimboo}, \ref{c} 
 and Proposition \ref{prop:compo}, and the \eqref{equivNorme}, \eqref{alien1} 
 one proves that the last  summand gives a contribution to
 $R_{5,n}(U)$ satisfying \eqref{stimeRRRN}.
By using \eqref{LLLtempo2}, \eqref{alien1}, 
\eqref{stimaRRRfina} we deduce that
\begin{equation*}
\begin{aligned}
&\|T_{(1+\mathcal{L})^{n}}R_{5}^{(2,+)}(U)\|_{L^{2}}+\|T_{\pa_{t}(1+\mathcal{L})^{n}}z\|_{L^{2}}\lesssim \|u\|^{5}_{H^{s}}\,.
\end{aligned}
\end{equation*}
Secondly we write
\[
\ii T_{(1+\mathcal{L})^{n}}(V*z)=
\ii V*z_{n}+\ii V*\big(T_{(1+|\x|^2)^{n}-(1+\mathcal{L})^{n}}z\big)
+\ii T_{(1+\mathcal{L})^{n}-(1+|\x|^2)^{n}}(V*z)\,.
\]
By \eqref{stimaTL}, \eqref{alien1},  and recalling \eqref{insPot} we conclude
$
\|T_{(1+\mathcal{L})^{n}}(V*z)- V*z_{n}\|_{L^{2}}
\lesssim \|u\|_{H^{s}}^{7}\,.
$
We now study the third summand in \eqref{alien11}. 
We have (see \eqref{alien12})
\[
T_{(1+\mathcal{L})^{n}}X_{\mathtt{H}_{\NLS}^{(4)}}^{+}(Z)=
T_{(1+|\x|^2)^{n}}X_{\mathtt{H}_{\NLS}^{(4)}}^{+,{\rm res}}(Z)+
T_{(1+|\x|^2){n}}X_{\mathtt{H}_{\NLS}^{(4)}}^{+,\perp}(Z)+
T_{(1+\mathcal{L})^{n}-(1+|\x|^2)^{n}}X_{\mathtt{H}_{\NLS}^{(4)}}^{+}(Z)\,.
\]
By \eqref{stimaTL},  \eqref{cubicifine},  \eqref{actionSob}, Lemma \ref{lem:trilineare} 
and using 
the estimate \eqref{pollo2}, one obtains
$$
\|T_{(1+\mathcal{L})^{n}-(1+|\x|^2)^{n}}X_{\mathtt{H}_{\NLS}^{(4)}}^{+}(Z)\|_{L^{2}}
\lesssim\|u\|^{9}_{H^{s}}\,.
$$
Recalling  \eqref{alien6} and \eqref{alien12} we write
\begin{equation}\label{alien100}
\begin{aligned}
&T_{(1+|\x|^2)^{n}}X_{\mathtt{H}_{\NLS}^{(4)}}^{+,\perp}(Z)=\mathcal{C}_1+\mathcal{C}_2+
\mathcal{C}_3\,,
\qquad \widehat{\mathcal{C}}_i(\x)=\tfrac{1}{(2\pi)^{ d}}
\sum_{\eta,\zeta\in \mathbb{Z}^{d}}\mathtt{c}_{i}(\x,\eta,\zeta)
\hat{z}(\x-\eta-\zeta)\hat{\bar{z}}(\eta)\hat{z}(\zeta)\,,\\
&\mathtt{c}_{1}(\x,\eta,\zeta):=
-2\ii\chi_{\epsilon}\Big(\tfrac{|\x-\zeta|}{\langle\x+\zeta\rangle} \Big)(1+|\zeta|^2)^{n}
\mathtt{1}_{\mathcal{R}^{c}}(\x,\eta,\zeta)
\\
&\mathtt{c}_{2}(\x,\eta,\zeta):=-
2\ii\chi_{\epsilon}\Big(\tfrac{|\x-\zeta|}{\langle\x+\zeta\rangle} \Big)\Big[(1+|\x|^2)^{n}
-(1+|\zeta|^2)^{n}\Big]
\mathtt{1}_{\mathcal{R}^{c}}(\x,\eta,\zeta)
\\
&\mathtt{c}_{3}(\x,\eta,\zeta):=
\mathtt{q}_{\mathtt{H}^{(4)}_{\NLS}}(\x,\eta,\zeta)(1+|\x|^2)^{n}
\mathtt{1}_{\mathcal{R}^{c}}(\x,\eta,\zeta)
\end{aligned}
\end{equation}
We now consider the operator $\mathcal{C}_1$ with coefficients 
$\mathtt{c}_1(\x,\eta,\zeta)$. First of all we remark that 
it can be written as $\mathcal{C}_1=M(z,\bar{z},z)$
where $M$ is a trilinear 
operator of the form \eqref{trilinearop}. Moreover, setting
\[
z_n=T_{(1+|\x|^2)^{n}}z+h_{n}\,,\quad h_n:=T_{(1+\mathcal{L})^{n}-(1+|\x|^2)^{n}}z\,,
\]
we can write
$
\mathcal{C}_1=B_{n}^{(1)}(Z)-M(z,\bar{z},h_n)\,,
$
where $B_{n}^{(1)}$ has the form \eqref{alien88} with coefficients as in \eqref{alien99}.
Using that $|\mathtt{c}_1(\x,\eta,\zeta)|\lesssim1$, Lemma \ref{lem:trilineare}
(with $m=0$)
and \eqref{stimaTL} we deduce that
$
\|M(z,\bar{z},h_n)\|_{L^{2}}\lesssim \|u\|_{H^{s}}^{9}\,.
$
Therefore this is a contribution to $R_{5,n}(U)$ satisfying \eqref{stimeRRRN}.
The discussion above implies formula \eqref{equa:VVTL} by setting
$B_{n}^{(2)}$ as the operator of the form \eqref{alien88}
with coefficients
$b_{n}^{(2)}(\x,\eta,\zeta):=\mathtt{c}_2(\x,\eta,\zeta)+\mathtt{c}_3(\x,\eta,\zeta)$.
The coefficient $\mathtt{c}_3(\x,\eta,\zeta)$ satisfies the \eqref{alien9}
by \eqref{pollo2}.
For the coefficient $\mathtt{c}_2(\x,\eta,\zeta)$
 one has to apply Lemma \ref{lem:paratri} with $\mu=m=1$ and
 $f(\x,\eta,\zeta):=((1+|\x|^2)^{n}-(1+|\zeta|^2)^{n})\langle\x\rangle^{-2n}$.
This concludes the proof.
\end{proof}

In the following lemma we prove a key cancellation 
due to the fact that the \emph{super actions} 
are prime integrals of the resonant Hamiltonian vector field 
$X_{\mathtt{H}_4}^{+,{\rm res}}(Z)$ in the same spirit of \cite{Faouplane}.
We also prove an important algebraic property 
of the operator $B_{n}^{(1)}$ in \eqref{equa:VVTL}.

\begin{lemma}\label{superazioni}
For any $n\geq 0$ we have
\begin{align}
{\rm Re}(T_{\langle\x\rangle^{n}}X_{\mathtt{H}_{\NLS}^{(4)}}^{+,{\rm res}}(Z),T_{\langle\x\rangle^{n}}z)_{L^{2}}&=0\,,
\label{superazioni1}\\
{\rm Re}(B_n^{(1)}(Z),z_{n})_{L^{2}}&=0\,,\label{cancello1}
\end{align}
where $X_{\mathtt{H}_{\NLS}^{(4)}}^{+,{\rm res}}$ is defined in Lemma \ref{equaVVnn} and
$B_{n}^{(1)}$ in \eqref{alien88}, \eqref{alien99}.
\end{lemma}
\begin{proof}
The \eqref{superazioni1} follows by Lemma \ref{cancellazioneRes}.
Let us check the \eqref{cancello1}. By an explicit computation using \eqref{scalarL}, \eqref{alien88} we get
\[
\begin{aligned}
{\rm Re}(B_n^{(1)}(Z),z_{n})_{L^{2}}&=\frac{1}{(2\pi)^d}
\sum_{\x,\eta,\zeta\in \mathbb{Z}^{d}}
b^{(1)}(\x,\eta,\zeta)
\hat{z}(\x-\eta-\zeta)\hat{\bar{z}}(\eta)\hat{z}_{n}(\zeta)\hat{\bar{z}}_{n}(-\x)\\
&+\frac{1}{(2\pi)^d}
\sum_{\x,\eta,\zeta\in \mathbb{Z}^{d}}
\ov{b^{(1)}(\x,\eta,\zeta)}
\hat{\bar{z}}(-\x+\eta+\zeta)\hat{z}(-\eta)\hat{\bar{z}}_{n}(-\zeta)\hat{z}_{n}(\x)\\
&
=\frac{1}{(2\pi)^d}\sum_{\x,\eta,\zeta\in \mathbb{Z}^{d}}
\Big[b^{(1)}(\x,\eta,\zeta)+\ov{b^{(1)}(\zeta,\zeta+\eta-\x,\x)}\Big]
\hat{z}(\x-\eta-\zeta)\hat{\bar{z}}(\eta)\hat{z}_{n}(\zeta)\hat{\bar{z}}_{n}(-\x)\,.
\end{aligned}
\]
By \eqref{alien99} we have
\[
b^{(1)}(\x,\eta,\zeta)+\ov{b^{(1)}(\zeta,\zeta+\eta-\x,\x)}=
2\ii\chi_{\epsilon}\Big(\tfrac{|\x-\zeta|}{\langle\x+\zeta\rangle}\Big)\big[
\mathtt{1}_{\mathcal{R}^{c}}(\x,\eta,\zeta)
-\mathtt{1}_{\mathcal{R}^{c}}(\zeta,\zeta+\eta-\x,\x)
\big]=0\,,
\]
where we used the form of the resonant set $\mathcal{R}$ in 
\eqref{resonantSet}. This proves the lemma.
\end{proof}

We conclude the section with the following proposition.

\begin{proposition}\label{lem:basicenergy}
Let $u(t,x)$ be a solution of \eqref{NLS} 
satisfying Hypothesis \ref{2epsilon}
and consider the function $z_n$ in \eqref{def:VVTL} 
(see also \eqref{novavarV}, \eqref{cambio1}).
Then, setting $s=2n>2d+4$ we have 
\begin{equation}\label{pallone}
\frac{1}{2^{1/4}}\|u(t)\|_{H^{s}}\leq \|z_n(t)\|_{L^{2}}\leq2^{1/4} \|u(t)\|_{H^{s}}
\end{equation}
and
\begin{equation}\label{basicenergy}
\pa_{t}\|z_{n}(t)\|_{L^{2}}^{2}=
\mathcal{B}(t)+\mathcal{B}_{>5}(t)\,,\qquad t\in[0,T)\,,
\end{equation}
where

\vspace{0.3em}
\noindent
$\bullet$ the term $\mathcal{B}(t)$ has the form
\begin{equation}\label{BBBT}
\begin{aligned}
\mathcal{B}(t)&=\frac{2}{(2\pi)^{d}}
\sum_{\x,\eta,\zeta\in \mathbb{Z}^{d}}
\langle\x\rangle^{2n}\mathtt{b}(\x,\eta,\zeta)
\hat{z}(\x-\eta-\zeta)\hat{\bar{z}}(\eta)\hat{z}(\zeta)\hat{\bar{z}}(-\x)\,,\\
\mathtt{b}(\x,\eta,\zeta)&=
b_{n}^{(2)}(\x,\eta,\zeta)+\ov{b_{n}^{(2)}(\zeta,\zeta+\eta-\x,\x)}\,,
\qquad \x,\eta,\zeta\in \mathbb{Z}^{d}\,,
\end{aligned}
\end{equation}
where $b_{n}^{(2)}(\x,\eta,\zeta)$ are the coefficients in \eqref{alien88}, \eqref{alien9};

\vspace{0.3em}
\noindent
$\bullet$ the term $\mathcal{B}_{>5}(t)$  satisfies
\begin{equation}\label{quintestimate}
|\mathcal{B}_{>5}(t)|\lesssim\|u\|_{H^{s}}^{6}\,,
\quad t\in[0,T)\,.
\end{equation}
\end{proposition}

\begin{proof}
The norm $\|z_{n}\|_{L^{2}}$ is equivalent to $\|u\|_{H^{s}}$
by using Lemma \ref{equivLemma} and Remark \ref{equivLemmaRMK}.
By using \eqref{equa:VVTL} we get
\begin{align}
\frac{1}{2}\pa_{t}\|z_n(t)\|_{L^{2}}^{2}&=
{\rm Re}(T_{\langle\x\rangle^{2n}}X_{\mathtt{H}_{\NLS}^{(4)}}^{+,{\rm res}}(Z),z_n)_{L^{2}}\label{naso0}\\
&+{\rm Re}(-\ii T_{\mathcal{L}}z_n,z_n)_{L^{2}}
+{\rm Re}(B_{n}^{(1)}(Z),z_{n})_{L^{2}}
+{\rm Re}(-\ii V*z_n,z_n)_{L^{2}}\label{naso1}
\\&+
{\rm Re}(B_{n}^{(2)}(Z),z_{n})_{L^{2}}\label{naso2}
\\&
+{\rm Re}(R_{5, n}(Z),z_{n})_{L^{2}}\label{naso3}\,.
\end{align}
Recall that $T_{\mathcal{L}}$
is self-adjoint (see item $(iv)$ in Lemma \ref{c})
and the convolution potential $V$  has real Fourier coefficients. Then 
by using also Lemma \ref{superazioni} (see \eqref{cancello1}) 
we deduce $\eqref{naso1}=0$.
Moreover by Cauchy-Schwarz inequality, estimates \eqref{stimeRRRN}, 
\eqref{equivNorme} and \eqref{alien1}
we obtain that 
the  term in \eqref{naso3} is bounded form above by 
$ \|u\|^{6}_{H^{s}}$.
Consider the terms in \eqref{naso0} and \eqref{naso2}.
Recalling \eqref{def:VVTL} and \eqref{simboLLL} we write
\[
\begin{aligned}
{\rm Re}(T_{\langle\x\rangle^{2n}}X_{\mathtt{H}_{\NLS}^{(4)}}^{+,{\rm res}}(Z),z_n)_{L^{2}}&=
{\rm Re}(T_{\langle\x\rangle^{2n}}X_{\mathtt{H}_{\NLS}^{(4)}}^{+,{\rm res}}(Z),T_{\langle\x\rangle^{2n}}z)_{L^{2}}+
{\rm Re}(T_{\langle\x\rangle^{2n}}X_{\mathtt{H}_{\NLS}^{(4)}}^{+,{\rm res}}(Z),
T_{(1+\mathcal{L})^{n}-\langle\x\rangle^{2n}}z)_{L^{2}}\,,\\
&\stackrel{\mathclap{\eqref{superazioni1}}}{=}\,\,
{\rm Re}(T_{\langle\x\rangle^{2n}}X_{\mathtt{H}_{\NLS}^{(4)}}^{+,{\rm res}}(Z),
T_{(1+\mathcal{L})^{n}-\langle\x\rangle^{2n}}z)_{L^{2}}\,.
\end{aligned}
\]
Moreover we write
\[
{\rm Re}(B_{n}^{(2)}(Z),z_{n})_{L^{2}}=
{\rm Re}(B_{n}^{(2)}(Z),T_{\langle\x\rangle^{2n}}z)_{L^{2}}+
{\rm Re}(B_{n}^{(2)}(Z),T_{(1+\mathcal{L})^{n}-\langle\x\rangle^{2n}}z)_{L^{2}}\,.
\]
Using the bound \eqref{stimaTL} in Lemma \ref{c}  to estimate the operator
$T_{(1+\mathcal{L})^{n}-\langle\x\rangle^{2n}}$, Lemma \ref{lem:trilineare} and
\eqref{alien9} to estimate the operator $B_{n}^{(2)}(Z)$,
we get
\[
|{\rm Re}(T_{\langle\x\rangle^{2n}}X_{\mathtt{H}_{\NLS}^{(4)}}^{+,{\rm res}}(Z),
T_{(1+\mathcal{L})^{n}-\langle\x\rangle^{2n}}z)_{L^{2}}|+
|{\rm Re}(B_{n}^{(2)}(Z),T_{(1+\mathcal{L})^{n}-\langle\x\rangle^{2n}}z)_{L^{2}}|
\lesssim\|u\|_{H^{s}}^{10}\,,
\]
which means that these remainders 
can be absorbed in the term $\mathcal{B}_{>5}(t)$.
 Then we set 
\[
 \mathcal{B}(t):=2{\rm Re}(B_{n}^{(2)}(Z),T_{\langle\x\rangle^{2n}}z)_{L^{2}}\,.
 \]
 Formul\ae\, \eqref{BBBT} follow by an explicit computation using 
 \eqref{alien88}, \eqref{alien9}.
\end{proof}

\subsubsection{Estimates of non-resonant terms}\label{sec:stimenonres}
In this subsection we provide estimates on the term
$\mathcal{B}(t)$ appearing in \eqref{basicenergy}.
We  state the main 
result of this section.

\begin{proposition}\label{prop:stimeEnergia}
Let  $N>0$. 
Then there is $s_0= s_0(N_0)$, where $N_0>0$ is given 
by Proposition \ref{NonresCond}, 
 such that, if Hypothesis  \ref{2epsilon}
holds with $s\geq s_0$, one has
 \begin{equation}\label{stimaeneNonresCaso2}
 \left|\int_{0}^{t}\mathcal{B}(\s)d\s\right|\lesssim
 \|u\|^{10}_{L^{\infty}H^{s}}TN +\|u\|_{L^{\infty}H^{s}}^{6}T
 +\|u\|^{4}_{L^{\infty}H^{s}}TN^{-1}+\|u\|^{4}_{L^{\infty}H^{s}}\,,
 \end{equation}
 where $\mathcal{B}(t)$ is in \eqref{BBBT}.
\end{proposition}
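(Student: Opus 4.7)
The strategy is a time normal form applied to the quartic quantity $\mathcal{B}(t)$ combined with a low/high frequency splitting, in the spirit of \cite{IPtori}. I fix the cut-off parameter $N>0$ and split $\mathcal{B}(t)=\mathcal{B}_{\mathrm{low}}(t)+\mathcal{B}_{\mathrm{high}}(t)$ in \eqref{BBBT} via the indicator functions $\mathtt{1}_{\{\max_2\leq N\}}$ and $\mathtt{1}_{\{\max_2>N\}}$ on the indices $(\x,\eta,\zeta)$. The high-frequency piece is handled directly: on $\max_2>N$ one has $\max_1\geq\max_2>N$, so \eqref{alien9} yields $|b_n^{(2)}|\lesssim N^{-1}\langle\x\rangle^{2n}\max_2^4$; converting the weight $|\x|^{2n}$ into an $L^{2}$-factor on $z_n$ via \eqref{def:VVTL}--\eqref{equivNorme} (up to lower-order terms of size $\|u\|^{6}_{H^s}$) and applying Lemma \ref{lem:trilineare} with $\mu=4$, $m=0$ gives $|\mathcal{B}_{\mathrm{high}}(t)|\lesssim N^{-1}\|u\|_{H^s}^4$, which integrated in time produces the contribution $T N^{-1}\|u\|^{4}_{L^\infty H^s}$.

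\textbf{Normal form on low frequencies.} On $(\x,\eta,\zeta)\notin\mathcal{R}$ with $\max_2\leq N$, Proposition \ref{NonresCond} yields $|\omega_{\NLS}(\x,\eta,\zeta)|\geq\gamma\max_2^{-N_0}$, so I define the quartic modified energy
\begin{equation*}
\mathcal{F}(t):=\frac{2}{(2\pi)^d}\sum_{\substack{(\x,\eta,\zeta)\notin\mathcal{R}\\ \max_2\leq N}}\frac{|\x|^{2n}\mathtt{b}(\x,\eta,\zeta)}{-\ii\,\omega_{\NLS}(\x,\eta,\zeta)}\,\hat{z}(\x-\eta-\zeta)\hat{\bar{z}}(\eta)\hat{z}(\zeta)\hat{\bar{z}}(-\x).
\end{equation*}
The modified coefficient is bounded by $\langle\x\rangle^{2n}\max_2^{N_0+4}/\max_1$; choosing $s_0=s_0(N_0,d)>d/2+N_0+4$ and applying Lemma \ref{lem:trilineare} (with $\mu=N_0+4$, $m=1$) then yields $|\mathcal{F}(t)|\lesssim\|u\|_{H^{s}}^4$, which provides the boundary contribution $\|u\|^{4}_{L^\infty H^s}$ in \eqref{stimaeneNonresCaso2}. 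Using \eqref{eq:ZZZbis} one has $\partial_t\hat{z}(k)=-\ii\Lambda_{\NLS}(k)\hat{z}(k)+\widehat{\mathtt{N}}(k)$ with $\mathtt{N}:=-\ii T_\Sigma z+X^{+}_{\mathtt{H}^{(4)}_{\NLS}}(Z)+R_5^{(2,+)}(U)$ (together with the conjugate identity for $\hat{\bar{z}}$); the four ``linear'' contributions to $\partial_t\mathcal{F}$ combine to restore the factor $-\ii\,\omega_{\NLS}$ in the numerator, exactly cancelling the denominator, so that $\partial_t\mathcal{F}=\mathcal{B}_{\mathrm{low}}+\mathcal{G}$, where $\mathcal{G}(t)$ is a sum of four quadrilinear expressions in which one factor of $z$ (resp.\ $\bar{z}$) is replaced by $\mathtt{N}$ (resp.\ $\bar{\mathtt{N}}$). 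Integrating in time yields $\int_0^t\mathcal{B}_{\mathrm{low}}(\sigma)\,d\sigma=\mathcal{F}(t)-\mathcal{F}(0)-\int_0^t\mathcal{G}(\sigma)\,d\sigma$.

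\textbf{Estimate of $\mathcal{G}$.} The three kinds of contributions to $\mathcal{G}$ are handled separately. The cubic $X^{+}_{\mathtt{H}^{(4)}_{\NLS}}$, bounded in $H^s$ by $\|u\|^3$ via \eqref{cubicifine} and Lemma \ref{azioneSimboo}, contributes an integrand of size $\|u\|^{6}$, producing the term $T\|u\|^{6}_{L^\infty H^s}$; the quintic $R_5^{(2,+)}$, bounded by $\|u\|^5$ via \eqref{stimaRRRfina}, contributes $\|u\|^{8}$, which is absorbed in $\|u\|^{6}$ under Hypothesis \ref{2epsilon}; the quasi-linear piece $T_\Sigma z$ has $\|T_\Sigma z\|_{H^{s-2}}\lesssim\|u\|^{7}_{H^s}$ by \eqref{stimaTL}, and the resulting two-derivative loss is recovered on $\max_2\leq N$ at the cost of a single factor $N^{1}$ (after absorbing the $N^{N_0}$ from $\omega_{\NLS}^{-1}$ and a part of the derivative loss into the Sobolev regularity $s\geq s_0$), yielding the bound $N\|u\|^{10}$ and hence the term $TN\|u\|^{10}_{L^\infty H^s}$. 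Summing all four contributions gives \eqref{stimaeneNonresCaso2}.

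The main obstacle lies in the precise bookkeeping for the $T_\Sigma z$ contribution to $\mathcal{G}$: since $T_\Sigma$ loses two derivatives while $\omega_{\NLS}^{-1}$ loses $N_0$, one must distribute regularity carefully between the four Fourier legs, using the constraint $\max_2\leq N$ to convert excess derivative loss into a \emph{single} power of $N$ and the choice of $s_0=s_0(N_0,d)$ to absorb $N_0$, so as to recover exactly the exponent $N^{1}$ appearing in \eqref{stimaeneNonresCaso2} rather than a worse power. All remaining steps reduce to routine multilinear Fourier estimates via Lemma \ref{lem:trilineare} together with the structural information in \eqref{eq:ZZZbis}.
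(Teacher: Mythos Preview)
Your overall strategy---low/high frequency splitting followed by a time normal form (modified energy) on the low piece---is exactly the paper's, and your treatment of the high-frequency part, of $\mathcal{F}$ itself, and of the $X^{+}_{\mathtt{H}^{(4)}_{\NLS}}$ and $R_{5}^{(2,+)}$ contributions to $\mathcal{G}$ is correct. The gap is in your choice of cut-off: you localize to $\max_2\{|\xi-\eta-\zeta|,|\eta|,|\zeta|\}\leq N$, whereas the paper localizes to $\max_1\{|\xi-\eta-\zeta|,|\eta|,|\zeta|\}\leq N$ (see \eqref{triMaps3}--\eqref{triMaps4}), and this difference breaks precisely the $T_\Sigma z$ estimate.

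On your low region the highest input frequency $\max_1$---and with it $|\xi|$, since $\big||\xi|-\max_1\big|\leq 2\max_2\leq 2N$---is \emph{unbounded}. When $\partial_t$ in $\partial_t\mathcal{F}$ hits the high-frequency input leg (the one at $\max_1$) or the output leg at $-\xi$, the $T_\Sigma z$ piece of $\mathtt{N}$ must absorb $s-1$ derivatives there: the full coefficient of $\mathcal{F}$ is $|\xi|^{2n}\mathtt{b}/\omega_{\NLS}$, of size $\langle\xi\rangle^{2s}\max_2^{N_0+4}/\max_1$; after placing $\langle\xi\rangle^{s}$ on the output leg one is left with $\langle\xi\rangle^{s}/\max_1\lesssim\max_1^{s-1}$ on the high input. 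But $\|T_\Sigma z\|_{H^{s-1}}$ would require $\|z\|_{H^{s+1}}$ by \eqref{stimaTL}, one derivative more than is available, and since $\max_1$ is not bounded by $N$ on your region this missing derivative cannot be traded for any power of $N$. Your sentence ``the resulting two-derivative loss is recovered on $\max_2\leq N$ at the cost of a single factor $N^{1}$'' is therefore false for these two legs.

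The paper's cut at $\max_1\leq N$ forces \emph{all four} frequencies to be $\lesssim N$ on the low piece; one then writes $1/\max_1=\max_1/\max_1^{2}\leq N/\max_1^{2}$, manufacturing an extra $\max_1^{-1}$ in the coefficients of $\mathcal{T}_<$. This is exactly \eqref{iraq2} with $p=2$, giving $\|\mathcal{T}_<\|_{H^{2}}\lesssim N\|z\|_{H^{s}}^{3}$, which is then paired with $\|T_{|\xi|^{2n-2}}T_\Sigma z\|_{L^{2}}\lesssim\|T_\Sigma z\|_{H^{s-2}}\lesssim\|u\|^{6}_{H^{s}}\|z\|_{H^{s}}$ to produce $N\|u\|^{10}_{H^{s}}$. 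Replacing your $\max_2$-cut by a $\max_1$-cut fixes the argument without changing anything else.
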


\noindent
We need some preliminary results.
We consider the following trilinear maps:
\begin{align}
&\mathcal{B}_{i}=\mathcal{B}_i[z_1,{z}_2,z_3]\,,\quad
\widehat{\mathcal{B}_{i}}(\x)=\frac{1}{(2\pi)^d}\sum_{\eta,\zeta\in\mathbb{Z}^{d}}
\mathtt{b}_{i}(\x,\eta,\zeta)
\hat{z}_1(\x-\eta-\zeta)\hat{{z}}_2(\eta)\hat{z}_3(\zeta)\,,\;\;i=1,2,\label{triMaps1}\\
&\mathcal{T}_{<}=\mathcal{T}_{<}[z_1,{z}_2,z_3]\,,\quad
\widehat{\mathcal{T}_{<}}(\x)=\frac{1}{(2\pi)^d}\sum_{\eta,\zeta\in\mathbb{Z}^{d}}
\mathtt{t}_{<}(\x,\eta,\zeta)
\hat{z}_1(\x-\eta-\zeta)\hat{{z}}_2(\eta)\hat{z}_3(\zeta)\,,\label{triMaps2}
\end{align}
where
\begin{align}
\mathtt{b}_{1}(\x,\eta,\zeta)&=\mathtt{b}(\x,\eta,\zeta)
\mathtt{1}_{\{\max\{|\x-\eta-\zeta|, |\eta|,|\zeta|\}\leq N\}}\,,\label{triMaps3}\\
\mathtt{b}_{2}(\x,\eta,\zeta)&=\mathtt{b}(\x,\eta,\zeta)
\mathtt{1}_{\{\max\{|\x-\eta-\zeta|, |\eta|,|\zeta|\}> N\}}\,,\label{triMaps4}\\
\mathtt{t}_{<}(\x,\eta,\zeta)&=\tfrac{-1}{\ii \omega_{\NLS}(\x,\eta,\zeta)}\mathtt{b}_1(\x,\eta,\zeta)\,,
\label{triMaps5}
\end{align}
where $\mathtt{b}(\x,\eta,\zeta)$
are the coefficients in \eqref{BBBT}, and $\omega_{\NLS}$ is the phase in \eqref{fasePhi}.
We remark that if $(\x,\eta,\zeta)\in\mathcal{R}$ (see Def. \ref{def:resonantSet}) then the coefficients 
$\mathtt{b}(\x,\eta,\zeta)$ are equal to zero
(see \eqref{BBBT},  \eqref{alien88}, \eqref{alien9}).
Therefore, since $\omega_{\NLS}$ is non-resonant (see  Proposition  \ref{NonresCond}), the 
coefficients in \eqref{triMaps5}
are well-defined. We now prove an abstract 
results on the trilinear maps introduced 
in \eqref{triMaps1}-\eqref{triMaps2}.

\begin{lemma}\label{lem:iraq}
One has that, for $s=2n>d/2+4$,
\begin{equation}\label{iraq1}
\|\mathcal{B}_2[z_1,z_2,z_3]\|_{L^{2}}\lesssim N^{-1}
\sum_{i=1}^{3} \|z_i\|_{H^{s}}\prod_{i\neq k}\|z_k\|_{H^{d/2+4+\epsilon}}\,,\qquad \forall \epsilon>0\,.
\end{equation}
\noindent
There is $s_0(N_0)>0$ ($N_0>0$ given by Proposition \ref{NonresCond})
such that for $s\geq s_0(N_0)$ one has
 \begin{equation}\label{iraq2}
\|\mathcal{T}_{<}[z_1,z_2,z_3]\|_{H^{p}}\lesssim N
\sum_{i=1}^{3} \|z_i\|_{H^{s+p-2}}\prod_{i\neq k}\|z_k\|_{H^{s_0}}\,, \quad p\in\mathbb{N}\,,
\end{equation}
 \begin{equation}\label{iraq2bis}
\|\mathcal{T}_{<}[z_1,z_2,z_3]\|_{L^{2}}\lesssim 
\sum_{i=1}^{3} \|z_i\|_{H^{s}}\prod_{i\neq k}\|z_k\|_{H^{s_0}}\,.
\end{equation}
\end{lemma}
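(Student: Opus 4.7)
The plan is a direct application of the trilinear Sobolev estimate of Lemma \ref{lem:trilineare}, once a uniform pointwise bound on the coefficients $\mathtt{b}_1$, $\mathtt{b}_2$, $\mathtt{t}_<$ has been secured. First, combining the symmetrized definition \eqref{BBBT} of $\mathtt{b}$ with the pointwise bound \eqref{alien9} on $b_n^{(2)}$ and the elementary inequality $\langle\x\rangle\lesssim \max_1\{\langle\x-\eta-\zeta\rangle,\langle\eta\rangle,\langle\zeta\rangle\}$ (which holds because $\x=(\x-\eta-\zeta)+\eta+\zeta$), I would establish the master estimate
\[
|\mathtt{b}(\x,\eta,\zeta)|\;\lesssim\;\max{}_{1}\{\langle\x-\eta-\zeta\rangle,\langle\eta\rangle,\langle\zeta\rangle\}^{s-1}\;\max{}_{2}\{\langle\x-\eta-\zeta\rangle,\langle\eta\rangle,\langle\zeta\rangle\}^{4},
\]
valid on the support of $\mathtt{b}$, i.e.\ on the complement of $\mathcal{R}$. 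The symmetrized second term $\overline{b_n^{(2)}(\zeta,\zeta+\eta-\x,\x)}$ obeys the same bound, since $|\zeta+\eta-\x|=|\x-\eta-\zeta|$ and the two relevant triples of moduli are pairwise comparable.

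For \eqref{iraq1}, the indicator $\mathtt{1}_{\max_1>N}$ in \eqref{triMaps4} allows me to trade one power of $\max_1$ for $N^{-1}$, namely $\max_1^{s-1}\mathtt{1}_{\max_1>N}\leq N^{-1}\max_1^{s}$, so that $|\mathtt{b}_2|\lesssim N^{-1}\max_1^{s}\max_2^{4}$. Applying Lemma \ref{lem:trilineare} with $\mu=4$ and $m=-s$, and choosing the auxiliary Sobolev exponent $s_0=d/2+4+\epsilon>d/2+\mu$ (which is admissible thanks to $s>2d+4$), gives \eqref{iraq1} directly.

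For \eqref{iraq2}, I would first note that $\mathtt{t}_<$ is well defined because $\mathtt{b}_1$ vanishes on $\mathcal{R}$; on the complement of $\mathcal{R}$ Proposition \ref{NonresCond}(i) yields $|\omega_{\NLS}(\x,\eta,\zeta)|^{-1}\lesssim \gamma^{-1}\max_2^{N_0}$, and hence
\[
|\mathtt{t}_<(\x,\eta,\zeta)|\;\lesssim\;\max{}_{1}^{s-1}\max{}_{2}^{4+N_0}\mathtt{1}_{\max_1\leq N}.
\]
Multiplying by the Sobolev weight $\langle\x\rangle^p\lesssim \max_1^p$ and using the support condition in the form $\max_1^{s+p-1}\mathtt{1}_{\max_1\leq N}\leq N\,\max_1^{s+p-2}$, I obtain the pointwise bound $\langle\x\rangle^{p}|\mathtt{t}_<|\lesssim N\,\max_1^{s+p-2}\max_2^{4+N_0}$. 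Lemma \ref{lem:trilineare} with $\mu=4+N_0$ and $m=2-s-p$, applied to this multiplied coefficient, delivers \eqref{iraq2}; the constraint $s_0>d/2+4+N_0$ fixes the choice of $s_0=s_0(N_0)$ in the statement. Finally, \eqref{iraq2bis} follows from the same pointwise bound on $\mathtt{t}_<$, now without extracting an $N$ factor: Lemma \ref{lem:trilineare} with $\mu=4+N_0$ and $m=1-s$ gives $\|\mathcal{T}_<\|_{L^2}\lesssim\sum_i\|z_i\|_{H^{s-1}}\prod_{k\neq i}\|z_k\|_{H^{s_0}}$, which implies \eqref{iraq2bis} since $\|\cdot\|_{H^{s-1}}\leq\|\cdot\|_{H^s}$.

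There is no genuine obstacle: once the master bound on $\mathtt{b}$ is in place, everything reduces to the already-available Lemma \ref{lem:trilineare} and the non-resonance estimate \eqref{lowerPhistrong}. The only item requiring some care is the symmetrization in $\mathtt{b}$, where one must check that the second summand's $\max_1$ and $\max_2$ are comparable to those of the first triple so that a single bound controls both contributions; this is a routine triangle-inequality check.
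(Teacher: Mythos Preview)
Your approach is correct and is essentially the paper's: bound the symbols pointwise using \eqref{alien9} and the non-resonance estimate \eqref{lowerPhistrong}, then invoke Lemma~\ref{lem:trilineare}. One point deserves more care than you give it. You assert that the symmetrized second term $\overline{b_n^{(2)}(\zeta,\zeta+\eta-\x,\x)}$ obeys the same master bound because ``the two relevant triples of moduli are pairwise comparable'', and call this a routine triangle-inequality check. The $\max_1$'s are indeed comparable by momentum conservation, but the $\max_2$'s are \emph{not}: with $A=|\x-\eta-\zeta|$, $B=|\eta|$, $C=|\zeta|$, $D=|\x|$, take $A\sim D\sim M$ large and $B,C\sim 1$; then $\max_2\{A,B,C\}\sim 1$ while $\max_2\{A,B,D\}\sim M$.

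The master bound nonetheless holds, but via the prefactor $\langle\zeta\rangle^{2n}=C^{2n}$ rather than comparability of $\max_2$. A clean way: if $C\le\max_2\{A,B,C\}$ then $C^{2n}\le\max_2\{A,B,C\}^{4}M^{s-4}$ (using $s\ge4$), and $\max_2\{A,B,D\}^4/\max_1\{A,B,D\}\le M^{3}$, giving the bound; if instead $C=\max_1\{A,B,C\}\sim M$ then $\max_2\{A,B,C\}=\max\{A,B\}$, and since one of $A,B,D$ is $\sim M$, either $\max\{A,B\}\sim M\ge\max_2\{A,B,D\}$ or $D\sim M$ forces $\max_2\{A,B,D\}=\max\{A,B\}$. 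This short dichotomy replaces your ``pairwise comparable'' claim and the rest of your argument then goes through verbatim.
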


\begin{proof}
Using \eqref{triMaps4},  \eqref{BBBT}, \eqref{alien9}
we get that
\[
\begin{aligned}
\|\mathcal{B}_2[z_1,z_2,z_3]\|_{L^{2}}^{2}&\lesssim
\sum_{\x\in\mathbb{Z}^{d}}\Big(\sum_{\eta,\zeta\in \mathbb{Z}^{d}}
|\mathtt{b}_{2}(\x,\eta,\zeta)|
|\hat{z}_1(\x-\eta-\zeta)||\hat{{z}}_2(\eta)||\hat{z}_3(\zeta)|
\Big)^{2}\\
&\lesssim
N^{-2}\sum_{\x\in\mathbb{Z}^{d}}\Big(\sum_{\eta,\zeta\in \mathbb{Z}^{d}}
\langle \x\rangle^{2n}\max_{2}\{|\x-\eta-\zeta|,|\eta|,|\zeta|\}^{4}
|\hat{z}_1(\x-\eta-\zeta)||\hat{{z}}_2(\eta)||\hat{z}_3(\zeta)|
\Big)^{2}\,.
\end{aligned}
\]
Then, by reasoning as in the proof of Lemma \ref{lem:trilineare},
one obtains the \eqref{iraq1}.
Let us prove the bound \eqref{iraq2} for $p=0$, the others are similar.
Using \eqref{triMaps5}, \eqref{lowerPhistrong},  \eqref{BBBT}, \eqref{alien9}
we have
\[
\begin{aligned}
\|\mathcal{T}_{<}[&z_1,z_2,z_3]\|_{L^{2}}^{2}\lesssim
\sum_{\x\in\mathbb{Z}^{d}}\Big(\sum_{\eta,\zeta\in \mathbb{Z}^{d}}
|\mathtt{t}_{<}(\x,\eta,\zeta)|
|\hat{z}_1(\x-\eta-\zeta)||\hat{{z}}_2(\eta)||\hat{z}_3(\zeta)|
\Big)^{2}\\
&{\lesssim_{\gamma}}N^2
\sum_{\x\in\mathbb{Z}^{d}}\left(\sum_{\eta,\zeta\in \mathbb{Z}^{d}}
\tfrac{\langle \x\rangle^{2n}\max_{2}\{|\x-\eta-\zeta|,|\eta|,|\zeta|\}^{N_0+4}}
{\max_{1}\{|\x-\eta-\zeta|,|\eta|,|\zeta|\}^{2}}
|\hat{z}_1(\x-\eta-\zeta)||\hat{{z}}_2(\eta)||\hat{z}_3(\zeta)|
\right)^{2}\,.
\end{aligned}
\]
Again,  by reasoning as in the proof of Lemma \ref{lem:trilineare},
one obtains the \eqref{iraq2}.
The \eqref{iraq2bis} follows similarly.
\end{proof}

\begin{proof}[{\bf Proof of Proposition \ref{prop:stimeEnergia}}]
By \eqref{triMaps1}, \eqref{triMaps3}, \eqref{triMaps4}, 
and recalling the definition of $\mathcal{B}$
in \eqref{BBBT}, we can write
\begin{equation}\label{EnergiaTot}
\begin{aligned}
\int_{0}^{t}\mathcal{B}(\s)d\s&=
\int_{0}^{t}(\mathcal{B}_1[z,\bar{z},z],T_{\langle\x\rangle^{2n}}z)_{L^{2}}d\s
+\int_{0}^{t}(\mathcal{B}_2[z,\bar{z},z],T_{\langle\x\rangle^{2n}}z)_{L^{2}}d\s\,.
\end{aligned}
\end{equation}
By Lemma \ref{lem:iraq} we have
\begin{equation}\label{iraq7}
\left|\int_{0}^{t}(\mathcal{B}_2[z,\bar{z},z],T_{\langle\x\rangle^{2n}}z)_{L^{2}}d\s\right|
\stackrel{\eqref{iraq1}}{\lesssim}
N^{-1}\int_{0}^{t}\|z\|_{H^{s}}^{4}d\s
\stackrel{\eqref{alien1}}{\lesssim}
N^{-1}\int_{0}^{t}\|u\|^{4}_{H^{s}}\,.
\end{equation}
Consider now the first summand in the r.h.s. of \eqref{EnergiaTot}.
We claim that we have the following identity:
\begin{equation}\label{iraq10}
\begin{aligned}
\int_{0}^{t}
(\mathcal{B}_1[z,\bar{z},z],T_{\langle\x\rangle^{2n}}z)_{L^{2}}d\s&=
\int_{0}^{t}
(\mathcal{T}_{<}[z,\bar{z},z],T_{\langle\x\rangle^{2n}}(\pa_{t}+\ii\Lambda_{\NLS})z)_{L^{2}}d\s
\\&
+\int_{0}^{t}(\mathcal{T}_{<}[(\pa_{t}+\ii\Lambda_{\NLS})z,\bar{z},z],T_{\langle\x\rangle^{2n}}z)_{L^{2}}d\s
\\&
+\int_{0}^{t}(\mathcal{T}_{<}[z,\bar{z},(\pa_{t}+\ii\Lambda_{\NLS})z],T_{\langle\x\rangle^{2n}}z)_{L^{2}}d\s
\\&
+\int_{0}^{t}(\mathcal{T}_{<}[z,\ov{(\pa_{t}+\ii\Lambda_{\NLS})z},z],T_{\langle\x\rangle^{2n}}z)_{L^{2}}d\s
+O(\|u\|^{4}_{H^{s}})\,.
\end{aligned}
\end{equation}
We use the claim, postponing its proof.
Consider the first summand in the r.h.s. of \eqref{iraq10}. Using
the self-adjointness of $T_{\langle\x\rangle^{2}}$ and the
 \eqref{eq:ZZZbis}
we write
\[
\begin{aligned}
(\mathcal{T}_{<}[z,\bar{z},z],T_{\langle\x\rangle^{2n}}(\pa_{t}+\ii\Lambda_{\NLS})z)_{L^{2}}&=
(T_{\langle\x\rangle^{2}}\mathcal{T}_{<}[z,\bar{z},z],-T_{\langle\x\rangle^{2n-2}}\ii T_{\Sigma}z)_{L^{2}}\\
&+\big(\mathcal{T}_{<}[z,\bar{z},z],T_{\langle\x\rangle^{2n}}(X_{\mathtt{H}_{\NLS}^{(4)}}^{+}(Z)
+R_{5}^{(2,+)}(U))\big)_{L^{2}}\,.
\end{aligned}
\]
We estimate the first summand in the r.h.s. by means of the  Cauchy-Schwarz inequality, the  \eqref{iraq2} with $p=2$ and the 
\eqref{stimaTL}; analogously we estimate the second summand by means of the 
 Cauchy-Schwarz inequality, \eqref{iraq2bis}, the \eqref{cubicifine}
and the 
 \eqref{stimaRRRfina}, obtaining
\[
\left|
\int_{0}^{t}
(\mathcal{T}_{<}[z,\bar{z},z],T_{\langle\x\rangle^{2n}}(\pa_{t}+\ii\Lambda_{\NLS})z)_{L^{2}}d\s
\right|\leq \int_{0}^{t}\|u(\s)\|^{10}_{H^{s}}N+\|u(\s)\|_{H^{s}}^{6}
d\s\,.
\]
The other terms in \eqref{iraq10} are estimated in a similar way. We eventually obtain the \eqref{stimaeneNonresCaso2}.

\noindent
We now prove the claim \eqref{iraq10}.
Recalling \eqref{eq:ZZZbis} we have that
\begin{equation*}
\pa_{t}\hat{z}(\x)=-\ii \Lambda_{\NLS}(\x)\hat{z}(\x)+\widehat{\mathcal{Q}}(\x)\,,\qquad 
\x\in \mathbb{Z}^{d}\,,
\qquad
\mathcal{Q}:=-\ii T_{\Sigma}z+X_{\mathtt{H}_{\NLS}^{(4)}}^{+}(Z)+R_{5}^{(2,+)}(U)\,.
\end{equation*}
We define $\hat{g}(\x):=e^{\ii t\Lambda_{\NLS}(\x)}\hat{z}(\x)$, $\forall\x\in\mathbb{Z}^{d}$. 
One can note that $\hat{g}(\x)$ satisfies
\begin{equation}\label{eq:ZZZbisFou}
\pa_{t}\hat{g}(\x)=e^{\ii t\Lambda_{\NLS}(\x)}\widehat{\mathcal{Q}}(\x)=e^{\ii t\Lambda_{\NLS}(\x)}(\pa_{t}+\ii\Lambda_{\NLS})\hat z(\xi) \,,\qquad 
\forall\,\x\in\mathbb{Z}^{d}\,.
\end{equation}
According to this notation and using \eqref{triMaps1} and \eqref{fasePhi} 
we have
\[
\int_{0}^{t}
(\mathcal{B}_1[z,\bar{z},z],T_{\langle\x\rangle^{2n}}z)_{L^{2}}d\s=
\int_{0}^{t}\sum_{\x,\eta,\zeta\in\mathbb{Z}^{d}}\tfrac{1}{(2\pi)^d}
\mathtt{b}_{1}(\x,\eta,\zeta)e^{-\ii \s\omega_{\NLS}(\x,\eta,\zeta)}
\hat{g}(\x-\eta-\zeta)\hat{\bar{g}}(\eta)\hat{g}(\zeta)\hat{\bar{g}}(-\x)\langle\x\rangle^{2n}d\s\,.
\]
By integrating by parts in $\s$ and using \eqref{eq:ZZZbisFou} 
one gets the \eqref{iraq10} with
\[
O(\|u\|^{4}_{H^{s}})=
(\mathcal{T}_{<}[z(t),\bar{z}(t),z(t)],T_{\langle\x\rangle^{2n}}z(t))_{L^{2}}-
(\mathcal{T}_{<}[z(0),\bar{z}(0),z(0)],T_{\langle\x\rangle^{2n}}z(0))_{L^{2}}\,.
\]
The remainder above is bounded from above by $\|u\|^{4}_{L^{\infty}H^{s}}$
using Cauchy-Schwarz and the \eqref{iraq2bis}.
\end{proof}

\subsection{Estimates for the KG}\label{stimettineKG}
In this section we provide \emph{a priori} energy estimates on the variable 
$Z$ solving  \eqref{eq:ZZZKG}. This  implies similar estimates on the solution $U$
of the system \eqref{QLNLS444KG} thanks to the equivalence \eqref{alienKG1}.
In subsection \ref{sec:55KG} we introduce an equivalent energy norm
and we provide a first energy inequality. This is the content of 
Proposition \ref{lem:basicenergyKG}.
Then in subsection \ref{sec:55KG5KG} we give improved  bounds on the
non-resonant terms.
\subsubsection{First energy inequality}\label{sec:55KG}
We recall that the system \eqref{eq:ZZZKG} is diagonal up to smoothing terms plus some
higher degree of homogeneity remainder. 
Hence, for simplicity, we pass to the scalar equation
\begin{equation}\label{eq:ZZZKGscal}
\pa_{t}z+\ii \Lambda_{\KG} z=-\ii \opbw\big(\widetilde{a}_{2}^{+}(x,\x)\Lambda_{\KG}(\x)\big)z
+X_{\mathtt{H}^{(4)}_{\KG}}^{+}(Z)+{R}^{(2,+)}_{4}(u)
\end{equation}
where (recall \eqref{cubicifineKG})
$
X_{\mathtt{H}^{(4)}_{\KG}}^{+}(Z)=-\ii \opbw(a_0(x,\x))z+Q_{\mathtt{H}^{(4)}_{\KG}}^{+}(Z)\,.
$
For $n\in \mathbb{R}$ we define 
\begin{equation}\label{def:VVTLKG}
z_{n}:=\langle D\rangle^{n}z\,,\qquad 
Z_{n}=\vect{z_n}{\ov{z_{n}}}
:=\uno\langle D\rangle^{n} Z\,,\quad Z=\vect{z}{\bar{z}}\,. 
\end{equation}
We have the following.
\begin{lemma}\label{equaVVnnKG}
Fix $n:=n(d)\gg1$ large enough  and recall \eqref{eq:ZZZKGscal}.
One has that the function $z_n$ defined in \eqref{def:VVTLKG}
solves the problem
\begin{equation}\label{equa:VVTLKG}
\pa_{t}z_n=-\ii \opbw\big((1+\widetilde{a}_{2}^{+}(x,\x))\Lambda_{\KG}(\x)\big)z_{n}
+\langle D\rangle^{n}X^{+,{\rm res}}_{\mathtt{H}^{(4)}_{\KG}}(Z)
+B_{n}^{(1)}(Z)+B_{n}^{(2)}(Z)
+R_{4,n}(U)\,,
\end{equation}
where the resonant vector field $X^{+,{\rm res}}_{\mathtt{H}^{(4)}_{\KG}}$ is defined as 
in Def. \ref{def:resonantSet} (see also Rmk. \ref{rmkalg}), 
the cubic terms $B_{n}^{(i)}$, $i=1,2$, have the form
\begin{align}
\widehat{B_{n}^{(1)}(Z)}(\x)&=\frac{1}{(2\pi)^{d}}
\sum_{\substack{\s_1,\s_2\in\{\pm\} \\ \eta,\zeta\in \mathbb{Z}^{d}}}
b_1^{\s_1,\s_2}(\x,\eta,\zeta)
\hat{z^{\s_1}}(\x-\eta-\zeta)\hat{z^{\s_2}}(\eta)\hat{z}_{n}(\zeta)\,,\label{alienKG888}\\
\widehat{B_{n}^{(2)}(Z)}(\x)&=\frac{1}{(2\pi)^{ d}}
\sum_{\substack{\s_1,\s_2,\s_3\in\{\pm\}\\\eta,\zeta\in \mathbb{Z}^{d}}}
b_{2,n}^{\s_1,\s_2,\s_3}(\x,\eta,\zeta)
\hat{z^{\s_1}}(\x-\eta-\zeta)\hat{z^{\s_2}}(\eta)\hat{z^{\s_3}}(\zeta)\,,
\label{alienKG889}
\end{align}
with  (recall Rmk. \ref{strutturaA0KG})
\begin{align}
b_1^{\s_1,\s_2}(\x,\eta,\zeta)&:=-\ii a_0^{\s_1,\s_2}\big(\x-\zeta,\eta,\tfrac{\x+\zeta}{2}\big) \chi_{\epsilon}\Big(\tfrac{|\x-\zeta|}{\langle\x+\zeta\rangle} \Big)
\mathtt{1}_{\mathcal{R}^{c}}(\x,\eta,\zeta)\,, \label{alienKG99}\\
|b_{2,n}^{\s_1,\s_2,\s_3}(\x,\eta,\zeta)|&\lesssim 
\tfrac{\langle \x\rangle^{n}\max_{2}\{|\x-\eta-\zeta|,|\eta|,|\zeta|\}^{\mu}}
{\max_1\{\langle\x-\eta-\zeta\rangle,\langle\eta\rangle,\langle\zeta\rangle\}}
\mathtt{1}_{\mathcal{R}^{c}}(\x,\eta,\zeta)\,,
\label{alienKG9}
\end{align}
for some $\mu>1$. The remainder satisfies
\begin{equation}\label{stimeRRRNKG}
\|R_{4,n}(U)\|_{L^{2}}\lesssim \|u\|^{4}_{H^{n}}\,.
\end{equation}
\end{lemma}

\begin{proof}
Recalling the definition of resonant vector fields in Def. \ref{def:resonantSet}
we set
\begin{equation}\label{oldmanKG4}
X^{+,\perp}_{\mathtt{H}^{(4)}_{\KG}}(Z):=
X^{+}_{\mathtt{H}^{(4)}_{\KG}}(Z)-X^{+,{\rm res}}_{\mathtt{H}^{(4)}_{\KG}}(Z)\,,
\end{equation}
which represents the non resonant terms in the cubic 
vector field of \eqref{eq:ZZZKGscal}. By differentiating in $t$ the \eqref{def:VVTLKG}
and using the \eqref{eq:ZZZKGscal}
we get
\begin{align}
\pa_{t}z_{n}&=-\ii \opbw\big((1+\widetilde{a}_{2}^{+}(x,\x))\Lambda_{\KG}(\x)\big)z_{n}
+\langle D\rangle^{n}
X^{+,{\rm res}}_{\mathtt{H}^{(4)}_{\KG}}(Z)
\nonumber\\
&\quad-\ii \big[\langle D\rangle^{n}
,  \opbw\big((1+\widetilde{a}_{2}^{+}(x,\x))
\Lambda_{\KG}(\x)\big)\big]z\label{oldmanKG1}\\
&\quad+\langle D\rangle^{n}
X^{+,\perp}_{\mathtt{H}^{(4)}_{\KG}}(Z)\label{oldmanKG2}\\
&\quad+\langle D\rangle^{n}
{R}^{(2,+)}_{4}(u)\,.\label{oldmanKG3}
\end{align}
We analyse each summand above separately. By estimate \eqref{stimaRRRfinaKG} we deduce
 $\|\eqref{oldmanKG3}\|_{L^{2}}\lesssim \|u\|^{4}_{H^{n}}$.
Let us now consider the commutator term  in \eqref{oldmanKG1}.
By Lemma \ref{azioneSimboo}, Proposition  \ref{prop:compo} and the estimate on the 
semi-norm of the  symbol $\widetilde{a}_{2}^{+}(x,\x)$ in \eqref{mission1KG},
we obtain that
$
\|\eqref{oldmanKG1}
\|_{L^{2}}\lesssim \|u\|^{3}_{H^{n}}\|z\|_{H^{n}}
{\lesssim}\|u\|^{4}_{H^{n}}\,,
$ we have used also the \eqref{alienKG1}.
The term in \eqref{oldmanKG2} is the most delicate.
By \eqref{cubicifineKG} and \eqref{oldmanKG4} (recall also Rmk. \ref{strutturaA0KG} 
and \eqref{quantiWeyl})
\begin{equation}\label{alienKG100}
\begin{aligned}
&\langle D\rangle^nX_{\mathtt{H}^{(4)}_{\KG}}^{+,\perp}(Z)=B_{n}^{(1)}(Z)+
\mathcal{C}_1+\mathcal{C}_2\,,
\end{aligned}
\end{equation}
with $B_{n}^{(1)}(Z)$ as in \eqref{alienKG888} and coefficients as in \eqref{alienKG99},
the term $\mathcal{C}_1$ has the form
\begin{equation}\label{alienKG101}
\begin{aligned}
\widehat{\mathcal{C}}_1(\x)&=
\tfrac{1}{(2\pi)^{d}}
\sum_{\substack{\s_1,\s_2\in\{\pm\} \\ \eta,\zeta\in \mathbb{Z}^{d}}}
\mathtt{c}_1^{\s_1,\s_2}(\x,\eta,\zeta)
\hat{z^{\s_1}}(\x-\eta-\zeta)\hat{z^{\s_2}}(\eta)\hat{z}(\zeta)\,,\\
\mathtt{c}_1^{\s_1,\s_2}(\x,\eta,\zeta)&=
-\ii a_0^{\s_1,\s_2}\big(\x-\zeta,\eta,\tfrac{\x+\zeta}{2}\big) \chi_{\epsilon}\Big(\tfrac{|\x-\zeta|}{\langle\x+\zeta\rangle} \Big)\Big[\langle \xi\rangle^n-\langle \zeta\rangle^n\Big]
\mathtt{1}_{\mathcal{R}^{c}}(\x,\eta,\zeta)\,,
\end{aligned}
\end{equation}
and the term $\mathcal{C}_2$ has the form \eqref{alienKG889} 
with coefficients (see \eqref{restoQ3GGGKG})
\begin{equation}\label{alienKG102}
\begin{aligned}
&\mathtt{c}_{2}^{\s_1,\s_2,\s_3}(\x,\eta,\zeta):=
\mathtt{q}_{\mathtt{H}^{(4)}_{\KG}}^{\s_1,\s_2,\s_3}(\x,\eta,\zeta)
\langle \x\rangle^n
\mathtt{1}_{\mathcal{R}^{c}}(\x,\eta,\zeta)\,.
\end{aligned}
\end{equation}
In order to conclude the proof we need to show that the coefficients in \eqref{alienKG101},
\eqref{alienKG102} satisfy the bound \eqref{alienKG9}.
This is true for the coefficients in \eqref{alienKG102} thanks 
to the bound \eqref{restoQ32GGGKG}.
Moreover notice that
\[
|\langle \x\rangle^n-\langle \zeta\rangle^n|
\lesssim|\x-\zeta|\max\{\langle \x\rangle, \langle\zeta\rangle\}^{n-1}\,.
\]
Then the coefficients in \eqref{alienKG101}
 satisfy \eqref{alienKG9} by using Remark \ref{strutturaA0KG}
 and Lemma \ref{lem:paratri}.
\end{proof}

\begin{remark}\label{semilin4}
In view of Remarks \ref{semilin1}, \ref{semilin2}, \ref{semilin3}
if \eqref{KG} is semi-linear then the symbol $\tilde{a}^{+}_{2}$ in \eqref{equa:VVTLKG}
is equal to zero, the coefficients $b_{2,n}^{\s_1,\s_2,\s_3}(\x,\eta,\zeta)$ 
in \eqref{alienKG889} satisfies the bound \eqref{alienKG9}
the the better denominator
$\max_1\{\langle\x-\eta-\zeta\rangle,\langle\eta\rangle,\langle\zeta\rangle\}^{2}$.
\end{remark}

In view of Lemma \ref{equaVVnnKG} we deduce the following.

\begin{proposition}\label{lem:basicenergyKG}
Let $\psi(t,x)$ be a solution of \eqref{KG} 
satisfying Hypothesis \ref{2epsilonKG}
and consider the function $z_n$ in \eqref{def:VVTLKG} 
(see also \eqref{varZZZKG}, \eqref{cambio1KG}).
Then, setting $s=n=n(d)\gg1$ 
we have $\|z_n\|_{L^{2}}\sim \|\psi\|_{H^{s+1/2}}+\|\dot{\psi}\|_{H^{s-1/2}}$
and
\begin{equation}\label{basicenergyKG}
\pa_{t}\|z_{n}(t)\|_{L^{2}}^{2}=
\mathcal{B}(t)+\mathcal{B}_{>4}(t)\,,\qquad t\in[0,T)\,,
\end{equation}
where

\vspace{0.3em}
\noindent
$\bullet$ the term $\mathcal{B}(t)$ has the form
\begin{equation}\label{BBBTKG}
\begin{aligned}
\mathcal{B}(t)&=
\sum_{\substack{\s_1,\s_2,\s_3\in\{\pm\} \\\x,\eta,\zeta\in \mathbb{Z}^{d}}}
\langle \xi\rangle^{2n}\mathtt{b}^{\s_1,\s_2,\s_3}(\x,\eta,\zeta)
\hat{z^{\s_1}}(\x-\eta-\zeta)\hat{z^{\s_2}}(\eta)\hat{z^{\s_3}}(\zeta)\hat{\bar{z}}(-\x)\,,
\end{aligned}
\end{equation}
where $\mathtt{b}^{\s_1,\s_2,\s_3}(\x,\eta,\zeta)\in \mathbb{C}$
satisfy, for $\x,\eta,\zeta\in \mathbb{Z}^{d}$,
\begin{equation}\label{alienKG999}
|\mathtt{b}^{\s_1,\s_2,\s_3}(\x,\eta,\zeta)|\lesssim
\tfrac{\max_{2}\{|\x-\eta-\zeta|,|\eta|,|\zeta|\}^{\mu}}
{\max_1\{\langle\x-\eta-\zeta\rangle,\langle\eta\rangle,\langle\zeta\rangle\}}\mathtt{1}_{\mathcal{R}^{c}}(\x,\eta,\zeta)
\end{equation}
for some $\mu>1$;

\vspace{0.3em}
\noindent
$\bullet$ the term $\mathcal{B}_{>5}(t)$ 
satisfies
\begin{equation}\label{quintestimateKG}
|\mathcal{B}_{>4}(t)|\lesssim\|u\|_{H^{s}}^{5}\,,
\quad t\in[0,T)\,.
\end{equation}
\end{proposition}

\begin{proof}
The equivalence between $\|z_n\|_{L^2}$ and $\|\psi\|_{H^{s+1/2}}+\|\dot{\psi}\|_{H^{s-1/2}}$ follows by Remarks \ref{equivLemmaRMKKG} and \ref{WWUUUKG}.
By using \eqref{equa:VVTLKG} we get
\begin{align}
\tfrac{1}{2}\pa_{t}\|z_n(t)\|_{L^{2}}^{2}&=
{\rm Re}\big(-\ii   \opbw\big((1+\widetilde{a}_{2}^{+}(x,\x))\Lambda_{\KG}(\x)\big)z_{n} ,z_n\big)_{L^{2}}\label{nasoKG00}
\\
&+{\rm Re}(\langle D\rangle^nX_{\mathtt{H}^{(4)}_{\KG}}^{+,{\rm res}}(Z),z_n)_{L^{2}}\label{nasoKG0}\\
&
+{\rm Re}(B_{n}^{(1)}(Z),z_{n})_{L^{2}}\label{nasoKG1}
\\&+
{\rm Re}(B_{n}^{(2)}(Z),z_{n})_{L^{2}}\label{nasoKG2}
\\&
+{\rm Re}(R_{4,n}(Z),z_{n})_{L^{2}}\label{nasoKG3}\,.
\end{align}
By \eqref{nuovadiagKG}, \eqref{A2tildeKG} and \eqref{simboa2KG} we have that the symbol 
$(1+\widetilde{a}_{2}^{+}(x,\x))\Lambda_{\KG}(\x)$ is real-valued. Hence the operator
$\ii   \opbw\big((1+\widetilde{a}_{2}^{+}(x,\x))\Lambda_{\KG}(\x)\big)$
is skew-self-adjoint. We deduce \eqref{nasoKG00}$\equiv0$.
By Lemma \ref{cancellazioneRes} (see also Remark \ref{rmkalg})
we also have that \eqref{nasoKG0}$\equiv0$.
We also have that
$
\eqref{nasoKG1}\equiv0\,, 
$ to see this one can reason as done in the proof of Prop. \ref{superazioni}, by using Remark \ref{strutturaA0KG}, in particular \eqref{mazzo2KG}.
By formula \eqref{alienKG889} and estimates \eqref{alienKG9} we have 
that the term in \eqref{nasoKG2} has the form \eqref{BBBTKG} with coefficients
satisfying \eqref{alienKG999}.
By Cauchy-Schwarz inequality and estimate \eqref{stimeRRRNKG}
we get that the term in \eqref{nasoKG3} satisfies the bound \eqref{quintestimateKG}.
\end{proof}

\begin{remark}\label{semilin5}
In view of Remark \ref{semilin4}, if \eqref{KG} is semi-linear, then the 
coefficients $\mathtt{b}^{\s_1,\s_2,\s_3}(\x,\eta,\zeta)$ of the energy in \eqref{BBBTKG}
satisfy the bound \eqref{alienKG999} with the better denominator 
$\max_1\{\langle\x-\eta-\zeta\rangle,\langle\eta\rangle,\langle\zeta\rangle\}^{2}$.
\end{remark}

\subsubsection{Estimates of non-resonant terms}\label{sec:55KG5KG}
In Proposition \ref{lem:basicenergyKG} we provided a precise structure of
the term $\mathcal{B}(t)$ of degree $4$ in \eqref{basicenergyKG}.
In this section we show that, actually,
$\mathcal{B}(t)$ satisfies better bounds with respect to a general
quartic multilinear maps by using that it is \emph{non-resonant}.
We state the main result of this section.

\begin{proposition}\label{prop:stimeEnergiaKG}
Let  $N>0$ and let $\beta$ be as in Proposition \ref{NonresCondKG}.
Then there is $s_0= s_0(N_0)$, where $N_0>0$ is given 
by Proposition \ref{NonresCondKG}, 
 such that, if Hypothesis  \ref{2epsilonKG}
holds with $s\geq s_0$, one has
 \begin{equation}\label{stimaeneNonresCaso2KG}
 \left|\int_{0}^{t}\mathcal{B}(\s)d\s\right|\lesssim
 \|u\|^{6}_{L^{\infty}H^{s}}TN^{\beta-1} +\|u\|_{L^{\infty}H^{s}}^{7}N^{\beta}T
 +\|u\|^{4}_{L^{\infty}H^{s}}TN^{-1}+N^{\beta-1}\|u\|^{4}_{L^{\infty}H^{s}}\,,
 \end{equation}
 where $\mathcal{B}(t)$ is in \eqref{BBBTKG}.
\end{proposition}

\noindent
We firstly introduce some notation. Let $\vec{\s}:=(\s_1,\s_2,\s_3)\in\{\pm\}^{3}$ and 
consider the following trilinear maps:
\begin{align}
&{\mathcal{B}}^{\vec{\s}}_{i}={\mathcal{B}}^{\vec{\s}}_{i} [z_1,{z}_2,z_3]\,,\quad
\widehat{\mathcal{B}}^{\vec{\s}}_{i}(\x)=
\tfrac{1}{(2\pi)^d}\sum_{\eta,\zeta\in\mathbb{Z}^{d}}
\langle\x\rangle^s
\mathtt{b}^{\vec{\s}}_{i}(\x,\eta,\zeta)
\hat{z^{\s_1}_1}(\x-\eta-\zeta)\hat{z^{\s_2}_{2}}(\eta)
\hat{z^{\s_3}_{3}}(\zeta)\,,\label{triMapsKG1}\\
&\mathcal{T}^{\vec{\s}}_{<}=\mathcal{T}^{\vec{\s}}_{<}[z_1,{z}_2,z_3]\,,\quad
\widehat{\mathcal{T}}^{\vec{\s}}_{<}(\x)=\tfrac{1}{(2\pi)^d}
\sum_{\eta,\zeta\in\mathbb{Z}^{d}}
\langle\x\rangle^s
\mathtt{t}^{\vec{\s}}_{<}(\x,\eta,\zeta)
\hat{z_{1}^{\s_1}}(\x-\eta-\zeta)\hat{z_{2}^{\s_2}}(\eta)\hat{z^{\s_3}_{3}}(\zeta)\,,\label{triMapsKG2}
\end{align}
where
\begin{align}
\mathtt{b}_{1}^{\vec{\s}}(\x,\eta,\zeta)&=\mathtt{b}^{\s_1,\s_2,\s_3}(\x,\eta,\zeta)
\mathtt{1}_{\{\max\{|\x-\eta-\zeta|, |\eta|,|\zeta|\}\leq N\}}\,,\label{triMapsKG3}\\
\mathtt{b}^{\vec{\s}}_{2}(\x,\eta,\zeta)&=\mathtt{b}^{\s_1,\s_2,\s_3}(\x,\eta,\zeta)
\mathtt{1}_{\{\max\{|\x-\eta-\zeta|, |\eta|,|\zeta|\}> N\}}\,,\label{triMapsKG4}\\
\mathtt{t}^{\vec{\s}}_{<}(\x,\eta,\zeta)&=\tfrac{-1}{\ii \omega_{\KG}^{\vec{\s}}(\x,\eta,\zeta)}
\mathtt{b}^{\vec{\s}}_1
(\x,\eta,\zeta)\,,
\label{triMapsKG5}
\end{align}
where $\mathtt{b}^{\s_1,\s_2,\s_3}(\x,\eta,\zeta)$
are the coefficients in \eqref{BBBTKG}, and $\omega_{\KG}^{\vec{\s}}$ is the phase in \eqref{fasePhiKG}.
We remark that if $(\x,\eta,\zeta)\in\mathcal{R}$ (see Def. \ref{def:resonantSet}) 
then the coefficients 
$\mathtt{b}(\x,\eta,\zeta)$ are equal to zero
(see \eqref{BBBTKG},  \eqref{alienKG889}, \eqref{alienKG9}).
Therefore, since $\omega^{\vec{\s}}_{\KG}$ is non-resonant (see  Proposition  \ref{NonresCondKG}), the 
coefficients in \eqref{triMapsKG5}
are well-defined. We now state an abstract 
result on the trilinear maps introduced 
in \eqref{triMapsKG1}-\eqref{triMapsKG2}.

\begin{lemma}\label{lem:iraqKG}
Let $\mu>1$ as in \eqref{alienKG999}.
One has that, for $s>d/2+\mu$,
\begin{equation}\label{iraqKG1}
\|\mathcal{B}^{\vec{\s}}_2[z_1,z_2,z_3]\|_{L^{2}}\lesssim N^{-1}
\sum_{i=1}^{3} \|z_i\|_{H^{s}}\prod_{i\neq k}\|z_k\|_{H^{d/2+\mu+\epsilon}}\,,
\end{equation}
for any $\vec{\s}\in\{\pm\}^{3}$ and any $\epsilon>0$.
\noindent
There is $s_0(N_0)>0$ ($N_0>0$ given by Proposition \ref{NonresCondKG})
such that for $s\geq s_0(N_0)$ one has
 \begin{equation}\label{iraqKG2}
\|\mathcal{T}_{<}[z_1,z_2,z_3]\|_{H^{p}}\lesssim N^{\beta}
\sum_{i=1}^{3} \|z_i\|_{H^{s+p-1}}\prod_{i\neq k}\|z_k\|_{H^{s_0}}\,, \quad p\in\mathbb{N}\,,
\end{equation}
 \begin{equation}\label{iraqKG2bis}
\|\mathcal{T}_{<}[z_1,z_2,z_3]\|_{L^{2}}\lesssim N^{\beta-1}
\sum_{i=1}^{3} \|z_i\|_{H^{s}}\prod_{i\neq k}\|z_k\|_{H^{s_0}}\,. 
\end{equation}
where $\beta$ is defined in Proposition \ref{NonresCondKG}.
\end{lemma}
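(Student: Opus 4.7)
\medskip
\noindent\textbf{Proof plan for Lemma \ref{lem:iraqKG}.}
The plan is to reduce each of the three estimates to an application of Lemma \ref{lem:trilineare} by exploiting the frequency restrictions contained in \eqref{triMapsKG3}--\eqref{triMapsKG4} together with the quantitative information provided by \eqref{alienKG999} and \eqref{lowerPhistrongKG}. In each case the strategy is to factor out the $N$-dependence explicitly, so that what remains is a trilinear symbol of the form $\max_2^{\mu'}/\max_1^{m'}$ to which Lemma \ref{lem:trilineare} directly applies. Throughout we use that the indicator $\mathtt{1}_{\mathcal{R}^c}$ is already built into $\mathtt{b}^{\s_1,\s_2,\s_3}$ (by \eqref{alienKG999}), so that the reciprocal $1/\omega^{\vec\s}_{\KG}$ in \eqref{triMapsKG5} is well-defined thanks to Proposition \ref{NonresCondKG}.

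For \eqref{iraqKG1}, on the support of $\mathtt{b}^{\vec{\s}}_{2}$ the localization in \eqref{triMapsKG4} forces $\max_1\{\langle\x-\eta-\zeta\rangle,\langle\eta\rangle,\langle\zeta\rangle\}\geq N$, so from \eqref{alienKG999} we obtain
\[
|\mathtt{b}^{\vec{\s}}_{2}(\x,\eta,\zeta)|
\lesssim \frac{\max_{2}\{\cdot\}^{\mu}}{\max_{1}\{\cdot\}}\mathtt{1}_{\{\max_1>N\}}
\lesssim N^{-1}\,\max_{2}\{\cdot\}^{\mu}.
\]
The factor $N^{-1}$ is pulled out as a constant, and Lemma \ref{lem:trilineare} is applied with parameters $\mu$ and $m=0$; choosing $s_0>d/2+\mu$ of the form $s_0=d/2+\mu+\epsilon$ yields exactly \eqref{iraqKG1}.

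For the two bounds \eqref{iraqKG2} and \eqref{iraqKG2bis}, the support of $\mathtt{t}^{\vec{\s}}_{<}$ is constrained by $\max_1\{\cdot\}\leq N$ via \eqref{triMapsKG3}. Combining \eqref{alienKG999} with the lower bound \eqref{lowerPhistrongKG} yields
\[
|\mathtt{t}^{\vec{\s}}_{<}(\x,\eta,\zeta)|
\;\lesssim_{\gamma}\; \max_{2}\{\cdot\}^{\mu+N_{0}}\,\frac{\max_{1}\{\cdot\}^{\beta}}{\max_{1}\{\cdot\}}\,\mathtt{1}_{\{\max_{1}\leq N\}}\,.
\]
To prove \eqref{iraqKG2} I rewrite $\max_{1}^{\beta}/\max_{1}=N^{\beta}\cdot\max_{1}^{-1}\cdot (\max_{1}/N)^{\beta-1}\cdot (\max_{1}/N)$; using $\max_{1}\leq N$ this gives $|\mathtt{t}^{\vec{\s}}_{<}|\lesssim N^{\beta}\max_{2}\{\cdot\}^{\mu+N_{0}}/\max_{1}\{\cdot\}$, and Lemma \ref{lem:trilineare} with parameters $\mu'=\mu+N_0$ and $m=1$ --- for any $s_0>d/2+\mu+N_0$ --- gives the $H^{s+1}$ gain, and hence the $H^{p}$ bound \eqref{iraqKG2} for every $p\in\mathbb{N}$ (with the appropriate bookkeeping $\langle\x\rangle^{p}\lesssim \max_{1}^{p}$). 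To prove \eqref{iraqKG2bis} I instead use directly the identity $\max_{1}^{\beta}/\max_{1}=\max_{1}^{\beta-1}\leq N^{\beta-1}$ on the support, which yields
\[
|\mathtt{t}^{\vec{\s}}_{<}(\x,\eta,\zeta)|
\;\lesssim\; N^{\beta-1}\,\max_{2}\{\langle\x-\eta-\zeta\rangle,\langle\eta\rangle,\langle\zeta\rangle\}^{\mu+N_{0}}\,,
\]
and Lemma \ref{lem:trilineare} with $\mu'=\mu+N_{0}$ and $m=0$ produces \eqref{iraqKG2bis}.

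The only non-routine point is the choice of $s_0$: it must exceed $d/2+\mu+N_{0}$ so that the surplus $\max_{2}^{N_{0}}$ generated by inverting the small-divisor estimate is absorbed into the two low-frequency $H^{s_{0}}$ norms by Lemma \ref{lem:trilineare}; this is the source of the dependence $s_{0}=s_{0}(N_{0})$ in the statement. The rest is an essentially mechanical application of Lemma \ref{lem:trilineare}, and I do not anticipate any subtler obstacle.
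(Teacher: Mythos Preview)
Your proposal is correct and follows exactly the paper's approach: the paper's own proof consists of the single line ``The proof is similar to the one of Lemma \ref{lem:iraq}'', and you have spelled out precisely that adaptation --- extracting $N^{-1}$ from the denominator $\max_1$ for \eqref{iraqKG1}, and for \eqref{iraqKG2}--\eqref{iraqKG2bis} combining \eqref{alienKG999} with the small-divisor bound \eqref{lowerPhistrongKG} to reach symbols of the shape $\max_2^{\mu+N_0}\max_1^{\beta-1}$, then trading powers of $\max_1\le N$ for powers of $N$ before invoking Lemma \ref{lem:trilineare}. Your identification of $s_0>d/2+\mu+N_0$ as the threshold is also exactly the point.
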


\begin{proof}
The proof is similar to the one of Lemma \ref{lem:iraq}.
One has to use  Proposition \ref{NonresCondKG} instead of  Proposition \ref{NonresCond}
to estimate the small divisors.
\end{proof}
\begin{remark}\label{semilin6}
In view of Remark \ref{semilin5}, if \eqref{KG} is semi-linear we may improve \eqref{iraqKG1} and  \eqref{iraqKG2bis} with
\begin{equation*}
\begin{aligned}
\|\mathcal{B}^{\vec{\s}}_2[z_1,z_2,z_3]\|_{L^{2}}&\lesssim N^{-2}
\sum_{i=1}^{3} \|z_i\|_{H^{s}}\prod_{i\neq k}\|z_k\|_{H^{d/2+\mu+\epsilon}}\,,
\\
\|\mathcal{T}_{<}[z_1,z_2,z_3]\|_{L^{2}}&\lesssim N^{\beta-2}
\sum_{i=1}^{3} \|z_i\|_{H^{s}}\prod_{i\neq k}\|z_k\|_{H^{s_0}}\,. 
\end{aligned}
\end{equation*}
\end{remark}

We are now in position to prove the main Proposition \ref{prop:stimeEnergiaKG}.

\begin{proof}[{\bf Proof of Proposition \ref{prop:stimeEnergiaKG}}]
By \eqref{triMapsKG1}, \eqref{triMapsKG3}, \eqref{triMapsKG4}, 
and recalling the definition of $\mathcal{B}$
in \eqref{BBBTKG}, we can write
\begin{equation}\label{EnergiaTotKG}
\begin{aligned}
\int_{0}^{t}\mathcal{B}(\tau)d\tau&=\sum_{\vec{\s}\in\{\pm\}^{3}}
\int_{0}^{t}(\mathcal{B}^{\vec{\s}}_1[z,{z},z],\langle D\rangle^sz)_{L^{2}}d\tau
+\sum_{\vec{\s}\in\{\pm\}^{3}}\int_{0}^{t}
(\mathcal{B}^{\vec{\s}}_2[z,{z},z],\langle D\rangle^sz)_{L^{2}}d\tau\,.
\end{aligned}
\end{equation}
By Lemma \ref{lem:iraqKG} we have
\begin{equation}\label{iraqKG7}
\left|\int_{0}^{t}(\mathcal{B}^{\vec{\s}}_2[z,{z},z],\langle D\rangle^sz)_{L^{2}}d\s\right|
\stackrel{\eqref{iraqKG1}}{\lesssim}
N^{-1}\int_{0}^{t}\|z\|_{H^{s}}^{4}d\tau
\stackrel{\eqref{alienKG1}}{\lesssim}
N^{-1}\int_{0}^{t}\|u\|^{4}_{H^{s}}d\tau\,.
\end{equation}
Consider now the first summand in the r.h.s. of \eqref{EnergiaTotKG}.
Integrating by parts as done in the proof of Prop. \ref{prop:stimeEnergia}   
we have
\begin{equation}\label{iraqKG10}
\begin{aligned}
\int_{0}^{t}
(\mathcal{B}^{\vec{\s}}_1[z,\bar{z},z],\langle D\rangle^sz)_{L^{2}}d\tau&=
\int_{0}^{t}
(\mathcal{T}^{\vec{\s}}_{<}[z,{z},z],\langle D\rangle^s(\pa_{t}+\ii\Lambda_{\KG})z)_{L^{2}}d\tau
\\&
+\int_{0}^{t}(\mathcal{T}^{\vec{\s}}_{<}[(\pa_{t}+\ii\Lambda_{\KG})z,{z},z],
\langle D\rangle^sz)_{L^{2}}d\tau
\\&
+\int_{0}^{t}(\mathcal{T}^{\vec{\s}}_{<}[z,\bar{z},(\pa_{t}+\ii\Lambda_{\KG})z],
\langle D\rangle^sz)_{L^{2}}d\tau
\\&
+\int_{0}^{t}(\mathcal{T}^{\vec{\s}}_{<}[z,(\pa_{t}+\ii\Lambda_{\KG})z,z],
\langle D\rangle^sz)_{L^{2}}d\tau
+R\,,
\end{aligned}
\end{equation}
where 
\[
R=
(\mathcal{T}^{\vec{\s}}_{<}[z(t),{z}(t),z(t)],\langle D\rangle^sz(t))_{L^{2}}-
(\mathcal{T}^{\vec{\s}}_{<}[z(0),{z}(0),z(0)],\langle D\rangle^sz(0))_{L^{2}}\,.
\]
The remainder $R$ above is bounded from above by $N^{\beta}\|u\|^{4}_{L^{\infty}H^{s}}$
using Cauchy-Schwarz and the \eqref{iraqKG2}.
Let us now consider the first summand in the r.h.s. of \eqref{iraqKG10}.
Using that the operator $\langle D\rangle$
is self-adjoint and recalling the equation \eqref{eq:ZZZKGscal} we have
\begin{align}
(\mathcal{T}^{\vec{\s}}_{<}[z,{z},z],\langle D\rangle^s(\pa_{t}+\ii\Lambda_{\KG})z)_{L^{2}}&=
(\langle D\rangle\mathcal{T}^{\vec{\s}}_{<}[z,{z},z],\langle D\rangle^{s-1}(\pa_{t}+\ii\Lambda_{\KG})z)_{L^{2}}
\nonumber\\
&=
(\langle D\rangle\mathcal{T}^{\vec{\s}}_{<}[z,{z},z],\langle D\rangle^{s-1}
 \opbw\big(-\ii\widetilde{a}_{2}^{+}(x,\x)\Lambda_{\KG}(\x)\big)z)_{L^{2}}\label{lemansKG1}\\
 &+(\mathcal{T}^{\vec{\s}}_{<}[z,{z},z],\langle D\rangle^s
 \big(X_{\mathtt{H}_{\KG}^{(4)}}^{+}(Z)+{R}^{(2,+)}_{4}(u)\big))_{L^{2}}
 \label{lemansKG2}\,.
\end{align}
By Cauchy-Schwarz inequality, estimate \eqref{iraqKG2} with $p=1$, 
estimate \eqref{mission1KG} on the semi-norm of the symbol $\widetilde{a}_2^{+}(x,\x)$
Lemma \ref{azioneSimboo} and the equivalence \eqref{alienKG1},
we get 
$
|\eqref{lemansKG1}|\lesssim \|u\|_{H^{s}}^{7}N^{\beta}\,.
$
Consider the term in \eqref{lemansKG2}. First of all notice that,
by \eqref{calA00KG} and Lemma \ref{azioneSimboo}, and by \eqref{restoQ32GGGKG}
and Lemma \ref{lem:trilineare}, the field  $X_{\mathtt{H}^{(4)}_{\KG}}(Z)$ 
in  \eqref{cubicifineKG} 
satisfies
the same estimates \eqref{stimaGGKG} as the field $X_{\mathcal{H}^{(4)}_{\KG}}$.
Therefore, using \eqref{iraqKG2bis} and  \eqref{stimaRRRfinaKG}, we obtain 
$
|\eqref{lemansKG2}|\lesssim \|u\|^{6}_{H^{s}}N^{\beta-1}\,.
$ 
Using that (see Hyp. \ref{2epsilonKG}) $\|u\|_{H^{s}}\ll1$,
we conclude that the first summand in the r.h.s. of \eqref{iraqKG10} 
is bounded from above by 
$N^{\beta}\int_{0}^{t}\|u(\tau)\|^{7}d\tau+N^{\beta-1}\int_{0}^{t}\|u(\tau)\|^{6}d\tau$.
The other terms in \eqref{iraqKG10} 
are estimated in a similar way. We eventually 
obtain the \eqref{stimaeneNonresCaso2KG}.
\end{proof}

\begin{remark}\label{semilin7}
In view of Remarks \ref{semilin1}, \ref{semilin2}, \ref{semilin3}, \ref{semilin4}, 
\ref{semilin5} and \ref{semilin6}, if \eqref{KG} is semi-linear 
we have the better (w.r.t. \eqref{stimaeneNonresCaso2KG}) 
estimate 
 \begin{equation}\label{stimaeneNonresCaso2KGsemilin}
 \left|\int_{0}^{t}\mathcal{B}(\s)d\s\right|\lesssim
 \|u\|^{6}_{L^{\infty}H^{s}}TN^{\beta-2} 
 +\|u\|^{4}_{L^{\infty}H^{s}}TN^{-2}+N^{\beta-2}\|u\|^{4}_{L^{\infty}H^{s}}\,.
 \end{equation}
\end{remark}

\section{Proof of the main results}

In this section we conclude the proof of our main theorems.

\vspace{0.2em}
\noindent
{\bf Proof of Theorem \ref{main:NLS}}.
Consider \eqref{NLS} and let $u_0$ be as in the statement of Theorem \ref{main:NLS}. 
By the result in \cite{Feola-Iandoli-local-tori}
we have that
 there is $T>0$ and a unique solution $u(t,x)$ of $\eqref{NLS}$ with $V\equiv0$ 
 such that Hypothesis \ref{2epsilon}
is satisfied. To recover the  result when $V\neq0$
one can argue as done in \cite{Feola-Iandoli-Loc}.
Consider a potential $V$ as in \eqref{insPot} with 
$\vec{x}\in \mathcal{O}\setminus\mathcal{N}$
with $\mathcal{N}$ is the zero measure set given in Proposition \ref{NonresCond}.
We claim that we have the following \emph{a priori} estimate:
fix any  $0<N$, then 
for any $t\in [0,T)$, with $T$ as in Hyp. \ref{2epsilon}, one has
\begin{equation}\label{claimUU}
\|u(t)\|_{H^{s}}^{2}\leq 2\|u_0\|_{H^{s}}^{2}
+C\Big( \|u\|^{10}_{L^{\infty}H^{s}}TN+\|u\|^{6}_{L^{\infty}H^{s}}T
+\|u\|^{4}_{L^{\infty}H^{s}}TN^{-1}+\|u\|^{4}_{L^{\infty}H^{s}}\Big)\,,
\end{equation}
for some $C>0$ depending on $s$.
To prove the claim we reason as follows.
By Proposition \ref{NLSparapara} we have that 
\eqref{NLS} is equivalent to the system 
\eqref{QLNLS444}.
By Propositions \ref{diago2max}, \ref{prop:blockdiag00} and Lemma 
\ref{equaVVnn}
we can construct a function $z_{n}$ with $2n=s$ such that if
$u(t,x)$ solves the \eqref{NLS} then $z_{n}$ solves the equation 
\eqref{equa:VVTL}. Moreover by Proposition \ref{lem:basicenergy}
we have the equivalence \eqref{pallone}.
and we deduce
\begin{equation}\label{darma2}
\|u(t)\|_{H^{s}}^{2}\leq 2^{1/2}\|z_{n}(t)\|_{L^{2}}^{2}\leq 2 \|u_0\|^{2}_{H^{s}}
+2\left|\int_{0}^{t}\mathcal{B}(\s)d\s\right|+2 \left|\int_{0}^{t}\mathcal{B}_{>5}(\s)d\s\right|\,,
\end{equation}
Propositions \ref{lem:basicenergy} and \ref{prop:stimeEnergia} apply, therefore, by \eqref{stimaeneNonresCaso2}
and \eqref{quintestimate}, we obtain
the \eqref{claimUU}.
The thesis of Theorem
\ref{main:NLS} follows from the 
following lemma.
 \begin{lemma}{\bf (Main Bootstrap).}
Let $u(t,x)$ be a solution of \eqref{NLS} with $t\in[0,T)$ and initial condition 
$u_0\in H^{s}(\mathbb{T}^{d};\mathbb{C})$. 
Then, for $s\gg1$ large enough,  there exist $\e_0,c_0>0$ such that, 
for any $0<\e\leq\e_0$, if 
\begin{equation}\label{boot1}
\|u_0\|_{H^{s}}\leq \tfrac{1}{4}\e\,,\qquad \sup_{t\in[0,T)}\|u(t)\|_{H^{s}}\leq \e\,,
\qquad T\leq c_0\e^{-4}\,,
\end{equation}
then we have the improved bound $\sup_{t\in[0,T)}\|u(t)\|_{H^{s}}\leq \e/2$.
\end{lemma}

\begin{proof}
For $\e$ small enough the bound \eqref{claimUU} holds true, 
and we fix $N:=\e^{-3}$.
 Therefore, there is $C=C(s)>0$
such that, for any $t\in[0,T)$,
\begin{equation}\label{boot3}
\begin{aligned}
\|u(t)\|_{H^{s}}^{2}&\leq 2\|u_0\|_{H^{s}}^{2}+C\big( \|u\|^{4}_{L^{\infty}H^{s}}
+\|u\|^{10}_{L^{\infty}H^{s}}T\e^{-3}+\|u\|^{6}_{L^{\infty}H^{s}}T
+ \|u\|^{4}_{L^{\infty}H^{s}}T\e^{3}\big)\\
&\stackrel{\mathclap{\eqref{boot1}}}{\leq} 
\tfrac{1}{8}\e^{2}+
C\big(\e^{4}+ 2\e^{7}T
+\e^{6}T\big)
\\&
\leq \tfrac{\e^{2}}{4}(\tfrac{1}{2}+4C(\e^{4}+2\e c_0+c_0))\leq \e^{2}/4
\end{aligned}
\end{equation}
where in the last inequality we have chosen  $c_0$ and $\e$ sufficiently small. 
This implies the thesis.
\end{proof}

\vspace{0.2em}
\noindent
{\bf Proof of Theorem \ref{main2}}.
One has to  follow almost word by word the proof of 
Theorem \ref{main:NLS}. The only difference relies on the estimates on the small divisors
which in this case are given by item $(ii)$ of Proposition \ref{NonresCond}.

\vspace{0.2em}
\noindent
{\bf Proof of Theorem \ref{main:KG}}.
Consider \eqref{KG} and let $(\psi_0,\psi_1)$ as in the statement of Theorem \ref{main:KG}. 
Let  $\psi(t,x)$ be a solution of \eqref{KG} satisfying the condition in Hyp. \ref{2epsilonKG}.
By Proposition \ref{KGparaparaKG}, recall \eqref{CVWW}, 
the function $U:=\vect{u}{\bar{u}}$ 
solves \eqref{QLNLS444} 
with initial condition $u_0=\tfrac{1}{\sqrt{2}}(\Lambda_{\KG}^{\frac{1}{2}}\psi_0+\ii \Lambda_{\KG}^{-\frac{1}{2}}\psi_1)$. Moreover, by Hyp. \ref{2epsilonKG} and Remark \ref{WWUUUKG}
one has 
$\|u_0\|_{H^{s}}\leq 1/16\e$.
By Remark \ref{WWUUUKG}, in order to get the \eqref{tesiKG}, we have to show that 
the bound $\sup_{t\in[0,T)}\|u\|_{H^{s}}\leq \e/4$
 holds for  time  
 $T\gtrsim \e^{-3^{-}}$ if $d=2$ and $T\gtrsim \e^{-8/3^{-}}$  if $d\geq3$.
Fix $\beta$ as in Proposition \ref{NonresCondKG} and let $m\in \mathcal{C}_{\beta}$.
By Propositions \ref{diagoOrd1}, \ref{prop:blockdiag00KG} and Lemma 
\ref{equaVVnnKG}
we can construct a function $z_{n}$ with $n=s$ such that if
$\psi(t,x)$ solves the \eqref{KG} then $z_{n}$ solves the equation 
\eqref{equa:VVTLKG}. 
By Proposition \ref{lem:basicenergyKG} and Remark \ref{equivLemmaRMKKG} we get
\begin{equation}\label{darmaKG2}
\|u(t)\|_{H^{s}}^{2}\leq
2^{1/2} \|z_{n}(t)\|_{L^{2}}^{2}\leq 2\|u_0\|^{2}_{H^{s}}
+2 \left|\int_{0}^{t}\mathcal{B}(\s)d\s\right|+ 2\left|\int_{0}^{t}\mathcal{B}_{>5}(\s)d\s\right|.
\end{equation}
Propositions \ref{lem:basicenergyKG} and \ref{prop:stimeEnergiaKG} apply, therefore, by \eqref{stimaeneNonresCaso2KG}
and \eqref{quintestimateKG}, we obtain
 the following \emph{a priori} estimate:
fix any  $0<N$, then 
for any $t\in [0,T)$, with $T$ as in Hyp. \ref{2epsilonKG}, one has
\begin{equation}\label{claimUUKG}
\begin{aligned}
\|u(t)\|_{H^{s}}^{2}&\leq2 \|u_0\|_{H^{s}}^{2}
\\&+C\big( \|u\|^{6}_{L^{\infty}H^{s}}TN^{\beta-1}+
\|u\|^{7}_{L^{\infty}H^{s}}TN^{\beta}
+
\|u\|^{6}_{L^{\infty}H^{s}}T
+\|u\|^{4}_{L^{\infty}H^{s}}TN^{-1}
+N^{\beta-1}\|u\|^{4}_{L^{\infty}H^{s}}\big)\,,
\end{aligned}
\end{equation}
for some $C>0$ depending on $s$.
The thesis of Theorem
\ref{main:KG} follows from the 
following lemma.
 \begin{lemma}{\bf (Main bootstrap).}
Let $u(t,x)$ be a solution of \eqref{QLNLS444KG} 
with $t\in[0,T)$ and initial condition 
$u_0\in H^{s}(\mathbb{T}^{d};\mathbb{C})$. 
Define $\mathtt{a}=3$ if $d=2$ and $\mathtt{a}=8/3$ if $d\geq3$.
Then, for $s\gg1$ large enough and any $\delta>0$,  
there exist $\e_0=\e_0(d,s,m,\delta)>0$ 
such that, 
for any $0<\e\leq\e_0$, if 
\begin{equation}\label{bootKG1}
\|u_0\|_{H^{s}}\leq 1/16\e\,,\qquad \sup_{t\in[0,T)}\|u(t)\|_{H^{s}}\leq \e/4\,,
\qquad T\leq \e^{-\mathtt{a}+\delta}\,,
\end{equation}
then we have the improved bound $\sup_{t\in[0,T)}\|u(t)\|_{H^{s}}\leq \e/8$.
\end{lemma}

\begin{proof}
We start with $d\geq 3$. 
For $\e$ small enough the bound \eqref{claimUUKG} holds true.
Let $\delta>0$ and $0<\s\ll\delta$.
Define
\begin{equation}\label{francoKG}
\beta:=3+\s\,,\quad 
N:=\e^{-\frac{2}{3+\s}}\,.
\end{equation}
By \eqref{claimUUKG}, \eqref{bootKG1}, \eqref{francoKG},  there is $C=C(s)>0$
such that, for any $t\in[0,T)$,
\begin{equation}\label{bootKG3}
\begin{aligned}
\|u(t)\|_{H^{s}}^{2}&\leq 2\frac{1}{16^{2}}\e^{2}+
C\e^{2}\e^{\frac{2}{3+\s}}
+2 CT\e^{2}(\e^{3}+\e^{2+\frac{2}{3+\s}} )\leq {\e^{2}}/{64}
\end{aligned}
\end{equation}
where in the last inequality we have chosen  
$\e$ sufficiently small
and we used the choice of $T$ in \eqref{bootKG1} and that $\s\ll\delta$. 
This implies the thesis for $d\geq 3$.
In the case $d=2$ the proof is similar setting $\beta=2+\s$
and $N=\e^{-2/(2+\s)}$.
\end{proof}

\noindent
{\bf Proof of Theorem \ref{main:KGsemi}}.
Using the Remarks \ref{semilin1}, \ref{semilin2}, \ref{semilin3}, \ref{semilin4}, \ref{semilin5}, \ref{semilin6}, \ref{semilin7}
one deduces the result by reasoning as in the proof of Theorem \ref{main:KG}
and using in particular the estimate \eqref{stimaeneNonresCaso2KGsemilin}.


\def\cprime{$'$}



\end{document}